\newcommand{\naturals}{\mathbb{N}}
\newcommand{\integers}{\mathbb{Z}}
\newcommand{\reals}{\mathbb{R}}
\newcommand{\fa}{\forall\,}
\newcommand{\nin}{\not\in}
\newcommand{\eps}{\varepsilon}
\newcommand{\deh}{\textup{d}}
\newcommand{\st}{\ \text{s.t.}\ }
\newcommand{\eg}{\text{e.g.}\ }
\newcommand{\ie}{\text{i.e.}\ }
\newcommand{\const}{C_\#}
\newcommand{\intr}[1]{\textup{int}\,#1}
\newcommand{\cl}[1]{\textup{cl}\,#1}
\newcommand{\inv}{^{-1}}
\newtheorem{thm}{Theorem}[section]
\newtheorem{lem}[thm]{Lemma}
\newtheorem{sublem}[thm]{Sub-lemma}
\newtheorem{prp}[thm]{Proposition}
\newtheorem{cor}[thm]{Corollary}
\theoremstyle{remark}
\newtheorem{rmk}[thm]{Remark}
\theoremstyle{definition}
\newtheorem{mydef}[thm]{Definition}
\newtheorem*{mydef*}{Definition}
\newtheorem*{mthm}{Main Theorem}
\numberwithin{equation}{section}
\setlist[enumerate,1]{label=\textup{(\alph*)}}
\setlist[enumerate,2]{label=\textup{\roman*.}}
\providecommand\@dotsep{5}
\renewcommand{\listoftodos}[1][\@todonotes@todolistname]{%
  \@starttoc{tdo}{#1}}
\author{Jacopo De Simoi}
\address{Jacopo De Simoi\\
  Department of Mathematics\\
  University of Toronto\\
  40 St George St. Toronto, ON, Canada M5S 2E4}
\email{{\tt jacopods@math.utoronto.ca}}
\urladdr{\href{http://www.math.utoronto.ca/jacopods}{http://www.math.utoronto.ca/jacopods}}
\author{Dmitry Dolgopyat}
\address{Dmitry Dolgopyat\\
  Department of Mathematics\\
  University of Maryland\\
  4417 Mathematics Bldg,  College Park,  MD 20742, USA}
\email{{\tt dmitry@math.umd.edu}}
\urladdr{http://www.math.umd.edu/\~{}dmitry}
\title{Dispersing Fermi--Ulam Models}
\def\tangu{{\Theta}}
\def\brb{{\bar{b}}}
\def\brc{{\bar{c}}}
\def\cT{{\mathcal{T}}}
\def\RmIII{{{I\!\!I\!\!I}}}
\def\RmII{{{I\!\!I}}}
\def\conen{{\mathfrak{N}}}
\def\conep{{\mathfrak{P}}}
\def\Kn{{\mathfrak{K}_{\bar n}}}
\def\coneIT{{\mathcal{C}_{I\tau}}}
\def\DS{\displaystyle}
\newcommand{\ignore}[1]{}
\begin{document}
\begin{abstract}
  We study a natural class of Fermi--Ulam Models that features good
  hyperbolicity properties and that we call \emph{dispersing
    Fermi--Ulam models}.  Using tools inspired by the theory of
  hyperbolic billiards we prove, under very mild
  complexity assumptions, a Growth Lemma for our systems.  This allows
  us to obtain ergodicity of dispersing Fermi--Ulam Models. 
 It follows  that almost every orbit of such systems is
  \emph{oscillatory}.
\end{abstract}
\maketitle

\tableofcontents
\section{Introduction.}
A Fermi--Ulam Model is a classical model of mathematical physics.
It describes a point mass moving freely between two infinitely heavy
walls.  One of the walls is fixed and the other one moves
periodically. Collisions with the walls are assumed to be elastic,
therefore the kinetic energy of the particle is conserved except at
collisions with the moving wall.  We denote the distance between the
two walls at time $t$ by $\ell(t)$.  We assume $\ell$ to be strictly
positive, Lipschitz continuous, piecewise smooth and periodic of
period $1$.

This model was introduced by Ulam, who wanted to obtain a simple model
for the stochastic acceleration, which according to Fermi \cite{F49,
  F54} is responsible for the presence of highly energetic particles
in cosmic rays.  Ulam and Wells performed numerical study of the
Fermi--Ulam model (see \cite{U}).  The authors were interested in
harmonic motion of the walls but due to limited power of their
computers they had to study less computationally intensive wall
motions. Namely, they assumed that velocity was either piecewise
constant or piecewise linear, since in that case the location of the
next collision can be found by solving either linear or quadratic
equation.  A few years after~\cite{U}, it has been pointed out by
Moser that if the motion of the wall is sufficiently smooth (in
particular, harmonic motions) then KAM theory implies that all
orbits have bounded velocities and so stochastic acceleration is
impossible.  The precise smoothness assumptions lneeded for the
application of KAM theory have been worked out by several
authors~\cite{Do, LL, P2, P1}.  However, Moser's argument does not
apply to the wall motions studied in \cite{U}. In fact, piecewise
smooth motions have been a subject of intensive numerical
investigations and several authors have reported the presence of
chaotic motions for certain parameter values (see \eg \cite{Br, CZ}).
The first rigorous result about the models studied in \cite{U} is due
to Zharnitsky, who proved in \cite{Z} the existence of unbounded
orbits for a range of parameters values. The next natural question is
how large is the set of orbits exhibiting stochastic acceleration.
In~\cite{fum}, we studied general wall motions such that the velocity
of the wall has only one discontinuity per period.  We
found\footnote{The results of \cite{fum} needed in the present paper
  are stated precisely in Section~\ref{SSCMP}.}  that the large energy
behavior of this system depends crucially on the value of a parameter
which, under the assumption that the discontinuity is at $0$, takes
the form
\begin{align}\label{DefDelta}
      \Delta &= \ell(0) [\ell'(0^{+})-\ell'(0^{-})] \int_0^1 \ell^{-2}(t) dt
\end{align}
where the second factor amounts to the velocity jump at $0$.  In
particular, we proved that the motion of the particle is chaotic for
large energies if $\Delta\not\in[0,4]$ and it is regular for large
energies if $\Delta\in(0,4)$.

While the large energy dynamical behavior depends only on the average
value of $\ell^{-2}$ and on the values of $\ell$ and its derivative at
the moment of jump (according to~\eqref{DefDelta}), the dependence of
the small energy dynamics on $\ell$ is more delicate.  It turns out
that the following property is sufficient to ensure stochastic
behavior for all energies.
\begin{mydef}
  A Fermi--Ulam model is said to be \emph{dispersing} if there exists
  $\cK>0$ so that $\ell''(t)\geq \cK$ for all $t$ where $\ell''$ is
  defined.
\end{mydef}
In this paper we  study the dynamics of dispersing
Fermi--Ulam models.  Note that for dispersing models, the value of
$\Delta$ defined by~\eqref{DefDelta} is necessarily negative.  Indeed,
the first and the last factors are positive while the second factor is
negative because periodicity implies that $\ell'(0^{-})=\ell'(1^{-})$
and the dispersing property implies that $\ell'(t)$ is increasing on
its interval of continuity.  Thus, according to~\cite{fum}, dispersing
Fermi--Ulam models are indeed stochastic for large energies.  The goal
of this paper is to show that stochasticity holds for \emph{all}
energies: we will prove that such systems are \emph{ergodic}.

To fix ideas, we take $\ell$ to be defined on the fundamental domain
$[0,1]$.  We assume that $\ell$ is $C^5$-smooth on $(0,1)$ and that it
can be smoothly extended to some neighborhood of $(0,1)$.  We assume the
fixed wall to be at the coordinate $z = 0$, and the coordinate of the
moving wall at time $t$ to be $z = -\ell(t)$.  Let $\exph$ denote the
\emph{extended phase space} of the system, defined as
\begin{align*}
  \exph=\{X=(t,z,v)\in\bR^3\st -\ell(t)\leq z\leq 0\}.
\end{align*}
where $z$ denotes the opposite of the distance between the point mass
and the fixed wall, $v$ is its velocity, with the positive direction
pointing away from the moving wall. The dynamics of the system is
described by the Hamiltonian flow $\flow{s}:\exph\to\exph$, which acts
on $\exph$ preserving the volume form
$\deh t\wedge \deh z \wedge \deh v$ (see Section~\ref{s_hyperbolicity}).

It will be more convenient to describe the dynamics on a suitable
Poincar\'e section.  Define the \emph{collision space}
$\csp=[0,1]\times[0,\infty) \ni x=(\ct,w)$.  The collision map
$\cm:(\ct,w)\mapsto(\ct',w')$ can be described as follows: a point
mass which leaves the moving wall at time $(\bmod\; 1)$ $\ct$ with
velocity $w$ relative to the moving wall will have its next collision
with the moving wall at time $(\bmod\; 1)$ equal to $\ct'$ and will
leave the moving wall with relative velocity $w'$ (which is thus
called \emph{post-collisional relative velocity}).  The invariant
volume form $\deh t\wedge \deh z \wedge \deh v$ induces an invariant
measure $\mu$ for $\cm$ where $$d\mu=(v+\dot{\ell}(t)) dv\wedge dt =
w\,dw\wedge dr.$$

Due to presence of singularities (the issue will be covered in detail
in Section~\ref{s_singularities}), the map $\cm$ and its iterates are
not defined everywhere. It is fortunately simple to show that the
singularity set is a $\mu$-null set (namely, a countable union of
smooth curves). Therefore the dynamics is well defined $\mu$-almost
everywhere, which is, in fact, all we need for the study of
statistical properties of the system.

In~\cite{fum} we proved that every dispersing Fermi--Ulam models is
recurrent, that is, $\mu$-almost every point eventually visits a region
of bounded velocity; moreover, we showed that such systems are
(non-uniformly) hyperbolic for large velocities.

We now state the main result of the present work. 
\begin{mthm}
  Dispersing Fermi--Ulam models that are \emph{regular at infinity}
  are ergodic.
\end{mthm}

\emph{Regularity at infinity} is a technical condition which allows
to control the combinatorics of collisions at infinity (see
Section~\ref{SS-VirtualComplexity} for the definition).  For the
moment we note that this property depends only on the parameter
$\Delta$ defined by \eqref{DefDelta}.  We will show in the appendix
that this condition may fail at most for countably many values of
$\Delta$.  In particular all dispersing Fermi--Ulam models with
$|\Delta|>\frac{1}{2}$ are regular at infinity (see Remark
\ref{RkDeltaHalf}).

Consider, as an example, piecewise quadratic motions studied in
\cite{U}.  Thus we assume that
\begin{align*}
  \ell(t)=1+a \left(\{t\}-\frac{1}{2}\right)^2,
\end{align*}
where $\{\cdot\}$ denotes the fractional part\footnote{Here the time scale is fixed
by the requirement that the motion is 1 periodic and spatial scale
 is fixed by the requirement that
 $\displaystyle \ell\left(\frac{1}{2}\right)=1.$}
. 
Here $a$ is a real
number that we assume to be greater than $-4$ so that $\ell(t)>0$ for
all $t$.  In this example we have $\ell''(t) = 2a$, thus the model is
dispersing if and only if $a>0$.  In this case 
one can compute (see~\cite{fum}) that 
\begin{align*}
  |\Delta|(a)=a+\frac{\sqrt{a} (a+4)}{2} \arctan\left(\frac{\sqrt{a}}{2}\right).
\end{align*}
Studying this function we see that $|\Delta|(a)>\ifrac{1}{2}$ for
$a>\ifrac{1}{4}$.  Hence, the model is regular at infinity for all $a>0$
 except, possibly, a countable set
of values of $ a\in \left(0, \ifrac{1}{4}\right).$

The foregoing discussion shows that most dispersing Fermi--Ulam models
are ergodic.  It is possible that, in fact, all dispersing
Fermi--Ulam models are ergodic, but the proof of this would require
new ideas.  On the other hand, the assumption that the Fermi--Ulam
model is dispersing is essential.  For example, for piecewise
quadratic wall motions with one singularity, then non-dispersing
models are not necessarily ergodic (see~\cite{fum}).

Recall that an orbit $\{(\ct_n,w_n)\}_{n\in\bZ}$ where
$(\ct_{n},w_{n}) = \cm^n(\ct_0,w_0)$ is said to be \emph{oscillatory}
if $\limsup w_n=\infty$ and $\liminf w_n<\infty$.
\begin{cor}\label{CrOsc}
  Almost every orbit of a dispersing Fermi--Ulam Model that is regular
  at infinity is oscillatory.
\end{cor}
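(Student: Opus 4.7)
The plan is to deduce Corollary~\ref{CrOsc} from the Main Theorem together with the infinite invariant measure of the collision map and the recurrence result of~\cite{fum}. Recall that $d\mu=w\,dw\wedge d\ct$ on $\csp=[0,1]\times[0,\infty)$, so $\mu$ is $\sigma$-finite with infinite total mass, a feature that is the real engine of oscillation once ergodicity is known.

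For the $\liminf$ side, I would first invoke the recurrence result of~\cite{fum}---$\mu$-a.e.\ point eventually enters a region $B_C=\{w\leq C\}$ of bounded velocity---and combine it with Poincar\'e recurrence inside the finite-measure set $B_C$ and with the ergodicity granted by the Main Theorem. Explicitly, the set $Y_C=\{x\in\csp\st \cm^n x\in B_C\text{ for infinitely many }n\}$ is $\cm$-invariant; Poincar\'e recurrence shows it contains $\mu$-almost all of $B_C$, so $\mu(Y_C)\geq\mu(B_C)>0$, and ergodicity then forces $\mu(Y_C^{\,c})=0$. Consequently $\liminf_n w_n\leq C<\infty$ $\mu$-almost everywhere.

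The $\limsup$ side is handled by the same invariance-plus-ergodicity mechanism, but applied to sets of large velocity. For each $M\in\bN$, I would pick a subset $A_M\subset\{w>M\}$ of \emph{positive and finite} $\mu$-measure---for instance $A_M=[0,1]\times(M,M+1]$, which has $\mu(A_M)=M+\tfrac12$. Poincar\'e recurrence applies to $A_M$, so the invariant set $\{x\st \cm^n x\in A_M\text{ i.o.}\}$ has positive measure and hence full measure by ergodicity. Thus $\limsup_n w_n>M$ holds $\mu$-almost everywhere, and intersecting these full-measure conclusions over the countable family $M\in\bN$ yields $\limsup_n w_n=\infty$ almost surely.

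Combining the two conclusions shows that $\mu$-almost every orbit is oscillatory. I anticipate no real difficulty here: the essential input is ergodicity of $(\cm,\mu)$ on an infinite-measure space, which is already in hand by the Main Theorem, and the remaining argument is the standard Poincar\'e-plus-ergodicity packaging carried out above. In particular no new dynamical information about $\cm$ is needed beyond what has been used to prove ergodicity, so the proof should be short and essentially soft.
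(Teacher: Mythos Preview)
Your argument is correct and supplies exactly the standard details the paper omits (the corollary is stated without proof): conservativity from Theorem~\ref{ThRec}, ergodicity from the Main Theorem, and $\mu(\csp)=\infty$ together force a.e.\ orbit to visit every fixed positive--finite-measure set infinitely often, which gives both $\liminf w_n<\infty$ and $\limsup w_n=\infty$. The only phrasing worth tightening is that Poincar\'e recurrence in an infinite-measure system is not automatic from ``$\mu(A_M)<\infty$'' alone---it genuinely requires the conservativity input from~\cite{fum}, which you already invoke for the $\liminf$ step and should flag just as explicitly in the $\limsup$ step.
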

\begin{figure}[!h]
  \includegraphics{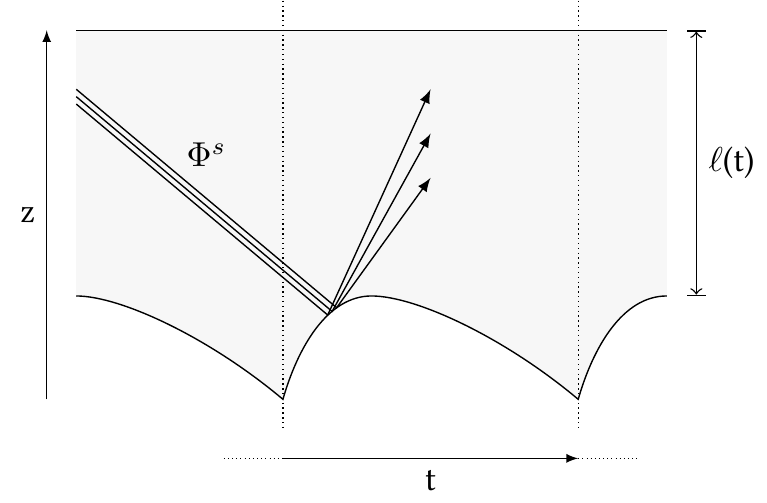}
  \caption{Dynamics of a dispersing Fermi--Ulam Model}
  \label{f_dispersing}
\end{figure}
The core observation made in this paper is that the dynamics of
dispersing Fermi--Ulam Models sports remarkable geometrical
similarities with the dynamics of planar dispersing
billiards\footnote{ This is one reason why we call such models
  dispersing. The other explanation in terms of geometric optics is
  given in Subsection~\ref{SSPSlopes}.}, although with an unusual
reflection law.  Moreover, our phase space $\csp$ is non-compact,
and the smooth invariant measure for $\cm$ is only $\sigma$-finite.
Ergodicity of systems with singularities, preserving a smooth infinite
measure is discussed for example in \cite{Simanyi, LenciI,
  LenciII}.  However, our system is significantly more complicated as
we explain below.

Recall first, that the study of ergodicity of uniformly hyperbolic
systems goes back to Hopf (see~\cite{Hopf}), who analyzed the case
where the stable and unstable foliations are smooth.  The Hopf
argument was extended to smooth uniformly hyperbolic systems\footnote{
  In such systems stable and unstable foliations are only H\"older
  continuous, see~\cite{Anosov}.} by Anosov and Anosov--Sinai
\cite{Anosov, AS}.  Hyperbolic systems with singularities are
discussed in \cite{Sinai, SC, KatStr, Pesin, LW}.  In order to use the
Hopf method (which is recalled in Section~\ref{ScErg}) one needs to
ensure that most points have long stable and unstable manifolds. A
classical way to guarantee this fact is to require that a small
neighborhood of the singularity set has small measure. In our case the
system is non-compact, and an arbitrary small neighborhood of the
singularity set has infinite measure, so a different method has to be
employed.  A more modern approach relies on the so called Growth
Lemma, developed in~\cite{BSC}, see~\cite{ChM} for a detailed
exposition.  The Growth Lemma implies that each unstable curve
intersect many long stable manifolds and vice versa.  The Growth Lemma
provides a significant improvement on the classical estimate on the
sizes of unstable manifolds and it has numerous applications to the
study of statistical properties, including mixing in finite and
infinite measure settings~\cite{C2, C3, ChZ, DN-Mech}, limit
theorems~\cite{C4, DSV}, and averaging~\cite{CD1, CD2, DN-Tube}.
However, in order to prove the Growth Lemma one needs to study the
structure of the singularity set in great detail.  It turns out that
the structure of singularities in dispersing Fermi-Ulam models is
quite complicated. Continuing the analogy with billiards, it
corresponds to billiards with infinite horizon billiards with corner
points.  The Growth Lemma for billiards with corners was established
only recently (see \cite{jmogy} for finite and \cite{BrownNandori} for
infinite horizon case).  Comparing to the aforementioned class of
billiards, an additional difficulty in our model is the lack of
hyperbolicity at infinity.  Indeed, when the velocity is large, the
travel time is short and the expansion deteriorates.  To address this
issue, an accelerated map was studied in~\cite{fum} (see
also~\cite{DF, D-icmp} for related results).  The main contribution of
this paper is to combine the analysis of the high energy regime
studied in~\cite{fum}, with the analysis of low energies (mostly based
on the ideas of~\cite{ChM} and the advances obtained in \cite{jmogy})
in order to prove a Growth Lemma valid for all energies.  
The Growth Lemma also
allows to prove absolute continuity of the stable and unstable
  laminations, which is a crucial ingredient in the proof of
  ergodicity via the Hopf argument.  Absolute continuity is proved in great
  generality for finite measure hyperbolic systems with singularities
  in~\cite{KatStr}, but their results cannot be applied to our infinite
  measure setting, so a different technique has to be employed.

We hope that the methods developed in this paper could be useful for
studying other hyperbolic systems preserving infinite measure (such
as, for example, the systems from \cite{LY, Zh}) and that our Growth
Lemma will be useful in studying more refined statistical properties
of dispersing Fermi--Ulam models.  

Since our analysis has many features in common with the study of
billiards, we will try, wherever possible, to employ the same notation
as in \cite{ChM}.  However, the arguments necessary for our system
require significant modifications in many places, which is,
  ultimately, the reason for the length of this paper. \\

The structure of the paper is as follows. In
Section~\ref{s_hyperbolicity} we describe basic properties of
dispersing Fermi--Ulam Models, including invariant cones and expansion
rates.  Section \ref{s_singularities} discusses the structure of the
singularities of the Poincar\'e map.  Section
\ref{sec:accel-poinc-map} is devoted to the high energy regime.  The
results of \cite{fum} are recalled and extended.  Section
\ref{ScDistortion} studies distortion of the collision map and obtains
regularity estimates on the images of unstable curves. The main
technical tool --the Growth Lemma-- is then proven in
Section~\ref{sec:expansion-estimate}. This lemma is used in
Section~\ref{sec:invariant-manifolds} to study the properties of
stable and unstable laminations which lead to the proof of Ergodicity
via the Hopf argument in Section~\ref{ScErg}.  Possible directions of
further research are discussed in Section~\ref{s_conclusions}.
Appendix~\ref{AppRegInf} contains the proof that for all but,
possibly, countably many values of $\Delta$, the corresponding model
is regular at infinity. The main issue is to show that certain
polynomials are not identically zero by estimating their values in a
perturbative regime.


\vskip3mm
{\bf A remark about our notation for constants.} We will use
the symbol $\Const$ to denote a constant whose value depends uniquely
on $\ell$ (which we assume to be fixed once and for all). The actual
value of $\Const$ can change from an occurrence to the next even on
the same line.
\section{Hyperbolicity}\label{s_hyperbolicity}
In this section we prove existence of invariant stable and unstable
cones for the dynamics and estimate the expansion of tangent vectors.
We begin with an essential property of Hamiltonian dynamics.
\subsection{Involution} \label{SSInvolution}%
Since Fermi--Ulam Models are mechanical systems, there exists a
time-{}reversing involution; on the other hand, since our system is
non-autonomous, we also need to change the time-dependence of the
Hamiltonian function, \ie we need to reverse the motion of the moving
wall.  For any $\ell$, let $\rell(\ct)=\ell(1-\ct)$ denote the
reversed motion, $\bar\exph$ the corresponding extended phase space
and $\rflow{s}:\bar\exph\to\bar\exph$ the flow map corresponding to
the reversed motion of the wall.  Define $\invo:\bR^3\to\bR^3$ so that
$\invo:(t,z,v)\mapsto(-t,z,-v)$.  Clearly, $\invo(\exph)=\bar\exph$;
moreover $\invo$ is an involution (\ie $\invo\circ\invo = 1$) which
anticommutes with the flow, \ie
\begin{align*}
  \invo\circ\flow{-s} = \rflow{s}\circ\invo.
\end{align*}
Notice a trivial but important fact, that $\ell''\geq\cK$ if and
only if ${\bar\ell}''\geq\cK$.

\subsection{Jacobi coordinates}
In billiards, in order to study of hyperbolic properties of the
system, it is convenient to change coordinates in $\exph$ to so-called
\emph{Jacobi coordinates} (see e.g.~\cite{Wojt}).  In our case this
step is not necessary, since, the coordinates $(z,v)$ turn out to be
the Jacobi coordinates of our system.  To fix ideas, let us write the
action of the flow map $\flow{s}$ on the extended phase space $\exph$
as \(\flow{s}:(t,z,v)\mapsto(t+s,z_s,v_s).\) If no collision occurs
between $t$ and $t+s$, then we have
\begin{align}\label{e_freeFlight}
  z_s&=z+s\cdot v& v_s&=v;
\end{align}
differentiating the above yields $\deh z_s = \deh z + s\deh v$ and
$\deh v_s = \deh v$, that is,
\begin{align*}
  \deh\flow{s}|_{(z,v)}=\matrixtt{1}{s}{0}{1}=:U_s.
\end{align*}
Assume now that between $t$ and $t+s$ there is exactly one collision
which occurs with the moving wall; the case of a collision with the
fixed wall is simpler and will be considered in due time as a special
case.  Let $\bar t$ be the time of the collision,
$\bar z=-\ell(\bar t \mod 1)$ be the position of the point mass at the
time of the collision, $\prv$ the pre-collisional velocity and $\pov$
the post-collisional velocity; finally let $\prs=\bar t-t$ and
$\pos=s-\prs=t+s-\bar t$ (see Figure~\ref{f_collision}).
\begin{figure}[!h]
  \def\svgwidth{7cm}
  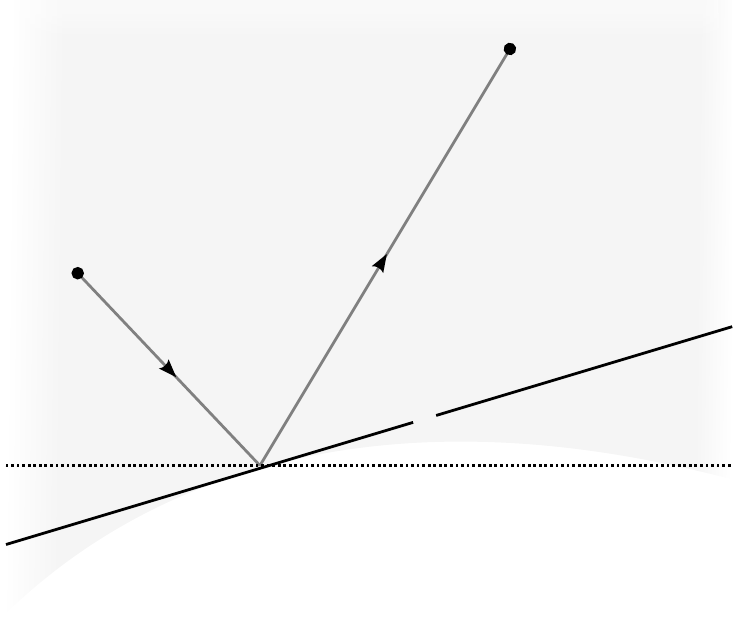
  \caption{Sketch of a collision with the moving wall.}
  \label{f_collision}
\end{figure}
Then:
\begin{align*}
  z &= \bar z - \prs\prv& z_s &= \bar z + \pos\pov\\
  v &= \prv = \slope - w & v_s &= \pov = \slope + w,
\end{align*}
where $\slope(\ct)=-\ell'(\ct)$ denotes the velocity of the moving
wall at time $\ct$ (\ie the slope of the boundary at the point of
collision).  Moreover, let $\curv(\ct)=\ell''(\ct)\geq\cK$ be the
opposite\footnote{ This choice of signs reflects the analogous choice
  which is usually made in the billiard literature.} of the
acceleration of the wall at time $\ct$; then:
\begin{align*}
  \deh\bar t &= \deh\ct &
                          \deh\bar z &= \slope \deh\ct &
                                                         \deh\slope &= -\curv \deh\ct.
\end{align*}
We thus obtain
\begin{subequations}\label{e_jacobiToCollision}
  \begin{align}
    \deh z &=(\slope-\prv)\deh\ct - \prs\deh \prv&
                                                   \deh z_s&=(\slope-\pov)\deh\ct + \pos\deh \pov\\
    \deh v&=-\curv\deh\ct - \deh w &
                                     \deh v_s&=-\curv\deh\ct + \deh w.
  \end{align}
\end{subequations}
We want to study what happens exactly during a collision, therefore we let $\prs,\pos\to
0^+$ and eliminate $\deh\ct$ and $\deh w$, obtaining:
\begin{align*}
  \deh \poz &= -\deh \prz & \deh \pov &= -\cpar\deh \prz- \deh \prv.
\end{align*}
Here $\DS \deh \prz = \lim_{\prs\to0^+}\deh z$ and
$\DS \deh \poz = \lim_{\pos\to0^+}\deh z_s,$ and we defined the
\emph{collision parameter} $\cpar=2\curv/w>0$ following the usual
notation and terminology of billiards.  From the above expression it is
clear that some special care is needed to deal with collisions with
small $w$. If $w=0$ we say that we have a \emph{grazing} collision. Such
collisions give rise to singularities, as will be explained in detail
later.  Notice that collisions with the fixed wall yield the same
formula with $\cpar=0$.

Define now:
\begin{align*}
  L_\cpar&:=\matrixtt{1}{0}{\cpar}{1}.
\end{align*}
Let us denote by $\fft$ the time elapsed before the next collision with
the moving wall (including grazing collisions). We can write the
differential $\deh\flow{\fft}|_{(z,v)}$ as the product
\begin{equation}\label{e_Differential}
  {\deh\flow{\fft}|}_{(z,v)}=  {(-1)}^{n_\fixw+1}
  L_{\cpar}U_{\fft}
\end{equation}
where $n_\fixw$ is the number of collisions with the fixed wall occurring between time $t$
and $t+\fft$, which can be either $0$ or $1$.
\subsection{Invariant cones}
\label{s_invariantCones}
(See \cite[Section~3.8]{ChM}).
Since we are
dealing with matrices acting on $\bR^2$, we will find convenient to deal
with slopes, rather than vectors; slopes in Jacobi coordinates will be
denoted by $\pslope=\delta v/\delta z$ and will be called
\emph{p-slopes}.  A (non-degenerate) matrix acts on slopes as a
(non-degenerate) M\"obius transformation. In particular, let
$J:\reals\setminus\{0\}\to\reals\setminus\{0\}$ denote the inversion
$x\mapsto x\inv$ and let $T_\alpha$ denote the translation
$x\mapsto x+\alpha$, for $\alpha\in\bR$.  Then $U_\fft$ induces the map
$J\circ T_\fft\circ J$, and $L_\cpar$ the map $T_\cpar$, that is:
\begin{align}\label{e_slopeEvolution}
  U_\fft  & :\pslope\mapsto{(\pslope\inv+\fft)}\inv & 
  L_\cpar & :\pslope\mapsto\pslope + \cpar
\end{align}
so we can rewrite \eqref{e_Differential} for p-slopes as follows:
\begin{align}\label{e_evolutionB}
  \pslope\mapsto [T_{\cpar}\circ J\circ T_{\fft}\circ J]\,\pslope.
\end{align}
The above formula immediately shows that the increasing cone
$\{\pslope>0\}$ is forward-invariant\footnote{ In fact $J$ clearly
  preserves such cone; moreover $\fft>0$ by definition and $\cpar>0$ by
  our hypotheses, which implies that also $T_\fft$ and $T_\cpar$
  preserve the increasing cone.}.  By the properties of the involution,
it is also clear that the decreasing cone \{$\pslope<0$\} is invariant
for the time-reversed flow.  It is not difficult to express the
invariant cones in collision coordinates. Namely let $\cslope$ denote
the slope of a vector in collision coordinates, that is
$\cslope = \delta w/\delta \ct$.  Then, using equations
\eqref{e_jacobiToCollision}, we obtain
\begin{equation}\label{e_slopeCollision}
  \cslope = - \curv - \prpslope w = \curv - \popslope w,
\end{equation}
where $\prpslope$ and $\popslope$ denote respectively the
pre-collisional and post-collisional p-slopes.  Thus the cone
$\{\cslope \leq -\cK\}$ (induced by $\prpslope\geq 0)$ is forward
invariant and, correspondingly, $\{\cslope\geq\cK\}$ (induced by
$\popslope\leq0$) is backward invariant.
\begin{mydef}\label{def:invariant-cones}
  Let the \emph{unstable} and \emph{stable cone field} be,
  respectively:
  \begin{align*}
    \coneu_x&=\{(\delta\ct,\delta w)\in\tang_x\csp\st -\infty < \delta w/\delta\ct \leq -\cK\}\\
    \cones_x&=\{(\delta\ct,\delta w)\in\tang_x\csp\st \cK\leq\delta w/\delta\ct < \infty\}.
  \end{align*}
  A curve is said to be an \emph{unstable curve}, or u-curve (resp.\ a
  \emph{stable curve} or s-curve) if the tangent vector at each point
  is contained in $\coneu$ (\resp $\cones$).  A curve (either stable
  or unstable) curve is said to be \emph{forward oriented} if the
  tangent vector at each point has a positive $\ct$-component.
\end{mydef}

  \begin{rmk}
    Observe that in our system unstable curves are decreasing and
    stable curves are increasing.  This, unfortunately, is
    the opposite of the situation that arises in billiards.
  \end{rmk}

Conventionally, we consider curves to be the embeddings an open
intervals, \ie without endpoints.  By our previous arguments,
$\cm_*\coneu_x\subset\coneu_{\cm x}$ and
$\cm\inv_*\cones_x\subset\cones_{\cm\inv x}$.  Moreover
by~\eqref{e_Differential} we gather that a forward-oriented unstable
(\resp stable) curve is sent by $\cm$ (\resp $\cm\inv$) to a
forward-oriented unstable (\resp stable) curve, if the ball has a
collision with the fixed wall between the two collisions with the
moving wall and to a backward-oriented unstable (\resp stable) curve
otherwise.

Further, define the two closed cones\footnote{ In the following
  definitions, with $\delta w/\delta r = \infty$ we allow vectors to be
  vertical.}
\begin{subequations}
  \begin{align}\label{e_definitionClosedCones}
    \conep_x&= \{(\delta\ct,\delta w)\in\tang_x\csp\st
              0 \leq \delta w/\delta\ct \leq \infty\}\\
    \conen_x&= \{(\delta\ct,\delta w)\in\tang_x\csp\st  -\infty \leq \delta
              w/\delta\ct \leq 0\}
  \end{align}
\end{subequations}
and observe that by~\eqref{e_slopeCollision} we have
\begin{equation}\label{PCSlopes}
  \popslope=\frac{\curv-\cslope}{w}, \quad
  \prpslope=\frac{-\curv-\cslope}{w}.
\end{equation}
From the above equations it follows easily that
\begin{align}\label{e_decreasingCone}
  \cm_*\conen_x&\subset \coneu_{\cm x}&
  \cm_*\inv\conep_x&\subset \cones_{\cm\inv x};
\end{align}
in particular, also in $(\ct,w$)-coordinates we have that the decreasing
cone field $\conen_x$ is forward invariant and the increasing cone field
$\conep_x$ is backward invariant.
\subsection{Geometrical interpretation of p-slopes} \label{SSPSlopes}
We have the following geometrical interpretation of
invariant cones in Jacobi coordinates: vectors in the tangent space
correspond to infinitesimal wave fronts; if $\pslope>0$ then the front
is dispersing, \ie{} nearby trajectories tend to get separated when
flowing in positive time.  Correspondingly $\pslope<0$ corresponds to
trajectories which would separate when flowing in negative time, \ie to
trajectories which are focusing in positive time. The case $\pslope=0$
corresponds to flat fronts, whereas the case $\pslope=\infty$
corresponds to a focused front (\ie all trajectories are emitted from
the same point).

\subsection{Expansion}\label{s_Jacobian}
Jacobi coordinates are convenient coordinates on the tangent space to
the collision space $\csp$.  By~\eqref{e_jacobiToCollision} it follows
that
\begin{align*}
  \vectt{\deh z}{\deh v}=\matrixtt{w}{0}{\curv}{-1}\vectt{\deh\ct}{\deh w}, \;\;
  \vectt{\deh\ct}{\deh w}=\matrixtt{w\inv}{0}{\curv w\inv}{-1}\vectt{\deh z}{\deh v}.
\end{align*}

For any $x\in\csp$, let $\fft(x)\ge 0$ denote the time elapsed until the
following (possibly grazing) collision with the moving wall.
Let us consider a vector of p-slope $\pslope^{+} = \pslope$ at $x$;
then~\eqref{e_freeFlight} implies that, during a flight of duration
$\fft$, we have $\deh z_\fft=(1+\fft\pslope)\deh z$ and
$\deh v_\fft = \deh v$. On the other hand, at a collision, we have
$|\deh \poz|=|\deh\prz|$.  Define the metric $|\deh z|$
for (non-vertical) tangent vectors (the so-called $p$-metric). Then we obtain
that, if the p-slope of a vector $v$ is $\pslope$, its expansion by the
collision map in the p-metric is given by
\begin{equation} \label{ExpPMetric}
  \frac{|\deh z_{\fft(x)}|}{|\deh z|}=1+\fft(x)\pslope.
\end{equation}
If $v_{n}\in\coneu_{x_{n}}$ (i.e.\ $\pslope>\cpar_{n}$), since
$\cpar_{n}$ is bounded below by $2\cK/w_{n}$ we obtain the lower bound
\begin{equation}\label{e_expansionLowerBound}
  \frac{|\deh z_{n+1}|}{|\deh z_n|}\geq 1+\frac{2\cK}{w_n}\fft_n
\end{equation}
where $\fft_n=\fft(x_n)$.  Observe that~\eqref{e_expansionLowerBound}
does not ensure any uniformity for the expansion of unstable vectors in
the $p$-metric.
In fact for large relative velocities $\fft\sim w\inv$. Additionally,
$\fft$ can be arbitrarily small also for small relative velocities,
because of the possibility of rapid subsequent collisions with the
moving wall.

We will see later that both these inconveniences can be
circumvented by defining an adapted metric and inducing on a suitable
subset of the collision space (see Proposition~\ref{p_propertiesAdm}).
However, before doing so, it is necessary to study singularities of
our system.

\section{Singularities}\label{s_singularities}
The existence of invariant cones places Fermi--Ulam Models into the
class of {\em hyperbolic systems with singularities}. This class also
contains piecewise expanding maps, dispersing billiards, and bouncing
ball systems (see~\cite{ChM, LW, T2, W-particles} and references
therein).  In hyperbolic maps with singularities, there is a fundamental
competition between expansion of vectors inside the unstable cone and
fracturing caused by singularities.  If fragmentation
prevails, such maps can indeed have poor ergodic properties (see
e.g.~\cite{T1}).  Our goal is to show that this does not happen for
(most) dispersing Fermi--Ulam Models; this will be accomplished with
the proof of the Growth Lemma in Section~\ref{sec:growth-lemma}.

In this section, we collect preliminary information about the geometry
of singularities\footnote{The reader familiar with dynamics of
  dispersing billiards will recognize certain distinctive features of
  the geometry of singularities (see e.g.~\cite[Section~2.10]{ChM}).}
of the collision map $\cm$.
\begin{rmk}
  In the following, if $X\subset\csp$, we will use the notation
  $\intr X$ ({resp.\ $\clo X$, $\partial X$}) to denote the
  topological interior (resp.\ closure, boundary) of the set $X$
  \emph{with respect to the topology on $\reals^2$} (and not with
  respect to the relative topology on $\csp$).
\end{rmk}

\subsection{Local structure}\label{ss_singularitiesLocalStructure}
Let us recall the definition of the collision map: $\cm(\ct,w) = (\ct',w')$
means that a point mass that leaves the moving wall at time $\ct$ with
velocity $w$ relative to the moving wall will have its next collision
with the moving wall at time given $(\bmod \; 1)$ by $\ct'$ and will
leave the moving wall with relative velocity $w'$.  Recall moreover that
$\fft:\csp\to\reals_{\ge0}$ is the (lower semi-continuous) function
which associates to $(\ct,w)$ the time elapsed before the next (possibly
grazing) collision with the moving wall.  If one considers the
\emph{preceding} collision rather than the following one in the above
discussion, we obtain the definition of the inverse map $\cm\inv$.

We define the \emph{singularity set} $\sing0$ to be the
boundary $\partial\csp$, i.e.:
\begin{align*}
  \sing{0} = \partial\csp =\{w = 0\}\cup \{\ct \in \{0,1\}\}.
\end{align*}
$\sing{0}$ is the set of points in the collision space for which the
point mass either just underwent a \emph{grazing} collision (when
$w = 0$), or it just left the moving wall at an instant in which the
motion of the wall is not smooth (when $r\in\{0,1\}$).

Let $x = (\ct,w)\in\csp$; observe that $\fft(x)$ is defined for all
$x\in\csp$.  There are three possibilities: the trajectory leaving the
moving wall at time $\ct$ with relative velocity $w$ may have its next
collision with the moving wall

\begin{enumerate}
\item \label{i_regular} with nonzero relative velocity at an instant
  when the motion of the wall is smooth.  In this case $\cm$ is
  well-defined on $x$ and $\cm(x)\in\intr\csp = \csp\setminus\sing{0}$.
\item \label{i_grazing} with \emph{zero} relative velocity at an
  instant when the motion of the wall is smooth.  In this case $\cm$
  is well-defined, but might\footnote{In fact it will be always be
    discontinuous, except in the case described by
    Lemma~\ref{l_continuityAtxc} } be discontinuous at $x$ (and
  it turns out that $\DS \limsup_{x'\to x}|d\cm| =\infty$).  We have
  \begin{align*}
    \cm(x)\in\{r\in(0,1),\ w = 0\}\subset\sing0;
  \end{align*}
moreover $\fft$ is also
  discontinuous at $x$.
\item \label{i_corner} when the motion of the wall is \emph{not smooth};
  $\fft$ is continuous at $x$, but $\cm(x)$ is \emph{not} defined
  (because the post-collisional velocity is undefined).
\end{enumerate}
We can then define
\begin{align*}
  \singForward{} =\sing0 \cup \{x\in\csp\st \text{items~\ref{i_grazing} and~\ref{i_corner} take place}\}.
\end{align*}
The above also applies to the classification of the \emph{previous}
collision, which leads to the analogous definition of
$\singBackward{}$.  Observe that $\cm$ (\resp $\cm\inv$) is
well-defined and smooth on $x$ if and only if
$x\in\csp\setminus\singForward{}$ (\resp
$x\in\csp\setminus\singBackward{}$).  We let $\sing1 = \singForward{}$
(\resp $\sing{-1} = \singBackward{}$) and for $n > 0$ we define, by
induction:
\begin{align*}
  \sing{n+1}  & =\sing{n}\cup\cm\inv(\sing{n}\setminus\singBackward{}) & 
  \sing{-n-1} & =\sing{-n}\cup\cm(\sing{-n}\setminus\singForward{}).
\end{align*}
Finally, let $\sing{+\infty} = \bigcup_{n\ge0}\sing n$ and
$\sing{-\infty} = \bigcup_{n\le0}\sing n$.  Notice that, for any
$k\in\bZ$, the map $\cm^{k}$ is well-defined and smooth on $x$ if and
only if $x\in\csp\setminus\sing{k}$.

\begin{lem}[Local structure of singularities]\label{l_localSing}
  For $k > 0$ the set $\sing{k}\setminus\sing{0}$ (\resp
  $\sing{-k}\setminus\sing0$) is a union of smooth \emph{stable}
  (\resp \emph{unstable}) curves.  In particular $\sing{k}$ (\resp
  $\sing{-k}$) is a union of smooth curves tangent\footnote{ Here and
    below we say that a curve is tangent to a cone field if the
    tangent to the curve belongs to the cone at every point.} to the
  cone field $\conep$ (\resp $\conen$).
\end{lem}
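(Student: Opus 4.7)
The plan is induction on $k$ for the forward statement; the backward statement then follows either by applying the identical argument to $\cm^{-1}$ in place of $\cm$, or equivalently by conjugating with the involution $\invo$ of Section~\ref{SSInvolution} which interchanges the two singularity trees. Once we know $\sing{k}\setminus\sing{0}$ is a union of stable curves, the final assertion follows from $\cones\subset\conep$ together with the fact that $\sing{0}$ itself consists of horizontal ($\{w=0\}$) and vertical ($\{\ct\in\{0,1\}\}$) segments whose tangent vectors lie on the boundary of $\conep$.

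\textbf{Base case $k=1$.} Unwinding cases~\ref{i_grazing} and~\ref{i_corner}, the set $\sing{1}\setminus\sing{0}$ is the union of the set of points whose next collision is grazing, which equals $\cm^{-1}(\{w=0\})\setminus\sing{0}$, and the set of points whose next collision hits a corner, namely $\{x\in\csp : \bar t(x)\in\bZ\}\setminus\sing{0}$, which is the $\cm$-preimage of the vertical segments $\{\ct\in\{0,1\}\}$ once $\cm$ is extended so as to record the time of the next collision even when the outgoing velocity is undefined. In either description the target is a smooth curve whose tangent vectors lie in the closed cone $\conep$. On every connected component of $\csp\setminus\singForward{}$ the map $\cm$ is a $C^4$ diffeomorphism (since $\ell\in C^5$ and $\mu$ is $\cm$-invariant), so by the implicit function theorem the preimages are smooth curves; by~\eqref{e_decreasingCone} in the form $\cm_*\inv\conep\subset\cones$, each preimage is a smooth stable curve.

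\textbf{Induction and main obstacle.} Assuming the conclusion for $\sing{k}$, the recursive definition gives
\begin{align*}
  \sing{k+1}\setminus\sing{0} = (\sing{k}\setminus\sing{0})\cup\bigl(\cm^{-1}(\sing{k}\setminus\singBackward{})\setminus\sing{0}\bigr),
\end{align*}
so it suffices to handle the second term, which we split according to $\sing{k}=(\sing{k}\setminus\sing{0})\cup\sing{0}$. Pulling back the stable curves from $\sing{k}\setminus\sing{0}$ produces stable curves via $\cm_*\inv\cones\subset\cm_*\inv\conep\subset\cones$, and the $\conep$-tangent pieces of $\sing{0}$ are handled exactly as in the base case. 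The principal technical annoyance is that $\cm$ has unbounded derivative at grazing collisions and near $\{w=0\}$, which could a priori destroy smoothness of the preimage curves; this does not threaten the present \emph{local} structural statement because each step of the induction is carried out on a connected component of the open complement of the accumulated singularity set, where $\cm$ and $\cm^{-1}$ are bona fide $C^4$ diffeomorphisms, and the claim is only that $\sing{k}$ is a countable union of smooth pieces rather than a single globally smooth curve. Quantitative control on these pieces (lengths, uniform regularity near grazing points, and the combinatorics of how they accumulate) is postponed to later sections, where it enters the proof of the Growth Lemma.
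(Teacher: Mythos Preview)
Your overall strategy---reduce to $k=1$ via cone invariance, and pass between forward and backward singularities by the involution---is the same as the paper's. The induction step and the corner branch of the base case are fine. The gap is at the \emph{grazing} branch of $\sing{1}$.

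You invoke ``$\cm$ is a $C^4$ diffeomorphism on each component of $\csp\setminus\singForward{}$'' together with~\eqref{e_decreasingCone} in the form $\cm_*^{-1}\conep\subset\cones$. Neither tool is available on the grazing curve itself: there $\cm$ sends $x$ to a point with $w'=0$, and from the invariant measure one has $\det d\cm = w/w'\to\infty$ (the entries of $d\cm$ in collision coordinates blow up like $1/w'$), so $\cm$ does \emph{not} extend to this boundary as a diffeomorphism, the implicit function theorem does not apply to $w'\circ\cm$, and the $p$-slope conversion \eqref{PCSlopes} underlying~\eqref{e_decreasingCone} is singular at $w'=0$. Your dismissal (``each step is carried out on a connected component of the open complement, where $\cm$ and $\cm^{-1}$ are bona fide $C^4$ diffeomorphisms'') misses the point: $\sing{1}$ is exactly the \emph{boundary} of those components, and smoothness on the open interior says nothing about the boundary curve.

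The paper closes this gap by a direct wave-front computation (the Sub-lemma after the statement, argued for $\sing{-1}$): a curve in $\singBackward{}$ consists of trajectories emitted either from the corner or from a grazing collision, and in both cases the outgoing $p$-slope at the previous step is $\pslope^+_{-1}=\infty$ (a focused front). One forward step of~\eqref{e_evolutionB} then gives $\prpslope_0=\cpar_0+1/\tau_{-1}>0$, hence $\cslope=-\curv-w/\tau_{-1}\le-\cK$, placing the tangent in $\coneu$. This computation bypasses the singular differential entirely. Equivalently, one may observe that $\cm^{-1}$ (rather than $\cm$) extends smoothly up to $\{w'=0\}$, with $d\cm^{-1}$ of rank one there and image exactly the stable direction just computed; either viewpoint repairs your argument, but some such direct computation is required---the abstract cone inclusion alone does not suffice at $w'=0$.
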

We will prove the above statement for $\sing{-k}$.  The analogues for
$\sing{k}$ can be obtained using the involution.
Moreover, since the unstable cone is $\cm$-invariant, it suffices to
prove the statement for $\sing{-1} = \singBackward{}$.
\begin{sublem}%
  Let $x\in\singBackward{}\setminus\sing0$, then the p-slope of
  $\singBackward{}$ at $x=(\ct,w)$ is given by
  \begin{subequations}\label{e_slopeSingularity1}
    \begin{align}
      \pslope &= \cpar_0(x)+1/\fft_{-1}(x)>0.\label{e_pslopeSingularity1}
                \intertext{ Equivalently, the slope in collision coordinates is given by }
                \cslope &= -\curv(\ct) - w/\fft_{-1}(x)\leq -\cK.\label{e_cslopeSingularity1}
    \end{align}
  \end{subequations}
\end{sublem}
\begin{proof}
  Observe that each curve in $\singBackward{}$ is formed by trajectories
  for which either $\ct_{-1}=0$, or $w_{-1}=0$.  In the first case, such
  trajectories draw a wave front which is emitted from a single point,
  therefore it is immediate that $\pslope_{-1}^+=\infty$.  We claim that
  also in the second case $\pslope_{-1}^+=\infty$, which then
  immediately implies equations~\eqref{e_slopeSingularity1}
  using~\eqref{e_evolutionB}.  In fact consider two nearby trajectories
  which leave the wall with zero relative velocity at times $\ct$ and
  $\ct'=\ct+\delta\ct$.  Let $v$ and $v'=v+\delta v$ be the
  corresponding outgoing velocities; observe that
  $\delta v\sim\curv\delta \ct$.  On the other hand, the second
  trajectory at time $\ct$ will have height $z'=z+\delta z$,
  where 
  $\delta z\sim\curv\delta\ct^2$; we conclude that
  $\displaystyle \pslope^+_{-1}=\lim_{\delta\ct\to0}\delta v/\delta
  z=\infty$.
\end{proof}
\begin{rmk}
  The corresponding formulae for the slopes of $\singForward{}$ at any
  $x=(\ct,w)\in\singForward{}\setminus\sing0$ are
  \begin{subequations}\label{e_slopeSingularityForward}
    \begin{align}
      \pslope &= -1/\fft_{0}(x) < 0\\
      \cslope &= \curv(\ct) + w/\fft_0(x) > \cK.
    \end{align}
  \end{subequations}
\end{rmk}

\subsection{Global structure.}\label{s_globalSingularities}%
We now begin the description of the global structure\footnote{ The
  structure depends on our simplifying hypotheses on the motion of the
  wall. If $\ell$ had more than one break point, the set $\sing{1}$
  would have a much more complicated structure, although its key
  features will be similar.  Moreover, the structure of $\sing{k}$ for
  $k>1$ would also be essentially similar in the case we have multiple
  breakpoints.} of the singularity sets $\singBoth$.  Let us first
introduce some convenient notation.

Let $\ellu=\max\ell=\ell(0)=\ell(1)$.  Since $\ell$ is strictly
convex, it has a unique critical point (a minimum), which we denote by
$\ctc\in(0,1)$.  Set $\elll=\min\ell=\ell(\ctc)$ and $\xc=(\ctc,0)$.
Recall that $\slope(\ct)=-\ell'(\ct)$ and define
\begin{align*}
  \slopel=\min\slope=\lim_{\ct\to1}\slope(\ct)<0,\; \slopeu=\max\slope=\lim_{\ct\to0}\slope(\ct)>0,
  \;
  \slopeb=\slopeu-\slopel>0.
\end{align*}
We remark that in this new notation, we can write~\eqref{DefDelta} as
\begin{align*}
  \Delta &= -\ellu \slopeb\int_0^1 \ell^{-2}(t) dt.
\end{align*}

Observe that the point $\xc$ is a fixed point for the dynamics: it
corresponds to the configuration in which the point mass stays put at
distance $\elll$ from the fixed wall, and the moving wall hits it with
speed $0$ at times $\ctc+\integers$.  Moreover, points arbitrarily close
to $\xc$ may have arbitrarily long free flight times \ie
\begin{align*}
  \limsup_{x\to\xc}\fft(x) = \infty.
\end{align*}

Next, we identify a special region of the phase space. It is clear
that, if the relative velocity of the point mass at a
collision with the moving wall is sufficiently large, then the particle
will necessarily have to bounce off the fixed wall before colliding
again with the moving wall. On the other hand, if the velocity at a
collision with the moving wall is comparable with the velocity of the
wall itself, then the particle could have two (or a priori more)
consecutive collisions with the moving wall before hitting the fixed
wall.\footnote{ In the case of billiards this corresponds to so-called
  \emph{corner series}.}
\begin{mydef}
  A collision with the moving wall is called a \emph{recollision} if it
  is immediately preceded by another collision with the moving wall; it
  is called a \emph{simple collision} otherwise.  We denote with
  $\reco\subset\csp$ the open set of points corresponding to
  regular\footnote{ That is, we do not take into account points that
    undergo a grazing collision on either the recollision or on the
    previous collision; moreover we do not take into account collisions
    with the singular point $\xc$. } recollisions and let
  $\backReco=\cm\inv\reco$.
\end{mydef}
The following lemma provides a description of the sets $\reco$ and $\backReco$.
\begin{lem}\label{l_forwardReco}
  Let $\singReco^- = \cm([\ctc,1]\times\{0\})$ and
  $\singReco^+ = \cm\inv([0,\ctc]\times\{0\})$.  Then:
  \begin{itemize}
  \item[(a1)] $\singReco^-$ is a connected u-curve that leaves
    $(0,\slopeb)$ with slope $-\infty$ and reaches $\xc$ with slope
    $-\curv({\ctc})$;
  \item[(a2)] $\reco$ is the interior of the curvilinear triangle whose sides
    are the (horizontal) segment $[0,\ctc]\times\{0\}$, the (vertical)
    segment $\{0\}\times[0,\slopeb]$ and $\singReco^-$.
  \item[(b1)] $\singReco^+$ is a connected s-curve that leaves $\xc$ with
    slope $\curv(\ctc)$ and reaches $(1,\slopeb)$ with slope $\infty;$
  \item[(b2)] $\backReco$ is the interior of the curvilinear triangle whose
    sides are the (horizontal) segment $[\ctc,1]\times\{0\}$, the
    (vertical) segment $\{1\}\times[0,\slopeb]$ and $\singReco^+$.
  \end{itemize}
\end{lem}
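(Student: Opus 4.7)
By the involution $\invo$ of Section~\ref{SSInvolution}, which exchanges forward and backward dynamics and thus swaps $\reco \leftrightarrow \backReco$ and $\singReco^- \leftrightarrow \singReco^+$, parts (b1)--(b2) follow from (a1)--(a2), so I focus on the latter.

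\emph{Proof of (a1).} I parametrize $\singReco^- = \cm([\ctc, 1] \times \{0\})$ by the previous grazing time $\ct_{-1} \in [\ctc, 1]$. Since the wall motion is smooth on $(\ctc, 1)$, $\cm$ is continuous on this interval, yielding a connected curve. For the endpoints: at $\ct_{-1} = \ctc$, the grazing trajectory is the fixed point, hence $\cm(\ctc, 0) = \xc$. As $\ct_{-1} \to 1^-$, the particle's absolute velocity tends to $\slopel$ while its position tends to $-\ellu$; at $\ct \equiv 0^+$ the wall's velocity jumps from $\slopel$ to $\slopeu$, triggering an instantaneous collision whose post-collisional relative velocity is $\slopeu - \slopel = \slopeb$, so the image tends to $(0, \slopeb)$. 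Since $\singReco^- \subset \sing{-1}$, the sub-lemma gives the slope in collision coordinates $\cslope = -\curv(\ct) - w/\fft_{-1} \leq -\cK$, so the curve is a u-curve. The endpoint slopes follow by taking limits: at $\xc$, $w \to 0$ and $\fft_{-1} \to 1$ (the fixed-point period), so $\cslope \to -\curv(\ctc)$; at $(0, \slopeb)$, $w \to \slopeb$ and $\fft_{-1} \to 0$, so $\cslope \to -\infty$.

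\emph{Proof of (a2).} I describe $\backReco = \cm\inv(\reco)$ directly. A point $(\ct_{-1}, w_{-1})$ lies in $\backReco$ iff its subsequent free-flight trajectory meets the moving wall regularly without first touching the fixed wall $z = 0$. Using the dispersing property together with a careful analysis of the wall's velocity jump at $\ct \equiv 0$, I verify that $\backReco$ is the interior of the curvilinear triangle with vertices $\xc$, $(1, 0)$, $(1, \slopeb)$ and sides $[\ctc, 1] \times \{0\}$, $\{1\} \times [0, \slopeb]$, and $\singReco^+$ (a stable curve, whose properties are the involution-analog of (a1)). Applying $\cm$ to each side of $\backReco$ yields the corresponding side of $\reco$: $\cm([\ctc, 1]\times\{0\}) = \singReco^-$ by (a1); $\cm(\singReco^+) = [0, \ctc]\times\{0\}$ by the definition of $\singReco^+$; and $\cm(\{1\}\times[0, \slopeb]) = \{0\}\times[0, \slopeb]$ by the same velocity-jump computation as in (a1) (for $w_{-1} \in [0, \slopeb]$, the image is $(0, \slopeb - w_{-1})$). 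By continuity of $\cm$ on $\backReco$, $\reco$ equals the interior of the claimed triangle.

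The main obstacle is the description of the right edge $\{1\}\times[0, \slopeb]$ of $\backReco$: naively one might guess the right boundary to be the curve $w_{-1} = -\slope(\ct_{-1})$ (where the particle's absolute velocity vanishes), but the wall's velocity jump at $\ct \equiv 0$ produces instantaneous collisions not present in smooth settings, and these populate precisely $\{1\}\times[0, \slopeb]$. Verifying this, and ruling out other boundary components, requires a careful trajectory analysis using the dispersing hypothesis $\ell'' \geq \cK$, which provides the convexity and monotonicity of the flight-time and impact-position functions.
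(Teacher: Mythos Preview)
Your (a1) is essentially the paper's argument, except that you assert continuity of $\cm$ on $[\ctc,1]\times\{0\}$ from smoothness of $\ell$ on $(\ctc,1)$. That concerns only the departure point, not the landing point: you still need that the next collision is neither grazing nor at the corner $r'=0$. The paper secures this by noting that the grazing trajectory from $(r,0)$, $r\in[\ctc,1]$, is trapped in the curvilinear triangle $U$ in $(t,z)$-space bounded by the two wall arcs (over $[\ctc,1]$ and $[1,\ctc+1]$) and the horizontal segment at height $-\elll$; convexity of $\ell$ gives this immediately.

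Your (a2) has a genuine gap. You reduce (b) to (a) via the involution, but then argue for (a2) by first characterizing $\backReco$, which is precisely statement (b2); as written this is circular. Even setting the logical structure aside, the substance is missing: you write that you ``verify'' that $\backReco$ is the stated triangle using the dispersing hypothesis, and then in your final paragraph concede that this verification is the ``main obstacle'' and leave it as a sketch (``requires a careful trajectory analysis''). The paper proves (a2) directly in $(t,z)$-space: a tangent-line convexity argument shows that any incoming trajectory to $(\ct,w)$ with $\ct\in(\ctc,1]$ must have previously hit the fixed wall, so $\reco\subset[0,\ctc]\times\reals^+$; then for each $r\in[0,\ctc]$ one uses the sub-region $V\subset U$ cut off by the grazing trajectory $\Gamma$ landing at $(r,w^*)\in\singReco^-$ to see that the backward trajectory from $(r,w)$ with $w<w^*$ stays in $V$ (hence recollision), while for $w>w^*$ it lies strictly above $\Gamma$ and cannot meet the wall arc over $[\ctc,1]$ (no recollision). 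This trajectory-trapping argument is exactly what your sketch gestures at but does not supply.
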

\begin{figure}[!h]
  \def\svgwidth{4.5cm}
  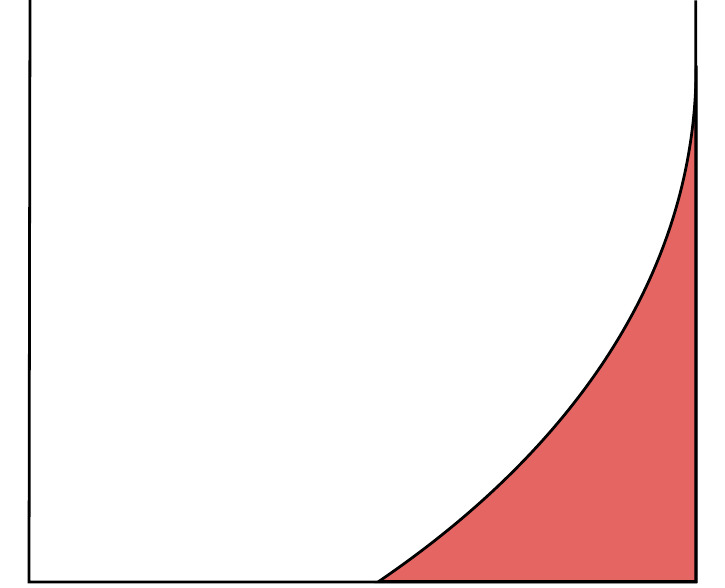
  \caption{The recollision region $\backReco$.}
  \label{f_recollisionRegion}
\end{figure}
\begin{proof}
  We prove part (a). Part (b) follows from part (a) and the properties
  of the involution.  Let $U$ denote the curvilinear triangle in
  $(t,z)$-space bounded by $\Gamma_1$--the wall trajectory for
  $t\in [r_C,1]$, $\Gamma_2$--the wall trajectory for $t\in [1, r_C+1]$
  and $\Gamma_3$--the horizontal segment joining the highest points of
  those trajectories. By our convexity assumption on $\ell$ and
  elementary geometrical considerations, any trajectory $x = (r,0)$ with
  $r\in[\ctc,1]$ stays inside $U$ hence its next collision necessarily
  occurs on the moving wall.  This in turn implies that the u-curve
  $\singReco^- = \cm([\ctc,1]\times \{0\})$ is connected (since it
  cannot be cut by singularities).  It is then trivial to check that
  $\cm(1,0)=(0,\slopeb)$, which implies that $\singReco^-$
  connects $(0,\slopeb)$ with the fixed point $\xc$.  Our statements
  about the tangent slope at $(0,\slopeb)$ and $\xc$ immediately follow
  from~\eqref{e_cslopeSingularity1} observing that
  \begin{align*}
    \lim_{\ct\to 1}\fft((\ct,0)) &= 0&
    \lim_{\ct\to\ctc^+}\fft((\ct,0)) &= 1.
  \end{align*}

  It remains to prove (a2). First, consider a collision that occurs at a
  point $(\ct,w)$ with $\ct \in (\ctc,1]$: the incoming trajectory lies
  above the tangent to $\ell$ at $\ct$, which, in turn, lies above the
  graph of $\ell$ (for $\ct' < \ct$) by convexity of $\ell$.  In
  particular it is above the graph of $\ell$ at time $\ctc$, that is, it
  gets above the maximal height of the wall and its velocity at time
  $\ctc$ is negative.  Hence, necessarily, the preceding collision will
  occur with the fixed wall, proving that
  $\reco\subset[0,\ctc]\times\reals^+$.  It remains to check that any
  point in $[0,\ctc]\times\reals^+$ lying below $\singReco^-$
  corresponds to a recollision, whereas any point lying above
  $\singReco^-$ corresponds to a single collision.  So pick
  $r\in [0, \ctc]$.  By (a1) there is $\ct^*\in [\ctc, 1]$ such that
  $\cF(\ct^*,0)=(r, w^*)\in \singReco^-$.  Let $\Gamma$ be the trajectory
  from $(\ct^*,0)$ to $(r, w^*)$ and $V\subset U$ be the region bounded
  by $\Gamma_1, \Gamma_2,$ and $\Gamma$.  There are two cases.
  \begin{itemize}
  \item[(i)] $w\leq w^*$.  Then the backward trajectory of $(r, w)$ is
    contained in $V$ and so it crosses $\Gamma_1$ before colliding
    with the fixed wall.
  \item[(ii)] $w\geq w^*$.  Then the backward trajectory of $(r, w)$ is
    above $\Gamma$ so if it crossed $\Gamma_1$ this would happen at
    some time $\ct'<\ct^*$.  However by convexity, any orbit starting
    at time $\ct'$ lies strictly above $\Gamma$ so it can not hit the
    moving wall at time $r$.
  \end{itemize}
  This concludes the proof.
\end{proof}
\begin{rmk}\label{r_thereCanBeOnlyOne}
  The above lemma implies that $\clo\backReco\cap\clo\reco=\{\xc\}$, \ie
  the number of consecutive collisions with the moving wall is at most
  $2$ (except for the singular point $\xc$, which is a fixed point of
  the dynamics).
\end{rmk}
\begin{rmk}\label{r_boundFreeFlight}%
  Let $x_0=(r_0,w_0)$; if $x_0\not\in\clo\backReco$, then $\fft(x_0)$
  satisfies the bound:
  \begin{align}\label{e_boundFreeFlight}
    \frac{2\elll}{w_1+\slope(\ct_1)} = \frac{2\elll}{w_0-\slope(\ct_0)} \leq \fft(x_0) \leq
    \frac{2\ellu}{w_0-\slope(\ct_0)} = \frac{2\ellu}{w_1+\slope(\ct_1)}.
  \end{align}
  \eqref{e_boundFreeFlight} follows since $w_0-h(r_0)=w_1+h(r_1)$ is
  the post-collisional absolute velocity of the point mass and
  $\elll\le\ell(\ct) < \ellu$.  Observe moreover that $w_0-h(r_0)>0$,
  otherwise the next collision would certainly be a recollision, since
  the absolute velocity would be non-positive.  On the other hand, if
  $x\in\backReco$, $\fft(x)$ may be arbitrarily small.
\end{rmk}%
We record in the following lemma an observation which will be useful
on several occasions.
\begin{lem}\label{lem:geometry}
  If $x = (\ct,w)$ is so that either $\fft(x) \geq 2$ or
  $\fft_{-1}(x)\geq 2$ then:
  \begin{align*}
    x \in \{w < \Const \tau^{-1/2},\; |\ct-\ctc|< \Const
    \tau^{-1/2} \}.
  \end{align*}
\end{lem}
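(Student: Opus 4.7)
The strategy is to exploit the dispersing hypothesis to show that a long free flight time forces the configuration to sit in a parabolic neighborhood of the fixed point $\xc$. I focus on the case $\fft(x) \geq 2$; the case $\fft_{-1}(x) \geq 2$ then follows by the time-reversal symmetry provided by the involution of Section~\ref{SSInvolution}.

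Write $x = (\ct_0, w_0)$ and let $V$ denote the post-collisional absolute velocity of the particle, which is constant during the free flight up to one possible sign reversal at a bounce off the fixed wall (by Remark~\ref{r_thereCanBeOnlyOne}). The first step is to establish the bound $|V| \leq 2\ellu/\fft$: the total arc length of the trajectory between two consecutive moving-wall collisions equals $|V|\fft$, and a direct geometric argument bounds this arc length by $\ell(\ct_0) + \ell(\ct_1) \leq 2\ellu$ in the simple-collision case and by $|\ell(\ct_0) - \ell(\ct_1)| \leq \ellu$ in the recollision case, where $\ct_1\equiv\ct_0+\fft\pmod 1$. Second, since $\fft \geq 2$, there exists $t^* \in (\ct_0, \ct_0 + 1]$ with $t^* \equiv \ctc \pmod 1$, at which the moving wall attains its highest position $-\elll$. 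For $\fft$ sufficiently large, $|V|$ is small enough that the particle is still in its ascending phase at $t^*$ (in particular $V > 0$, since otherwise $z(t^*) < -\ell(\ct_0) \leq -\elll$ would violate the no-collision condition). Hence $z(t^*) = -\ell(\ct_0) + V(t^* - \ct_0)$, and imposing $z(t^*) > -\elll$ yields
\[
\ell(\ct_0) - \elll < V(t^* - \ct_0) \leq V \leq 2\ellu/\fft.
\]
The dispersing hypothesis gives $\ell(\ct_0) - \elll \geq \tfrac{\cK}{2}(\ct_0 - \ctc)^2$, which combined with the above produces $|\ct_0 - \ctc| \leq \Const \fft^{-1/2}$.

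For the velocity bound, I would use the relation $V = \slope(\ct_0) + w_0$ for the post-collisional absolute velocity (see Figure~\ref{f_collision}) together with $\slope(\ctc) = -\ell'(\ctc) = 0$ and the smoothness of $\slope$; these imply $|\slope(\ct_0)| \leq \Const|\ct_0 - \ctc| \leq \Const\fft^{-1/2}$, so $w_0 \leq V + |\slope(\ct_0)| \leq \Const\fft^{-1/2}$, completing the proof. The principal obstacle is to treat the simple-collision and recollision subcases uniformly (both in fact satisfy $V\fft \leq 2\ellu$) and to verify the positivity of $V$ in the large-$\fft$ regime, both of which follow from the geometry of the trajectory as sketched above.
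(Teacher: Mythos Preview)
Your proof is correct and follows essentially the same approach as the paper: bound the absolute post-collisional velocity by $O(\fft^{-1})$, deduce $\ell(\ct_0)-\elll=O(\fft^{-1})$, use convexity to get $|\ct_0-\ctc|=O(\fft^{-1/2})$, and finally combine $\slope(\ctc)=0$ with smoothness of $\slope$ to bound $w_0$. The paper's version is simply more terse: it cites Remark~\ref{r_boundFreeFlight} directly for the velocity bound and phrases the height estimate as ``$\ell(\ct)-\elll=\cO(1/\tau)$, since otherwise $(\ct,w)$ would be in the recollision region,'' which is exactly the geometric observation you spell out via the position at time $t^*$. One minor simplification: your recollision subcase is unnecessary, since $\fft(x)\geq 2$ already forces $x\notin\clo\backReco$ (a recollision has $\fft<1$), so you are automatically in the simple-collision regime with $V>0$.
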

\begin{proof}
  It suffices to prove the result under the assumption
  $\tau(x)\geq 2$, since the other case follows by applying the
  involution.  Since $\tau(x)\ge 2$, in particular $x\nin\cl\reco$;
  hence by~\eqref{e_boundFreeFlight} we gather
  \begin{equation} \label{AVDNu}
    0<w-h(\ct) \le 2\ellu/\tau.
  \end{equation}
  We also have $\ell(\ct)-\elll = \cO(1/\tau)$, since otherwise
  $(\ct, w)$ would be in the recollision region. Since $\ell$ has a
  critical point at $\ctc$, it follows that
  $|\ct-\ctc|\leq \frac{\bar C}{\sqrt{\tau}}$ giving the second
  inclusion.  It follows that
  $|h(\ct)|\leq \frac{\hat C}{\sqrt{\tau}}$.  Now the first inclusion
  follows from~\eqref{AVDNu}.
\end{proof}

Define $\csps^- = \clo(\csp\setminus\clo\reco)$ and
$\csps^+ = \clo(\csp\setminus\clo\backReco)$.  The curve $\singReco^-$
(\resp $\singReco^+$) is one among the unstable (\resp stable) disjoint
curves whose union form the set $\singBackward{}$ (\resp
$\singForward{}$); the other curves will cut $\csps^-$ (\resp $\csps^+$)
in countably many connected components, as we now describe\footnote{The
  structure of singularities for dispersing Fermi--Ulam Models is remarkably
  similar to the one described in~\cite[Section 4.10]{ChM} for the
  singularity portrait in a neighborhood of a singular point of a
  billiard with infinite horizon.  We refer to the discussion presented
  there for further insights; here we provide a qualitative description
  which however suffices for our purposes.}.  Let us first introduce
some convenient notation: we define the \emph{left boundary}
$\lboundary\csps^\pm = \{(\ct,w)\in\partial\csps^\pm \st
\ct\in[0,\ctc]\}$ and the right boundary
$\rboundary\csps^\pm = \{(\ct,w)\in\partial\csps^\pm \st
\ct\in[\ctc,1]\}$.  \newcommand{\singMap}{\Gamma}
\begin{lem}\label{LmSingF}
  There exist countably many $C^1$-smooth unstable curves
  $\{\singBackward{\indCell}\}_{\indCell = 0}^{\infty}$ with the following
  properties
  \begin{enumerate}
  \item $\singBackward{\indCell}\cap\singBackward{\indCell'} = \emptyset$ if $\indCell\ne\indCell'$.
  \item $\singBackward{} = \singReco^-\cup\bigcup_{\indCell = 0}^{\infty}\singBackward{\indCell}$.
  \item $\singBackward{0}$ is unbounded: its left endpoint approaches
    $(0,\infty)$ and the other endpoint is in $\rboundary\csps^-$.
  \item $\singBackward{\indCell}$ for $\indCell > 0$ is compact and joins
    $\lboundary\csps^-$ to $\rboundary\csps^-$.
  \item $\singBackward{\indCell}$ approaches $\xc$ for
    $\indCell\to\infty$; more precisely:
    \begin{align*}
      \singBackward{\indCell}\subset\{w < \Const \indCell^{-1/2},\;
      |\ct-\ctc|< \Const \indCell^{-1/2} \}.
    \end{align*}
  \item There exists $c > 0$ such that $\singBackward{\indCell}$ is
    tangent to the cone
    \begin{align*}
      \hat{\coneu_{\indCell}} = \{-\curv(\ct)-c\indCell^{-3/2}\le\delta
      w/\delta r\le -\curv(\ct)\}.
    \end{align*}
  \end{enumerate}
  The corresponding statements hold for $\singForward{}$ using the
  involution.
\end{lem}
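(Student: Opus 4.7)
The plan is to parametrize $\singBackward{}\setminus\sing{0}$ by its $\cm$-preimages and classify them according to the number of wall periods elapsed during the preceding flight. Every $x\in\singBackward{}\setminus\sing{0}$ equals $\cm(x_{-1})$ for some $x_{-1}$ satisfying either (A) $\ct_{-1}\in\{0,1\}$ (wall motion non-smooth), or (B) $w_{-1}=0$ (preceding collision grazing). By Lemma~\ref{l_forwardReco}, the subcase of (B) with $\ct_{-1}\in[\ctc,1]$ produces exactly $\singReco^-$, so it remains to handle preimages in the vertical rays $\{0,1\}\times(0,\infty)$ and in the horizontal segment $(0,\ctc)\times\{0\}$, which will yield $\singBackward{0}$ and $\{\singBackward{\indCell}\}_{\indCell\geq 1}$ respectively.

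For type (A), as $w_{-1}\to\infty$ along the ray $\ct_{-1}=0$ the particle flies quickly with $\fft_{-1}=O(1/w_{-1})$ and $\ct_0\to 0^+$, giving an unbounded smooth $u$-curve with asymptote at $(0,\infty)$; as $w_{-1}$ decreases the trajectory eventually encounters a grazing, which is where the image terminates on $\rboundary\csps^-$. The ray $\ct_{-1}=1$ gives the same curve by periodicity, defining $\singBackward{0}$. For type (B) with $\ct_{-1}\in(0,\ctc)$, the trajectory leaves with absolute velocity $\slope(\ct_{-1})>0$, bounces off the fixed wall, and its return time $\fft_{-1}(\ct_{-1})$ satisfies the implicit equation
\begin{equation*}
  \slope(\ct_{-1})\cdot\fft_{-1}-\ell(\ct_{-1}) = \ell\bigl(\ct_{-1}+\fft_{-1}\bmod 1\bigr).
\end{equation*}
Because $\slope(\ct_{-1})\to 0$ as $\ct_{-1}\to\ctc^-$, the relevant root of this equation ranges over all sufficiently large values and the branch switches each time the trajectory grazes the moving wall. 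Numbering the branches by $\indCell\in\naturals$, where $\indCell$ counts the number of complete wall periods traversed during the flight, defines $\singBackward{\indCell}$ as the image of branch $\indCell$ under $\cm$. Disjointness and exhaustion are immediate from the construction; smoothness follows from the implicit function theorem (the derivative with respect to $\fft_{-1}$ equals minus the post-collisional absolute velocity, which is nonzero off grazings); the endpoints lie on $\lboundary\csps^-$ and $\rboundary\csps^-$ precisely because each branch terminates at a grazing that shifts $\indCell$ by $\pm 1$.

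Properties (5) and (6) follow directly from earlier results in the paper. On $\singBackward{\indCell}$ the flight time satisfies $\fft_{-1}\geq\Const\cdot\indCell$ by construction, so Lemma~\ref{lem:geometry} (applied via the involution $\invo$) yields $w\leq\Const\,\indCell^{-1/2}$ and $|\ct-\ctc|\leq\Const\,\indCell^{-1/2}$. The sub-lemma's identity $\cslope=-\curv(\ct)-w/\fft_{-1}$ then gives
\begin{equation*}
  0 \leq -\bigl(\cslope+\curv(\ct)\bigr) \leq \Const\,\indCell^{-3/2},
\end{equation*}
which is exactly the cone $\hat{\coneu_{\indCell}}$ claimed in (6).

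The main technical obstacle is the branch analysis outlined above: one must verify that the return-time equation really does decompose into exactly one smooth branch per $\indCell\geq 1$, each forming a complete curve from $\lboundary\csps^-$ to $\rboundary\csps^-$. Here strict convexity of $\ell$ (the dispersing hypothesis) is essential, as it forces all grazings of the trajectory with the moving wall to be transverse in the $(\ct_{-1},\fft_{-1})$-plane, ruling out branch coalescence and guaranteeing that the indexing is well-defined and monotone in $\ct_{-1}$.
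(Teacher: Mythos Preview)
Your overall strategy---parametrize $\singBackward{}\setminus\sing0$ by its $\cm$-preimages on $\sing0$, group them by flight time, and then read off (e) and (f) from Lemma~\ref{lem:geometry} and the slope formula~\eqref{e_cslopeSingularity1}---is exactly what the paper does, and your arguments for items (e) and (f) are correct.

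The gap is in how you set up the indexing. You split the preimage set into type~(A) (vertical rays) and type~(B) (the horizontal segment $(0,\ctc)\times\{0\}$), declaring that (A) produces $\singBackward{0}$ and (B) produces $\singBackward{\indCell}$ for $\indCell\ge 1$. This identification is not correct in general: whether the transition $\fft=1$ occurs on the vertical ray or on the horizontal segment depends on the value of $\fft$ at the corner $(0,0)$, which in turn depends on $\ell$ (roughly, on whether $2\ell/\slopeu$ is larger or smaller than $1$). If $\fft((0,0))>1$ then the lower part of the vertical ray already contributes to curves with index $\indCell\ge1$, and your construction would omit those pieces from the union, breaking item~(b); conversely if $\fft((0,0))<1$ then part of the horizontal segment feeds into $\singBackward{0}$, and your description of its right endpoint fails. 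Your claim that ``as $w_{-1}$ decreases the trajectory eventually encounters a grazing'' is also unjustified: the first obstruction along the vertical ray may just as well be an integer-time collision ($\fft=1$), not a grazing. Finally, ``the ray $\ct_{-1}=1$ gives the same curve by periodicity'' is imprecise, since the wall velocity jumps at integer times and the correspondence involves a shift $w\mapsto w-\slopeb$.

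The paper sidesteps all of this by treating the preimage set as the single arc $\lboundary\csps^+$ (vertical ray glued to horizontal segment at the corner) and partitioning it \emph{directly} by the condition $\fft\in[\indCell,\indCell+1]$, defining $\singBackward{\indCell}=\clo\cm(\intr\sing0_\indCell)$. This makes items (a)--(d) essentially automatic; the only subtlety is checking that $\cm$ remains smooth across the corner $(0,0)$ when it lies in the interior of some $\sing0_\indCell$, which is handled by a standard billiard-type argument. With that cleaner partition the ``branch analysis'' you flag in your last paragraph is not needed.
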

\begin{proof}
  A point $x'$ can be in $\singBackward{}$ for two different reasons:
  its previous collision with the moving wall $x = (\ct,w)$ may have
  occurred either at an integer time (item~\ref{i_corner} in the
  definition of $\sing0$) or at a non-integer time with a grazing
  collision (item~\ref{i_grazing} in the definition of $\sing0$).  If
  $x'$ is a recollision, then $x'\in\singReco^-$ (and hence
  $\ct\in[\ctc,1]$ and $w = 0$), otherwise we can choose
  $x\in\lboundary\csps^+$.  

  For any $\indCell\in\integers_{\ge0}$ define
  $
  \sing0_\indCell = \{x\in\lboundary\csps^+\st\fft(x)\in[\indCell,\indCell+1]\}.
  $
  Notice that 
  $\cm$ is smooth in the interior of these curves\footnote{ Smoothness
    is obvious unless $(0,0)\in\intr\sing0_\indCell$; even in this case
    it holds true, and follows from arguments identical to the ones
    described in~\cite[after Exercise 4.46]{ChM}}.  We conclude that
  $\cm(\intr\sing0_\indCell)$ is a $C^1$-smooth unstable curve. Define
  \begin{align*}
    \singBackward{\indCell} = \cl\cm(\intr\sing0_\indCell).
  \end{align*}
  Items (a) and (b) then follow by construction.

  Next, it is easy to see that if $w$ is sufficiently large, then the
  trajectory will bounce off the fixed wall and hit back the moving wall
  after a short time $\fft\in(0,1)$; in particular $\sing0_0$ is
  unbounded while $\sing0_\nu$ and $\singBackward\nu$ are bounded for
  $\nu>0$.

  Next, as $w$ increases to $\infty$, the point $\cm (0,w)=(\ct', w')$
  where $\ct'$ is small and $w'$ is large.
  On the other hand when $x\in\sing0_0$ approaches the (only) boundary
  point of $\sing0_0$, the point $\cm x$ will necessarily tend to
  $\rboundary\csps^-$.  This proves item (c).  Item (d) follows from
  analogous arguments.

  Item (e) follows by applying Lemma~\ref{lem:geometry} to an
  arbitrary point in $\singBackward{\indCell}$.
  Finally, item (f) follows from~\eqref{e_cslopeSingularity1}
    and item (e).
\end{proof}

  \begin{lem}[Continuation property]
    For each $n\ne 0$, every curve $S\subset\sing n\setminus \sing0$
    is a part of some monotonic continuous (and piecewise smooth)
    curve $\sing{*}\subset\sing n\setminus \sing 0$ which
    terminates on $\sing0 = \partial\csp$.
  \end{lem}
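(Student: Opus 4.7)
The plan is to reduce to the case $n<0$ using the involution $\invo$ of Section~\ref{SSInvolution}, which swaps $\sing{n}$ with $\sing{-n}$ while preserving both $\sing 0=\partial\csp$ and the cone fields $\conep,\conen$, and then to induct on $|n|$.

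For the base case $|n|=1$, the claim is essentially contained in Lemma~\ref{LmSingF}. The decomposition $\sing{-1}=\singReco^-\cup\bigcup_{\nu\geq 0}\singBackward{\nu}$ produces smooth unstable pieces whose endpoints lie on $\partial\csps^-\subset\partial\csp\cup\singReco^-$. Any endpoint falling directly on $\partial\csp$ needs no extension; an endpoint on $\singReco^-$ is continued by $\singReco^-$ itself, which by Lemma~\ref{l_forwardReco} runs from $(0,\slopeb)\in\partial\csp$ to $\xc\in\partial\csp$. The resulting concatenation is a monotone piecewise smooth unstable curve terminating on $\partial\csp$ at both ends.

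For the inductive step, assume the claim holds for all $\sing{-m}$ with $1\leq m<n$, and let $S\subset\sing{-n}\setminus\sing 0$ be a smooth unstable arc. If $S\subset\sing{-(n-1)}$, apply induction directly; otherwise the recursion $\sing{-n}=\sing{-(n-1)}\cup\cm(\sing{-(n-1)}\setminus\singForward{})$ shows $S=\cm(\tilde S)$ for some smooth $\tilde S\subset\sing{-(n-1)}\setminus\singForward{}$. By the inductive hypothesis $\tilde S$ extends to a piecewise smooth monotone unstable curve $\tilde{\sing}^{*}\subset\sing{-(n-1)}$ terminating on $\partial\csp$. Pushing $\tilde{\sing}^{*}$ forward by $\cm$ yields a piecewise smooth curve $\sing^{*}\subset\sing{-n}$ containing $S$; failures of smoothness can occur only at the images of (i) internal break points of $\tilde{\sing}^{*}$, (ii) intersections $\tilde{\sing}^{*}\cap\singForward{}$, and (iii) the two endpoints of $\tilde{\sing}^{*}$ on $\partial\csp$.

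Case (i) is handled by applying the inductive hypothesis to the lower-level piece on which the break point lies, so that the chain continues inside $\sing{-n}$. The crux is case (ii): at an intersection $q\in\tilde{\sing}^{*}\cap\singForward{}$, the one-sided limits of $\cm$ along $\tilde{\sing}^{*}$ land on $\sing 0=\{w=0\}\cup\{\ct\in\{0,1\}\}$, since by definition $\singForward{}$ consists of points whose next collision is either grazing or at an integer time. For (iii), a similar argument together with Lemma~\ref{lem:geometry} (to control the limiting geometry at grazing endpoints) shows the $\cm$-image of the endpoint also lies on $\partial\csp$. Monotonicity is preserved throughout because every constituent arc is tangent to $\conen$ and hence decreasing in $\ct$. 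The main obstacle is the bookkeeping in cases (ii)--(iii): one must verify that the image of $\tilde{\sing}^{*}$ through a crossing with $\singForward{}$ actually reaches $\partial\csp$ rather than stalling at an interior point, which rests on the explicit blow-up description of $\cm$ near $\singForward{}$ encoded in \eqref{e_slopeSingularityForward} and Lemma~\ref{LmSingF}.
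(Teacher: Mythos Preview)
Your overall architecture matches the paper's: reduce to one sign via the involution, handle $|n|=1$ using Lemma~\ref{LmSingF}, and proceed by induction using the recursive definition of $\sing n$. The base case is correctly handled.

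There is, however, a genuine error in your case (iii). If the endpoint $p$ of $\tilde{\mathcal S}^{*}$ lies on $\sing 0=\partial\csp$, it is \emph{not} true that the limit of $\cm$ along $\tilde{\mathcal S}^{*}$ at $p$ lands on $\partial\csp$. Concretely, if $p=(0,w_0)$ with $w_0>0$, then $\lim_{\tilde{\mathcal S}^{*}\ni x\to p}\cm(x)$ is the next collision of an orbit leaving the wall at time $0^{+}$ with relative velocity $w_0$; this is typically an interior point of $\csp$. Similarly, if $p=(r_0,0)$ with $r_0\in(0,1)\setminus\{r_{\mathrm C}\}$, the limit is the next collision after a grazing departure, again generically interior. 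The invocation of Lemma~\ref{lem:geometry} is a red herring: that lemma concerns points with long free-flight time and has no bearing here. What \emph{is} true is that the limit point $q$ satisfies $q\in\singBackward{}=\sing{-1}$, precisely because its previous collision (namely $p$) is on $\sing 0$. So the pushforward does not terminate on $\sing 0$ but only on $\sing{-1}\subset\sing{-(n-1)}$.

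Once corrected, your argument can be completed by applying the inductive hypothesis once more from $q\in\sing{-(n-1)}$. But notice that this is exactly the paper's (much shorter) inductive step: one observes directly that a smooth curve $S\subset\sing{-n}\setminus\sing{-(n-1)}$ has its endpoints on $\sing{-(n-1)}$ (your cases (ii) and (iii) together say exactly this, since $\sing 0\cup\sing{-1}\subset\sing{-(n-1)}$), and then continues inside $\sing{-(n-1)}$ to $\sing 0$ by induction. The detour through extending $\tilde S$ first and then pushing forward the whole extension is unnecessary and is what created the spurious case (iii) in the first place.
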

  \begin{proof}
    It suffices to prove the property for $n > 0$, since the case
    $n < 0$ follows by the properties of the involution.  The
    statement holds for $n = 1$ by Lemma~\ref{LmSingF}; the statement
    then follows by induction by definition of $\sing n$: assume that
    $S\subset\sing {n+1}\setminus\sing n$.  Then, by construction, $S$
    terminates on either $\sing0$ or $\sing n$. However if it
    terminates on $\sing n$, then by inductive hypothesis it can be
    continued as a piecewise smooth curve to $\sing 0$.
  \end{proof}

The curves $\{\singBoth_{\indCell}\}_{\indCell\ge0}$ cut $\csps^\pm$ in
countably many connected components which we denote with
$\{\cell^+_\indCell\}$ (\resp $\{\cell^-_\indCell\}$) and we call
\emph{positive} (\resp \emph{negative}) \emph{cells}.  Indexing is
defined as follows: for $\indCell > 0$ we let $\cell^\pm_\indCell$
denote the component whose boundary contains $\singBoth_{\indCell-1}$
and $\singBoth_{\indCell}$ and let $\cell^\pm_0$ denote the remaining
cell.  The cells $\cell^+_\indCell$ admit also an intrinsic definition
as
\begin{align}\label{e_defCell}
  \cell^+_\indCell &= \intr\{x\in\csps^+ \st \ct(x)+\fft_0(x)\in(\indCell,\indCell+1)\};
\end{align}
observe that each positive cell is indexed by the number of boundaries
of fundamental domains which are crossed by the trajectory between the
current and the next collision.  A similar intrinsic characterization
can be given for the negative cells $\cell^-$.  We summarize in the
following lemma some properties of positive cells that follow from our
above discussion.

\begin{lem}[Properties of positive cells]\label{l_singularityStructure}
  \hskip0pt 
  \begin{enumerate}
  \item \label{i_disjointCells} The cells $\{\cell^+_\indCell\}_{\indCell\ge0}$ are open, connected and pairwise disjoint.
  \item \label{i_partitionCells} We have
    \begin{align*}
      \intr\csps^+\setminus\singForward{}=\bigcup_{\indCell=0}^\infty\cell^+_\indCell.
    \end{align*}
  \item \label{i_adjacentCells}
    $\clo\cell^+_\indCell\cap\clo \cell^+_{\indCell'} = \emptyset$ if
    $|\indCell-\indCell'| > 1$; moreover if
    $\bar x\in\clo\cell^{+}_{\indCell}\cap\clo\cell^{+}_{\indCell+1}$,
    we have either
    \begin{align*}
      \lim_{\cell^{+}_{\indCell}\ni x\to\bar x}\cm x&\in\{1\}\times\reals^+&
      \lim_{\cell^{+}_{\indCell+1}\ni x\to\bar x}\cm x&\in\{0\}\times\reals^+,
    \end{align*}
    or
    \begin{align*}
      \lim_{\cell^{+}_{\indCell}\ni x\to\bar x}\cm x&\in[0,1]\times\{0\}&
      \lim_{\cell^{+}_{\indCell+1}\ni x\to\bar x}\cm x&\in\singReco^{-}.
    \end{align*}

  \item for any $\bar\indCell$ there exists $\eps$ so that the ball of radius $\eps$ centered at $\xc$ does not
    intersect $\bigcup_{\indCell = 0}^{\bar\indCell}   \cell^+_\indCell$.
  \item \label{i_boundedCells} for $\indCell > 1$, we have $\cell^{+}_{\indCell}\subset\{w < \Const \indCell^{-1/2},\;
      |\ct-\ctc|< \Const \indCell^{-1/2} \}$.
  \end{enumerate}
\end{lem}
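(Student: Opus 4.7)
The plan is to read items (a) and (b) off the intrinsic characterization~\eqref{e_defCell}, deduce the adjacency statement (c) from the two mechanisms that produce a curve of $\singForward{}$, and obtain the quantitative items (d) and (e) from Lemma~\ref{LmSingF} (applied to $\singForward{}$ through the involution) together with Lemma~\ref{lem:geometry}.

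First I would observe that $\fft_{0}$ is continuous (in fact smooth) on $\csps^{+}\setminus\singForward{}$, and that $\ct+\fft_{0}$ takes no integer value there: integer values would correspond to the forward trajectory reaching the moving wall at a non-smooth instant, which is exactly one of the mechanisms producing $\singForward{}$ (item~\ref{i_corner} in the classification of Section~\ref{ss_singularitiesLocalStructure}). Consequently, on each connected component of $\intr\csps^{+}\setminus\singForward{}$ the integer part $\lfloor\ct+\fft_{0}\rfloor$ is constant. By Lemma~\ref{LmSingF} applied via the involution, the curves $\{\singForward{\nu}\}_{\nu\geq 0}\cup\{\singReco^{+}\}$ are pairwise disjoint simple curves, naturally ordered in $\nu$, each joining the two components of the boundary of $\csps^{+}$; the complement therefore consists of exactly one connected strip per integer value of $\ct+\fft_{0}$, which yields (a) and (b).

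For (c), the disjointness $\clo\cell^{+}_{\nu}\cap\clo\cell^{+}_{\nu'}=\emptyset$ for $|\nu-\nu'|>1$ follows because between two non-adjacent strips lies at least one entire curve of $\singForward{}$ contained in neither closure. For $\nu'=\nu+1$ the shared boundary lies on $\singForward{\nu}$ and I would split into two cases according to the origin of the singularity at $\bar x$. If $\bar x$ corresponds to the forward trajectory hitting the moving wall at an integer time, then approaching $\bar x$ from $\cell^{+}_{\nu}$ one has $\ct+\fft_{0}<\nu+1$, so the image collision time tends to $1^{-}$ and $\cm x\to\{1\}\times\reals^{+}$; approaching from $\cell^{+}_{\nu+1}$ one has $\ct+\fft_{0}>\nu+1$, so the image collision time tends to $0^{+}$ and $\cm x\to\{0\}\times\reals^{+}$. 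If instead $\bar x$ corresponds to an imminent grazing collision, then on one side $\cm x\to[0,1]\times\{0\}$ (the grazing image), while on the other side the narrowly-missing trajectory immediately recollides, so by Lemma~\ref{l_forwardReco} we have $\cm x\to\singReco^{-}$.

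For (d), since $\limsup_{x\to\xc}\fft(x)=\infty$, every sufficiently small neighborhood of $\xc$ is contained in $\{\fft_{0}>\bar\nu+1\}\subset\bigcup_{\nu>\bar\nu}\cell^{+}_{\nu}$, which is disjoint from $\bigcup_{\nu\leq\bar\nu}\cell^{+}_{\nu}$. For (e), any $x\in\cell^{+}_{\nu}$ with $\nu>1$ satisfies $\fft_{0}(x)>\nu-1\geq 1$; the proof of Lemma~\ref{lem:geometry} in fact uses only that $\fft$ is bounded away from $0$ and delivers bounds on $w$ and $|\ct-\ctc|$ of order $\fft^{-1/2}$, so the claimed estimate follows after (if necessary) enlarging $\Const$ to absorb the borderline case $\nu=2$. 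The main obstacle is the case analysis in (c): at each boundary point on $\singForward{\nu}$ one must correctly identify which of the two singularity-producing mechanisms is active and track where an approaching sequence is sent by $\cm$. Everything else reduces to bookkeeping on top of Lemmas~\ref{LmSingF}, \ref{l_forwardReco}, and~\ref{lem:geometry}.
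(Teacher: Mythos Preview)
Your approach is exactly what the paper intends: the lemma is stated there as a summary of properties that ``follow from our above discussion,'' and you correctly identify Lemmata~\ref{LmSingF}, \ref{l_forwardReco}, and~\ref{lem:geometry} together with the intrinsic characterization~\eqref{e_defCell} as the ingredients. Items (a), (b), (c), and (e) are handled essentially correctly.

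There is, however, a genuine logical slip in your argument for (d). You write that ``since $\limsup_{x\to\xc}\fft(x)=\infty$, every sufficiently small neighborhood of $\xc$ is contained in $\{\fft_{0}>\bar\nu+1\}$.'' This implication is false: the $\limsup$ statement only guarantees a \emph{sequence} along which $\fft$ blows up, and indeed points of $\backReco$ arbitrarily close to $\xc$ have arbitrarily small $\fft$. What you need is that $\fft(x)\to\infty$ as $x\to\xc$ \emph{within} $\csps^{+}$ (equivalently, outside $\cl\backReco$). This does hold, and follows directly from Remark~\ref{r_boundFreeFlight}: for $x\notin\cl\backReco$ one has $\fft(x)\ge 2\elll/(w-h(\ct))$, and both $w\to 0$ and $h(\ct)\to h(\ctc)=0$ as $x\to\xc$, so the denominator vanishes. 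Since each $\cell^{+}_{\nu}$ lies in $\intr\csps^{+}$ (hence avoids $\cl\backReco$), this gives (d).

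A smaller point on (c): in the grazing alternative you should note explicitly that the grazing collision necessarily occurs at some $\ct'\in(\ctc,1)$. This is because the curves $\singForward{\nu}$ are (via the involution) preimages of pieces of $\rboundary\csps^{-}$, whose horizontal part is $[\ctc,1]\times\{0\}$; grazing collisions at $\ct'<\ctc$ correspond instead to $\singReco^{+}$, which does not separate two cells $\cell^{+}_{\nu}$. Once $\ct'>\ctc$ is established, the identification of the $\cell^{+}_{\nu}$ side (hit, image in $[0,1]\times\{0\}$) versus the $\cell^{+}_{\nu+1}$ side (miss, image in $\singReco^{-}$) follows since the miss forces the next collision into the following fundamental domain, as in the proof of Lemma~\ref{l_forwardReco}.
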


\begin{rmk}\label{r_singularityStructureBackwards}
  Using the involution, the above lemma also describes (with due
  modifications) the negative cells
  $\cell^-_\indCell = \cm\cell^+_\indCell$.
\end{rmk}

Despite the fact that the singular point $\xc$ is accumulated by
singularities (both forward and backward in time), we have the
following result.
\begin{lem}\label{l_continuityAtxc}
  For every $\eps > 0$ there exists a $\delta > 0$ so that
  \begin{align*}
    \cm(B(\xc,\delta)\setminus\sing1)\subset B(\xc,\eps).
  \end{align*}
\end{lem}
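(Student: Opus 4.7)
The plan is to split $B(\xc,\delta)\setminus\sing1$ according to whether the next collision is a recollision. Since $\csp=\csps^+\cup\clo\backReco$ with common boundary on $\singReco^+\subset\sing1$, it suffices to treat
\[
A = \csps^+ \cap B(\xc,\delta)\setminus\sing1
\quad\text{and}\quad
B = \intr\backReco \cap B(\xc,\delta)\setminus\sing1
\]
separately, and show each image lies in $B(\xc,\eps/2)$ once $\delta$ is small.

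For $x\in A$ the approach is to use the cell decomposition. By Lemma~\ref{l_singularityStructure}, $x$ lies in some unique cell $\cell^+_\indCell$, and item~(d) of the same lemma lets me force $\indCell$ to exceed any prescribed $\bar\indCell$ by shrinking $\delta$; the intrinsic characterization~\eqref{e_defCell} then gives $\fft(x)>\indCell-1$. Choosing $\bar\indCell\geq 3$, the image $y=\cm x$ satisfies $\fft_{-1}(y)=\fft(x)\geq 2$, so Lemma~\ref{lem:geometry} applied to $y$ yields $|\cm x-\xc|\leq \Const\,\indCell^{-1/2}$, which drops below $\eps/2$ once $\bar\indCell$ is chosen large enough.

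For $x\in B$ the next collision is an immediate recollision with the moving wall and the flight time stays close to $1$ as $x\to\xc$; the image $\cm x$ lies in $\reco$ but also tends to $\xc$. I would prove this by a direct Taylor expansion at $\xc$: writing $\ct=\ctc+\alpha$, $w$ small, and using $\ell(\ctc+u)=\elll+\tfrac12\ell''(\ctc)u^2+O(u^3)$, the recollision condition $\ell(\ct+t)-\ell(\ct)+(w-\ell'(\ct))t=0$ near $t=1$ becomes a quadratic in $s=t-1$ with relevant root $s=O(\sqrt{\alpha})$; this forces $\ct'-\ctc=O(\sqrt{\alpha})$, and the elastic reflection formula $w' = -\ell'(\ct+t)-(w-\ell'(\ct))$ gives $w'=O(\sqrt\alpha)$ as well. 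An equivalent conceptual rephrasing is that the orbit of $\xc$ in $\exph$ is the stationary one at $z=-\elll$, colliding with the moving wall only at times $\ctc+n\notin\bZ$ (hence bounded away from the wall-motion corners at integers); continuity of the flow $\flow s$ near this regular periodic orbit then delivers the bound.

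The substantive difficulty is region $B$: part $A$ is essentially bookkeeping once the structural Lemmas~\ref{l_singularityStructure} and~\ref{lem:geometry} are granted, but in $B$ the flight time is short and the dynamics is grazing-like. The main obstacle is to verify that the expansion above is uniform in $(\alpha,w)\in\backReco\cap B(\xc,\delta)$ and that removing $\sing1$ is sufficient to keep us away from the grazing-at-image curves inside $\backReco$ (the components of $\sing1\cap\backReco$ obtained, via the involution, from the $\singForward{\indCell}$'s of Lemma~\ref{LmSingF}), so that the Taylor-expansion solution is genuinely the first post-collisional state of the orbit throughout the region under consideration.
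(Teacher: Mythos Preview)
Your decomposition and the treatment of region $A$ coincide with the paper's proof; the paper phrases the conclusion as $\cm(\cell^+_\indCell)=\cell^-_\indCell$ and then invokes the bound of Lemma~\ref{l_singularityStructure}(e) on $\cell^-_\indCell$ (via Remark~\ref{r_singularityStructureBackwards}), which is the same content as your appeal to Lemma~\ref{lem:geometry}.

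For region $B$ you are working harder than necessary, and the worry in your last paragraph is unfounded. The paper dispatches this case in one line: $\cm$ is continuous on $\backReco$ and $\lim_{\backReco\ni x\to\xc}\cm x=\xc$. The observation you are missing is that $\backReco\cap\sing1=\emptyset$. Indeed, by definition $\cm(\backReco)=\reco$, and by Lemma~\ref{l_forwardReco}(a2) the set $\reco$ is open and contained in $(0,\ctc)\times(0,\infty)$; hence for every $x\in\backReco$ the next collision has $w'>0$ and $\ct'\in(0,\ctc)$, so neither the grazing case~(b) nor the corner case~(c) in the definition of $\sing1$ can occur. There are thus no ``components of $\sing1\cap\backReco$'' to keep away from (the involution sends forward singularities to \emph{backward} singularities, not to other forward ones), and $\cm$ is smooth on all of $\backReco$. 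Your own flow-continuity rephrasing --- $\xc$ is a regular periodic orbit of period $1$, hitting the wall only at times $\ctc+\bZ$, bounded away from the corner times --- already gives $\cm x\to\xc$ directly; the explicit Taylor expansion is a valid but unnecessary detour.
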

\begin{proof}
  If $x\in B(\xc,\delta)\setminus\sing1$ there are two possibilities;
  either $x\in B(\xc,\delta)\cap\backReco$ or
  $x\in B(\xc,\delta)\cap\cell^+_\indCell$ for some large $\indCell$.
  In the former case $\cm$ is continuous in $\backReco$ and
  $\DS \lim_{\backReco\ni x\to(\ctc,0)}\cm x = \xc$, so we only need to
  check the latter case.  However, if $x\in\cell^+_\indCell$, then, by
  definition $\cm x\in\cell^-_\indCell$ and we conclude the proof
  since the cells $\{\cell^-_\indCell\}$ also accumulate to $\xc$ by
  Lemma~\ref{l_singularityStructure}(e) and
  Remark~\ref{r_singularityStructureBackwards}.
\end{proof}
In view of Lemma~\ref{LmSingF}, a u-curve $W$ can in principle be cut by
singularities of $\cm$ in countably many connected components.\footnote{
  This problem is certainly familiar to the reader acquainted with the
  theory of dispersing billiards with infinite horizon.}  The next lemma
ensures that this may only happen in a neighborhood of the singular
point $\xc$.
\begin{lem}\label{l_finitelyManySectors}
  Let $x\in\csp\setminus\{\xc\}$. For any $l > 0$, the set $\sing{l}$
  cuts a sufficiently small neighborhood of $x$ in finitely many
  connected components.
\end{lem}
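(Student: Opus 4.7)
The proof is by induction on $l$.

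For the base case $l=1$, apply Lemma~\ref{LmSingF} (transferred to forward singularities via the involution) to write $\sing{1}=\sing{0}\cup\singReco^{+}\cup\bigcup_{\indCell\geq 0}\singForward{\indCell}$, where item~(e) of that lemma confines each $\singForward{\indCell}$ to $\{|\ct-\ctc|<\Const \indCell^{-1/2},\ w<\Const \indCell^{-1/2}\}$. Thus the curves $\singForward{\indCell}$ accumulate only at $\xc$, and any sufficiently small open ball $U$ around $x\neq\xc$ with $\xc\notin\cl U$ meets only finitely many of them. Together with the smooth pieces of $\sing{0}$ and $\singReco^{+}$ this yields finitely many components of $U\setminus\sing{1}$.

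For the inductive step, use $\sing{l+1}=\sing{l}\cup\cm\inv(\sing{l}\setminus\singBackward{})$. Given $x\neq\xc$, shrink $U$ so that (i) $\sing{l}\cap U$ has finitely many components, by the inductive hypothesis; (ii) $U$ meets only finitely many positive cells $\cell^{+}_{\indCell}$, by Lemma~\ref{l_singularityStructure}(e) since $x\neq\xc$; and (iii) $\cl U$ is compact and disjoint from $\xc$. On each cell $\cell^{+}_{\indCell}$ meeting $U$, the map $\cm$ restricts to a smooth diffeomorphism onto $\cell^{-}_{\indCell}$, so the extra contribution to $\sing{l+1}\cap U$ coming from this cell is in smooth bijection with $\sing{l}\cap V_{\indCell}$, where $V_{\indCell}:=\cm(U\cap\cell^{+}_{\indCell})$. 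The inductive step thus reduces to showing that $\sing{l}\cap V_{\indCell}$ has finitely many components for each of the finitely many relevant $\indCell$.

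The crux is to establish that $\cl V_{\indCell}$ is a compact subset of $\csp\setminus\{\xc\}$. By Lemma~\ref{l_singularityStructure}(c), $\xc\in\cl V_{\indCell}$ would require the existence of a boundary point $\bar x\in\cl U\cap\partial\cell^{+}_{\indCell}$ with $\lim_{y\to\bar x,\,y\in\cell^{+}_{\indCell}}\cm y=\xc$; that is, the trajectory from $\bar x$ must reach $(\ctc,0)$ as a grazing limit. Since $\slope(\ctc)=0$ and $\ell$ attains its unique minimum at $\ctc$, a short calculation (splitting into the cases of zero or one intermediate fixed-wall bounce, and using that the grazing condition forces the ball to be at rest in the absolute frame at the critical time) shows this forces $\bar x=\xc$, which is excluded by the choice of $U$. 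Once $\cl V_{\indCell}\subset\csp\setminus\{\xc\}$ is known to be compact, we cover it by finitely many balls on each of which the inductive hypothesis provides a finite decomposition of $\sing{l}$; a standard finite-cover argument then shows that $\sing{l}\cap V_{\indCell}$ has finitely many components. Pulling back by $\cm$ and summing over the finitely many relevant cells completes the inductive step.

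The main obstacle is precisely the step of excluding $\xc$ from $\cl V_{\indCell}$: the map $\cm$ is discontinuous along cell boundaries, so a priori $V_{\indCell}$ could extend arbitrarily close to $\xc$. Ruling this out amounts to verifying that no ``virtual preimage'' of $\xc$ other than $\xc$ itself exists, which is a delicate but elementary consequence of the convexity of $\ell$ combined with the specific geometry of grazing limits at the critical time $\ctc$.
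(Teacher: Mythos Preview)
Your inductive setup is reasonable and your base case is fine, but there is a genuine gap in the inductive step: the ``standard finite-cover argument'' you invoke does not give what you need. Covering $\cl V_{\indCell}$ by finitely many balls, each cut by $\sing{l}$ into finitely many pieces, tells you that the \emph{union} of the balls is cut into finitely many pieces. It does \emph{not} follow that the proper open subset $V_{\indCell}$ is cut into finitely many pieces --- an open subset of a set with finitely many complementary components can itself have infinitely many. To repair this you would need to control $\partial V_{\indCell}$ (images of arcs of $\partial U$ and pieces of $\sing{-1}$) and argue that its arcs meet the curves of $\sing{l}$ only finitely often; this is plausible given the monotone structure of stable and unstable curves, but it is not standard and it is not what you wrote. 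You also overstate the difficulty of your ``crux'': for any fixed $\indCell\neq\recoPrivate$ one has $V_{\indCell}\subset\cell^-_{\indCell}$ and $\xc\notin\cl\cell^-_{\indCell}$ directly by Lemma~\ref{l_singularityStructure}(d) (via the involution); only the recollision case needs an argument, and that argument is precisely Lemma~\ref{l_continuityAtxc} applied to $\cm^{-1}$, which you are essentially re-deriving by hand.

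The paper's proof avoids the finite-cover issue entirely by a cleaner mechanism. Instead of pushing forward \emph{one} step and comparing against $\sing{l}$, it pushes the offending component $\cU'$ of $\cU\setminus\sing{l'-1}$ forward by $\cm^{l'-1}$ all the way and compares against $\sing{1}$, whose structure is simple: a connected open set cut by $\sing{1}$ into infinitely many pieces must meet infinitely many cells $\cell^+_{\indCell}$, hence (by Lemma~\ref{l_singularityStructure}(e)) it accumulates at $\xc$. Then Lemma~\ref{l_continuityAtxc}, applied in the backward direction via the involution, pulls this back to show $\xc\in\cl\cU$, contradicting $x\neq\xc$. Because the forward image $\cm^{l'-1}\cU'$ is itself connected, no cover-and-reassemble step is needed.
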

\begin{proof}
  Assume that for an arbitrarily small ball $\cU\ni x$ there exists
  $0 < l'\le l$ so that $\cU\setminus\sing {l'-1}$ has finitely many
  connected components and $\cU\setminus\sing {l'}$ has infinitely many.
  We conclude that there exists a connected component $\cU'$ of
  $\cU\setminus\sing {l'-1}$ which is cut by $\sing {l'}$ in infinitely
  many connected components.  By definition $\cm^{l'-1}$ is smooth on
  $\cU'$ and, by our assumption, $\cm^{l'-1}\cU'$ intersects infinitely
  many positive cells $\cell^+$.  We gather that there exists a sequence
  $x_m\in\cU'\cap\cm^{-(l'-1)}\cell^+_{\indCell_m}$, where
  $\indCell_m\to\infty$; by Lemma~\ref{l_singularityStructure} we have
  $\cm^{l'-1}x'_m\to\xc$, which by Lemma~\ref{l_continuityAtxc} implies
  that $x'_n\to\xc$, that is $\xc\in\cl\cU$.  Since $\cU$ can be taken
  to be arbitrarily small, we conclude that $x = \xc$.
\end{proof}
For $l_- \le 0 \le l_+$, define
$\sing{l_-,l_+} = \sing{l_-}\cup\sing{l_+}$: %
then $\csp\setminus\sing{l_-,l_+}$ is given by a (countable) union of
connected components.  A point $x\in\sing{l_-,l_+}$ is said to be a
\emph{multiple point of $\sing{l_-,l_+}$} if it belongs to the closure
of at least three such connected components; we denote the set of
multiple points of $\sing{l_-,l_+}$ by $\mult{l_-,l_+}$.
\begin{lem}\label{l_singularPointNotMultiple}
  The singular point $\xc\nin\mult{l_-,l_+}$ for any
  $l_- \le 0 \le l_+$.
\end{lem}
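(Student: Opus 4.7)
I would prove that at most two connected components of $\csp\setminus\sing{l_-,l_+}$ have $\xc$ in their closure, which is strictly less than three.

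The first step is to exclude the cells $\cell^\pm_\nu$. By Lemma~\ref{LmSingF}(d) (and its involuted analogue), every $\cell^+_\nu$ with $\nu\ge 1$ is separated from $\xc$ by the compact curve $\singForward{\nu}$, which does not contain $\xc$; hence $\xc\nin\clo\cell^+_\nu$, and symmetrically for $\cell^-_\nu$. Lemma~\ref{l_singularityStructure}(d) also ensures that the outermost cells $\cell^\pm_0$ stay at positive distance from $\xc$. Thus any connected component $V$ with $\xc\in\clo V$ must accumulate points of $\reco$ or of $\backReco$ arbitrarily close to $\xc$; since $\clo\reco\cap\clo\backReco=\{\xc\}$ by Remark~\ref{r_thereCanBeOnlyOne}, the two sides contribute independently and produce at most two candidates in total.

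Next I would analyze the $\reco$-side (the $\backReco$-side is symmetric via the involution $\invo$ of Section~\ref{SSInvolution}). If $\sing{l_-,l_+}\cap\intr\reco=\emptyset$, then the component containing $\reco$ trivially has $\xc$ in its closure. Otherwise, I would show that the cuts of $\reco$ by $\sing{l_-,l_+}$ accumulate at $\xc$: when $l_+\ge 1$, this follows directly from Lemma~\ref{LmSingF}(e) (applied via $\invo$) because the stable curves $\singForward{\nu}$ entering $\reco$ have foot-points on $\{w=0\}$ converging to $\ctc$; for larger $|l_-|$ or $|l_+|$, iterated images/preimages of these curves under $\cm^{\pm 1}$ still accumulate at $\xc$ by the continuity of $\cm$ at $\xc$ given by Lemma~\ref{l_continuityAtxc}. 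The resulting slivers of $\reco\setminus\sing{l_-,l_+}$ are each bounded by two consecutive accumulating cuts plus arcs of $\partial\reco$; the $\{w=0\}$-footprint of such a sliver lies strictly in $\ct<\ctc$, so no sliver has $\xc$ in its closure, and the $\reco$-side contributes zero components in this case.

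Collecting the two sides yields the result. The main technical point is the accumulation claim: the case $|l_\pm|\le 1$ is immediate from Lemma~\ref{LmSingF}, but for larger iterates one has to verify that preimages of $\singForward{\nu}$ and $\singBackward{\nu}$ under $\cm^{\pm k}$ continue to accumulate at $\xc$ rather than retreating to positive distance, which is where Lemma~\ref{l_continuityAtxc} is essential.
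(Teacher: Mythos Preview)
Your overall strategy—bound the number of components of $\csp\setminus\sing{l_-,l_+}$ whose closure contains $\xc$ by localizing to $\reco$ and $\backReco$—is sound, but there is a genuine gap in the first step. From $\xc\notin\clo\cell^\pm_\nu$ for each \emph{fixed} $\nu$ it does not follow that a component $V$ with $\xc\in\clo V$ accumulates only points of $\reco$ or $\backReco$ near $\xc$: the cells themselves accumulate at $\xc$ as $\nu\to\infty$, so a sequence $x_n\in V$ with $x_n\to\xc$ could a priori lie in $\cell^+_{\nu_n}$ with $\nu_n\to\infty$. What is missing is connectedness: if $l_+\ge 1$ then $V\cap\sing1=\emptyset$, so $V$ lies in a \emph{single} component of $\csp\setminus\sing1$, and that component must be $\backReco$; symmetrically, $l_-\le-1$ forces $V\subset\reco$. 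Once you say this, the case $l_+\ge1,\ l_-\le-1$ is immediate ($V\subset\backReco\cap\reco=\emptyset$), and your accumulation analysis for that case is unnecessary.

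For the remaining nontrivial case (say $l_-=0$, $l_+\ge 2$, so $V\subset\backReco$), the paper takes a shorter route than your sliver argument. Each component of $\csp\setminus\sing{0,2}$ inside $\backReco$ has the form $\backReco\cap\cm^{-1}\cell^+_{\nu'}$ with $\nu'\ne\recoPrivate$ (by Remark~\ref{r_thereCanBeOnlyOne}); if $\xc$ were in its closure, continuity of $\cm$ at $\xc$ (Lemma~\ref{l_continuityAtxc}) would force $\xc\in\clo\cell^+_{\nu'}$, contradicting Lemma~\ref{l_singularityStructure}. This handles $l_+=2$ outright, and larger $l_+$ only refines the partition. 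Your geometric argument can be made to work, but the ``$\{w=0\}$-footprint'' claim is incomplete as written: a sliver of $\reco$ could also approach $\xc$ along the boundary curve $\singReco^-$, so you would need to check that the \emph{other} endpoints of the cuts (on $\singReco^-$) also accumulate at $\xc$.
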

\begin{proof}
  By Lemma~\ref{l_singularityStructure} we gather that the only
  connected component of $\csp\setminus\sing1$ whose closure meets $\xc$
  is $\backReco$.  This proves our statement for $l_- = 0,\ l_+ = 1$.
  Now consider a connected component $\cc$ of $\csp\setminus\sing{0,2}$;
  by definition there exist
  $\indCell,\indCell'\in\{\recoPrivate,0,1,\cdots\}$ so that
  $\cc = \cell^+_\indCell\cap\cm\inv\cell^+_{\indCell'}$.  If
  $\cl\cc\ni\xc$, then by the above discussion
  $\indCell = \recoPrivate$, which by Remark~\ref{r_thereCanBeOnlyOne}
  implies that $\indCell'\ne\recoPrivate$.  But then we would have
  $\cl\cm\inv\cell^+_{\indCell'}\ni\xc$, which by
  Lemma~\ref{l_continuityAtxc} implies that
  $\cl\cell^+_{\indCell'}\ni\xc$, contradicting
  Lemma~\ref{l_singularityStructure}.  The statement for general $l_-$
  and $l_+$ then follows by applying Lemma~\ref{l_continuityAtxc}.
\end{proof}

\section{Accelerated Poincar\'e map.}\label{sec:accel-poinc-map}
The analysis of Section~\ref{s_hyperbolicity} shows that expansion of the
collision map $\cm$ is small for large energies. That is, the hyperbolicity
of $\cm$ is rather weak in this region.  It is thus convenient to
consider an induced map, obtained by skipping over collisions
that happen in the same fundamental domain for $\ell$.  In this section
we discuss the resulting accelerated map $\cmp$.  In particular, we will
recall the results of~\cite{fum}, where the large energy regime for
piecewise smooth Fermi--Ulam Models was studied in detail.  At the same time, we
will also present some new technical estimates which are needed for the
proof of our Main Theorem.

\subsection{Number of collisions per period.}
Recall the definition of positive and negative $\indCell$-cells given in
the previous section (see \eqref{e_defCell}). Define (see
Figure~\ref{f_inducingRegion}):
\begin{align}\label{e_definitionInducedSpace}
  \cspi = \clo\left(\csp\setminus \clo\cell^-_0\right).
\end{align}%
\begin{figure}[!h]
  \def\svgwidth{5cm} 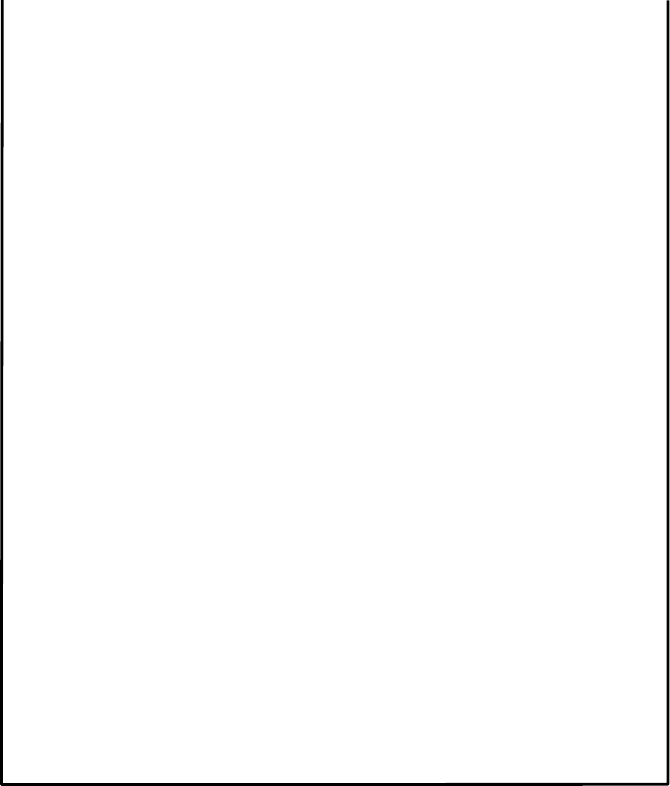
  \caption{The inducing set $\cspi$; note that the geometry can be
    slightly different depending on the properties of $\ell$. In
    fact, it is possible for $\singBackward 0$ to terminate at $\{w = 0\}$
    rather than at $\{\ct = 1\}$.}
  \label{f_inducingRegion}
\end{figure}%
\begin{rmk}\label{r_boundary cspi}
  Observe that $\partial\cspi$ is the union of vertical curves,
  horizontal curves and the unstable curve $\singBackward0$.  In particular,
  each curve in $\partial\cspi$ is compatible with the cone
  field $\conen$.
\end{rmk}

Let $\cE_0 = \intr\csp$ and, for any $n\in\naturals$, define
\begin{align*}
  \cE_{n} &= \{x\in  \csp\setminus\sing{n-1}\st\cm^{k}x\in\cell_0^+ \text{ for any }0\le k < n\}.
\end{align*}
Observe that, by construction, $\cE_n\supset\cE_{n+1}$ and
$\cE_n\supset\cm\cE_{n+1}$; since $\cell_0^+\cap\sing1 = \emptyset$,
we conclude by induction that $\cE_n\cap\sing{n} = \emptyset$.

For any $n > 0$, define $\cEs_n = \cE_{n-1}\setminus\cE_{n}$.
Observe that, if $x\in\cEs_1\setminus\sing{1}$, then $\cm$ is
well defined and smooth at $x$, and moreover $\cm x\in\cspi$; more
generally, for any $k\ge 1$, if $x\in\cEs_{k}\setminus\sing{k}$, then
the map $\cm^k$ is well defined and smooth at $x$, and moreover
$\cm^kx\in\cspi$.  For any $x\in\intr\csp$,
define:
\begin{align*}
  \Np(x) = \sum_{k\ge0} 1_{\cE_k}(x) = \max\{n\ge0\st \cE_n\ni x\}.
\end{align*}
Observe that, if $x\in\cEs_n$, our construction implies that
$\Np(x) = n$.  Finally, let
\begin{align*}
  \singt+ = \sing0\cup \bigcup_{k\ge 0} (\sing{k+1}\cap \cE_k).
\end{align*}
Observe that, for any $k$ we have
$\cEs_{k}\cap\singt+ = \cEs_{k}\cap\sing{k}$ and
$\partial\cEs_{k}\subset\singt+$.  In particular, for any $k > 0$, the
function $x\mapsto\min\{k,\Np(x)\}$ is constant on each connected
component of $\csp\setminus\sing{k}$.  Moreover, by construction,
$\singt+$ is a countable union of $C^1$-smooth stable curves with
\begin{align*}
  \singForward{}\subset\singt+\subset\sing{+\infty}.
\end{align*}
By the above considerations, we conclude that if
$x\in\csp\setminus\singt+$ and  $\Np(x) < \infty$, then
$\cm^{\Np(x)}$ is well-defined and smooth at $x$ and
$\cm^{\Np(x)}x\in\cspi$.  We now proceed to show that $\Np$ is finite
for any $x\in\intr\csp$.
\begin{lem} \label{LmFRet} %
  The sets $(\cEs_n)_{n > 0}$ form a partition$\pmod0$ of $\csp.$
  Moreover
  for any $x = (r,w)\in\intr\csp$:
  \begin{align}\label{e_weakBoundOnNp}
    1\le\Np(x)\le \Const w+ N_\#;
  \end{align}
\end{lem}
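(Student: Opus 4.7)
Since $\cE_{n}\subset\cE_{n-1}$ by construction, the sets $\cEs_{n}=\cE_{n-1}\setminus\cE_{n}$ are pairwise disjoint and their union is $\cE_{0}\setminus\bigcap_{n\ge 1}\cE_{n}=\intr\csp\setminus\bigcap_{n\ge 1}\cE_{n}$. Hence the partition claim (mod $0$) is equivalent to $\bigcap_{n}\cE_{n}$ being $\mu$-null. Moreover the inequality $\Np(x)\le \Const\, w+N_{\#}$ implies a fortiori that $\Np(x)<\infty$ for every $x\in\intr\csp$, which in turn forces $\bigcap_{n}\cE_{n}=\emptyset$. So the whole statement reduces to that quantitative upper bound; the lower bound $\Np(x)\ge 1$ is trivial because $\intr\csp=\cE_{0}$.

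The key is a telescoping identity in $\cell_{0}^{+}$. If $x\in\cEs_{n}$ with $n\ge 2$, then by construction $x_{k}:=\cm^{k}x\in\cell_{0}^{+}$ for $k=0,\dots,n-2$. The intrinsic characterization \eqref{e_defCell} of $\cell_{0}^{+}$ forces $\ct_{k}+\fft_{k}\in(0,1)$, so $\ct_{k+1}=\ct_{k}+\fft_{k}$ (no wrap around the fundamental domain), whence
\begin{equation*}
\sum_{k=0}^{n-2}\fft_{k}=\ct_{n-1}-\ct_{0}<1.
\end{equation*}
I combine this with two complementary bounds. Since $\cell_{0}^{+}\subset\intr\csps^{+}$ is disjoint from $\clo\backReco$, the free flight estimate \eqref{e_boundFreeFlight} gives $\fft_{k}\ge 2\elll/(w_{k}+\slopeb)$. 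Being outside $\clo\backReco$ also means the particle bounces off the fixed wall between $x_{k}$ and $x_{k+1}$, so the reflection identity $\prv_{k+1}=-\pov_{k}$ together with $\pov_{k}=\slope(\ct_{k})+w_{k}$ and $\prv_{k+1}=\slope(\ct_{k+1})-w_{k+1}$ yields $w_{k+1}-w_{k}=\slope(\ct_{k})+\slope(\ct_{k+1})$; in particular $|w_{k+1}-w_{k}|\le 2\slopeb$, so $w_{k}\le w_{0}+2k\slopeb$.

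Combining the two bounds and estimating the resulting decreasing sum by an integral,
\begin{equation*}
1>\sum_{k=0}^{n-2}\frac{2\elll}{w_{0}+(2k+1)\slopeb}\ge \frac{\elll}{\slopeb}\log\frac{w_{0}+(2n-1)\slopeb}{w_{0}+\slopeb},
\end{equation*}
which upon solving for $n$ gives $n\le \Const\, w + N_{\#}$ for suitable constants depending only on $\ell$. The case $n=1$ is immediate since the bound holds as soon as $N_{\#}\ge 1$. Letting $n\to\infty$ in the same chain of estimates shows that no orbit can stay in $\cell_{0}^{+}$ forever (otherwise $\sum\fft_{k}$ would stay below $1$ while its integral lower bound diverges), so $\bigcap_{n}\cE_{n}=\emptyset$ and the partition claim follows. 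The only delicate point is checking that every transition within $\cell_{0}^{+}$ includes a fixed-wall bounce, which is essential for controlling the drift in $w_{k}$; this is where the restriction to $\cell_{0}^{+}$ (as opposed to an arbitrary positive cell) and the geometric description of $\backReco$ in Lemma~\ref{l_forwardReco} are crucially used.
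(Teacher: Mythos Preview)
Your proof is correct and follows essentially the same route as the paper: both reduce to the telescoping bound $\sum_{k}\fft_{k}<1$ on an orbit segment confined to $\cell_{0}^{+}$, combine the free-flight lower bound \eqref{e_boundFreeFlight} with a linear drift estimate on the velocity, and extract a logarithmic lower bound that forces $n\le\Const\,w+N_\#$. The only cosmetic difference is that the paper tracks the absolute velocity $v_{k}=w_{k}-h(\ct_{k})$ and uses $v_{k}\le v_{0}+2k\slopeu$, whereas you track $w_{k}$ directly via the reflection identity; the resulting inequalities are equivalent up to harmless constants.
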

\begin{proof}
  We claim that for sufficiently large $n$:
  \begin{align}\label{e_linearBoundOncE}
    \cE_n \subset \{w\ge \Const n-\slopeu\}.
  \end{align}

  Observe that~\eqref{e_linearBoundOncE} implies that
  \begin{align*}
    \bigcap_{k\ge0}\cE_k = \emptyset;
  \end{align*}
  which in particular implies that the sequence $(\cEs_n)_{n > 0}$
  forms a partition$\pmod 0$ of $\csp$. The
  estimate~\eqref{e_weakBoundOnNp} also immediately follows
  from~\eqref{e_linearBoundOncE}.

  We proceed with the proof of our claim. Assume $x\in\cE_n$ and let
  $x_k =(r_k,w_k) = \cm^k x$. By construction, we have for any
  $0\le k < n$ that $x_k\in\cell^+_0$, \ie
  $\ct_{k}+\fft(x_{k}) \in(0,1)$.  By induction, this implies
  $\ct_n = \ct_{0}+\sum_{k=0}^{n-1}\fft(x_k) < 1$.  In particular
  \begin{align*}
    \sum_{k=0}^{n-1}\fft(x_k) < 1.
  \end{align*}
  On the other hand, since $\cell^+_0\cap\cl\backReco = \emptyset$, if
  $(r,w)\in\cell^+_0$, we can use the lower bound
  in~\eqref{e_boundFreeFlight}, which gives
  \begin{equation}
    \label{FTRecall}
    \fft(\ct,w)\geq2\elll/(w-h(\ct)).
  \end{equation}
  Let $v_k = w_k-h(\ct_{k})$ be the absolute velocity after the $k$-th
  collision; notice that since in particular $x_k\nin\backReco$ for
  $0\le k < n$ we have $v_k > 0$; moreover, trivially
  $v_k \le v_0+2k\slopeu$. We conclude that
  \begin{align*}
    1 > \sum_{k = 0}^{n-1}\fft(x_k)\ge \frac\elll\slopeu\sum_{k =
    0}^{n-1}\left[\frac{v_0}{2\slopeu}+k\right]\inv\ge\frac\elll\slopeu\log\left[1+\frac{2\slopeu n}{v_0}\right].
  \end{align*}
  Hence,
  \begin{equation} \label{ColBLV}
    v_0 > \Const n,
  \end{equation}
  which immediately implies~\eqref{e_linearBoundOncE}, since
  $v_0 < w+\slopeu$.
\end{proof}
Define $\singp+ = (\singt+\cap\cspi) \cup \partial\cspi$.
Lemma~\ref{LmFRet} implies that the map
$\cmp:\cspi\setminus\singp+\to\cspi$ given by
\begin{align*}
  \cmp(x) = \cm^{\Np(x)}(x),
\end{align*}
is well defined and smooth.  A completely analogous construction leads
to the definition of a set $\singp{-}$ so that the inverse induced map
$\cmp\inv$ is defined for $x\in\cspi\setminus\singp-$.  In fact we
have that $\cmp$ is a diffeomorphism
$\cmp:\cspi\setminus\singp+\to\cspi\setminus\singp-$.  We can
  also define $\Np_{-}:\cspi\setminus\singp-\to\bZ_{ < 0}$ so that
  $\cmp\inv(x) = \cm^{\Np_{-}(x)}(x)$.  Observe that
  $\Np_{-}(x) =-\Np(\cmp\inv(x))$.

We now proceed to define the singularity set for the map $\cmp^k$ for
any $k\in\integers$.  This is completely analogous to the construction
carried over in Subsection~\ref{ss_singularitiesLocalStructure}; let
$\singp0 = \partial\cspi$, $\singp1 = \singp+$ (\resp
$\singp{-1} = \singp-$) and for any $n > 0$ let
\begin{align*}
  \singp{n+1} &=\singp{n}\cup\cmp\inv(\singp{n}\setminus\singp-)
  &\singp{-n-1} &=\singp{-n}\cup\cmp(\singp{-n}\setminus\singp+).
\end{align*}
Observe that $\cmp^k$ is well defined and smooth at $x$ if and only if
$x\in\cspi\setminus\singp{k}$.  Let furthermore
$\DS \singp{+\infty} = \bigcup_{n\ge0}\singp{n}$ and
$\DS \singp{-\infty} = \bigcup_{n\le0}\singp{n}$.

For any $n \ge 0$, let us define
  $\Np_{n}:\cspi\setminus\singp{n}\to\bN$ by induction as follows.  We
  let $\Np_{0}(x) = 0$ and, for $k \ge 1$, we let
\begin{align*}
  \Np_{k}(x) = \Np_{k-1}(x)+\Np(\cmp^{k -1}x).
\end{align*}
Observe that by construction we have
$\cmp^{n}(x) = \cm^{\Np_{n}(x)}(x)$.  Then define $\singt{n}$ as
follows: $x\in\singt{n}$ if either $x\in\singt{+}$ or
$\cm^{\Np(x)}\in\singp{n-1}$.  Then we can extend the definition of
$\Np_{n}$ to $\csp\setminus\singt{n}$ as follows: if $n = 1$ we let
$\Np_{1}(x) = \Np(x)$; otherwise
$\cm^{\Np(x)}(x)\in\cspi\setminus\singp{n-1}$ and we define
$\Np_{n}(x) = \Np(x)+\Np_{n-1}(\cm^{\Np(x)}x)$.  A similar
construction leads to the definition of $\Np_{-n}$ for $n > 0$.

\begin{rmk}\label{rmk:definition-Npk}
  It follows from our construction that if $x = (r,w)$ is so that
  $\Np_{k}(x)$ is defined, then, denoting once again $x_{j} = \cm^{j}x$:
  \begin{align*}
    \Np_{k}(x) = \min \{n \st \ct+\sum_{j = 0}^{n-1}\tau(x_{j})\ge k\}.
  \end{align*}
\end{rmk}

Let $W$ be an unstable curve, and $n > 0$; let $W'$ be a connected
component of $\cm^{n}W$; then we can define
  \begin{align}\label{eq:definition-hatn}
    \hat n(W') &= \max \{k \st \Np_{k}(x)\le n \text{ for all }
                 x\in \cm^{-n}W'\}.
  \end{align}
We conclude this subsection with the definition of the fundamental
domains
\begin{align}\label{e_fundamentalDomains}
  \Dom_n &= \intr\cspi\cap\cEs_n.
\end{align}
Notice that our previous discussion shows that
\begin{subequations}\label{e_domainSingularities}
  \begin{align}
    \Dom_{n}\cap\sing{n-1} &= \emptyset\\
    \Dom_n\cap\singp+ &= \Dom_n\cap\sing{n}.
  \end{align}
\end{subequations}

\subsection{Dynamics for large energies.} \label{SSCMP}%
In~\cite{fum} we have proved several useful properties that the map
$\cmp$ satisfies for large values of $w$.  We collect them in the
proposition below. Recall the notation $$(\ct_k,w_k) = \cm^{k}(\ct,w).$$
\begin{prp}[Properties of $\cmp$ for large energies]\label{p_largeEnergiesFUM}
  There exists $\largew > 0$ so that, if $(r,w)\in\cspi$, $w\ge\largew$:
  \begin{enumerate}
  \item \label{i_boundOnExcursion} there exists $C_* > 1$ so that for
    any $0 \le k \le \Np(\ct,w)$
    \begin{align}\label{e_boundOnExcursion}
      w_k,w_k-h(\ct_k)\in(C_*\inv
      w, C_*w);
    \end{align} Accordingly, we have\footnote { In fact, the following
      stronger statement holds: the limit of $\frac{\Np(\ct, w)}{w}$ exists
      when $w\to \infty$ and $(\ct, w)\in \cspi$.  However, the weaker
      estimate~\eqref{Per-W} is sufficient for our current purposes.}
    \begin{align}
      \label{Per-W}
      C_{*}\inv{w} &\le \Np(\ct, w)\leq C_* w.
    \end{align}

  \item \label{i_boundOnEndpoint} there exists $\hat C$ so that
    $|w_{\Np(\ct,w)}-w|\le \hat C$.
  \end{enumerate}
\end{prp}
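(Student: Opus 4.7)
The proposition collects results essentially proved in~\cite{fum}; I outline the strategy. The core idea is that for large $w$ the dynamics over one fundamental-domain excursion closely resembles repeated elastic bounces against a nearly stationary wall, so both the collision count and the net drift in $w$ are controlled by a semi-adiabatic analysis.

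For part (a), I would first establish the two-sided bound on the post-collisional absolute velocity $v_k = w_k - h(\ct_k)$. At each collision $v_k$ jumps by at most $2\slopeu$, so $|v_k - v_0| \leq 2\slopeu \cdot \Np(\ct,w)$. Combined with the crude upper bound $\Np(\ct,w) \leq \Const w$ implied by~\eqref{ColBLV}, this already gives $v_k \leq \Const w$. For a matching lower bound, if $v_k$ were to drop below $v_0/C$ at some step, then~\eqref{FTRecall} would force $\fft(x_k) \geq \Const/v_0$, and within $O(1)$ further steps the partial sum $\sum \fft(x_j)$ would exceed $1$, contradicting $x_k \in \cell^+_0$. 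The bound on $w_k$ then follows since $|h| \leq \slopeu$. The lower bound $\Np(\ct,w) \geq C_*^{-1} w$ is immediate from $\fft(x_k) \leq 2\ellu/v_k \leq \Const/w$ together with the fact that the flight times over an excursion sum to a positive constant.

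The delicate statement is part (b). I would write the telescoping sum
\begin{align*}
w_{\Np(\ct,w)} - w = \sum_{k = 0}^{\Np(\ct,w)-1} (w_{k+1} - w_k)
\end{align*}
and expand each increment in powers of $1/v_k$ using the smoothness of $\ell$ on the interior of a period. The leading-order contributions reassemble into a Riemann sum for an integral that vanishes over a full period by periodicity; the jump of $\ell'$ at integer times contributes a bounded correction (related to $\Delta$ from~\eqref{DefDelta}); and the remainders from the discrete-to-continuous approximation are $O(w^{-1})$ per collision, yielding $O(1)$ after summing over the $O(w)$ collisions in one excursion. The main obstacle is controlling this error uniformly: near the discontinuity of $\ell'$, and near grazing collisions at the endpoints of the excursion, the expansion degrades and must be handled separately. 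Proving that the contribution of such near-singular events remains $O(1)$ is precisely the technical heart of~\cite{fum}, and it is also the main reason for introducing the accelerated map $\cmp$ in the first place.
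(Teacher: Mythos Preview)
You correctly identify that the paper does not prove this proposition but simply cites~\cite{fum}, and your overall outline of the strategy there is reasonable. One detail of your sketch is wrong, however: in the lower-bound argument for part~(a), the claim that ``within $O(1)$ further steps the partial sum $\sum\fft(x_j)$ would exceed~$1$'' does not follow. A single flight time of order $C/v_0$ (or $O(1)$ of them) is negligible for large $v_0$. The correct version runs the same logarithmic-sum estimate as in the proof of Lemma~\ref{LmFRet}: if $v_{k^*}<v_0/C$ first occurs at step $k^*$, then the velocity must have descended gradually (by at most $2\slopeu$ per step), so $k^*\gtrsim v_0/\slopeu$, and along the way $v_j\le v_0/C+2\slopeu(k^*-j)$; summing $\fft(x_j)\ge 2\elll/v_j$ over $j<k^*$ gives a contribution $\gtrsim (\elll/\slopeu)\log C$, which exceeds~$1$ once $C>e^{\slopeu/\elll}$.

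That said, the route actually taken in~\cite{fum} (and reflected in Theorem~\ref{t_normalForm} just below the proposition) is more direct: the adiabatic invariant $I=w\,\ell(\ct)+O(1)$ is approximately conserved along an excursion, which immediately gives $w_k\in\bigl((\elll/\ellu)\,w_0,\;(\ellu/\elll)\,w_0\bigr)$ up to $O(1)$ corrections, and likewise yields the bound on $|w_{\Np}-w|$ in part~(b) with less bookkeeping than the Riemann-sum expansion you describe.
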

\begin{cor}\label{c_weakBoundOnEndpoint}
  For any $(r,w)\in\cspi\setminus\singp+$, let $(\hat r,\hat w) =
  \cmp(r,w)$; then
  \begin{align*}
    |\hat w-w|\le\const.
  \end{align*}
\end{cor}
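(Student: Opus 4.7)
The plan is to split into two regimes according to whether $w$ is large or small, bounded by the threshold $\largew$ provided by Proposition~\ref{p_largeEnergiesFUM}.

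\textbf{High-energy regime.} When $w\ge\largew$, the bound $|\hat w - w|\le\const$ is a direct restatement of part~\ref{i_boundOnEndpoint} of Proposition~\ref{p_largeEnergiesFUM}, applied to $\hat w = w_{\Np(r,w)}$. So nothing new is needed in this range.

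\textbf{Low-energy regime.} When $w<\largew$, Lemma~\ref{LmFRet} yields the uniform bound
\[
  \Np(r,w)\le \Const\largew + N_\# =: N_0,
\]
so the induced map is obtained by iterating $\cm$ at most $N_0$ times starting from a point in a bounded region. The idea is then to control the per-step growth of $w$ along this bounded orbit, and conclude from $\hat w\le w + (\text{growth per step})\cdot N_0$.

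\textbf{Per-step velocity bound.} Here the key observation is that between two consecutive collisions with the moving wall the absolute velocity $v$ is preserved in magnitude (it simply flips sign if a collision with the fixed wall intervenes). Combined with $v_k = w_k + \slope(\ct_k)$ and $|\slope|\le\slopeu$, this gives the crude but uniform estimate
\[
  w_{k+1} \le |v_k| + \slopeu \le w_k + 2\slopeu
\]
for every step of $\cm$, valid with no largeness assumption on $w_k$. Iterating from $w_0 = w < \largew$ for at most $N_0$ steps yields $w_k \le \largew + 2\slopeu N_0$ throughout the excursion, hence in particular $\hat w \le \largew + 2\slopeu N_0$, and therefore
\[
  |\hat w - w|\le 2\largew + 2\slopeu N_0 = \const.
\]

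\textbf{Main obstacle.} The only mildly subtle point is the per-step bound $w_{k+1}\le w_k + 2\slopeu$ for intermediate collisions during a low-energy excursion, since the induced map groups together potentially many collisions in the same fundamental domain. Once this crude kinematic inequality is in place, combining the two regimes immediately gives the statement.
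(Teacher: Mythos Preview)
Your proposal is correct and follows essentially the same approach as the paper: split into high and low energy, invoke Proposition~\ref{p_largeEnergiesFUM}\ref{i_boundOnEndpoint} for $w\ge\largew$, and for $w<\largew$ use the bound~\eqref{e_weakBoundOnNp} on $\Np$ together with the elementary per-step kinematic bound $|w_{k+1}-w_k|=\cO(1)$. The paper's proof is simply a one-line pointer to these two ingredients; your version spells out the per-step bound explicitly, which is the same inequality ``$v_k\le v_0+2k\slopeu$'' already used inside the proof of Lemma~\ref{LmFRet}.
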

\begin{proof}
  The proof immediately follows combining
  Proposition~\ref{p_largeEnergiesFUM}\ref{i_boundOnEndpoint}
  (for large $w$) and~\eqref{e_weakBoundOnNp} (for small $w$).
\end{proof}
In fact, in~\cite{fum} we constructed a normal form for $\cmp$ for
high energies, which we now proceed to describe.  Consider the strip
$\bM = [0,1]\times \bR\ni(\tau,I)$, and for $\Delta\in\bR$ define the
piecewise affine map $\nf_\Delta: \bM\to\bM$ given by the formula
\begin{equation}
  \label{SawTooth}
  \nf_\Delta(\tau,I) = (\bar \tau,\bar I),\text{ where }
  \left\{\begin{array}{rl}
           \bar \tau &= \tau-I\mod 1,\\
           \bar I &= I+\Delta(\bar\tau-1/2).
         \end{array}\right.
     \end{equation}
     The curves $\{\tau = I\mod 1\}$ partition $\bM$ in a countable number of
     fundamental domains that we denote with $(\hDom_n)_{n\in\bZ}$, where the
     index $n$ is so that $\hDom_n\ni(1/2,n)$.  Observe that $\nf_\Delta$ is
     continuous in each fundamental domain. In particular, for
     $n\in\integers$ let $\tsl n:\bM\to\bM$ be the translation map
     \begin{align}\label{e_translationMapDefinition}
       \tsl n &: (\tau,I)\mapsto(\tau,I+n);
     \end{align}
     then $\hDom_n = \tsl n\hDom_0$ and if $x\in\hDom_n$, we have
     $\nf_\Delta = \tsl{n}\circ\nfr_\Delta\circ\tsl{-n}$, where
     $\nfr_\Delta:\reals^2\to\reals^2$ is the affine map given by
     \begin{align*}
       \nfr_\Delta(\tau,I) = (\tilde\tau,\tilde I),\text{ where }
       \left\{\begin{array}{rl}
                \tilde\tau &= \tau - I,\\
                \tilde I &= I +\Delta(\tilde\tau-1/2).
              \end{array}\right.
     \end{align*}
     The relevance of the map $\nf_\Delta$ comes from
     Theorem~\ref{t_normalForm} below.  The theorem is essentially a more
     detailed statement of~\cite[Theorem 1]{fum}.  The reader will have no
     difficulty to check that~\cite[Section II]{fum} indeed provides all that
     is needed to prove Theorem~\ref{t_normalForm}.

     Below the symbol $\cO_k(I^{-1})$ denotes a function whose partial
     derivatives up to order $k$ are $\cO(I^{-1})$.
     \begin{thm}
       \label{t_normalForm}
       There exist $\largew > 0$ and coordinates $(\tau,I)$ on the
       set $\cspi\cap\{w\ge\largew\}$ so that
       \begin{enumerate}
       \item $\const\inv w < I < \const w$; moreover, there exists
         $C > 0$ so that if $(r,w)\in D_n$, and $(r',w')\in D_{n'}$
         and $w' -w > C$, then necessarily $n' > n$.
       \item the singularity lines $\{\ct = 0\}$ and
         $\cm \{\ct = 0\}$ are given in $(\tau, I)$ coordinates by
         $\{\tau = 0\}$ and $\{\tau = 1+\cO_5\left(I^{-1}\right)\}$
         respectively;
       \item if $x\in D_n$ then $\cmp$ in $(\tau,I)$-coordinates is
         a $\cO_5(I\inv)$-\hskip0pt{}perturbation of\/
         $\tsl n\circ\nfr_\Delta\circ\tsl {-n}$ where $\Delta$ is
         given by \eqref{DefDelta}.
       \end{enumerate}
       The coordinates $(\tau, I)$ will be called \emph{adiabatic
         coordinates}.
     \end{thm}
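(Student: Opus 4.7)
My plan is to construct the adiabatic coordinates $(\tau, I)$ via the averaging principle for rapidly oscillating Hamiltonian systems, following the strategy developed in~\cite[Section~II]{fum}. In the high-energy regime (large $w$), the particle undergoes $\Np(x)\asymp w$ collisions per period by Proposition~\ref{p_largeEnergiesFUM}\ref{i_boundOnExcursion}, and an action variable $I$ emerges as the natural adiabatic invariant, approximately conserved over one period. Concretely, I would take $I$ to be (up to a constant factor) the action integral associated with the unperturbed fast oscillation between the walls, so that $\const^{-1} w < I < \const w$ and the level sets of $I$ are invariant under the averaged dynamics. The phase $\tau$ is then chosen so that the unperturbed dynamics becomes the pure rotation $\tau\mapsto \tau-I\mod 1$, and the normalization of $\tau$ is fixed so that the singularity $\{\ct = 0\}$ of the wall motion maps precisely to $\{\tau = 0\}$. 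Property~(a) then follows because, within each fundamental domain $D_n$, the coordinate $I$ takes values in an interval of length approximately $1$ centered near $n$ (reflecting that $n$ counts the number of full periods completed during the return), while by Corollary~\ref{c_weakBoundOnEndpoint} combined with $I\asymp w$ the per-step change in $I$ is $\cO(1)$; thus a jump $w'-w$ larger than a suitable constant forces $n'>n$.

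For property~(b), the identification $\{\ct = 0\} \leftrightarrow \{\tau = 0\}$ is built into the construction of $\tau$. The image $\cm\{\ct=0\}$ consists of the points whose \emph{previous} collision occurred at the wall-motion discontinuity; after leaving the wall at $\ct = 0$, the particle completes essentially one full period of bouncing before the next collision, so the next collision occurs at $\tau$ close to $1$. The correction term $\cO_5(I^{-1})$ arises from the residual flight time between the last bounce and the end of the period, which is of order $w^{-1}\asymp I^{-1}$, and its $C^5$ regularity is inherited from the standing $C^5$ hypothesis on $\ell$.

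The main work, and the principal obstacle, is establishing property~(c) with the full $\cO_5(I^{-1})$ control on the remainder. The plan is to write $\cmp$ as a composition of approximately $\Np(x)$ individual collision maps $\cm$, each admitting a Taylor expansion in powers of $I^{-1}$; a careful summation (essentially the content of~\cite[Section~II]{fum}) shows that the leading-order effect of one full traversal in the new coordinates is exactly the piecewise affine map $\nfr_\Delta$. The constant $\Delta$ of~\eqref{DefDelta} appears because the net kick on $I$ across one period is produced entirely at the velocity discontinuity and is weighted by the average of $\ell^{-2}$, reproducing the product in~\eqref{DefDelta}; the translation conjugation $\tsl{n}\circ\nfr_\Delta\circ\tsl{-n}$ on $D_n$ simply records that $I$ has crossed the integer $n$ in transiting between adjacent fundamental domains. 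The genuinely delicate part is the $\cO_5$ control: one must track how the $C^5$ smoothness of $\ell$ propagates through the composition of up to order $I$ single-bounce maps while each bounce contributes a correction of uniform size $I^{-1}$. This is achieved by writing the single-bounce map as the identity plus an $\cO_5(I^{-1})$ perturbation in rescaled (adiabatic) coordinates, and using the chain rule to verify that derivatives up to order $5$ remain uniformly bounded through the composition, which is precisely the computation carried out in~\cite{fum}.
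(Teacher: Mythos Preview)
Your proposal is correct and follows the same approach as the paper: the paper does not prove Theorem~\ref{t_normalForm} directly but refers the reader to~\cite[Section~II]{fum}, stating that the theorem is ``essentially a more detailed statement of~\cite[Theorem~1]{fum}'' and that the computations there provide all that is needed. Your sketch of the averaging argument (construction of the adiabatic invariant $I \approx w\ell(\ct)$, phase $\tau$ via the averaged rotation, emergence of $\Delta$ from the velocity jump weighted by the average of $\ell^{-2}$, and propagation of $C^5$ bounds through $\Np\asymp I$ single-bounce maps each of size $I^{-1}$) is an accurate summary of that construction; the explicit change-of-coordinate formulas recorded in~\eqref{ChCoordIT} confirm that your picture of $I$ and $\tau$ is the correct one.
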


     In particular, the above theorem implies that if $n$ is
     sufficiently large, $\tsl{-n}\Dom_n$ is contained in a
     $\const n\inv$-neighborhood of $\hDom_0$.  We will often drop the
     subscript $\Delta$ from $\nfr$ when this will not cause
     confusion.

     For future reference we include the formulas relating the
     adiabatic coordinates $(\tau, I)$ to the original coordinates
     $(\ct, w).$ Namely we have
     \begin{subequations}
       \label{ChCoordIT}
       \begin{equation} I=w \ell(\ct) +\mathfrak{a}(\ct)+\cO_5(w^{-1}),
       \end{equation}
       \begin{equation} \tau= \theta I+\cO_5(w^{-1}),
       \end{equation}
       \begin{equation} \theta=\int_0^t \ell^{-2}(s)
         ds+\frac{\mathfrak{b}(\ct)}{w}+\cO_5(w^{-2})
       \end{equation}
     \end{subequations} where $\mathfrak{a}$ and $\mathfrak{b}$ are smooth
     functions whose precise value will not be important for us.

     The next result, proven in~\cite{fum}, provides the first major
     step toward the proof of the ergodicity of dispersing Fermi--Ulam
     Models.
     \begin{thm}
       (\cite[Theorem 4]{fum})
       \label{ThRec}
       Dispersing Fermi--Ulam Models are recurrent.
     \end{thm}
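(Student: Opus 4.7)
My plan is to reduce the high-energy dynamics to an asymptotic Markov-chain description using the normal form of Theorem \ref{t_normalForm}, and then establish recurrence by a Lyapunov function argument in the adiabatic coordinates $(\tau, I)$.

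First, I would fix the threshold $\largew$ of Theorem \ref{t_normalForm} and study $\cmp$ on the high-energy region $\{w\ge\largew\}$ in adiabatic coordinates. By item (c) of that theorem, inside each fundamental domain $\Dom_n$ the map $\cmp$ acts as $\tsl{n}\circ\nfr_\Delta\circ\tsl{-n}$ up to a $\cO_5(I^{-1})$ perturbation, so a single step shifts $I$ by
\begin{equation*}
  I_{n+1}-I_n = \Delta\bigl(\tau_{n+1}-1/2\bigr)+\cO(I_n^{-1}).
\end{equation*}
Since $\Delta<0$ for dispersing models, the companion map $\nfr_\Delta$ is uniformly hyperbolic, and on the invariant cone field of Section \ref{s_invariantCones} the expansion in the $\tau$ direction is controlled. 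The first task is thus to show that, after conditioning on the past, the variable $\tau_{n+1}$ is close to uniformly distributed on $[0,1]$. This can be done by pushing forward a standard measure on an unstable curve and using the expansion given by \eqref{ExpPMetric} together with a distortion estimate; the net effect is that the conditional mean of $I_{n+1}-I_n$ is $\cO(I_n^{-1})$ while the conditional variance has a positive lower bound $\sigma_*^2>0$ as long as $I_n$ remains large.

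Next, using these asymptotic moment bounds, I would build an explicit Lyapunov function $V(I)=\sqrt{I}$ and show that for $I_n\ge\largew$
\begin{equation*}
  \mathbb{E}\bigl[V(I_{n+1})-V(I_n)\,\big|\,\mathcal{F}_n\bigr] \le
  -\frac{c_1}{I_n^{3/2}}+\frac{c_2}{I_n^{2}}
\end{equation*}
for some $c_1>0$; the leading order comes from the Taylor expansion $V(I+\xi)\approx V(I)+\xi V'(I)+\tfrac12\xi^2 V''(I)$ combined with $V''<0$ and the strictly positive variance of $\xi$. Thus $V(I_n)$ is a supermartingale outside the region $\{I\le\largew\}$. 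The Foster--Lyapunov criterion (or a direct optional-stopping argument applied to $V(I_{n\wedge\sigma})$ where $\sigma$ is the exit time from $\{I\ge\largew\}$) then forces $\sigma<\infty$ almost surely. Hence $\mu$-a.e.\ orbit of $\cmp$ entering the high-energy region will exit it in finite time, and since $|\Np|$ is finite on each cell by Lemma \ref{LmFRet}, the same holds for $\cm$.

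To upgrade from ``a.s.\ one visit to the bounded region'' to ``infinitely many visits'', I would apply the above argument to every iterate $\cmp^k$ starting from any point in $\cspi\cap\{w\ge\largew\}$, combined with the $\sigma$-finiteness of $\mu$ restricted to the bounded region to handle the small-energy return (Poincar\'e recurrence on the finite-measure piece). Concretely, the bounded region $\cspi\cap\{w<\largew\}$ has finite $\mu$-measure, so Poincar\'e recurrence handles orbits that remain inside it, and the Lyapunov argument handles orbits that escape to high energy, guaranteeing their return.

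I expect the main obstacle to be making rigorous the claim that $\tau_{n+1}$ is near-uniform conditionally on the past: this requires a careful propagation of regularity of densities along unstable curves in adiabatic coordinates, under iterations of the $\cO_5(I^{-1})$-perturbed sawtooth map, and must be delicate enough that the error in the drift estimate is genuinely of order $I_n^{-1}$ and not just $o(1)$, otherwise the Lyapunov inequality could fail for large $I_n$. The auxiliary tools of Section \ref{ScDistortion} and of Theorem \ref{t_normalForm}(b), which tracks the singularity lines to high order, are what make this possible.
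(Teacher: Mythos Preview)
The paper does not prove Theorem~\ref{ThRec}; it is imported wholesale from \cite{fum} and merely quoted here, so there is no in-paper argument to compare against. Your outline is in the right spirit --- a Lyapunov/martingale argument on the adiabatic action $I$ via the normal form of Theorem~\ref{t_normalForm} is indeed the natural route, and is close to what \cite{fum} does --- but as written it has two real gaps.

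First, the supermartingale inequality is miscounted. With $V(I)=\sqrt I$ one has $V'(I)=\tfrac12 I^{-1/2}$ and $V''(I)=-\tfrac14 I^{-3/2}$, so if the conditional drift of $\xi=I_{n+1}-I_n$ is $c_0 I_n^{-1}+o(I_n^{-1})$ and the conditional second moment is $\sigma_*^2+o(1)$, the Taylor expansion gives
\[
\mathbb E\bigl[V(I_{n+1})-V(I_n)\,\big|\,\mathcal F_n\bigr]
=\frac{4c_0-\sigma_*^2}{8\,I_n^{3/2}}+o\!\left(I_n^{-3/2}\right).
\]
Both the drift and the variance contribute at the \emph{same} order $I_n^{-3/2}$; your display $-c_1 I_n^{-3/2}+c_2 I_n^{-2}$ silently relegates the drift term to order $I_n^{-2}$, which is wrong. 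To get $c_1>0$ you must verify the Lamperti-type condition $4c_0<\sigma_*^2$ (or, with a different power, $2c_0<\sigma_*^2$), and $c_0$ is determined by the $\cO_5(I^{-1})$ correction in Theorem~\ref{t_normalForm}(c), one order beyond what that theorem makes explicit. Nothing in your plan computes or bounds $c_0$.

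Second, the probabilistic language is not grounded: $\cmp$ is deterministic, so ``$\mathbb E[\cdot\mid\mathcal F_n]$'' has no direct meaning. Making sense of it requires either a standard-pair decomposition plus a quantitative equidistribution statement for $\tau_{n+1}$ on each unstable curve (an averaging lemma, not just distortion control), or the construction of a genuinely deterministic Lyapunov function that decreases after a block of iterates. You flag this as the main obstacle but offer no mechanism beyond invoking Section~\ref{ScDistortion}; those estimates control Jacobians along a fixed curve but do not by themselves yield equidistribution of $\tau$ to the $o(I^{-1})$ accuracy your drift bound needs.
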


     \subsection{Bounds for $p$-slopes.}

     We record in this section several useful estimates.
     \begin{lem}
       \label{LmPSlopeC0}
       There are constants $c_1, c_2 > 0$ such that for any $w_*$
       sufficiently large, any $x = (\ct,w)\in \csp$, if
       $\prpslope\ge 0$ (and in particular for any unstable vector):
       \begin{itemize}
       \item[(a)] If $w\geq w_*$ then $(\prpslope)'\geq \cK/w$.
       \item[(b)] If $w\leq w^*$, then
         \begin{align*}
           (\prpslope)'\geq \frac{c_1}{1+\tau}.
         \end{align*}
         Furthermore, if $x\nin\backReco$, we also have the upper bound
         \begin{align*}
           (\prpslope)'\leq \frac{c_2}{1+\tau}.
         \end{align*}
       \end{itemize}
     \end{lem}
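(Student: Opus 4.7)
My starting point is the explicit transformation formula for p-slopes under one iteration of $\cm$. By \eqref{e_slopeEvolution}, the action on the pre-collisional p-slope is the composition of the collision $L_\cpar : \pslope \mapsto \pslope + \cpar$ followed by the free flight $U_\fft : \pslope \mapsto \pslope/(1 + \fft\pslope)$, giving
\begin{equation*}
(\prpslope)' \;=\; \frac{\prpslope + \cpar}{1 + \fft(\prpslope + \cpar)},
\end{equation*}
where $\cpar = 2\curv(\ct)/w \ge 2\cK/w$ and $\fft = \fft(x)$. Since $y \mapsto y/(1+\fft y)$ is increasing on $[0,\infty)$, the hypothesis $\prpslope \ge 0$ yields the fundamental inequality $(\prpslope)' \ge \cpar/(1+\fft\cpar) = 1/(1/\cpar + \fft)$; the whole proof will amount to exploiting this single estimate in the two velocity regimes.

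For part (a), I would first note that $w \ge w_* > \slopeb$ forces $x \nin \clo\backReco$ by Lemma \ref{l_forwardReco}, so Remark \ref{r_boundFreeFlight} supplies $\fft \le 2\ellu/(w - \slopeu)$. Since $\curv$ is bounded above, this implies $\fft\cpar = \cO(w^{-2})$; for $w_*$ chosen large enough, $1 + \fft\cpar \le 2$, and the fundamental inequality becomes $(\prpslope)' \ge \cpar/2 \ge \cK/w$.

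For part (b) I would split into the two bounds. The lower bound: $w \le w^*$ yields $1/\cpar \le A := w^*/(2\cK)$, hence $(\prpslope)' \ge 1/(A + \fft)$; the elementary estimate $A + \fft \le (1+A)(1+\fft)$ then delivers the claim with $c_1 := 1/(1+A)$. The upper bound: the extra hypothesis $x \nin \backReco$ is what makes the argument work, because by Remark \ref{r_boundFreeFlight} the free flight time has a positive uniform lower bound $\fft_{\min} > 0$ for $w \le w^*$ outside $\backReco$; combining the trivial estimate $(\prpslope)' \le 1/\fft$ (immediate from the formula) with $\fft \ge \fft_{\min}$ produces $(\prpslope)' \le (1+\fft_{\min}^{-1})/(1+\fft) =: c_2/(1+\fft)$.

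I do not foresee any serious obstacle; the analytic content reduces to the monotonicity of one M\"obius transformation, handled in two velocity regimes. The only subtlety worth flagging is that the recollision hypothesis in the upper bound of (b) is essential: inside $\backReco$ the free flight time $\fft$ can be arbitrarily small, so $(\prpslope)'$ would become unbounded even for fixed $\prpslope$.
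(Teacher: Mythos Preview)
Your proof is correct and follows essentially the same route as the paper's: both exploit the formula $(\prpslope)' = (\prpslope+\cpar)/(1+\fft(\prpslope+\cpar))$, the monotonicity of the underlying M\"obius map, and the free-flight bounds from Remark~\ref{r_boundFreeFlight}. Your lower bound in (b) is actually a bit cleaner, since the inequality $1/\cpar+\fft\le(1+A)(1+\fft)$ handles the recollision region uniformly, whereas the paper treats $x\in\backReco$ as a separate case.
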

     \begin{proof}
       Assume $w_* > 2\cK$ and so large that $w\geq w_*$ implies that
       $\tau\leq 1$.  In this case,~\eqref{e_slopeEvolution} implies:
       \begin{align*}
         (\prpslope)' &= ((\prpslope+\cpar)\inv+\tau)\inv\ge
                        ((\prpslope+\cpar)\inv+1)\inv\\
                      &\ge (\cpar\inv+1)\inv =  \left(w/2\curv+1\right)\inv\ge
                        \cK w\inv.
       \end{align*}
       This proves item (a).  Next suppose that $w\leq w^*$ somewhere on $W$.
       Then, unless $x\in\backReco$, there is a constant $\delta=\delta(w_*)$
       such that $\tau\geq\delta$.  In order to prove (b), rewrite
       \begin{equation}
         \label{ElFr}
         (\prpslope)'=\frac{1}{\tau}-\frac{1}{\tau(1+\tau(\prpslope+\cpar))}.
       \end{equation}
       Hence
       \begin{align*}
         \frac{1}{\tau}\left(1-\frac{1}{1+\delta \cK}\right)\leq (\prpslope)'\leq \frac{1}{\tau},
       \end{align*}
       which gives both the upper and lower bounds.  If, on the other hand
       $x\in\backReco$, then $\tau\le 1$ and by Lemma~\ref{l_forwardReco} we
       have $w \le \slopeb$; proceeding as in (a), we obtain the lower bound
       provided that $c_1\le (\slopeb/2\cK+1)\inv$.
     \end{proof}

     Recall that
     $\prpslope_k$ denotes the value of $\prpslope$ of the $k$-th iterate
     of the element under consideration.

     \begin{lem}
       \label{LmPSlopeHighEnergy} There are constants $c_3, c_4, \breps$ such
       that the following estimates hold for $w\geq w^*$.

       \begin{enumerate}
       \item i. If $\prpslope\geq \breps$ then $(\prpslope)'\geq \breps$

         \noindent ii. if $\prpslope\leq \breps$ then
         $(\prpslope)'\geq \prpslope+\frac{c_3}{w}. $

       \item i. If $\ifrac{1}{\prpslope} \geq \breps$ then
         $\ifrac{1}{(\prpslope)'}\geq \breps$

         \noindent ii. if $\ifrac{1}{\prpslope}\leq \breps$ then
         $\ifrac{1}{(\prpslope)'}\geq \frac{1}{\prpslope}+\frac{c_3}{w}$.

         \item\label{i_highEnergySqueeze} i. If
         $\breps\leq \prpslope_0\leq \frac{1}{\breps}$ then for any
         $n\leq w,$ $\breps\leq \prpslope_n\leq \frac{1}{\breps}$.

         \noindent ii. If $\prpslope_0\le\breps$ then for $n\leq {w}$, we have
         $\prpslope_n\geq \min(\frac{n c_4}{w}, \breps)$.

         \noindent iii. If $\prpslope_0\geq \frac{1 }{\breps}$ then for
         $n\leq {w}$, we have
         $\prpslope_n\leq \max(\frac{w}{n c_4}, 1/\breps)$.

       \end{enumerate}
     \end{lem}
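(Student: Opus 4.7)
The plan is to exploit the explicit one-step formula
\[
(\prpslope)' = \frac{\prpslope + \cpar}{1 + \fft(\prpslope + \cpar)}
\]
derived in the proof of Lemma~\ref{LmPSlopeC0}, together with the observation that for $w$ large the point $x$ lies outside $\cl\backReco$ (since $\backReco\subset\{w\leq\slopeb\}$ by Lemma~\ref{l_forwardReco}). Then \eqref{e_boundFreeFlight} and the definition $\cpar = 2\curv/w$ yield that $\cpar w$ and $\fft w$ both lie in a fixed compact subinterval of $(0,\infty)$, and in particular the ratio $\cpar/\fft$ is pinched between two positive constants depending only on $\ell$.

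For part (a), I would study the function $f(\pslope) = (\pslope+\cpar)/(1+\fft(\pslope+\cpar))$, which is strictly increasing on $[0,\infty)$. A direct computation yields
\[
f(\pslope)-\pslope = \frac{\cpar - \pslope\fft(\pslope+\cpar)}{1+\fft(\pslope+\cpar)}.
\]
For $\pslope\leq\breps$ with $\breps$ small, the numerator is at least $\cpar/2\geq\cK/w$ and the denominator is $O(1)$, proving (a)(ii). For (a)(i), monotonicity of $f$ reduces the claim to $f(\breps)\geq\breps$, which is equivalent to $\cpar\geq\fft\breps(\breps+\cpar)$; since $\cpar/\fft$ is bounded below, this holds once $\breps$ is chosen small enough. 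Part (b) proceeds identically after the substitution $\xi = 1/\prpslope$, which turns the map into $\xi'=\xi/(1+\cpar\xi)+\fft$ with increment $\xi'-\xi = \fft - \cpar\xi^{2}/(1+\cpar\xi)$; the analogous estimates use $\fft\geq c/w$ for (b)(ii) and $\fft/\cpar$ bounded below for (b)(i).

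For part (c) the strategy is to iterate the one-step estimates while controlling the velocities $w_k$. By Proposition~\ref{p_largeEnergiesFUM}(a), one induced cycle of $\cmp$ has length at least $C_*^{-1}w$ and keeps $w_k$ within a multiplicative factor $C_*$ of $w$; hence the interval $0\leq k\leq n\leq w$ spans at most a bounded number of induced cycles, and $w_k\asymp w$ throughout. The per-step estimates of (a) and (b) therefore apply uniformly along the trajectory, and the increment $c_3/w_k$ may be replaced by $c_4/w$ for a new constant $c_4$. Statement (c)(i) is then immediate induction from (a)(i) and (b)(i). For (c)(ii): as long as $\prpslope_k\leq\breps$, part (a)(ii) gives $\prpslope_{k+1}\geq\prpslope_k+c_4/w$, so telescoping yields $\prpslope_n\geq nc_4/w$; once the iterate crosses $\breps$, part (a)(i) prevents it from dropping back. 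Statement (c)(iii) is symmetric via $\xi = 1/\prpslope$ and part (b).

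The main obstacle is not conceptual but arises in the uniform bookkeeping for part (c): the per-step increments involve $\cpar_k,\fft_k,w_k$, which all vary from step to step, and the constants $c_3, c_4, \breps$ must be chosen simultaneously to make all one-step estimates compatible. What makes this manageable is precisely Proposition~\ref{p_largeEnergiesFUM}, which supplies $w_k\asymp w$ uniformly across one induced cycle; provided $w^*$ is taken larger than a fixed multiple of the threshold for the single-step estimates, the comparison propagates cleanly over all $n\leq w$ iterations.
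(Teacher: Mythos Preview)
Your proposal is correct and follows essentially the same route as the paper's proof: the same one-step increment formulae for $\prpslope$ and for $\beta=1/\prpslope$, the same monotonicity reduction for (a)i and (b)i, and the same telescoping argument for (c) using $w_k\asymp w$. Your treatment of (c) is in fact slightly more explicit than the paper's, which simply invokes \eqref{e_boundOnExcursion} without spelling out that $n\le w$ may span several induced cycles; your observation that this is at most a bounded number of cycles (hence $w_k\asymp w$ persists) is the correct way to fill that in.
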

     \begin{proof}
       In this proof we drop the superscript “$-$” from $\cB$ for
       ease of notation.\\
       \renewcommand{\prpslope}{\cB} 
       (a) We have
       \begin{align*}
         \prpslope'-\prpslope &=
                                \frac{\frac{{2}\kappa}{w}\left(1-\tau \prpslope\right)-\tau
                                \prpslope^2}{1+\tau\left(\prpslope+\frac{2\kappa}{w}\right)}
       \end{align*}
       so (a)ii follows from the fact that
       $\frac{c^{-1}}{w}\le\tau\le \frac{c}{w}$, which in turn follows
       from~\eqref{e_boundFreeFlight}.  Since the function
       $B\mapsto \frac{\cR+B}{1+\tau(\cR+B)}$ is increasing (see
       \eqref{ElFr}) $\prpslope\geq\breps$ implies
       $\prpslope'\geq \frac{\cR+\breps}{1+\tau(\cR+\breps)}\geq\breps $
       where the last inequality relies on the already proven part (a)ii.
       This proves (a)i.

       (b) Let $\beta=1/\prpslope$.  Then
       $\beta'=\tau+\frac{\beta}{1+2\frac{\beta \kappa}{w}}$ whence
       \begin{align*}
         \beta'-\beta &= 
         \tau-\frac{2{\beta^2 \kappa}}{w+2{\beta \kappa}}.
       \end{align*}

       Thus (b)ii follows from the fact that $\tau\ge \frac{c}{w}$.  Since
       the function $\beta\mapsto\tau+\frac{\beta}{1+2\frac{\beta \kappa}{w}}$ is
       increasing, $\beta\geq\breps$ implies
       $\beta'\geq\tau+\frac{\breps}{1+2\frac{\beta \kappa}{w}}\geq \breps$
       where the last step relies on the already proven part (b)ii.  This proves
       (b)i.

       (c) Item i immediately follows from (a)i and (b)i.  By part (a)i we
       can conclude that if $\prpslope_{k}\ge\breps$ for some $0 < k\le n$,
       then necessarily $\prpslope_{n}\ge\breps$.  We can therefore assume
       that $\prpslope_{k} < \breps$ for all $0 < k \le n$.  In this case
       part (a)ii implies that
       $\prpslope_{k+1}\ge\prpslope_{k}+\ifrac{c_3}{w_{k}}$.  Combining
       this with \eqref{e_boundOnExcursion} we
       obtain $\prpslope_{n}\ge\prpslope+\ifrac{n c_3}{w}$, proving
       (c)ii.  The upper bound follows by analogous
       considerations involving $\prpslope\inv$ and part (b).
     \end{proof}

     It is convenient to consider smaller invariant cones, which are
     obtained by iterating the dynamics on $\coneu$ and $\cones$.
     First the cones will be defined on $\cspi$, then they will be
     extended to $\csp$ using the dynamics.  Observe that since such
     cones are defined dynamically and the dynamics is only defined
     almost everywhere, we will only be able to define the cones
     almost everywhere.
     \begin{mydef}\label{def:mature-cones}
       Let $x\in\cspi\setminus\singp+$; define
       \begin{align*}
         \mcones(x)&=\cmp_*\inv|_{\cmp x}\cones;
       \end{align*}
       if $x\in\csp\setminus\singt+$, then $\cm^{\Np(x)}x\in\cspi$,
       and we can define
       \begin{align*}
         \mcones(x) &= \cm^{-\Np(x)}_{*}|_{\cm^{\Np(x)}x}\mcones(\cm^{\Np(x)}x).
       \end{align*}
       Observe that $\mcones(x)$ is defined almost everywhere on
       $\csp$; with a similar procedure we can define $\mconeu(x)$ for
       a.e.\ $x\in\csp$.

       An unstable (\resp stable) curve will be called \emph{mature}
       if it is tangent to $\mconeu$ (\resp $\mcones$). In particular,
       $W\subset\cspi\setminus\singp-$ is a mature unstable curve if
       $\cmp^{-1} W$ is unstable; likewise
       $V\subset\cspi\setminus\singp+$ is a mature stable curve if
       $\cmp V$ is a stable curve.
     \end{mydef}

     Combining Lemma~\ref{LmPSlopeHighEnergy} with
     Theorem~\ref{t_normalForm} and using Lemma~\ref{LmPSlopeC0} we obtain
     the following result.
     \begin{cor} \label{CrHESqueeze} %
       There are constants $\brw, \brb $ such that the following
       holds.  Let $W$ be a mature unstable curve, then
       \begin{enumerate}
         \item\label{i_lower-bound-prpslope} for all $n\ge0$ such that
         $w_n\geq \brw$, or if $x_{n}\in\reco$, we have
         $\prpslope_n\geq\brb$.  \item\label{i_upper-bound-prpslope}
         for all $n\ge0$ such that $x_{n}\nin\reco$ we have
         $\prpslope_{n}\leq\brb\inv$.
       \end{enumerate}
     \end{cor}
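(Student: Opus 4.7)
The plan is to combine three ingredients: the one-step slope estimates of Lemma \ref{LmPSlopeC0}, the many-step pinching of Lemma \ref{LmPSlopeHighEnergy}, and the near-constancy of $w$ during an excursion from Proposition \ref{p_largeEnergiesFUM}. The maturity of $W$ supplies a long excursion of $\cm$-iterates along which $\prpslope$ gets pinched into $[\breps,1/\breps]$; the final $\brb\le\breps$ will then be chosen small enough to also absorb the low-energy contributions. First I would dispose of the low-energy cases. In part (a), if $x_n\in\reco$, then $x_{n-1}\in\backReco\subset[0,\ctc]\times[0,\slopeb]$ by Lemma \ref{l_forwardReco}, so $w_{n-1}$ is bounded and $\fft_{n-1}$ is bounded above (the mass stays below the maximal wall height during a recollision); tangent vectors to $\cm^{n-1}W$ lie in $\coneu$ so $\prpslope_{n-1}\geq 0$, and Lemma \ref{LmPSlopeC0}(b) gives $\prpslope_n=(\prpslope_{n-1})'\geq c_1/(1+\fft_{n-1})$, bounded below. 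In part (b), if $w_n<w_*$ and $x_n\nin\reco$, then $x_{n-1}\nin\backReco$; the change $|w_{n-1}-w_n|$ is bounded, so $w_{n-1}$ is also bounded, and Remark \ref{r_boundFreeFlight} yields $\fft_{n-1}\geq\delta>0$, which combined with the upper bound in Lemma \ref{LmPSlopeC0}(b) gives $\prpslope_n\leq c_2/(1+\fft_{n-1})$, bounded above.

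The substance is the high-energy case: part (a) with $w_n\ge\brw$ and part (b) with $w_n\ge w_*$. Set $m_*=\max\{m\le n:x_m\in\cspi\}$. By Proposition \ref{p_largeEnergiesFUM}\ref{i_boundOnExcursion}, the values $w_k$ for $m_*\le k\le n$ all lie in $[C_*\inv w_n,C_* w_n]$, so by choosing $\brw$ with $\brw/C_*\ge w_*$ we stay in the high-energy regime throughout. I claim that the local piece of $\cm^{m_*}W$ at $x_{m_*}$ is itself mature; this follows by induction on the successive returns to $\cspi$ using the fact that $\cmp$ preserves maturity: if $W_0$ is mature then $\cmp W_0=:W_1$ satisfies $\cmp\inv W_1=W_0$, and $W_0$ is mature hence in particular unstable, so $W_1$ is mature as well. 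Given maturity at $x_{m_*}$, write $x_{m_*}=\cm^N y$ where $y=\cmp\inv x_{m_*}$ lies on an unstable curve and $N=\Np(y)\ge C_*\inv w(y)\gtrsim w_n$ by \eqref{Per-W}, while $w$ stays in $[C_*\inv w(y),C_* w(y)]$ along the excursion by Proposition \ref{p_largeEnergiesFUM}. Applying Lemma \ref{LmPSlopeHighEnergy}(c)ii and (c)iii to this excursion, with $\breps$ chosen so that $\breps\le c_4/C_*$, brings $\prpslope(x_{m_*})$ into $[\breps,1/\breps]$ regardless of the initial value $\prpslope(y)\ge 0$. The remaining $n-m_*\le N$ iterations stay in the high-energy regime, so Lemma \ref{LmPSlopeHighEnergy}(c)i preserves $\prpslope_k\in[\breps,1/\breps]$ up to $k=n$, giving both the lower and upper bounds at $x_n$.

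The main obstacle is the maturity-propagation step: verifying that at every reentry $x_{m_*}\in\cspi$ one may still treat the local piece of $\cm^{m_*}W$ as mature, which is what permits the application of Lemma \ref{LmPSlopeHighEnergy} over an excursion of length $N\gtrsim w_n$ and thus a window long enough for (c)ii and (c)iii to pinch $\prpslope$ into $[\breps,1/\breps]$. Once this is in place, the argument reduces to a single choice of constants: $\breps\le c_4/C_*$, $\brw\ge C_*w_*$, and $\brb$ no larger than the minimum of $\breps$ and the (finitely many) low-energy constants $c_1/(1+\Const)$ and $c_2/(1+\delta)$ (inverted as appropriate), which then delivers the corollary.
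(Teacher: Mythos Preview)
Your argument is correct and is precisely the fleshed-out version of the one-line proof the paper gives (``Combining Lemma~\ref{LmPSlopeHighEnergy} with Theorem~\ref{t_normalForm} and using Lemma~\ref{LmPSlopeC0}''): Lemma~\ref{LmPSlopeC0}(b) handles the bounded-energy cases (the $\reco$ lower bound and the non-$\reco$ upper bound), while Lemma~\ref{LmPSlopeHighEnergy}(c) together with Proposition~\ref{p_largeEnergiesFUM} (which is what the paper's citation of Theorem~\ref{t_normalForm} amounts to here) pinches $\prpslope$ into a fixed interval over one full $\cmp$-excursion supplied by maturity. One cosmetic point: $\breps$ is a fixed constant produced by Lemma~\ref{LmPSlopeHighEnergy}, not a parameter you may choose; the pinching gives $\prpslope(x_{m_*})\in[\min(\breps,c_4/C_*),\,\max(1/\breps,C_*/c_4)]$, and you simply absorb these into $\brb$ rather than forcing $\breps\le c_4/C_*$.
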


     Note that combining Corollary \ref{CrHESqueeze} with
     \eqref{e_slopeCollision} yields that there is a constant $\brC > 1$
     such that for sufficiently large $w$:
     \begin{subequations}\label{MConeNarrow}
       \begin{align}  \mconeu&\subset \left\{-\brC
         w <\frac{\delta w}{\delta \ct}<-\cK-\brC\inv{w}\right\}  \\
         \mcones&\subset \left\{\cK+\brC\inv w <\frac{\delta w}{\delta
                  \ct}<\brC{w}\right\}.
       \end{align}
     \end{subequations}

     In the recollision region $\reco$ Corollary \ref{CrHESqueeze} 
     does not provide an upper bound on $\prpslope$. In fact, in this region $\prpslope$
     may in fact grow arbitrarily large.  However, a simple inspection
     of~\eqref{e_slopeEvolution} shows that for any $L > 0$
     sufficiently large there exists $\delta > 0$ so that if
     $(\prpslope)' > L$ then $w < \delta$ and $\tau < \delta$.  We
     gather that if $(\prpslope)'$ is large, then $x$ lies in a
     neighborhood of the point $(1,0)$.  The analysis in
     Lemma~\ref{l_forwardReco} allows then to conclude that $x'$ lies
     in a neighborhood of $(0,\slopeb)$.  We summarize the above
     observation for future use in the following lemma.
       \begin{lem}\label{lem:vertical-unstable}
         There exists $B > 0$ so that if $W$ is a mature unstable curve
         passing through $x = (\ct,w)$ with pre-collisional p-slope
         $\prpslope$, then either $\prpslope < B$ or $w > B\inv$.
       \end{lem}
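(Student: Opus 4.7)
The plan is to split into two cases according to whether $x\in\reco$. If $x\nin\reco$, maturity of $W$ together with Corollary~\ref{CrHESqueeze}\ref{i_upper-bound-prpslope} immediately gives $\prpslope\le\brb^{-1}$, so the first alternative of the dichotomy holds as soon as $B>\brb^{-1}$. The remaining case is $x\in\reco$, where by definition of the recollision region $x_{-1}:=\cm^{-1}x$ lies in $\backReco$.

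In that case, reading off the $p$-slope evolution across one collision at $x_{-1}$ followed by one free flight of duration $\tau_{-1}$ from \eqref{e_slopeEvolution} yields
\begin{align*}
  \prpslope^{-1} \;=\; (\prpslope_{-1}+\cpar_{-1})^{-1}+\tau_{-1} \;\ge\; \tau_{-1},
\end{align*}
so $\prpslope\le 1/\tau_{-1}$; in particular, $\prpslope$ very large forces $\tau_{-1}$ very small. I would then verify, by revisiting the geometry of $\backReco$ developed in Lemma~\ref{l_forwardReco}, that the continuous function $\tau$ vanishes on $\cl{\backReco}$ only at the corner $(1,0)$: on the bottom edge $[\ctc,1]\times\{0\}$ the proof of Lemma~\ref{l_forwardReco} records the limits $\tau(\ctc^+,0)=1$ and $\tau(1^-,0)=0$; on the left boundary $\singReco^+$, $\tau$ is bounded below because $\cm(\singReco^+)$ lies on the bottom edge $[0,\ctc]\times\{0\}$ of $\reco$, which is reached only after the particle bounces off the fixed wall; and on the right edge $\{1\}\times[0,\slopeb]$, a direct check based on the absolute post-collisional velocity $\slopel+w$ shows that $\tau$ vanishes there only as $w\to 0^+$. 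Consequently, for $\prpslope$ large enough, $x_{-1}$ is confined to an arbitrarily small neighborhood of $(1,0)$, and continuity of $\cm|_{\backReco}$ combined with the identity $\cm(1,0)=(0,\slopeb)$ (Lemma~\ref{l_forwardReco}) place $x=\cm x_{-1}$ in a neighborhood of $(0,\slopeb)$; in particular $w\ge\slopeb/2$ once $\prpslope$ exceeds a sufficiently large threshold $L$.

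Choosing $B$ larger than $\brb^{-1}$, $L$, and $2/\slopeb$ then yields the stated dichotomy. The main obstacle is the geometric verification that $\tau$ vanishes on $\cl{\backReco}$ only at $(1,0)$: the limits at the two endpoints of the bottom edge are explicit in the proof of Lemma~\ref{l_forwardReco}, but the behaviour on the lateral components of $\partial\backReco$ — in particular near the corner $(1,\slopeb)$, where the particle escapes upward at absolute velocity $\slopeu$ and only recollides after a long excursion to the fixed wall — requires a separate check tracking the post-collisional trajectory of the point mass.
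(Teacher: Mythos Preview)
There is a concrete gap. Your claim that $\tau$ (extended by limits from inside $\backReco$) vanishes on $\cl{\backReco}$ only at the corner $(1,0)$ is false: it vanishes along the \emph{entire} right edge $\{1\}\times[0,\slopeb)$. For $x_{-1}=(1-\epsilon,w_{-1})\in\backReco$ with $\epsilon$ small and $w_{-1}\in(0,\slopeb)$ fixed, the particle leaves with absolute velocity $v\approx w_{-1}+\slopel<\slopeu$; at time $1$ the wall's velocity jumps from $\slopel$ to $\slopeu$, so the wall overtakes the particle after a further time $s\approx w_{-1}\epsilon/(\slopeb-w_{-1})$, whence $\tau_{-1}\approx\epsilon\,\slopeb/(\slopeb-w_{-1})\to 0$. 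Your ``direct check based on the absolute post-collisional velocity $\slopel+w$'' overlooks precisely this jump. Consequently the inequality $\prpslope\le 1/\tau_{-1}$ by itself only pins $x_{-1}$ to a neighborhood of the right edge; the image then satisfies $\cm x_{-1}\approx(0,\slopeb-w_{-1})$, so $w=\slopeb-w_{-1}$ can be made arbitrarily small, and the desired dichotomy collapses.

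The missing ingredient is the \emph{other} half of the information in~\eqref{e_slopeEvolution}: writing $\prpslope=\bigl((\prpslope_{-1}+\cpar_{-1})^{-1}+\tau_{-1}\bigr)^{-1}$, the condition $\prpslope>L$ forces not only $\tau_{-1}<1/L$ but also $\prpslope_{-1}+\cpar_{-1}>L$. This is what the paper exploits: since $x_{-1}\in\backReco$ lies outside $\reco$, the maturity hypothesis bounds $\prpslope_{-1}$ from above (this is where Corollary~\ref{CrHESqueeze}\ref{i_upper-bound-prpslope} enters), so $\cpar_{-1}=2\curv_{-1}/w_{-1}$ must be large and hence $w_{-1}$ is small. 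With \emph{both} $\tau_{-1}$ and $w_{-1}$ small, the point $x_{-1}$ is genuinely near $(1,0)$, and then your concluding step via $\cm(1,0)=(0,\slopeb)$ goes through.
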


     \subsection{The \texorpdfstring{$\admNamePm$}{α±}-metrics}%
     We now proceed to define a pair of convenient metrics on $\csp$, which
     we denote with $|\cdot|\adm$ and $|\cdot|\adms$ and call the
     \emph{$\admName$-metric} and the \emph{$\admNameM$-metric},
     respectively.  Let $\admParam0,\admParam1 > 0$ be small constants
     which will be specified later (see~\eqref{ChooseAlpha}
     and~\eqref{e_chooseAlpha0}). For $x = (\ct,w)$, we define the
     functions
     \begin{align*}
       \admFuncPm(x) = \expo{\admParam0\If_\recoMp(x)}(1+\admParam1\cdot w),
     \end{align*}
     where $\If_\reco$ (\resp $\If_{\backReco}$) is the indicator function
     of $\reco$ (\resp $\backReco$).  For $\deh x\in\tang_x\csp$ we set
     (recall that $\curv(\ct) = \ell''(\ct)$)
     \begin{align*}
       |\deh x|\admPm=\admFuncPm(x)(\curv(\ct) |d\ct|+|dw|).
     \end{align*}
     Note that since $w=\frac{dz}{d\ct}$ we obtain the following relations
     with the Euclidean metric $|d x|^2_{\eum}=d\ct^2+d w^2$ and the
     $p$-metric $|\deh x|\pme$ defined at the beginning of
     Section~\ref{s_Jacobian}.
     \begin{subequations}\label{e_defAdm}
       \begin{align}
         |\deh x|\admPm %
         &= \admFuncPm(x)|\deh x|\pme\frac{\curv(\ct)+|\cslope|}{w} =\label{e_defAdmP} \\%
         &= \admFuncPm(x)|\deh x|\eum\frac{\curv({\ct})+|\cslope|}{\sqrt{1 +\cslope^2}}.\label{e_defAdmE}
       \end{align}
     \end{subequations}

     \begin{lem}
       \label{LmAlp-Euc}
       Let $|\cdot|_{\text{E}(\tau,I)}$ be the Euclidean metric in
       $(\tau,I)$-coordinates on $\cspi$.
       \begin{enumerate}
       \item There exists $c > 0$ so that for any vector
         $dx\in\tang_{x}\cspi$ we have
         \begin{align}\label{e_boundAlphaTau}
           \quad |dx|\admPm \ge
           c|dx|_{\text{E}(\tau,I)}.
         \end{align}

       \item For each $A > 0$ there is a constant $C > 0$ such that if
                  \begin{equation} \label{AVertCone}
             A^{-1}\leq \frac1w \frac{|\delta w |}{|\delta \ct|} \leq A, \quad \delta \ct\ \delta w<0
           \end{equation}
         then
         \begin{equation}
           \label{e_boundAlphaTauCone}
           C\inv
           w|dx|_{\text{E}(\tau,I)}\leq |dx|\admPm\leq C
           w|dx|_{\text{E}(\tau,I)}.
         \end{equation}
       \item  There is a constant $A$ such that each vector in
         $\mconeu$ satisfies \eqref{AVertCone}.  Consequently
         \eqref{e_boundAlphaTauCone} holds on $\mconeu$.
       \end{enumerate}
     \end{lem}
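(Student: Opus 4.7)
Both parts rest on computing the Jacobian of the coordinate change \eqref{ChCoordIT} and exploiting that the adiabatic coordinates are defined only for $w\geq\largew$. Differentiating \eqref{ChCoordIT} yields, modulo lower-order terms for $w\geq\largew$,
\begin{align*}
  dI &\approx \ell(\ct)\,dw + w\ell'(\ct)\,d\ct,\\
  d\tau &\approx \theta\ell(\ct)\,dw + w\bigl(\ell(\ct)\inv + \theta\ell'(\ct)\bigr)d\ct,
\end{align*}
where $\theta=\theta(\ct,w)$ is as in \eqref{ChCoordIT}. Since $\ell,\ell',\theta$ are uniformly bounded on $[0,1]$, this gives $|dI|+|d\tau|\leq C(|dw|+w|d\ct|)$. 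On the other hand, the definition \eqref{e_defAdm} combined with the dispersing lower bound $\curv(\ct)\geq\cK$ gives
\[
  |dx|_{\admNamePm} \geq (1+\admParam 1 w)\bigl(\curv(\ct)|d\ct|+|dw|\bigr) \geq c\bigl(w|d\ct|+|dw|\bigr)
\]
for $w\geq\largew$, with $c$ depending on $\cK$, $\admParam 1$, and $\largew$. Combining the two estimates gives part (a).

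\textbf{Part (b).} Suppose $dx$ satisfies \eqref{AVertCone}. Writing $dw = s\,w\,d\ct$ with $|s|\in[A\inv,A]$ and setting $u:=\ell'(\ct)+\ell(\ct)s$, the leading-order expressions become
\[
  dI \approx w\,u\,d\ct, \qquad d\tau \approx w\bigl(\ell(\ct)\inv+\theta u\bigr)\,d\ct,
\]
so that, neglecting the lower-order terms,
\[
  |dI|^2+|d\tau|^2 \approx w^2 d\ct^2 \Bigl[(1+\theta^2)u^2 + 2\theta u/\ell(\ct) + \ell(\ct)^{-2}\Bigr].
\]
The quadratic form in $u$ inside the brackets attains the minimum value $[\ell(\ct)^2(1+\theta^2)]^{-1}$, which is uniformly bounded away from zero; hence $|dx|_{E(\tau,I)}\asymp w|d\ct|$. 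Meanwhile, inside the cone \eqref{AVertCone} the term $|dw|\asymp w|d\ct|$ dominates $\curv(\ct)|d\ct|$ for $w$ large, so that $|dx|_{\admNamePm}\asymp w\cdot|dw|\asymp w^2|d\ct|$. Taking the ratio yields \eqref{e_boundAlphaTauCone}.

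\textbf{Part (c) and the main obstacle.} Part (c) follows immediately from \eqref{MConeNarrow}: for sufficiently large $w$, unstable vectors satisfy $-\brC w<\delta w/\delta \ct<-\cK-\brC\inv w$, forcing $\delta\ct\cdot\delta w<0$ and $\brC\inv\leq \tfrac{1}{w}|\delta w/\delta\ct|\leq\brC$ (the $\cK/w$ correction is absorbed into $A$). The only subtle point is the positive-definiteness of the quadratic form in part (b): at first glance a cancellation in $dI\approx\ell\,dw+w\ell'\,d\ct$ might drive $|dI|$ to be small for specific slopes within the unstable cone. The explicit computation of the discriminant of the quadratic form shows that $dI$ and $d\tau$ cannot vanish simultaneously, which rescues the lower bound.
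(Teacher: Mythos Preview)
Your proof is correct and follows the same overall plan as the paper: compute the Jacobian of the change of variables \eqref{ChCoordIT} and compare the size of $(\delta I,\delta\tau)$ with the $\admNamePm$-norm. Part (a) is handled identically in both arguments, and part (c) is the same (the paper cites Corollary~\ref{CrHESqueeze} directly rather than its consequence \eqref{MConeNarrow}, but these are equivalent).

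The one genuine difference is in the lower bound of part (b). The paper observes that on $\cspi$ with $w\geq\largew$ one has $\tau\in[0,1]$ and hence $\theta=\cO(1/w)$; since $\theta\approx\int_0^{\ct}\ell^{-2}$, this forces $\ct$ to be small, so $\ell'(\ct)<0$. Combined with the sign condition $\delta\ct\,\delta w<0$ in \eqref{AVertCone}, this makes the two leading terms $\ell\,\delta w$ and $w\ell'\,\delta\ct$ in $\delta I$ have the \emph{same} sign, ruling out cancellation directly. You instead bypass the sign argument entirely by writing everything in terms of the slope parameter $u=\ell'+\ell s$ and computing the discriminant of the resulting quadratic form in $u$, which shows that $\delta I$ and $\delta\tau$ cannot vanish simultaneously regardless of the sign of $\ell'$. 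Your route is a bit more computational but is self-contained: it does not use the geometric fact that points of $\cspi$ at high energy have $\ct$ close to $0$.
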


     \begin{proof}
       Without loss of generality, we assume that
       $\max(|\delta w|, |\delta \ct|)=1$.  Using~\eqref{ChCoordIT} we get
       \begin{equation}\label{DeltaI}
         \delta I=\ell \delta w+\left(w \dot\ell +\dot{\mathfrak{a}}\right)\delta \ct+O(w\inv),
       \end{equation}
       \begin{equation}\label{DeltaTau}
         \delta \tau=\theta \delta I+I\delta \theta+O(w\inv)=\theta \delta I+
         \frac{I\delta \ct}{\ell^2}+O(w\inv).
       \end{equation}
       Hence, both terms are $o(w)$, while $|dx|\admPm$ is
       of order $w$; part (a) follows.

       Next, under the assumptions of part (b) we get that
       $|\delta \ct|\leq A/w.$ It follows that both leading terms in
       \eqref{DeltaI} are of order $1$ and, moreover, they have the
       same sign, since $\delta w$ and $\delta\ct$ have different
       signs while $\dot\ell(\ct)$ is negative for small $\ct$ (note
       that since $\tau\in [0,1]$ it follows that
       $\theta=\cO(1/w)$). The foregoing remark also shows that the
       first term in \eqref{DeltaTau} is $\cO(1/w)$ while the second
       term is $\cO(1).$ Part (b) follows.
       It remains to note that \eqref{AVertCone} holds on $\mconeu$ due to
       Corollary \ref{CrHESqueeze}.
     \end{proof}
     The estimate \eqref{e_boundAlphaTauCone} has the following useful
     consequence.  Let
       \begin{equation}
       \label{DefLambdaDelta}
       \Lambda_\Delta=\frac{\cT+\sqrt{\cT^2-4}}{2}, \text{ where }
       \cT=2-\Delta
     \end{equation}
     be the leading eigenvalue of $d\nf $ defined by~\eqref{SawTooth}.
     \begin{cor}
       \label{CrExpAtInf} For each $n$ there are constants
       $\hat{C}, \bar{w}$ such that if $w_k\geq \bar{w}$ for
       $k=0,\cdots, n-1$ and $dx^u\in \mconeu$ then
       \begin{equation}\label{EqExpAtInf} |\cm^{n}_*\deh x^{\unstable} |\adm
         \geq \hat C \Lambda_\Delta^n |\deh x^{\unstable}|\adm.
       \end{equation}
     \end{cor}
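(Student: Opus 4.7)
The plan is to pass to the adiabatic coordinates $(\tau,I)$ provided by Theorem~\ref{t_normalForm} and reduce the estimate to the hyperbolic linear model $\tilde F_\Delta$, then translate the resulting Euclidean-$(\tau,I)$ bound into the $\alpha^+$ metric via Lemma~\ref{LmAlp-Euc}. Since the normal form is naturally adapted to the induced map, I would interpret the estimate as being about $\cmp^n$ acting on $\mconeu$ (the hypothesis $w_k\geq\bar w$ in high energy forces the orbit to be well-described by the induced dynamics).

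First I would differentiate the representation of Theorem~\ref{t_normalForm}(c): on each cell $\Dom_n$ we have $\cmp = T_n\circ \tilde F_\Delta\circ T_{-n} + \cO_5(I\inv)$, and since translations have identity differential this yields $d\cmp|_{(\tau,I)} = d\tilde F_\Delta + \cO(I\inv)$, uniformly in the cell. A short matrix computation shows $d\tilde F_\Delta = \matrixtt{1}{-1}{\Delta}{1-\Delta}$ is area-preserving with trace $\cT = 2-\Delta$, so its dominant eigenvalue is exactly $\Lambda_\Delta$. A standard chain-rule and comparison argument then gives
\begin{align*}
  \bigl\|d\cmp^n - (d\tilde F_\Delta)^n\bigr\| \le \Const\cdot n\,\Lambda_\Delta^{n-1}\,\bar w\inv,
\end{align*}
provided $I_k\gtrsim\bar w$ for all $0\le k < n$, which is ensured by the hypothesis $w_k\ge\bar w$ together with Theorem~\ref{t_normalForm}(a). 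For each fixed $n$, choosing $\bar w$ large enough makes this error negligible against $\Lambda_\Delta^n$. Combining Corollary~\ref{CrHESqueeze} with Lemma~\ref{LmAlp-Euc}(c) shows $\mconeu$, read in adiabatic coordinates, sits inside a fixed cone around the dominant $d\tilde F_\Delta$-eigendirection, so the linear model expands such vectors by exactly $\Lambda_\Delta^n$ in the Euclidean-$(\tau,I)$ norm, and the perturbation bound transfers this to $d\cmp^n$.

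To finish, I would invoke Lemma~\ref{LmAlp-Euc}(b)-(c): on $\mconeu$ the two metrics are comparable up to a factor of $w$, and Corollary~\ref{c_weakBoundOnEndpoint} applied $n$ times gives $|w_n-w_0|\le \Const\,n$, so the ratio $w_n/w_0$ stays bounded whenever $\bar w\gg n$. The factors of $w$ on both sides therefore cancel up to a constant, producing the claimed $\hat C\Lambda_\Delta^n$ lower bound in the $\alpha^+$ metric. The main obstacle, and the reason $\hat C$ and $\bar w$ must depend on $n$, is controlling the accumulated $\cO(I\inv)$ perturbation in the chain rule: the natural bound $n\Lambda_\Delta^{n-1}/\bar w$ must be dominated by $\Lambda_\Delta^n$, forcing $\bar w\gg n$. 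A routine secondary point is to verify that $\mconeu$ really aligns with the dominant $d\tilde F_\Delta$-eigendirection in the adiabatic chart, which follows by combining the p-slope squeeze of Corollary~\ref{CrHESqueeze} with the change-of-variable formulas relating $(\ct,w)$ to $(\tau,I)$ tangent vectors.
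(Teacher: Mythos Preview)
Your approach is essentially the paper's: pass to adiabatic coordinates via Theorem~\ref{t_normalForm}, compare the $n$-fold differential to the hyperbolic linear model $(d\tilde F_\Delta)^n$, and convert back to the $\alpha$-metric using Lemma~\ref{LmAlp-Euc}. The paper streamlines slightly by observing (from the proof of Lemma~\ref{LmAlp-Euc}) that $\mconeu$ lies in the quadrant cone $\coneIT=\{\delta I\,\delta\tau<0\}$, which is $F_\Delta$-invariant and on which $F_\Delta^n$ expands by $\bar C\Lambda_\Delta^n$; this replaces your alignment-with-eigendirection step and avoids invoking Corollary~\ref{CrHESqueeze} (which in the paper actually comes \emph{after} this corollary, so your reference to it is logically fine but out of order).
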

     \begin{proof}
       The discussion following \eqref{DeltaI}, \eqref{DeltaTau}
       shows\footnote{ Recall that the leading term in
         \eqref{DeltaTau} is the second one and that $\delta w$ and
         $\delta\ct$ have different signs.} that
       \begin{align*} \mconeu\subset \coneIT:=\{(\delta I, \delta \tau):
         \delta I \delta \tau <0\}.
       \end{align*} It is also straightforward to check that there is a
       constant $\bar C$ such that for $v\in\coneIT$ we have
       \begin{align*} |(F_\Delta^n)_* v|_{\text{E}(\tau,I)}\geq \bar C
         \Lambda_\Delta^n |v|_{\text{E}(\tau,I)}.
       \end{align*}

       Now Theorem~\ref{t_normalForm} gives that for any $n$ and
       sufficiently large $\bar w$ (depending on $n$)
       \begin{align*} |\cm^n_* v|_{\text{E}(\tau,I)}\geq \frac{\bar C}{2}
         \Lambda_\Delta^n |v|_{\text{E}(\tau,I)}
       \end{align*} and~\eqref{EqExpAtInf} follows
       from~\eqref{e_boundAlphaTauCone}.
     \end{proof}

     The $\admNamePm$ metrics are Finsler metrics and they have the
     advantage of being \emph{Lyapunov metrics}, in the
     sense that they are strictly monotone for the (forward or
     backward , respectively) iterations of $\cmp$, as will be proven
     in Proposition~\ref{p_propertiesAdm} below.

   For $x = (\ct,w)\in\csp$, denote $x' = (\ct',w') = \cm x$ and for
   $dx\in\tang_x\csp$ we let $\deh x' = \cm_*\deh x\in\tang_{x'}\csp$.
   Likewise, for $x\in\cspi$, we denote
   $\hat x = (\hat\ct,\hat w) = \cmp(x)$ and for $dx\in\tang_x\cspi$
   we let $\deh \hat x = \cmp_*\deh x\in\tang_{\hat x}\cspi$.
   \begin{prp}\label{p_propertiesAdm}
     The $\admNamePm$-metrics satisfy the following properties:
     \begin{enumerate}
       \item\label{i_admeum} $|\cdot|\admPm$ is (uniformly)
       equivalent to $(1+\admParam1 w)|\cdot|\eum$. In particular
       $|\cdot|\adm$ and $|\cdot|\adms$ are equivalent to each
       other.  \item\label{i_expansionAdm} $\cm$ satisfies the
       following expansion estimate for any $\deh x\in\coneu_x$:
       \begin{subequations}
         \label{e_expansionAdm-two}
         \begin{align}\label{e_expansionAdm-alpha}
           \frac{|\deh x'|\admPm}{|\deh x|\admPm} 
           &\ge \frac{\alpha^{\pm}(x')}{\alpha^\pm(x)}\left(1+\fft\frac{2\cK}{w'}\right)\\
           &\ge e^{-\admParam0}\frac{1+\admParam1 w'}{1+\admParam1 w\phantom'}\left(1+\fft\frac{2\cK}{w'}\right);\label{e_expansionAdm}
         \end{align}
       \end{subequations}
       moreover if $w'$ is sufficiently small, for any $\deh x\in\coneu_x$:
       \begin{align}\label{e_expansionAdm-II}
         \frac{|\deh x'|\admPm}{|\deh x|\admPm} 
         &\ge \frac\const{w'}.
       \end{align}
       Additionally for any sufficiently large $w^{*} > 1$
         there exists $\Lambda^{*} > 1$ so that for any
         $x = (\ct,w)\in\csp\setminus\singt{+}$ with $w\ge w^{*}$,
         $\deh x^{\unstable}\in\coneu_x$ and $0\le n\le \Np(x)$:
       \begin{align}\label{e_boundedExpansion}
         |\cm^{n}_*\deh x^{\unstable}   |\adm& < \Lambda^{*}|\deh x^{\unstable}|\adm.
       \end{align}
     \item\label{i_uniformHyperbolicity} If $\admParam0$ and
     $\admParam1$ are sufficiently small, then the map $\cmp$ is
     uniformly hyperbolic with respect to the $\admNamePm$-metrics and
     the expansion is monotone in the following sense: there exists
     $\Lambda>1$ so that for any $x\in\cspi$,
     $\deh x^{\unstable}\in\coneu_x$ and any
     $\deh x^{\stable}\in\cones_x$:
       \begin{align}\label{e_uniformHyperbolicity}
         |\cmp_*\deh x^{\unstable}   |\adm&>\Lambda|\deh x^{\unstable}|\adm
         &|\cmp_*\inv \deh x^{\stable}|\adms&>\Lambda|\deh x^{\stable}|\adms.
       \end{align}
     \end{enumerate}
   \end{prp}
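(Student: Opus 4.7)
For part (a), $\admFuncPm(x)$ is pinched between $(1+\admParam1 w)$ and $e^{\admParam0}(1+\admParam1 w)$ while $\curv(\ct) = \ell''(\ct)$ lies in the bounded interval $[\cK, \|\ell''\|_\infty]$; therefore $\curv(\ct)|d\ct|+|dw|$ is equivalent to the Euclidean norm, and $|\cdot|\admPm$ is equivalent to $(1+\admParam1 w)|\cdot|_\eum$. The $\admName$- and $\admNameM$-metrics differ only in the (bounded) indicator factor, so they are likewise mutually equivalent.

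For part (b), the main step is to rewrite the $\admNamePm$-metric on the unstable cone via the post-collisional $p$-slope. Since $\deh x \in \coneu_x$ forces $\cslope < -\cK$, formula \eqref{e_slopeCollision} gives $|\cslope| = \popslope_x w - \curv(\ct)$, so $\curv(\ct) + |\cslope| = \popslope_x w$, and then \eqref{e_defAdmP} together with the identity $|\deh x|_\pme = w|\deh\ct|$ yields $|\deh x|\admPm = \admFuncPm(x)\,\popslope_x\, w\,|\deh\ct|$. Combining the $p$-metric expansion $w'|\deh\ct'|/(w|\deh\ct|) = 1 + \fft\popslope_x$ with the slope recursion $\prpslope_{x'} = \popslope_x/(1+\fft\popslope_x)$ and $\popslope_{x'} = \prpslope_{x'} + 2\curv(\ct')/w'$ produces
\[
\frac{\popslope_{x'}(1+\fft\popslope_x)}{\popslope_x} = 1 + \frac{2\curv(\ct')(1+\fft\popslope_x)}{w'\popslope_x} \ge 1 + \frac{2\cK\fft}{w'},
\]
which proves \eqref{e_expansionAdm-alpha}; \eqref{e_expansionAdm} then follows from the pointwise estimate $\admFuncPm(x')/\admFuncPm(x) \ge e^{-\admParam0}(1+\admParam1 w')/(1+\admParam1 w)$. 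For \eqref{e_expansionAdm-II} with $w'$ small, I distinguish two cases: if $x \notin \backReco$ then \eqref{e_boundFreeFlight} gives $\fft \ge 2\elll/(w'+\slopeu)$, hence $2\cK\fft/w' \ge \const/w'$; if $x \in \backReco$ then $w \le \slopeb$ is bounded by Lemma~\ref{l_forwardReco}, $x' \in \reco$, and the $e^{\admParam0}$ gain from the indicator jump combined with $\popslope_{x'} \sim 1/w'$ near the corner $(1,0)$ (visible from \eqref{PCSlopes}) closes the estimate. The upper bound \eqref{e_boundedExpansion} at large $w$ follows by confining all intermediate $w_k \in [C_*^{-1}w, C_*w]$ via Proposition~\ref{p_largeEnergiesFUM}, using $\fft_k = \cO(1/w)$ from \eqref{e_boundFreeFlight} and the two-sided bound on $\popslope_k$ from Corollary~\ref{CrHESqueeze}, and compounding per-step factors $1 + \cO(1/w^2)$ over the $\Np(x) \le C_* w$ intermediate steps.

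Part (c) splits by the magnitude of $w$. For $w$ above a threshold $w^*$, Corollary~\ref{CrExpAtInf} yields per-$\cmp$-iterate exponential expansion $|\cmp_* \deh x^u|\adm \ge \hat C \Lambda_\Delta |\deh x^u|\adm$, which for $w^*$ large enough exceeds any prescribed $\Lambda > 1$. For $w \le w^*$, Lemma~\ref{LmFRet} bounds $\Np(x)$ by an absolute constant, so $\cmp$ is a uniformly bounded composition of $\cm$-iterates; the per-step estimates of part (b) can be compounded, and the crucial input is that each $\cmp$-excursion contains at least one flight of bounded-below duration, namely the one that crosses a fundamental-domain boundary and forces the orbit out of $\cell^+_0$, so some step contributes non-trivial expansion. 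The stable case follows by the involution of Section~\ref{SSInvolution}, which interchanges $\admName \leftrightarrow \admNameM$ and $\reco \leftrightarrow \backReco$. The main obstacle is the balancing within the recollision region, where $\fft$ may be arbitrarily small and the per-step factor $1 + 2\cK\fft/w'$ becomes essentially trivial: the Lyapunov correction $e^{\admParam0\If_\recoMp}$ is engineered precisely to inject a gain $e^{\admParam0}$ on entry to $\reco$ and charge a matching loss $e^{-\admParam0}$ on exit, and one must choose $\admParam0$ and $\admParam1$ small enough that the residual net gain over each full $\cmp$-excursion dominates all cumulative losses -- using Remark~\ref{r_thereCanBeOnlyOne} to bound the number of boundary crossings per excursion -- which is the technical heart of the argument and is analogous to the treatment of corner series in infinite-horizon dispersing billiards.
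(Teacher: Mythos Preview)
Your treatment of parts (a) and (b) is essentially the paper's approach and is correct, with one small slip: in the argument for \eqref{e_boundedExpansion}, the per-step factor in the $*$-metric is $1+\cO(1/w)$ (from \eqref{e_expansionAuxMetricA} with $\prpslope_k$ bounded below by a constant), not $1+\cO(1/w^2)$; compounded over $\Np(x)=\cO(w)$ steps this is still uniformly bounded, so the conclusion survives. Your sketch for \eqref{e_expansionAdm-II} in the case $x\in\backReco$ is also imprecise: the indicator jump contributes only the bounded factor $e^{\alpha_0}$, not anything of order $1/w'$; the actual mechanism (in the paper as well) is that $(\prpslope)'$ stays bounded when $w'$ is small, which feeds directly into \eqref{e_expansionAuxMetricA}.

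The real gap is in part (c), in the high-energy regime. You invoke Corollary~\ref{CrExpAtInf} to get $|\cmp_*\,dx^u|\adm\ge \hat C\Lambda_\Delta|dx^u|\adm$ and then claim this exceeds $1$ ``for $w^*$ large enough''. But the constant $\hat C$ in that corollary does \emph{not} tend to $1$ as $w^*\to\infty$: it absorbs both the cone constant $\bar C$ for $\nf_\Delta$ on $\coneIT$ and the metric-comparison constants from \eqref{e_boundAlphaTauCone}, neither of which improves with $w^*$. So $\hat C\Lambda_\Delta>1$ is an additional claim that needs its own proof. Worse, Corollary~\ref{CrExpAtInf} is stated only for $dx^u\in\mconeu$, whereas \eqref{e_uniformHyperbolicity} is asserted for all $dx^u\in\coneu_x$; your route therefore does not cover the full cone. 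The paper avoids both problems by working directly with the auxiliary metric $|\cdot|_*$: for $x\notin\backReco$ with large $w$ it uses \eqref{e_expansionAuxMetricA} and Lemma~\ref{LmPSlopeHighEnergy}(c) (which handles arbitrary initial $\prpslope_0$) to bound $\sum_k[\prpslope_k]^{-1}$ from below over $\cO(w)$ steps, yielding a uniform $\Lambda_1>1$ for the $*$-expansion; for small $w$ it uses just the first $\cm$-step via \eqref{e_expansionExplicitBound} (subsequent steps are $*$-nondecreasing). The recollision balancing is then a clean dichotomy: if $x\in\backReco$ the indicator gives a free $e^{\alpha_0}$; if $x\notin\backReco$ one may lose at most $e^{-\alpha_0}$, but this is overcompensated by the $*$-expansion $\ge e^{2\alpha_0}$ once $\alpha_0$ is chosen small. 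Your verbal description of this mechanism is in the right spirit but does not substitute for the computation.
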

     \begin{proof}
       Item~\ref{i_admeum} immediately follows from~\eqref{e_defAdmE}.  In
       order to prove the remaining items it is convenient to introduce an
       auxiliary metric, which we denote with $|\cdot|\aum$ and is given by
       the expression:
       \begin{equation} \label{StarMetric}
         |\cdot|\aum = \admFuncPm(x)\inv|\cdot|\admPm =  \curv(\ct) |d\ct|+|dw|.
       \end{equation}
       Recall that by~\eqref{ExpPMetric} and~\eqref{e_slopeEvolution} we
       have
       \begin{align*}
         \frac{|\deh x'|\pme}{|\deh x|\pme} &= 1 + \fft\popslope,&
         (\prpslope)^{\prime}&=\frac{\popslope}{1+\fft \popslope}
       \end{align*}
       where $\fft = \fft(x),$ $\popslope = \popslope(\deh x)$, and
       $(\prpslope)'=\prpslope(dx')$.  Hence, if $\deh x\in\coneu_x$,
       then~\eqref{e_defAdmP} and~\eqref{PCSlopes} give

       \begin{align}\notag
         \frac{|\deh x'|\aum}{|\deh x|\aum} %
         &= (1 + \fft\popslope) \frac{w}{w'}\frac{\curv'-\cslope'}{\curv-\cslope}
           =  \frac{1 + \fft\popslope}{\popslope}
           \frac{2\curv'+(\prpslope)' w'}{w'} = \\ \label{e_expansionAuxMetricA}
         & = \left(1+\frac{2\curv'}{(\prpslope)^{\prime}w'}\right),
       \end{align}
       where for ease of notation we denoted $\curv = \curv(x)$ (\resp
       $\curv' = \curv(x')$) and $\cslope = \cslope(\deh x)$ (\resp
       $\cslope' = \cslope(\deh x')$).  Since $\pslope^{-\prime}\leq 1/\fft$
       we conclude:
       \begin{align}
         \frac{|\deh x'|\aum}{|\deh x|\aum}
         & \ge \label{e_expansionAuxMetricB}%
           1+\fft\frac{2\cK}{w'},
       \end{align}
       from which equations~\eqref{e_expansionAdm-two} immediately
       follow.  In order to prove~\eqref{e_expansionAdm-II}, notice
       that if $w'$ is sufficiently small, then
       Lemma~\ref{lem:vertical-unstable} implies that $(\prpslope)'$
       is bounded from above. Using~\eqref{e_expansionAuxMetricA} then
       immediately implies~\eqref{e_expansionAdm-II}.  
       
       It
         remains to show~\eqref{e_boundedExpansion}. Notice that by
         Proposition~\ref{p_largeEnergiesFUM}\ref{i_boundOnExcursion}
         and Corollary~\ref{CrHESqueeze}(a), we can choose $w^{*}$ so
       that $\prpslope_{n}$ is bounded from below for any
       $0\le n \le \Np(x)$. Using~\eqref{e_expansionAuxMetricA} we thus
       gather that, for some uniform $\Lambda_{1}^{*} > 1$:
       \begin{align*}
         \frac{|dx_{n}|_{*}}{|d x|_{*}} &=%
           \prod_{k = 0}^{n-1}\left(1+C w_{k}\inv \right)\le \Lambda_{1}^{*},
       \end{align*}
       where in the last step we used Lemma~\ref{LmFRet}.  Then
       once again using the definition of $|\cdot|\adm$, we
       obtain~\eqref{e_boundedExpansion} and we conclude the proof of
     item~\ref{i_expansionAdm}.  Observe moreover
     that~\eqref{e_expansionAuxMetricB} gives the trivial bound
       \begin{align*}
         |\deh \hat x|\aum\ge |\deh x'|\aum\ge|\deh x|\aum.
       \end{align*}
       We proceed now to the proof of item~\ref{i_uniformHyperbolicity}. We
       first prove the statement for unstable vectors.  Let
       \begin{align*}
         |dx|\auum = \expo{\admParam0\If_\reco(x)} |dx|\aum.
       \end{align*}
       We now claim that we can choose $\admParam0 > 0$ so that we have
       \begin{align}\label{e_auum}
         \frac{|d\hat x|\auum}{|dx|\auum} \ge \expo{\admParam0}.
       \end{align}
       If the above bound holds, we obtain item~\ref{i_uniformHyperbolicity}. In fact, observe that
       \begin{align*}
         \frac{|\deh \hat x|\adm}{|\deh x|\adm} =%
         \frac{1+\admParam1\hat w}{1+\admParam1 w}\frac{|\deh \hat x|\auum}{|\deh x|\auum}.
       \end{align*}
       Using Corollary~\ref{c_weakBoundOnEndpoint}, we can choose
       $\admParam1 > 0$ so small that
       \begin{align}
         \label{ChooseAlpha}
         \min_{(r,w)\in\cspi}\frac{1+\admParam1\hat w}{1+\admParam1 w} >  \expo{-\admParam0/2}.
       \end{align}
       \eqref{ChooseAlpha} together with~\eqref{e_auum} yields the first estimate
       of~\eqref{e_uniformHyperbolicity} with
       $\Lambda = \expo{\admParam0/2}$.  The corresponding
         estimate for stable vectors is obtained by applying the
         involution, and observing that the involution maps the
         $\admNameM$-metric for $\cm$ to the $\admName$-metric for
         $\cm\inv$.  This concludes the proof
         of~\ref{i_uniformHyperbolicity}.  
         
         It remains to
       prove~\eqref{e_auum}. First of all observe that, by definition
       \begin{align*}
         \frac{|d\hat x|\auum}{|dx|\auum} = \expo{\admParam0 (\If_\reco(\hat
         x) -\If_\reco(x))}\frac{|d\hat x|\aum}{|dx|\aum}.
       \end{align*}
       Notice moreover that if $x\in\backReco$ we have, by definition,
       $\cm x\in\reco\subset\cspi$ which yields $\hat x = x'$.  
       Since $\reco\cap\backReco = \emptyset$, we conclude
       that
       \begin{align*}
         \frac{|d\hat x|\auum}{|dx|\auum} = \expo{\admParam0}\frac{|dx'|\aum}{|dx|\aum}&\ge\expo{\admParam0}
         &\text{for any } x\in\backReco.
       \end{align*}
       On the other hand, if $x\not\in\backReco$ we have
       \begin{align*}
         \frac{|d\hat x|\auum}{|dx|\auum}\ge\expo{-\admParam0}\frac{|d\hat x|\aum}{|dx|\aum}.
       \end{align*}
       It thus suffices to show that we can choose $\admParam0$ so that
       \begin{align}\label{e_aum}
         \frac{|d\hat x|\aum}{|dx|\aum}&\ge\expo{2\admParam0}
         &\text{for any } x\nin\backReco.
       \end{align}
       In order to do so, we combine~\eqref{e_boundFreeFlight}
       and~\eqref{e_expansionAuxMetricB} to obtain
       \begin{align}\label{e_expansionExplicitBound}
         \frac{|\deh x'|\aum}{|\deh x|\aum}
         &\ge 1 + \frac{4\cK\elll}{w'(w'+h(\ct'))}
         &\text{for any } x\nin\backReco.
       \end{align}
       Let us fix $\largew > 0$ sufficiently large to be specified later and consider two cases.
       
       (1) If $w < \largew$,
       by~\eqref{e_expansionExplicitBound} we can find $\Lambda_0 > 1$ such that if $x\nin\backReco$,
       \begin{equation}
         \label{ExpLambda0}
         |\deh \hat x|\aum>\Lambda_0|\deh x|\aum.
       \end{equation}

       (2) Next suppose that $w\ge \largew$ large.  In this case the expansion of
       just one iterate of $\cm$ does not suffice and one needs to take into
       account several iterates.
       \ignore{ as we proceed to explain.  By
         Proposition~\ref{p_largeEnergiesFUM} and~\eqref{e_boundFreeFlight} we
         can choose $C > 1$ so that $C\inv w\inv < \fft_k < Cw\inv$ where
         $\fft_k = \fft(\ct_k,w_k)$ and thus $\Np(\ct,w) > C\inv w$.  For
         $\deh x\in\coneu_{x}$ let us denote by $\prpslope_k$ the
         precollisional $p$-slope of $\deh x_k = \cm^k_*dx$.
         By~\eqref{e_slopeEvolution} we gather that (choosing a larger $C$ if
         necessary)
         \begin{align*}
           [\prpslope_{k+1}]\inv \ge \left(\prpslope_{k}+\frac{2\cK^*}{w_k}\right)\inv+\fft_k \ge [\prpslope_k]\inv\left(1+\frac{C[\prpslope_k]\inv}w\right)\inv+\frac{C\inv}{w}.
         \end{align*}
         A direct computation shows that if $[\prpslope_k]\inv\le1/(2C)$, then
         $$[\prpslope_{k+1}]\inv > [\prpslope_k]\inv + C\inv/(2w). $$
         On the other hand, since the function
         $x\mapsto (x\inv+C/w)\inv+C\inv/w$ is increasing for positive $x$, we conclude that if
         $[\prpslope_k]\inv\ge1/(2C)$, then $[\prpslope_{k+1}]\inv\ge1/(2C)$.  Thus, for $0\le k\le \Np(\ct,w)$
         \begin{align*}
           [\prpslope_k]\inv > \frac1{2C}\min\{1, k/w\}.
         \end{align*}}
       Namely, \eqref{e_expansionAuxMetricA}  and Lemma~\ref{LmPSlopeHighEnergy}\ref{i_highEnergySqueeze} give
       \begin{align}
         \label{ExpLambda1}
         \frac{|\deh\hat x|\aum}{|dx|\aum} &> \frac{|dx_{C\inv w}|\aum}{|dx|\aum} > 1+\frac{C\inv}{w}\sum_{k = 0}^{C\inv w}[\prpslope_k]\inv
         > \Lambda_1
       \end{align}
       for some uniform $\Lambda_1 > 1$.

       Combining \eqref{ExpLambda0} and \eqref{ExpLambda1}
       we obtain~\eqref{e_aum}
       provided that
       \begin{align}\label{e_chooseAlpha0}
         \exp(2\admParam0) &< \min\{\Lambda_0,\Lambda_1\}.
       \end{align}
       This completes the proof of the proposition.
     \end{proof}

     We note the following bound: for any $L > 0$ there exists
     $C\admPm > 1$ so that for any unstable (or stable) curve $W$ such
     that $|W|\eum < L$, and for any $x',x''\in W$:
     \begin{align}\label{eq:dW-over-d-bound}
       C\admPm\inv &\le \frac{d\admPm^{W}(x',x'')}{d\admPm{}(x',x'')}\le C\admPm.
     \end{align}
     In fact, since unstable (\resp stable) curves are decreasing
     (\resp increasing), we have:
     \begin{align*}
       1\le \frac {d\eum^{W}(x',x'')}{d\eum(x',x'')}\le 2.
     \end{align*}
     Thus~\eqref{eq:dW-over-d-bound} follows by the equivalence of
     $d\adm$ with $(1+\alpha_{1}w)d\eum$ proved in
     Proposition~\ref{p_propertiesAdm}\ref{i_admeum} and the bound on
     the length of $W$.
     \begin{rmk}
       From now on, in an attempt to simplify the notation, we drop the
       superscripts $\pm$ from the $\admNamePm$-metric and we will always
       consider $\alpha = \alpha^{+}$.
     \end{rmk}
     \renewcommand{\admFunc}{\alpha} %
     \renewcommand{\admName}{\alpha} %
     \renewcommand{\adm}{_{\alpha}} %

     We now establish some properties of the
     $\admName$-metric which will be useful in the sequel.  Given a
     curve $W$ and two points $x',x''\in W$ we denote with
     $\dadm{W}(x',x'')$ (\resp $\deum{W}(x',x'')$) the
     $\admName$-length (\resp Euclidean length) of the subcurve of $W$
     bounded by $x'$ and $x''$.

     \begin{lem}\label{l_OneItFHat}
       For any $L > 0$ there exists $C > 0$ so that the following
       holds.  Let $n > 0$ and $W\subset\csp\setminus\sing{n}$ be an
       unstable curve.  Let $W_{k} = \cm^{k}W$ and assume that
       $|W_{n}|\eum < L$.  Let $x',x''\in W$ and denote
       $x'_{k} = \cm^{k}x'$ (likewise for $x''$); then:
       \begin{subequations}
         \begin{align}
           \dadm{W}(x'_0,x''_0) &\le C \dadm{W_{n}}(x'_{n},x''_{n})\label{e_adm-dominate}\\
           \sum_{j = 0}^{n}\deum{W_{k}}(x'_k,x''_k)&\le C \dadm{W_{n}}(x'_{n},x''_{n}).
         \end{align}
       \end{subequations}
     \end{lem}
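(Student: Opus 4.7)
The plan is to leverage two key properties of the dynamics established earlier: (i) the monotonicity of the auxiliary $*$-metric under $\cm$ on unstable vectors, $|dx'|\aum \ge |dx|\aum$, which follows from~\eqref{e_expansionAuxMetricB} in the proof of Proposition~\ref{p_propertiesAdm}; and (ii) the uniform hyperbolicity of the accelerated map $\cmp$ in the $\admName$-metric with factor $\Lambda>1$ from Proposition~\ref{p_propertiesAdm}\ref{i_uniformHyperbolicity}. These will be combined with the identity $|dx|\adm = \admFunc(x)|dx|\aum$ from~\eqref{StarMetric} and with the observation that $\admFunc(x) \asymp 1+\admParam1 w$ varies by a uniformly bounded factor along a single $\cmp$-excursion --- using Proposition~\ref{p_largeEnergiesFUM}\ref{i_boundOnExcursion} in the high-energy regime and Lemma~\ref{LmFRet} together with the bounded per-step change of $w$ in the low-energy one --- so that the two metrics are tied together pointwise along orbits.

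For~\eqref{e_adm-dominate}, I first establish the pointwise inequality $|\cm^{n}_{*}dx_{0}|\adm \ge C\inv|dx_{0}|\adm$, uniformly for $x_{0}\in W$ and $dx_{0}\in\coneu_{x_{0}}$, and then integrate along the subcurve of $W$ between $x'_{0}$ and $x''_{0}$. The pointwise bound follows by decomposing the orbit $x_{0},\ldots,x_{n}$ along its successive $\cmp$-iterates $0\le n_{1}<\cdots<n_{m}\le n$ and telescoping: between consecutive $\cmp$-iterates the ratio $|dx_{n_{j+1}}|\adm/|dx_{n_{j}}|\adm \ge \Lambda$ by uniform hyperbolicity of $\cmp$, while within each excursion (and within the final partial excursion from $n_{m}$ to $n$) the ratio $|dx_{k}|\adm/|dx_{n_{j}}|\adm$ is bounded below by a universal constant, thanks to $*$-monotonicity combined with the bounded variation of $\admFunc$ on the excursion.

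For the second estimate I decompose the sum by excursions and obtain a geometric series in $\Lambda\inv$. Within excursion $n_{i}\le k<n_{i+1}$, $*$-monotonicity yields $\deum{W_{k}}(x'_{k},x''_{k}) \le \const\, d^{W_{k}}_{*}(x'_{k},x''_{k}) \le \const\, d^{W_{n_{i+1}}}_{*}(x'_{n_{i+1}},x''_{n_{i+1}})$; since the excursion consists of $\Np(x_{n_{i}})\le \const(1+w_{n_{i}})$ iterates (by~\eqref{e_weakBoundOnNp}, sharpened by~\eqref{Per-W} at high energies), the partial sum is bounded by $\const(1+w_{n_{i}})\,d^{W_{n_{i+1}}}_{*}$. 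Converting the latter into the $\admName$-metric using $\admFunc\asymp 1+\admParam1 w_{n_{i+1}}$ and $w_{n_{i}}\asymp w_{n_{i+1}}$ from Corollary~\ref{c_weakBoundOnEndpoint}, the partial sum becomes $\le \const\,\dadm{W_{n_{i+1}}}(x'_{n_{i+1}},x''_{n_{i+1}})$; uniform hyperbolicity of $\cmp$ then gives $\dadm{W_{n_{i+1}}} \le \const\,\Lambda^{-(m-i-1)}\dadm{W_{n}}(x'_{n},x''_{n})$, and summing over $i$ yields the desired bound uniformly in $m$ and $n$ (the tail from $n_{m}$ to $n$ is handled by the same argument applied within a single excursion). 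The main technical obstacle is ensuring uniformity through the low-$w$ regime, where $\admFunc\asymp w$ degenerates; there one relies instead on $\Np$ being uniformly bounded and on $\admFunc$ being pinched between universal constants. The hypothesis $|W_{n}|\eum<L$ enters only to convert between Euclidean and $\admName$-distances on subcurves of $W_{n}$ via~\eqref{eq:dW-over-d-bound}, producing a constant $C$ depending only on $L$.
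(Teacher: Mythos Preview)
Your approach is essentially the same as the paper's: both decompose the orbit into $\cmp$-blocks (an initial partial block, a run of genuine $\cmp$-iterates, and a final partial block), use monotonicity of the $*$-metric together with the bounded variation of $\admFunc$ within each block, and then exploit the uniform hyperbolicity of $\cmp$ across blocks to produce a geometric series. The paper packages the single-block estimate as a separate sub-lemma (Sublemma~\ref{sublem:intermediate-step-expansion}) and works at the level of curve distances, whereas you argue pointwise on tangent vectors and integrate at the end; these are equivalent formulations of the same mechanism. Your remark that the hypothesis $|W_n|\eum<L$ enters only through~\eqref{eq:dW-over-d-bound} is slightly off --- in the paper it is used in the sub-lemma to propagate bounds on $w$ across the whole curve --- but in your pointwise version this step is indeed unnecessary, so the discrepancy is harmless.
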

     \begin{proof}

       Since $W\subset\csp\setminus\sing{n}$, we already observed that
       the function $x\mapsto \min(n, \Np(x))$ must be constant on
       $W$.  Let $\Np(W,n)$ denote this constant value.  Let us begin
       by proving an auxiliary result.
       \begin{sublem}\label{sublem:intermediate-step-expansion}
       There exists $C > 0$ such that
         if $n' \le \Np(W,n)$ and $|W_{n'}|\eum < L$,
         then 
         \begin{align}\label{eq:intermediate-step-expansion}
             \dadm{W}(x'_0,x''_0) &\le 
           C \dadm{W_{n'}}(x'_{n'},x''_{n'}).
         \end{align}
       \end{sublem}
       \begin{proof}
         We consider two cases.  Let $x_{0}' = (r_{0}',w_{0}')$ and
         choose $w_*$ sufficiently large. 
      
        (a) Assume $w'_{0}\leq w_*$: Lemma~\ref{LmFRet} gives a
           uniform upper bound on $\Np(x'_{0})$ (hence on $\Np(W,n)$).
           Notice that, even if we do not assume an upper bound on the
           Euclidean length of $W_{0}$, we have for any
           $x = (r,w)\in W_{0}$.
           \begin{align*}
             w\le w_{*}+2\Np\slopeb+L;
           \end{align*}
           Otherwise $\cm^{n'}x = (r_{n'},w_{n'})$ would satisfy
           $w_{n'} > w_{*}+\Np\slopeb+L$, but this is impossible by
           construction, since $w'_{n'}\le w_{*}+n'\slopeb$. and we
           assume $|W_{n'}|\eum < L$.  We now apply
           Proposition~\ref{p_propertiesAdm}\ref{i_expansionAdm} and
           conclude:
           \begin{align*}
             \dadm{W_{0}}(x_0',x_0'')
             &\le e^{n'\alpha_0} (1+\alpha_{1}(w_*+L+2\Np\slopeb))\cdot
               \dadm{W_{n'}}(x'_{n'},x''_{n'})\\\
             &\le C\dadm{W_{n'}}(x'_{n'},x''_{n'}),\end{align*}
           which yields the desired result.
         
         (b) If $w' > w_*$, then
           Proposition~\ref{p_largeEnergiesFUM}\ref{i_boundOnExcursion}
           ensures that $w'_k/w'_0\in (C\inv,C)$ for any
           $0\le k\le \Np(W,n)$. Since $|W_{n'}|\eum < L$, applying
           Proposition~\ref{p_largeEnergiesFUM}\ref{i_boundOnExcursion}
           again (to the inverse map) we conclude that a similar bound
           holds for every $w_{0}$ on $W_{0}$.  Since $w_*$ is chosen
           sufficiently large, $\alpha(x_k) = 1+\alpha_1\cdot w_k$ for
           any $x_k$ on the unstable curve joining $x'_k$ to $x''_k$.
           Iterating~\eqref{e_expansionAdm-alpha} we thus find, for
           unstable vectors tangent to $W$ and $\cm^{n'}W$:
           \begin{align*}
             \frac{|\deh x_{n'}|\adm}{|\deh x_0|\adm} %
             &\geq\frac{\alpha(x_{n'})}{\alpha(x_{0})}.
           \end{align*}
           This yields the desired result, since the ratio is
           uniformly bounded from below (once again since
           $w_{n'}/w_0\in(C\inv,C)$).
      
         We thus proved~\eqref{eq:intermediate-step-expansion}.
       \end{proof}
       In order to obtain~\eqref{e_adm-dominate}, it suffices to
       observe that given $W\subset\csp\setminus\sing n$, we can
       always write $\cm^{n} = \cm^{n_+}\circ\cmp^{l}\circ\cm^{n_-}$
       for some $l \ge 0$, $n_- = \Np(W,n)$ and
       $n_{+} \le \Np(W_{n-n_{+}})$.
       Then~\eqref{e_adm-dominate} follows
       from~\eqref{eq:intermediate-step-expansion} and from the
       uniform hyperbolicity of $\cmp$.

       The proof of the second estimate follows along similar lines. First
       we once again decompose
       $\cm^{n} = \cm^{n_+}\circ\cmp^{l}\circ\cm^{n_-}$ and then
       correspondingly we divide the sum into blocks where each block
       corresponds to one iteration of $\cmp$, or by $\cm^{n_-}$ and
       $\cm^{n_+}$ for the first and last block respectively.

       Let $0\le m < n$ be the starting index of some block and let
       $k\le N(x'_m)$.  We claim that:
       \begin{equation} \label{OneItFHat} \sum_{j=m}^{m+k}
         \deum{W_{j}}(x_j', x_j'')\leq C \dadm{W_{m+k}}(x_{m+k}',
         x_{m+k}'').
       \end{equation}
       In order to prove the claim, we again consider two cases.

  (a) Assume $w'_m\leq w_*.$ Then by
    Proposition~\ref{p_propertiesAdm}\ref{i_admeum} $d_E$ and
    $d_\alpha$ are equivalent for small energies and
    by~\eqref{e_adm-dominate} we obtain
    \begin{align*}
      \sum_{j = m}^{m+k}\deum{W_{j}}(x'_{j},x''_{j})\le C\sum_{j =
      m}^{m+k}\dadm{W_{j}}(x'_{j},x''_{j})\le C k \dadm{W_{m+k}}(x'_{m+k},x''_{m+k})
    \end{align*}
    which proves~\eqref{OneItFHat} since once again $k$ is uniformly bounded.

    (b) If $w_m' > w_*$ there might be many bounces during each period
    of the wall, \ie $k$ is not uniformly bounded.  Then using
    Proposition~\ref{p_propertiesAdm}\ref{i_admeum},~\eqref{e_expansionAdm-alpha},
    Lemma~\ref{LmFRet} and Proposition~\ref{p_largeEnergiesFUM},
    together with~\eqref{e_adm-dominate} we have
    \begin{align*}
      \sum_{j=m}^{m+k} \deum{W_{j}}(x_j', x_j'')
      &\le \brC \left[\sum_{j=m}^{m+k} \frac{\dadm{W_{j}}(x_j', x_j'')}{w_j'}\right]\\
      &\leq \brrC \dadm{W_{m+k}}(x_{m+k}', x_{m+k}'') \frac{\Np(x_m')}{w_m'} \le \brrrC
        \dadm{W_{m+k}}(x_{m+k}', x_{m+k}'').
    \end{align*}
    This proves that~\eqref{OneItFHat} holds also in case (b).

By \eqref{OneItFHat} we can write
  \begin{align*}
    \sum_{j=0}^n \deum{W_{j}}(x_j', x_j'')\leq C \sum_{l' = 0}^{l}
    \dadm{\cmp^{l}W_{n_-}}(\cmp^{l'}x_{n_-}', \cmp^{l'}x_{n_-}'') + C\dadm{W_{n}}(x_{n}', x_{n}'').
  \end{align*}
  By the uniform expansion of the $\admName$-metric shown in
  Proposition~\ref{p_propertiesAdm}(c) the sum on the right hand side
  is a geometric sum, whence:
  \begin{align*}
    \sum_{j=0}^n \deum{W_{j}}(x_j', x_j'')
    &\leq C \dadm{W_{n-n_+}}(x_{n-n_+}', x_{n-n_+}'') + C\dadm{W_{n}}(x_{n}', x_{n}'')
  \end{align*}
  from which we conclude the proof using once
  again~\eqref{e_adm-dominate}.
\end{proof}
Using the properties of the involution and the fact that the
$\admNamePm$-metrics are equivalent to each other, we obtain the
following corollary.

\begin{cor}\label{cor_adm-dominate-stable}
  For any $L > 0$, there exists $C > 0$ so that the following holds.
  Let $n > 0$ and $W\subset\csp\setminus \sing{n}$ be a curve so that
  $\cm^{n}W$ is a stable curve. Let $W_{k} = \cm^{k}W$ and assume that
  $|W_{k}|\eum < L$ for all $0\le k\le n$.  Let $x',x''\in W$ and
  denote $x'_{k} = \cm^{k}x'$ (likewise for $x''$). Then the following
  estimates hold.
  \begin{subequations}
    \begin{align}
      \dadm{W_{n}}(x'_n,x''_n) &\le C \dadm{W}(x'_{0},x''_{0})\label{e_adm-dominate-stable}\\
      \sum_{k = 0}^{n}\deum{W_{k}}(x'_k,x''_k)&\le C \dadm{W}(x'_{0},x''_{0}).
    \end{align}
  \end{subequations}
\end{cor}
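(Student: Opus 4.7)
The plan is to deduce the corollary directly from Lemma~\ref{l_OneItFHat} by conjugating the dynamics with the involution $\invo$ of Subsection~\ref{SSInvolution}, as the hint preceding the statement suggests. The four essential properties I would invoke are: $\invo$ anticommutes with the flow and interchanges $\cm$ with $\bar\cm\inv$; it swaps $\cones$ with the unstable cone field of the reversed system (this follows from~\eqref{e_slopeCollision} together with the sign change of $\dot\ell$ and $v$ under $\invo$); it exchanges the recollision regions $\reco$ and $\backReco$ (so it turns the weight $\admFuncPm$ of $\cm$ into the weight $\alpha^\mp$ of $\bar\cm$, as already used in the proof of Proposition~\ref{p_propertiesAdm}\ref{i_uniformHyperbolicity}); and it is a Euclidean isometry. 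Since $\rell''\ge\cK$ whenever $\ell''\ge\cK$, the reversed model $\bar\cm$ is itself a dispersing Fermi--Ulam model, so Lemma~\ref{l_OneItFHat} applies to it verbatim.

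Concretely I would set $\tilde W_k = \invo(W_{n-k})$ for $0\le k\le n$. By the properties above, $\tilde W_0$ is an unstable curve for $\bar\cm$ with $\tilde W_k = \bar\cm^k \tilde W_0$, and $|\tilde W_k|\eum = |W_{n-k}|\eum < L$. Applying Lemma~\ref{l_OneItFHat} to $\bar\cm$, the curve $\tilde W_0$, and the points $\invo(x'_n),\invo(x''_n)$, one obtains the analogues of~\eqref{e_adm-dominate} and of the Euclidean sum estimate, where the $\admName$-metric of $\bar\cm$ appears. Pulling back through $\invo$ (which exchanges $\alpha^+$ and $\alpha^-$), this becomes exactly the two claimed inequalities but with $d_{\alpha^-}$ in place of $\dadm{}$.

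The final step is to upgrade from $\alpha^-$ to $\alpha^+$: this is immediate from Proposition~\ref{p_propertiesAdm}\ref{i_admeum}, since both metrics are uniformly equivalent to $(1+\admParam1 w)\,|\cdot|\eum$, and absorbs only into the constant $C$. The Euclidean-sum estimate follows in the same way, using additionally that $\deum{\tilde W_k}=\deum{W_{n-k}}$ and that reversing the order of summation is harmless. I do not anticipate any real obstacle; the only delicate point is bookkeeping --- keeping straight how invariant cones, singularity sets, recollision regions, and the $\admNamePm$-weights transform under $\invo$, and aligning the reversed indices between forward iteration of $\bar\cm$ and backward iteration of $\cm$.
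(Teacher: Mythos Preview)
Your proposal is correct and follows exactly the approach the paper indicates: conjugate by the involution $\invo$ to turn the stable-curve situation for $\cm$ into the unstable-curve situation for the reversed map $\bar\cm$, apply Lemma~\ref{l_OneItFHat} there, and then pass from $\alpha^-$ back to $\alpha^+$ via Proposition~\ref{p_propertiesAdm}\ref{i_admeum}. The bookkeeping you flag (cones, recollision regions, weights, index reversal) is handled correctly, and nothing further is needed.
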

As it is clear, \eg from~\eqref{e_expansionAdm-alpha}, the expansion
of unstable curves can be arbitrarily large if the curve is cut by a
grazing singularity.  However, as in the case of billiards
(see~\cite[Exercise 4.50]{ChM}), the divergence of the expansion rate
is integrable, as we show in the following lemma.
\begin{lem}\label{l_globalExpansion}\
  \begin{enumerate}
  \item For any $L > 0$, there exists a constant $C_* > 1$ so that for
    any unstable curve $W$ with $|W|\eum < L$ and any connected
    component $W'\subset \cm W$, we have
    \begin{align}\label{e_globalExpansion}
      |W'|\adm\le C_*|W|\adm^{1/4}
    \end{align}
  \item For any $\delta_{*} > 0$ and $k > 0$ there exists
    $\delta =\delta(\delta_{*}, k)\in(0,\delta_{*})$ so that if $W$ is
    an unstable curve with $|W|\adm\le \delta$, $W'$ is a connected
    subcurve of $\cm^{n}W$ and $\hat n(W) < k$, then
    $|W'|\adm\le \delta_*$.
  \end{enumerate}
  The corresponding estimates for stable manifolds hold true.
\end{lem}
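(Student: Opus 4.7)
For Part~(1), my plan is to estimate the one-step expansion $\Lambda(x) := |\cm_{*}dx|\adm/|dx|\adm$ pointwise on $\tilde W := \cm^{-1}W'$, and then integrate along $\tilde W$. Combining Proposition~\ref{p_propertiesAdm}\ref{i_admeum} with identity~\eqref{e_expansionAuxMetricA}, together with the standard bounds on p-slopes for unstable vectors (Lemmas~\ref{LmPSlopeC0} and~\ref{LmPSlopeHighEnergy}), one derives an estimate of the form
\[
\Lambda(x) \le C_{0}\bigl(1 + \tau(x)/w'(x)\bigr),
\]
where $w'(x) = w(\cm x)$ and $C_0$ depends only on $\ell$. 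This bound is uniformly controlled away from the singularity set $\singForward{}$, but diverges as $x$ approaches an endpoint of $\tilde W$ lying on $\singForward{}$: either because $w'(x)\to 0$ at a grazing collision, or because $\tau(x)\to\infty$ near the fixed point $\xc$ (cf.\ Lemma~\ref{lem:geometry}).

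The crucial point is that this divergence is integrable with a specific power-law scaling. Using the geometry of the singularity curves described in Lemma~\ref{LmSingF} (in particular items~(e) and~(f), which locate $\singBackward{\indCell}$ in an $\indCell^{-1/2}$-neighborhood of $\xc$ and constrain its tangent cone), together with the mature-cone bounds~\eqref{MConeNarrow} controlling the tangent direction of $\tilde W$, one shows that if $x\in\tilde W$ lies at $\alpha$-arc-length distance $d$ from the relevant endpoint of $\tilde W$, then $\Lambda(x)\le C\,d^{-3/4}$ for some $C=C(L)$. Integrating then yields
\[
|W'|\adm = \int_{\tilde W}\Lambda(x)\,|dx|\adm \le C\int_{0}^{|\tilde W|\adm}d^{-3/4}\,dd = 4C\,|\tilde W|\adm^{1/4}\le 4C\,|W|\adm^{1/4},
\]
giving Part~(1) with $C_{*} = 4C$.

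For Part~(2), the plan is to combine Part~(1) with the uniform hyperbolicity of $\cmp$. Since $\hat n(W')<k$, the map $\cm^{n}$ restricted to $\tilde W := \cm^{-n}W'$ performs fewer than $k$ complete iterations of $\cmp$. By~\eqref{e_boundedExpansion}, each $\cmp$-iteration in the high-energy regime contributes an expansion bounded above by $\Lambda^{*}$, so their total contribution is at most $(\Lambda^{*})^{k}$. In the low-energy regime, each $\cmp$-iteration consists of at most $\bar N$ individual $\cm$-iterations for some constant $\bar N$ depending on $\ell$ (by~\eqref{e_weakBoundOnNp}), each controlled by Part~(1). Iterating the bound $|W_{j+1}|\adm \le C_{*}|W_{j}|\adm^{1/4}$ at most $k\bar N$ times gives
\[
|W'|\adm \le C_{1}(k)\cdot |W|\adm^{(1/4)^{k\bar N}}
\]
for an appropriate constant $C_{1}(k)$. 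Choosing $\delta = \delta(\delta_{*},k)$ small enough that $C_{1}(k)\cdot \delta^{(1/4)^{k\bar N}}\le\delta_{*}$ (and $\delta \le \delta_{*}$) yields the desired conclusion.

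The main obstacle is the pointwise estimate $\Lambda(x)\le C\,d^{-3/4}$, which requires a careful analysis of how unstable curves approach the singularity set $\singForward{}$: near a grazing endpoint $w'$ scales like a power of the arc-length distance $d$, while near $\xc$ the free-flight time $\tau$ scales inversely with $d$. The specific exponent $3/4$ emerges from the interplay of these two scalings and the transversality of $\tilde W$ to the singularity curves enforced by Lemma~\ref{LmSingF}(f). The analogous statements for stable manifolds follow by applying the involution of Subsection~\ref{SSInvolution}, which exchanges the $\admName$- and $\admNameM$-metrics and maps unstable curves to stable curves.
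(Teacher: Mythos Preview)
Your strategy for part~(b) is essentially the paper's: iterate part~(a) in the low-energy regime (where $\Np(x)$ is bounded) and invoke~\eqref{e_boundedExpansion} in the high-energy regime.

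The gap is in part~(a). Your claimed pointwise bound $\Lambda(x)\le C\,d^{-3/4}$ is not correct, and no such uniform bound with exponent $-3/4$ holds. In the bounded-$\tau$ case the expansion is $\Lambda\sim 1/w'$; since the $c$-slope of $W'$ is bounded away from zero one has $w'\gtrsim |u'-\tilde u'|$ (arc length on $W'$), which after inverting gives $\Lambda(x)\sim d^{-1/2}$, not $d^{-3/4}$, and upon integration $|W'|^{2}\le C|W|$. In the large-$\tau$ case the preimage lies in a single cell $\cell^{+}_{\indCell}$ with $\tau\sim\indCell$ essentially constant along $\tilde W$; the same computation now yields $\Lambda\sim \indCell^{1/2}d^{-1/2}$, so $|W'|^{2}\le C\indCell\,|W|$. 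This by itself is worse than before, not better, and cannot be rewritten as $\Lambda\le C d^{-3/4}$ since $d$ ranges up to $|\tilde W|\sim\indCell^{-1/2}$ while $\indCell^{1/2}d^{-1/2}\le C d^{-3/4}$ would force $d\lesssim\indCell^{-2}$.

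The paper closes this gap with a second, purely geometric, estimate: since $W'\subset\cell^{-}_{\indCell}$, Lemma~\ref{l_singularityStructure}\ref{i_boundedCells} gives the a~priori bound $|W'|^{2}\le C/\indCell$. Multiplying this by $|W'|^{2}\le C\indCell|W|$ eliminates $\indCell$ and produces $|W'|^{4}\le C|W|$, which is where the exponent $1/4$ actually comes from. Your proposal is missing this multiplication step; without it the argument does not reach the stated exponent.
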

\begin{proof}
  It suffices to prove this result with the $\alpha$-metric replaced by
  the auxiliary metric $|\cdot|_*$ defined by \eqref{StarMetric}.
   Assume first that $w\geq w_*$ on
  $W$, then by~\eqref{e_expansionAuxMetricA} and
  Lemma~\ref{LmPSlopeC0}(a) we conclude that the expansion along $W$ can
  be at most $1+2\curv'w/\curv w'$ which is uniformly bounded from
  above, hence $|W'|_*\leq C |W|_*$.

  Next, assume  that
  there is a point on $W$ so that $w\leq w_*$.  Let $u$ and $u'$ be the
  arclength parameters on $W$ and $W'$ respectively (with respect to
  $|\cdot|_*$-metric). Pick a large $T$ and consider two subcases.
 
 (i) $\tau\leq T$ on $W$: in this case Lemma~\ref{LmPSlopeC0} gives
    a uniform lower bound on $(\prpslope)'$ and
    hence~\eqref{e_expansionAuxMetricA} implies that
    $\left|\frac{d u'}{d u}\right|\leq \frac{\hat c}{w'}$.  Let
    $\tilde w'$ denote the minimal $w'$ on $W'$ and $\tilde u'$
    parametrize the point where the minimum is achieved.  Since
    $|\cslope|\geq\cK$ it follows that
    \begin{align*}
      w' \ge \tilde w'+c|u'-\tilde u'|,
    \end{align*}
    hence, we gather
    \begin{align*}
      \left|\frac{d u'}{d u}\right|\leq \frac{\bar{c}}{|u'-{\tilde
      u'}|}.
    \end{align*}
    Integrating the above estimate we obtain $|W'|_*^2\leq C |W|_*$ as
    needed.
  
  (ii) $\tau\geq T$ somewhere on $W$.  Then there is a (large)
    $\indCell\in\mathbb{N}$ such that
    $r+\tau(W)\subset(\indCell,\indCell+1)$, \ie
    $W'\subset\cell_{\indCell}^{-}$.  In this case Lemma
    \ref{LmPSlopeC0}(b) shows that, on $W'$, $(\prpslope)'$ is of
    order $1/\indCell$; thus repeating the argument from the previous
    subcase we obtain
    \begin{equation}
      \label{QuadLongT}
      |W'|_*^2\leq C \indCell |W|_*.
    \end{equation}
    \ignore{Note that on $W'$ we have $|(v-)'|\leq C/n$ since the
      backward travel time is of order $n$.  Also the height should
      be $\cO(1/n)$ close to the maximal since otherwise $W'$ would
      be in the recollision region. It follows that
      $|\ct_2'-\ct_1'|\leq \frac{\bar C}{\sqrt{n}}$ and hence
      $|W'|_*\leq \frac{\hat C}{\sqrt n}$.  That is} On the other
    hand, by Lemma~\ref{l_singularityStructure}\ref{i_boundedCells}
    and Remark~\ref{r_singularityStructureBackwards}, since
    $W'\subset\cell^{-}_{\indCell}$, we gather
    \begin{equation}\label{WPrSmall}
      |W'|_*^2\leq \frac{\bar{C}^2}{\indCell}.
    \end{equation}
    Multiplying \eqref{QuadLongT} and \eqref{WPrSmall} we obtain the
    result.

  We now prove item (b). Notice that it suffices to prove
      the case $k = 1$, since the general case follows by induction.
      let $\largew$ be sufficiently
      large and consider two possibilities.

      (I) If $W\subset \{w\le\largew\}$, then
      $\Np(x) < N_{*} = C\largew$, and thus $n \le N_{*}$: then the
      conclusion follows from item (a) since
      $|W'|\adm\le C_{*}^{4/3}|W|\adm^{1/4^{N_{*}}}$.

      (II) On the
      other hand, if $W\cap \{w > \largew\}\ne\emptyset$, by choosing
      $\largew$ sufficiently large and $\delta < 1$ we can guarantee
      that~\eqref{e_boundedExpansion} holds for all points in $W$,
      from which our conclusion immediately follows.
 \end{proof}
 \begin{rmk}\label{r_betterGlobalExpansion}
   Inspecting the proof of Lemma~\ref{l_globalExpansion}, we can
   obtain the slightly stronger result that if $W$ is
   unstable (\resp stable) and $W\subset\backReco$, (\resp
   $W\subset\reco$), then $|W'|\adm \le \Const|W|\adm^{1/2}$.
 \end{rmk}
 \begin{lem}\label{l_largeCell}\ %
   \begin{enumerate}
    \item\label{i_almostFinite} For any $\bar\indCell$, there exists
    $\delta =\delta(\bar\indCell) > 0$ so that for any u-curve
    $W\subset\csp$ with $|W|\adm < \delta$, $\cm W$ has at most $3$
    connected components that are \emph{not} contained in
    $\bigcup_{\indCell > \bar\indCell}\cell^-_\indCell$.%
    \item\label{i_almostFinite-large} There exists $\delta > 0$ and
    $w^{*} > 0$ so that if $|W|\adm < \delta$ and
    $W\subset\{w\ge w^{*}\}$, then $W$ intersects at most two
    $\cEs_{n}$'s.
    \item\label{i_almostFinite-induced} For any
    $\bar\indCell$ sufficiently large, there exists
    $\delta =\delta(\bar\indCell) > 0$ and $K > 0$ so that for any
    u-curve $W\subset\cspi$ with $|W|\adm < \delta$, $\cm W$ has at
    most $K$ connected components that are \emph{not} contained in
    $\bigcup_{\indCell > \bar\indCell}\cell^-_\indCell$.%
    \end{enumerate}
\end{lem}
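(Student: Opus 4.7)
The plan is to exploit two structural properties. First, by Lemma~\ref{LmSingF} together with Lemma~\ref{l_singularityStructure}(d)--(e), for any fixed $\bar\indCell$ all the cells $\cell^+_\indCell$ and all the singularity curves $\singForward{\indCell}$ with $\indCell > \bar\indCell$ lie in a shrinking neighborhood of the singular point $\xc$; on the complement, only finitely many singularity curves remain (namely $\singReco^+$, $\singForward 0$, and $\singForward{\indCell}$ for $1 \le \indCell \le \bar\indCell$). Second, a small $\admName$-length of $W$ forces a small Euclidean length at bounded $w$ (Proposition~\ref{p_propertiesAdm}\ref{i_admeum}) and a small length in adiabatic coordinates $(\tau,I)$ uniformly in $w$ (Lemma~\ref{LmAlp-Euc}(a)). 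Since the forward singularity curves are all stable while $W$ is unstable, the dispersing hypothesis supplies uniform transversality between them.

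For part (a): the components of $\cm W$ outside $\bigcup_{\indCell > \bar\indCell}\cell^-_\indCell$ correspond via $\cm^{-1}$ to pieces of $W\setminus\singForward{}$ lying in $\backReco \cup \bigcup_{\indCell\le\bar\indCell}\cell^+_\indCell$. The low-index cells sit in the complement of a fixed ball around $\xc$; combined with the length bounds (on $|W|_\eum$ at bounded $w$ and on $|W|_{E(\tau,I)}$ at high $w$, where only $\singForward 0$ is relevant), choosing $\delta$ small forces $W$ to cross each of the finitely many relevant singularity curves at most once. Using the adjacency structure (Lemma~\ref{l_singularityStructure}\ref{i_adjacentCells}) together with the fact that $\singReco^+$ separates $\backReco$ from $\csps^+$, the worst case consists of one crossing of $\singReco^+$ (yielding a piece in $\backReco$ and one in $\csps^+$) and one crossing of some $\singForward{\indCell}$ inside $\csps^+$ (yielding at most one further piece), for a total of three components.

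For part (b): by Lemma~\ref{LmAlp-Euc}(a) any tangent vector satisfies $|dx|\adm \ge c|dx|_{E(\tau,I)}$, so $|W|\adm < \delta$ gives $|W|_{E(\tau,I)} \le c^{-1}\delta$, arbitrarily small for small $\delta$. By Theorem~\ref{t_normalForm}, each fundamental domain $\Dom_n$ in adiabatic coordinates is an $\cO(n^{-1})$-perturbation of the unit-width strip $\hDom_n$, and consecutive $\Dom_n$'s are separated by a single smooth stable curve. Hence a sufficiently short unstable $W$ in the $(\tau,I)$ metric crosses at most one such separator and therefore intersects at most two $\cEs_n$'s.

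Part (c) combines both arguments: decompose $W = W^L \cup W^H$ with $W^L \subset \{w \le w^*\}$ and $W^H \subset \{w \ge w^*\}$ for a suitable threshold $w^*$. The argument of part (a) bounds the number of components coming from $W^L$; the argument of part (b) bounds those coming from $W^H$ (at high $w$ only $\singForward 0$ is present, so $W^H$ is cut into at most two pieces, all landing in $\cell^-_0$, which has index $0 \le \bar\indCell$ for $\bar\indCell$ large enough). Adding the two contributions yields a uniform constant $K$. The main obstacle is controlling the number of singularity crossings uniformly across a non-compact phase space: the accumulation of cells at $\xc$ is absorbed by the restriction to $\indCell \le \bar\indCell$, while the non-compactness at $w = \infty$ is tamed by the adiabatic coordinates; both reductions depend critically on transversality between the forward singularity curves and unstable directions, which is afforded by the dispersing hypothesis.
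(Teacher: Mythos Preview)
Your arguments for parts~(a) and~(b) are correct in spirit but take routes different from the paper's. For~(a) you argue directly via separation of the finitely many low-index singularity curves; the paper instead argues by contradiction, observing that a sequence of arbitrarily short curves each meeting three low-index cells would force two disjoint curves $\singForward{\indCell}$, $\singForward{\indCell'}$ to intersect, contrary to Lemma~\ref{LmSingF}(a). For~(b) you invoke the adiabatic coordinates and the metric comparison of Lemma~\ref{LmAlp-Euc}(a); the paper instead argues that if $W$ met three consecutive $\cEs_n$'s, then a forward image $\cm^{n+1}W$ would contain a component joining $\sing0$ to $\sing{-1}$, hence of definite $\alpha$-length, contradicting the bounded-expansion estimate~\eqref{e_boundedExpansion}. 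Note that your use of adiabatic coordinates in~(b) is, as stated, only justified on $\cspi$ (this is the hypothesis in Theorem~\ref{t_normalForm} and Lemma~\ref{LmAlp-Euc}), whereas part~(b) does not assume $W\subset\cspi$; this is a minor but genuine gap that the paper's expansion argument avoids.

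For part~(c), however, you have missed the main content. The printed statement says ``$\cm W$'', but the paper's proof is unambiguously about $\cmp W$, and this is the only reading under which the restriction $W\subset\cspi$ and the generic constant $K$ are not redundant (otherwise (c) is an immediate corollary of~(a) with $K=3$). The paper's argument proceeds by iteration: at low energy $\Np(x)\le N_*=\Const w_*$ by Lemma~\ref{LmFRet}, so one applies part~(a) at each of the $\le N_*$ intermediate steps, using Lemma~\ref{l_globalExpansion}(b) to keep all intermediate curves shorter than $\delta_*$; this yields at most $3^{N_*}$ components of $\cmp W$ outside the high-index cells. At high energy the adiabatic picture bounds the number of fundamental domains $D_n$ met by $W$, and hence the number of components of $\cmp W$, by~$2$. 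Your decomposition $W=W^L\cup W^H$ followed by a single application of~(a) to $W^L$ controls only one iterate of $\cm$; the iteration up to $\Np$ steps is the missing idea.
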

\begin{proof}
  We begin with the proof of item~\ref{i_almostFinite}. Observe that
  by Proposition~\ref{p_propertiesAdm}\ref{i_admeum}, it suffices to
  prove the statement for the Euclidean metric $|\cdot|\eum$.  Let
  $W' = W\setminus\backReco$. By Lemma~\ref{l_forwardReco}(a2) we
  conclude that $W'$ is connected. Since
  $\backReco\cap\sing{+} = \emptyset$, we conclude that
  $\cm(W\cap\backReco)\subset\reco$ is also connected. Therefore it can
  contribute to at most one connected component, which is not in
  $\bigcup_{\indCell > \bar\indCell}\cell^-_\indCell$.  Hence, it
  suffices to prove that there exists $\delta > 0$ so that if
  $|W'|\eum < \delta$, $W'\cap\backReco = \emptyset$, then $\cm W'$
  has at most $2$ connected components that are not contained in
  $\bigcup_{\indCell > \bar\indCell}\cell^-_\indCell$.  This is
  immediate if $\bar\indCell < 2$.  Otherwise there would be a
  sequence of curves $W'_n$ converging to a point which would
  intersect at least three $\cell^{+}_\indCell$, with
  $\indCell\le \bar\indCell$. Hence it would intersect at least two
  $\sing+_\indCell$, with $\indCell\le \bar\indCell$.  Since
  $\sing+_\indCell$ are closed sets, we conclude that two curves
  $\sing+_\indCell$ and $\sing+_{\indCell'}$ must intersect, but this
  is impossible by Lemma~\ref{LmSingF}(a).

  In order to prove item~\ref{i_almostFinite-large}, let us assume
  that $W$ intersects at least three consecutive $\cEs_{n}$'s: let us
  denote them by $\cEs_{n-1}, \cEs_{n}$ and $\cEs_{n+1}$; in
  particular it must be that $W$ intersects both $\sing{n}$ and
  $\sing{n+1}$.  This implies that $\cm^{n+1}W$ will have a component
  $W'$ that joins $\sing0$ to $\sing{-1}$, and thus $|W'|\adm > c$ for
  some uniform $c > 0$ (see~\eqref{e_boundAlphaTau} ).
  However,~\eqref{e_boundedExpansion} guarantees that the expansion of
  $\cm^{n}$ is bounded above by $\Lambda^{*}$.  We conclude that
  $|W|\adm > c/\Lambda^{*}$.  Hence if $|W|\adm < c/\Lambda$, $W$ can
  only intersect $2$ of the $\cEs_{n}$'s.

  We now proceed to the proof of~\ref{i_almostFinite-induced}; fix
  $w_{*} > 0$ sufficiently large.  If
  $W\cap\{w\le w_{*}\}\ne\emptyset$ and $|W|\adm < 1$, then
  Lemma~\ref{LmFRet} allows to conclude that $\Np(x)\le N_{*}$ where
  $N_{*} = Cw_{*}$.  By part (a) there exists $\delta_*$ so that if
  $|W|\adm < \delta_{*}$, then $\cm W$ has at most $3$ connected
  components not contained in
  $\bigcup_{\indCell\ge\bar\indCell}\cell_{\indCell}^{-}$.  Moreover
  by Lemma~\ref{l_globalExpansion}, we can find
  $\delta = \Const\delta_{*}^{4^{N_{*}}}$ so that any connected
  component of $\cm^{n}W$, for $0\le n\le N_{*}$ is not larger than
  $\delta_{*}$.  Finally, observe that if $\bar\indCell$ is
  sufficiently large, then $\cell_{\indCell}^{-}\subset\cspi$ for any
  $\indCell\ge\bar\indCell$.  We can conclude by induction that
  $\cmp W$ has at most $3^{N_{*}}$ components not contained in
  $\bigcup_{\indCell\ge\bar\indCell}\cell_{\indCell}^{-}$, provided
  that $|W|\adm < \delta$.  Assume, on the other hand that
  $W\subset\{w\ge w_{*}\}$.  According to Theorem~\ref{t_normalForm},
  if $|W|_{\text{E}(\tau,I)} < 1/2$, then $W$ lies in at most $2$
  fundamental domains $D_{n}$, and therefore $\cmp W$ has at most $2$
  connected components.  By~\eqref{e_boundAlphaTau}, there exists
  $\delta > 0$ so that if $|W|\adm < \delta$, then
  $|W|_{\text{E}(\tau,I)} < 1/2$.  We conclude that item (b) holds
  even for large $w$.
\end{proof}


Finally, we conclude this section with a useful result about
   singularities (this is the analog of~\cite[Lemma 4.55]{ChM} for our
   system.)
   \begin{lem}\label{l_S-dense}
     The sets $\sing{+\infty}$ and $\sing{-\infty}$ are dense in
     $\csp$.
   \end{lem}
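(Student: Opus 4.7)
By the involution introduced in Section~\ref{SSInvolution}, it suffices to prove density of $\sing{-\infty}$; the density of $\sing{+\infty}$ then follows by applying $\invo$. Fix $x\in\csp$ and $\eps>0$; we show that $B(x,\eps)\cap\sing{-\infty}\ne\emptyset$. Two cases are immediate: if $B(x,\eps)$ meets $\partial\csp=\sing{0}$ we are done, and if $B(x,\eps)$ lies inside a sufficiently small neighborhood of the singular fixed point $\xc$, then by Lemma~\ref{LmSingF}(e) the curves $\singBackward{\indCell}\subset\sing{-1}$ accumulating at $\xc$ provide the required point. Hence we reduce to the case $B(x,\eps)\subset\intr\csp$ with $\overline{B(x,\eps)}$ bounded away from $\xc$.

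Pick a small stable curve $V\subset B(x,\eps)$ and suppose, for contradiction, that $V\cap\sing{-\infty}=\emptyset$. Then for every $n\ge 0$ the map $\cm^{-n}$ is defined and smooth on $V$; moreover $\cm^{-n}V\cap\partial\csp=\emptyset$, for otherwise, since $\cm\inv$ is undefined on $\sing{0}\subset\singBackward$, we would have $V\cap\sing{-n-1}\ne\emptyset$. Consequently every $\cm^{-n}V$ is a smooth stable curve contained in $\intr\csp$. Since $\csp\setminus\cspi=\cell^-_0$ maps to $\cell^+_0\subset\cspi$ under $\cm\inv$, there is an infinite subsequence $n_k\nearrow\infty$ for which $\cm^{-n_k}V\subset\cspi$ and for which (up to an initial shift) $\cmp^{-k}$ coincides with $\cm^{-n_k}$ on $V$. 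The uniform expansion of $\cmp\inv$ on stable vectors in the $\alpha^{-}$-metric (Proposition~\ref{p_propertiesAdm}(c)) then gives
\begin{equation*}
  |\cm^{-n_k}V|\adms\ge \Lambda^{k}\,|\cm^{-n_0}V|\adms\longrightarrow\infty.
\end{equation*}

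To reach the contradiction, recall from Proposition~\ref{p_propertiesAdm}(a) that $|\cdot|\adms$ is equivalent to $(1+\alpha_1 w)|\cdot|\eum$. Since every $\cm^{-n_k}V$ is a stable curve lying inside the strip $[0,1]\times\bR_{\ge 0}$ with slope $\cslope\ge\cK>0$, the blow-up of $|\cm^{-n_k}V|\adms$ forces the $w$-extent of $\cm^{-n_k}V$ to become arbitrarily large. For sufficiently large $k$ we therefore enter the high-energy regime of Theorem~\ref{t_normalForm}: in the adiabatic coordinates $(\tau,I)$, the curve $\cm^{-n_k}V$ is an $\cO(I^{-1})$-perturbation of a stable line of the hyperbolic affine map $\nf_\Delta$, and its diverging $\alpha$-length forces it to traverse several fundamental domains $\hDom_n$. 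Each crossing of the singularity set $\{\tau=0\}\cup\{\tau=1+\cO_5(I^{-1})\}$ of $\cmp\inv$ corresponds, in the original coordinates, to an intersection with $\cm^{\pm 1}(\sing{0})\subset\sing{-\infty}$; pulling any such intersection back through the smooth restriction $\cm^{n_k}|_V$ produces a point of $V\cap\sing{-\infty}$, contradicting the standing hypothesis.

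The main obstacle is making the final high-energy step fully rigorous: one must confirm that the cone control provided by~\eqref{MConeNarrow} and Corollary~\ref{CrHESqueeze} persists beyond the $\cO(I^{-1})$-error of Theorem~\ref{t_normalForm} in a manner that guarantees the transversal intersection of the (long) stable curve $\cm^{-n_k}V$ with a genuine singularity line of $\cmp\inv$. The alternative case, in which the $w$-extent of $\cm^{-n_k}V$ stays bounded while the Euclidean length diverges, is easier: uniform hyperbolicity in a bounded region combined with Lemma~\ref{l_globalExpansion} then forces a cut by a singularity already in finitely many steps, reducing to the same contradiction.
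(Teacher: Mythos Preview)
Your approach is essentially the paper's, run in reverse time: grow a curve under iterations of the induced map until, by uniform expansion in the $\admName$-metric, it becomes so long that it must span two fundamental domains $D_k$ in the high-energy region, forcing a discontinuity. The paper proves the statement for $\sing{+\infty}$ directly (using an unstable curve and forward iteration of $\cmp$), and then closes the argument in one line: once $\cmp^{n^*}W$ intersects two domains $D_k$ and $D_{k'}$, the value of $\Np$ differs on the two pieces, so $\cmp^{n^*+1}|_W$ is discontinuous --- contradiction. No cone-control or transversality beyond Theorem~\ref{t_normalForm}(a) is needed, which dissolves the ``main obstacle'' you flag.

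There is, however, a genuine error in your reduction to $\cspi$. You assert that $\csp\setminus\cspi=\cell^-_0$ is mapped by $\cm^{-1}$ into $\cell^+_0\subset\cspi$. The first half is correct ($\cell^-_0=\cm\cell^+_0$), but the inclusion $\cell^+_0\subset\cspi$ is false: for large $w$ both the previous and the next collision occur in the same period, so $\cell^+_0\cap\cell^-_0$ contains most of the high-energy region and hence $\cell^+_0\not\subset\cspi$. This breaks your argument that the backward orbit enters $\cspi$ after one step. The fix is easy --- use the time-reversal of Lemma~\ref{LmFRet} to land in $\cspi$ after finitely many backward steps, after which $\cmp^{-1}$ keeps you there --- but as written the step is wrong. (Your ``alternative case'' paragraph is also unnecessary: a monotone curve with bounded $w$-extent and $r$-extent at most $1$ has bounded Euclidean length, so that case cannot occur.)
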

   \begin{proof}
     We prove the lemma for $\sing{+\infty}$ (the statement for
     $\sing{-\infty}$ follows by the properties of the involution).

     Assume by contradiction that $\csp\setminus\sing{+\infty}$
     contains an open ball $B$. Let $x\in B$ and $N = \Np(x)$. Then
     $B' = \cm^{N}B\subset\cspi$ and by invariance of
     $\csp\setminus\sing{+\infty}$ we gather that
     $B'\subset\cspi\setminus\sing{+\infty}\subset\cspi\setminus\singp{+\infty}$.

     We conclude that there exists an unstable curve $W\subset B'$ of
     positive length so that $\cmp^{n}|_W$ is smooth for every
     $n > 0$.  By
     Proposition~\ref{p_propertiesAdm}\ref{i_uniformHyperbolicity} 
     the length of the unstable curve $\cmp^{n}W$ would
     grow arbitrarily large.  Since unstable curves are decreasing, by
     definition of $\cspi$ and of the $\admName$-metric this means
     that for any $w^{*}$, there exists $n^*$ so that
     $\cmp^{n^{*}}W\cap \{w > w^{*}\}\neq\emptyset$.  But by the
     observation below Theorem~\ref{t_normalForm}(a) this means
     (choosing $w^{*}$ sufficiently large) that $\cmp^{n^{*}}W$ will
     intersect nontrivially at least two fundamental domains $D_k$,
     which in turn means that $\cmp^{n^*+1}|_{W}$ is discontinuous,
     which contradicts our assumptions.
   \end{proof}

\section{Distortion estimates}
\label{ScDistortion}
The previous sections dealt with $C^1$ estimates for the dynamics of
Fermi--Ulam Models.  However, it is well known that, in order to obtain good
statistical properties of hyperbolic maps, one needs a higher
regularity than $C^1$ for the purpose of controlling \eg distortion.
The necessary results about higher derivatives of the iterates of
$\cmp$ are presented in this section.
\subsection{Homogeneity strips}\label{s_homo}
In order to control distortion of u-curves, we introduce the so-called
\emph{homogeneity strips} $\homo_k\subset\csp$.  Fix $k_0 \in\bN $
sufficiently large, to be specified later, and define
$$\homo_0 = \{(\ct,w)\in\csp\st w > k_0^{-2}\}.$$
For $k\ge k_0$ define
\begin{align*}
 \homo_k = \{(\ct,w)\in\csp\st w\in({(k+1)^{-2}},k^{-2}]\}.
\end{align*}
By Proposition~\ref{p_propertiesAdm}\ref{i_expansionAdm}, we gather
that 
if $\cm x\in\homo_k$, the
expansion rate along unstable vectors at $x$ for the $\admName$-metric
is bounded below by $\const k^2$.  Moreover, by
Lemma~\ref{l_largeCell}, we can conclude that there exists
$\redCell > 0$ so that $\cell^{\pm}_\indCell \cap \homo_0 = \emptyset$
for any $\indCell > \redCell$.

As it is customary in the theory of billiards, we need to treat the
boundaries of $\homo_k$ as auxiliary (or \emph{secondary})
singularities.  For $k\ge k_{0}$, denote by
$\hhs_{k} = (0,1)\times\{k^{-2}\}$ and put
$\DS \hhs = \bigcup_{k\ge k_{0}}\hhs_{k}$.
Then we let
$\hsing 0 = \sing0 \cup \hhs$ and for any $n > 0$ we let:
\begin{align}\label{e_definitionHSing}
  \hsing{n} &= \sing n\cup \bigcup_{m =
              0}^{n}\cm^{-m}(\hhs\setminus\sing{-m}),&
  \hsing{-n} &= \sing{-n}\cup\bigcup_{m = 0}^{n} \cm^{m}(\hhs\setminus\sing{m}).
\end{align}
\begin{rmk}
\label{RkSCClosed}
Observe that $\cm\hhs$ (\resp $\cm^{-1}\hhs$) is a countable union of
stable (\resp unstable) curves that accumulate on the singular curves
$\sing{-1}\setminus\sing0$ (\resp $\sing{1}\setminus\sing0$).  Each
curve also terminates on $\sing{-1}$ (\resp $\sing{1}$).  In
particular each $\hsing n$ is a closed set.
\end{rmk}

As in Section~\ref{sec:accel-poinc-map}, we now extend
  these definitions  to the induced map.  First, define
  \begin{align*}
    \singt+_{\homo} = \hsing0 \cap \bigcup_{k\ge 0}(\hsing{k+1}\cap\cE_{k}),
  \end{align*}
then let $\hsingp+ = (\singt+_{\homo}\cap\cspi)\cup\partial\cspi$.  By a
similar construction we can define $\hsingp-$. then for any $n > 0$ we let:
\begin{align}\label{e_definitionHSingp}
  \hsingp{n+1} &= \hsingp n\cup\cmp\inv(\hsingp n\setminus\singp-)&
  \hsingp{-n-1} &= \hsingp{-n}\cup\cmp(\hsingp{-n}\setminus\singp+).
\end{align}
The auxiliary singularities will further cut any set into
components, which we call \emph{homogeneous components} (or
\emph{H-components}) An unstable (or stable) curve $W$ is said to be
\emph{weakly homogeneous} if $W$ belongs to only one strip
$\homo_{k}$.
\subsection{Unstable curves.}
In this section we study regularity properties of unstable
curves. By~\eqref{e_slopeCollision}, it suffices to establish the
regularity of the $p$-slope $\prpslope$.  In order to do so, we find
convenient to introduce the following notion: an unstable curve $W$ is
said to be $K$-\admiss{} if $\prpslope$ is $K$-Lipschitz (with respect
to the $\admName$-metric) on $W\setminus\backReco$ and
$(\prpslope)\inv$ is $K$-Lipschitz (with respect to the
$\admName$-metric) on\footnote{ In case that either
  $W\setminus\backReco$ or $W\cap\backReco$ is empty, we assume the
  Lipschitz condition to be trivially satisfied.} $W\cap\backReco$.
Using the involution, we can analogously define the class of stable
$K$-\admiss{} curves.  In this section we focus on properties of
unstable curves. 
Corresponding statements for
stable curves follow using the involution.  Later (in
Section~\ref{sec:invariant-manifolds}), we will use the
properties of stable curves.
\begin{prp} \label{PrBHold} For each $K > 0$ there exists
  $\bar{K} > 0$ such that the following holds.  Let $W$ be a
  weakly homogeneous mature unstable curve that is
  $K$-\admiss{}.  Then, for any $n > 0$, any H-component of $\cm^{n}W$
  is $\bar K$-\admiss{}.
\end{prp}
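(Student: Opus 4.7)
The plan is to reduce the proposition to a recursive linear estimate on the Lipschitz constant of $\prpslope$ (equivalently of $1/\prpslope$ on the recollision region) and then to iterate that estimate using the uniform hyperbolicity of the induced map $\cmp$. The starting point is the transformation rule~\eqref{e_evolutionB}, which on any smooth branch of $\cm$ reads
\begin{align*}
  (\prpslope)' \;=\; g(\prpslope,\cpar,\fft)\;:=\;\frac{\prpslope+\cpar}{1+\fft\,(\prpslope+\cpar)},\qquad \frac{1}{(\prpslope)'}\;=\;\fft+\frac{1}{\prpslope+\cpar},
\end{align*}
with $\cpar = 2\curv/w$ and $\fft$ smooth on each branch. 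A direct computation gives $\partial_{\prpslope}g=(1-\fft\,(\prpslope)')^{2}=1/(1+\fft\popslope)^{2}\le 1$, and the remaining partial derivatives $\partial_{\cpar}g,\partial_{\fft}g$ are explicit. Parametrising $W$ and an H-component $W'$ of $\cm W$ by $\admName$-arclength $u,u'$ and letting $\chi = du'/du$, the chain rule bounds
\begin{align*}
  \Big|\frac{d(\prpslope)'}{du'}\Big|\;\le\;\frac{(1-\fft\,(\prpslope)')^{2}}{\chi}\,\Big|\frac{d\prpslope}{du}\Big|\;+\;\frac{|\partial_{\cpar}g|}{\chi}\,\Big|\frac{d\cpar}{du}\Big|\;+\;\frac{|\partial_{\fft}g|}{\chi}\,\Big|\frac{d\fft}{du}\Big|.
\end{align*}
Using Corollary~\ref{CrHESqueeze}, Lemma~\ref{LmPSlopeC0} and Lemma~\ref{lem:vertical-unstable} to control the partial derivatives and the inverse expansion, and the smoothness of $\cpar,\fft$ on the branch to bound the variations $|d\cpar/du|,|d\fft/du|$ from the $\admName$-arclength bounds on $|d\ct/du|,|dw/du|$, yields the one-step inequality $K_{W'}\le A\,K_{W}/\chi + B/\chi$ with $A,B$ uniform. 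The reciprocal formula handles the analogous estimate for $1/\prpslope$ on $W\cap\backReco$, and the two regimes are matched at the boundary $\partial\backReco$ by Lemma~\ref{lem:vertical-unstable}, which provides a positive lower bound on $\prpslope$ away from the critical corner.

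The second ingredient is to turn this single-step estimate into a strict contraction by passing to $\cmp$. A single step of $\cm$ is not uniformly expanding in the $\admName$-metric at high energies, but Proposition~\ref{p_propertiesAdm}\ref{i_uniformHyperbolicity} provides a uniform factor $\Lambda>1$ for one application of $\cmp$, while~\eqref{e_boundedExpansion} controls the intra-period product of one-step factors uniformly in $\Np$. Chaining the one-step inequality across the $\Np$ steps that constitute one application of $\cmp$ then gives
\begin{align*}
  K_{W_{1}}\;\le\;\rho\,K_{W}+C_{*},\qquad 0<\rho<1,
\end{align*}
for every H-component $W_{1}$ of $\cmp W$, with $C_{*}$ depending only on $\ell$. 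Iterating yields the uniform bound $\bar K_{\infty}:=K+C_{*}/(1-\rho)$ for every H-component of $\cmp^{m}W$ and every $m\ge 1$. Finally, for an arbitrary $n\ge 1$, decomposing $\cm^{n}=\cm^{n_{+}}\circ\cmp^{m}\circ\cm^{n_{-}}$ with $n_{\pm}<\Np$ and applying the one-step bound finitely many times (the number of applications bounded, on each H-component, via~\eqref{e_boundedExpansion}) transfers $\bar K_{\infty}$ to the desired $\bar K$ for H-components of $\cm^{n}W$.

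The main obstacle is verifying the single-step estimate near the two mechanisms that make $\prpslope$ blow up, namely the grazing singularities in $\sing{\pm1}$ and the secondary boundaries $\hhs_{k}$ for large $k$. At a grazing singularity one has $\prpslope\sim\fft_{-1}^{-1}\to\infty$ by~\eqref{e_slopeSingularity1}, so one works with $1/\prpslope$ via the reciprocal formula; the resulting coefficients are smooth and bounded. At a homogeneity boundary $w\sim k^{-2}$ is small, $\cpar\sim k^{2}$ is large, and the coefficients $A,B$ a priori blow up. The weak homogeneity assumption forces $W'\subset\homo_{k}$, giving the width estimate $\mathrm{osc}_{W'}(w)=\cO(k^{-3})$ and hence $\mathrm{osc}_{W'}(\cpar)=\cO(k^{-1})$; on the other hand, the expansion factor $(1+\fft\cdot 2\cK/w')$ in~\eqref{e_expansionAdm-alpha} is of order $k^{2}$ in this regime, which precisely compensates the apparent blow-up in $A,B$. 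The quantitative matching of these two effects, uniformly in $k$, is what allows the recursive estimate to close and ultimately what ensures $\rho<1$ in the contraction above.
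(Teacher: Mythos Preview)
Your approach is essentially sound and parallels the paper's, but the packaging is different and a couple of steps are underspecified.

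The paper also starts from the recursion $\prpslope_{k}=G(\tau_{k-1},\prpslope_{k-1},\cpar_{k-1})$, but instead of differentiating, it takes finite differences for a fixed pair of points $x',x''$ on $W$. This yields a two--point lemma (Lemma~\ref{LmB1-2step}) bounding $|\prpslope_k'-\prpslope_k''|$ by $|\prpslope_{k-1}'-\prpslope_{k-1}''|/(\Delta_{k-1}'\Delta_{k-1}'')$ plus $C[d_E(x_{k-1}',x_{k-1}'')+d_E(x_k',x_k'')]$. Iterating directly gives a product of the $\Delta$--factors in the denominator (which tends to infinity by Proposition~\ref{p_propertiesAdm}) and a sum $\sum_j d_E(x_j',x_j'')$ for the inhomogeneous part, which is controlled by Lemma~\ref{l_OneItFHat}. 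There is no need to pass through $\cmp$ as a black box to manufacture $\rho<1$. An important dividend of the finite--difference formulation is that Lemma~\ref{LmB1-2step} is stated for $x',x''$ lying on \emph{different} unstable curves, and is reused verbatim for the holonomy estimates in Section~\ref{SSHolonomy}; your pointwise derivative version is intrinsically single--curve and would have to be redone there.

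Two places in your sketch need more care. First, $\fft$ depends on both endpoints, and on an H--component one has $d\fft/du=\chi\cdot(dr'/du')-dr/du$, so the inhomogeneous term $B$ contains a piece proportional to $\chi$; after dividing by $\chi$ this piece does not come with a small factor, and its summability over $\Np$ steps must be checked separately (this is precisely where the paper invokes Lemma~\ref{l_OneItFHat}). Second, the claim that chaining over one period of $\cmp$ yields a uniform additive constant $C_*$ amounts to bounding $\sum_m(\prod_{j>m}A_j/\chi_j)\,B_m/\chi_m$; here one needs a \emph{lower} bound on the tail products $\prod_{j>m}\chi_j$, whereas \eqref{e_boundedExpansion} supplies only an upper bound on head products. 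The required lower bound does follow (combine \eqref{e_boundedExpansion} with the uniform expansion of $\cmp$, or use Sublemma~\ref{sublem:intermediate-step-expansion}), but this should be said explicitly. The recollision transitions ($W$ or $W'$ in $\backReco$/$\reco$) also warrant an explicit case split as in Lemma~\ref{LmB1-2step}(b); Lemma~\ref{lem:vertical-unstable} alone does not cover the case $W'\subset\backReco$.
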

\begin{proof}
  Recall that for any $x\in W\setminus\sing{n}$ we denote with
  $\prpslope_{n}(x)$ the value of $\prpslope$ of the curve $\cm^{n}W$
  at the point $\cm^{n}x$.  In this proof we drop the superscript
  ``$-$'' in $\prpslope_n$ in order to simplify the notation.  We
  have, using~\eqref{e_slopeEvolution}, that
  $\cB_n=G(\tau_{n-1}, \cB_{n-1}, \cpar_{n-1})$ where
  \begin{align*}
    G(\tau, \cB, \cpar) &= \frac{\cB+\cpar}{1+\tau(\cB+\cpar)}.
  \end{align*}
  A direct computation gives
  \begin{subequations} \label{DeltaG}
    \begin{align}
      G(\tau, \cB', \cpar)-G(\tau, \cB'', \cpar)&=\frac{ (\cB'-\cB'')}{(1+\tau(\cB'+\cpar))(1+\tau(\cB''+\cpar))}, \\
      G(\tau, \cB, \cpar')-G(\tau, \cB, \cpar'')&=\frac{ (\cpar'-\cpar'')}{(1+\tau(\cB+\cpar'))(1+\tau(\cB+\cpar''))},\\
      G(\tau', \cB, \cpar)-G(\tau'', \cB, \cpar)&=\frac{(\cB+\cpar)^2 (\tau'-\tau'')}{(1+\tau'(\cB+\cpar))(1+\tau''(\cB+\cpar))}.\label{e_DeltaG-tau}
    \end{align}
  \end{subequations}
  Let $W_{n}$ be a $H$-component of $\cm^{n}W$ and for
  $0 \le k \le n $ let $W_k = \cm^{k-n}W_n$; let $x',x''\in W_0$ and
  for $0 \le k \le n $ let $x'_k = \cm^{k}x'$ and
  $x''_{k} = \cm^{k}x''$. Observe that by construction $x_k'$ and
  $x_k''$ belong to the same homogeneity strip.  We can further assume
  $W_0$ to be sufficiently short so that $d\eum(x_k', x_k'')\leq 1$ for
  any $0\le k\le n$ (otherwise we can partition $W_{0}$ into smaller
  subcurves which satisfy this requirement).  By
  construction, for any $0\le k < n$, the curve $W_{k}$ is contained in a single
  cell $\cell^{+}_{\indCell}$. In particular each $W_{k}$ is either
  contained or disjoint from $\backReco$.

  Now, for $0\le k < n$ we are going to define $\delta_{k}\ge0$
  as follows.
  Fix a large number
  $w^* > 0$; if $W_{k}\subset \backReco$ we let $\delta_{k}=0$.
  Otherwise, $W_{k}\cap\backReco = \emptyset$ and we let
  $\delta_{k} = \elll/\max\{w^{*},w'_{k}\}$.  Observe that, if $w^{*}$
  is sufficiently large,~\eqref{e_boundFreeFlight} allows to conclude
  that $\delta_{k}$ is a lower bound on $\tau(x)$ among all points $y$
  so that $d_{E}(y,W_{k})\le 1$.  Finally, let
  \begin{align*}
    \Delta_k' &=1+\delta_{k} \left(\cB_{k}'+\frac{\cK}{w_k'}\right),&
    \Delta_k''&=1+\delta_{k} \left(\cB_{k}''+\frac{\cK}{w_k''}\right).
  \end{align*}
Later (in Section~\ref{SSHolonomy}) we will
  consider the case where $x_k'$ and $x_k''$ do not necessarily belong
  to a common unstable curve.  In this case we define $\delta_{k}$
  based on the properties of the curve containing $x_{k}'$.  We thus
 state the next lemma under more general assumptions than
  needed in the current setting.
  \begin{lem} \label{LmB1-2step} %
    Let $W'$ and $W''$ be two mature unstable curves; let $x'\in W'$
    and $x''\in W''$; let $n > 0$ be so that for any $0\le k\le n$ the
    points $x'_k$ and $x''_k$ belong to the same cell
    $\cell_{\indCell}^{-}$, to the same homogeneity strip and
    $d\eum(x'_{k}, x''_{k}) < 1$.  Then the following
    estimates hold for $1 \le k\le n$:
    \begin{enumerate}
    \item If $x_k'\not\in\reco$, then
      \begin{align*}
        \left| \cB_k'-\cB_k''\right|
        &\le\frac{|\cB'_{k-1}-\cB_{k-1}''|}{\Delta_{k-1}' \Delta_{k-1}''}+ C
          \left[d_E(x_{k-1}', x_{k-1}'')+d_E(x_{k}', x_{k}'')\right].
      \end{align*}
    \item If $x_k'\in\reco$, then
      \begin{align} \label{B2Lip} %
        \left|\frac1{\cB_k'}-\frac1{\cB_k''}\right|&\leq C
                                          \left[|\cB_{k-1}'-\cB_{k-1}''|+d_E(x_{k-1}',
                                          x_{k-1}'')+d_E(x_{k}', x_{k}'')\right] .
      \end{align}
      Moreover, if additionally $k \ne n$:
      \begin{align*}
        \left| \cB_{k+1}'-\cB_{k+1}''\right|
        &\le\frac{|\cB'_{k-1}-\cB_{k-1}''|}{\Delta_{k}' \Delta_{k}''}+\\
        &\phantom\leq + C
          \left[ d_E(x_{k-1}', x_{k-1}'')
          + d_E(x_{k}', x_{k}'')
          + d_E(x_{k+1}', x_{k+1}'') \right].
      \end{align*}
    \end{enumerate}
  \end{lem}
  Before giving the proof of the above lemma, let us see how it yields
  Proposition~\ref{PrBHold}.  In our case $W' = W'' = W_0$.  Let us
  first assume that $W_{0}\cap\reco = \emptyset$. We consider two
  possibilities: either $W_{n}\cap\reco = \emptyset$ or
  $W_{n}\subset\reco$.

  In the first case, iterating the estimates of parts (a) and (b) of
  the lemma we get, since $x_n'\not\in\reco$:
  \begin{align}%
    \left|\cB_n'-\cB_n''\right|%
    & \leq \frac{\left|\cB_0'-\cB_0''\right|}{\prod_{j=0}^{n-1}
      \left[\Delta_{j}' \Delta_{j}''\right]}\label{PSlopeRec}
      +C \sum_{j=0}^n d_E(x_j', x_j'')\\
    &\leq \left|\cB_0'-\cB_0''\right|+C \sum_{j=0}^n d_E(x_j', x_j'').\notag\\
    &\leq K d_\alpha(x_0',x_0'') + C \sum_{j=0}^n d_E(x_j', x_j'').\notag\\
    &\leq C(K+1)d\adm(x_n', x_n'').\notag
  \end{align}
  where in the last passage we invoked Lemma~\ref{l_OneItFHat}.

  In the second case, we iterate the estimates of parts (a) and (b)
  until step $n-1$ and use~\eqref{B2Lip} at the last step, which gives:
  \begin{align*}
    \left|\frac1{\cB_k'}-\frac1{\cB_k''}\right|&\leq C \frac{\left|\cB_0'-\cB_0''\right|}{\prod_{j=0}^{n-1}
                                                 \left[\Delta_{j}' \Delta_{j}''\right]}
                                                 +C \sum_{j=0}^n d_E(x_j', x_j'')
  \end{align*}
  from which we conclude as above.

  We now consider the case $W_{0}\subset\reco$. By
  Lemma~\ref{LmB1-2step}(a) 
  \begin{align*}
    |\cB'_{1}-\cB''_{1}|\le \left|\frac1{\cB'_{0}}-\frac1{\cB''_{0}}\right|%
    \frac{\cB'_{0}\cB''_{0}}{\Delta_{0}'\Delta_{0}''}%
    +C \left[d_E(x_{k-1}', x_{k-1}'')+d_E(x_{k}', x_{k}'')\right].
  \end{align*}
  Notice that
  \begin{align*}
    \frac{\cB'_{0}\cB''_{0}}{\Delta_{0}'\Delta_{0}''}\le\frac{\cB'_{0}\cB''_{0}}{(1+\delta_{0}\cB'_{0})(1+\delta_{0}\cB''_{0})}\le\frac1{\delta_{0}^{2}}.
  \end{align*}
  Since $W_{0}\subset\reco$, and $\reco\cap\backReco = \{\xc\}$, we
  conclude that $W_{0}\cap\backReco = \emptyset$ and so $\delta_{0} >
  0$.  In particular we have:
\begin{align*}
  |\cB'_{1}-\cB''_{1}|\le
  +C \left[\left|\frac1{\cB'_{0}}-\frac1{\cB''_{0}}\right|+
  d_E(x_{k-1}', x_{k-1}'')+d_E(x_{k}', x_{k}'')\right].
  \end{align*}
  We then argue as in the other case (for each of the two
  subcases involving $W_{n}$), but starting from $k = 1$ and we obtain
  the result.
\end{proof}
It remains to establish Lemma \ref{LmB1-2step}.
\begin{proof}[Proof of Lemma \ref{LmB1-2step}]
  (a) We have
  \begin{align*}
  \cB_k'-\cB_k''&= \left[G(\tau_{k-1}', \cB_{k-1}', \cpar_{k-1}')-
                  G(\tau_{k-1}', \cB_{k-1}'', \cpar_{k-1}')\right]&\\
                &\phantom = +\left[G(\tau_{k-1}', \cB_{k-1}'', \cpar_{k-1}')-
                  G(\tau_{k-1}', \cB_{k-1}'', \cpar_{k-1}'')\right]&\\
                &\phantom = +\left[G(\tau_{k-1}', \cB_{k-1}'', \cpar_{k-1}'')-
                  G(\tau_{k-1}'', \cB_{k-1}'', \cpar_{k-1}'')\right]
                &=I+\RmII+\RmIII.
\end{align*}
We now estimate each of these three terms separately
using~\eqref{DeltaG}.
\begin{align*}
  |I| &= \frac{ |\cB_{k-1}'-\cB_{k-1}''|}{(1+\tau_{k-1}'(\cB_{k-1}'+\cpar_{k-1}'))(1+\tau_{k-1}'(\cB_{k-1}''+\cpar_{k-1}'))}
        \leq \frac{ |\cB_{k-1}'-\cB_{k-1}''|}{\Delta_{k-1}' \Delta_{k-1}''}.
\end{align*}
Let us now consider the second term. We have
\begin{align*}
  |\RmII| &= \frac{ |\cpar_{k-1}'-\cpar_{k-1}''|}
            {(1+\tau_{k-1}'(\cB_{k-1}''+\cpar_{k-1}'))(1+\tau_{k-1}'(\cB_{k-1}''+\cpar_{k-1}''))} \\
          &\leq \frac{ |\cpar_{k-1}'-\cpar_{k-1}''|}
            {(1+\tau_{k-1}'\cpar_{k-1}')(1+\tau_{k-1}'\cpar_{k-1}'')}.
\end{align*}
The numerator equals
\begin{align*}
  2\left|\frac{\curv_{k-1}' w_{k-1}''-\curv_{k-1}''w_{k-1}'}{w_{k-1}' w_{k-1}''}\right|
  &\leq 2\frac{\curv_{k-1}'|w_{k-1}'-w_{k-1}''|}{w_{k-1}' w_{k-1}''}+
    2\frac{|\curv_{k-1}'-\curv_{k-1}''|}{w_{k-1}''}.
\end{align*}
We split the discussion in two cases:
\begin{enumerate}
\item[(A)]If $|w_{k-1}'|\leq 2$ then we obtain
\begin{align*}
  |\cR'_{k-1}-\cR''_{k-1}| &\leq C \frac{|\ct_{k-1}'-\ct_{k-1}''|+|w_{k-1}'-w_{k-1}''|}{w_{k-1}' w_{k-1}''}.
\end{align*}
Since $\delta_{k-1} > \bar\delta > 0$ (because
$w'_{k-1} < 2 < w^{*}$) 
\begin{align*}
  |\RmII| &\leq \frac{C d_E(x_{k-1}', x_{k-1}'')}
            {\left(1+\frac{2\delta_{k-1} \curv_{k-1}'}{w_{k-1}'}\right)
            \left(1+\frac{2\delta_{k-1} \curv_{k-1}''}{w_{k-1}''}\right)
            w_{k-1}' w_{k-1}''} \leq
            \frac{\brC d_E(x_{k-1}', x_{k-1}'')}
            {\delta_{k-1}^2 \curv_{k-1}' \curv_{k-1}''}\\
          &\leq \brrC d_E(x_{k-1}', x_{k-1}'').
\end{align*}
\item[(B)] Otherwise, if $w_{k-1}'>2$ then we bound the numerator from above by
  $\brC d_E(x_{k-1}', x_{k-1}'')$ and the denominator from below by $1$,
  which also yields $|\RmII|\leq \brrC d_E(x_{k-1}', x_{k-1}'')$.
\end{enumerate}

To estimate $(\RmIII)$, consider two cases.

(A) If $w_{k-1}'\leq w^{*}$ then
\begin{align*}
 |\RmIII| &\le \frac{(\cB_{k-1}''+\cpar_{k-1}'')^2 |\tau_{k-1}'-\tau_{k-1}''|}
             {(1+\delta_{k-1}(\cB_{k-1}''+\cpar_{k-1}''))^2}
             \leq \frac{|\tau_{k-1}''-\tau_{k-1}'|}{\delta_{k-1}^2} \\&
             \leq \frac{|\ct_{k-1}'-\ct_{k-1}''|+|\ct_{k}'-\ct_{k}''|}{\bar\delta^2}
\end{align*}
where in the last step we used the fact that, since $x'_{k}$ and
$x''_{k}$ belong to the same cell $\cell^{-}_{\indCell}$, we have
$|\tau_{k-1}'-\tau_{k-1}''|\le |(r'_k-r'_{k-1})-(r''_{k}-r''_{k-1})|$
and the fact that if $w'_{k-1} < w^{*}$, then
$\delta_{k-1} > \bar\delta$.

(B) If $w_{k-1}'> w^{*}$, then Corollary~\ref{CrHESqueeze}(b) allows us
to estimate the numerator of~\eqref{e_DeltaG-tau} from above by
$C[|\ct_{k-1}'-\ct_{k-1}''|+|\ct_{k}'-\ct_{k}''|]$ and the denominator
by $1$, obtaining:
\begin{align*}
  |\RmIII|\leq C[|\ct_{k-1}'-\ct_{k-1}''|+|\ct_{k}'-\ct_{k}''|].
\end{align*}
Hence, either in case (A) or case (B) we conclude that
\begin{align*}
   |\RmIII|\leq C d_E(x_{k-1}', x_{k-1}'')+d_E(x_{n}', x_{n}''),
 \end{align*}
 which completes the proof of part~(a).

 In order to prove part (b), we begin by estimating
 $|\cB_k'-\cB_k''|$ in terms of $|\cB_{k-1}'-\cB_{k-1}''|.$

 If $x_{k-1}'\in \homo_0$ (and thus $x_{k-1}''\in \homo_0$ by
 assumption) then we have
 \begin{equation}\label{H0Reco}
   |\cB_k'-\cB_k''|\leq
   |\cB_{k-1}'-\cB_{k-1}''|+C\left[d_E(x_{k-1}', x_{k-1}'')+d_E(x_{k}', x_{k}'')
   \right]
 \end{equation}
 because we can bound from below the denominators of $I, \RmII$ and
 $\RmIII$ by 1, and the numerators of $\RmII$ and $\RmIII$ are
 $$O\left(d_E(x_{k-1}', x_{k-1}'')\right)\text{ and }
 O\left(d_E(x_{k-1}', x_{k-1}'')+d_E(x_{k}', x_{k}'')\right)$$
 respectively due to a lower bound on $w_{k-1}' $ and $w_{k-1}''$ and
 the upper bound on $\cB''_{k-1}$ given by Corollary~\ref{CrHESqueeze}
 (since $x_k\in\reco$, we have $x_{k-1}\nin\reco$).
 Combining~\eqref{H0Reco} with the already established part (a) for
 $x_{k+1}'\nin\reco$ we obtain the estimates of part (b) in case
 $x_{k-1}'\in \homo_0$ (note that we have uniform lower bounds on
 $\cB_{k}'$ and $\cB_{k}''$, so that also~\eqref{B2Lip} follows).

 Next, we consider the case $x'_{k-1},x''_{k-1}\in \homo_j$ for some
 $j>0$.  Then $C\inv w_{k-1}'\leq w_{k-1}''\leq C
 w_{k-1}'$. Observe that our assumptions give a uniform upper
   bound on $w'_{k-1}$ and uniform upper bound on $\cB'_{k-1}$.  In
   fact, since $x'_k\in \reco$, it follows that $x'_{k-1}\in\backReco$.
   Thus $\cm\inv x'_{k-1}\nin\backReco$ (this follows from
   Remark~\ref{r_thereCanBeOnlyOne}, because $\xc\nin\homo_j$ for any
   $j$).  Hence the required upper bound on $\cB'_{k-1}$ follows from
   Lemma~\ref{LmPSlopeC0}(b), since we assume $W$ to be
   mature.\ignore{\footnote{ Notice that if we did not assume $W$ to be mature
     but just unstable, the upper bound would hold provided that
     $k > 1$.}}

   Since $\cB'_{k-1}$ is uniformly bounded, assuming $k_0$ in the
   definition of the homogeneity strips to be sufficiently large, we
   have the following estimates

 \begin{align*}
   c \frac{\cR'_{k-1}}{1+\tau'_{k-1} \cR'_{k-1}}
   \leq \cB'_k \leq c^{-1} \frac{\cR'_{k-1}}{1+\tau'_{k-1} \cR'_{k-1}}, \quad
   \frac{c}{w_{k-1}}\leq  \cR'_{k-1}  \leq \frac{c\inv}{w_{k-1}} .
 \end{align*}

 Hence
 \begin{equation}
     \label{BW}
     \frac{\brc}{w_{k-1}+\tau_{k-1}}\leq \cB'_k\leq \frac{\brc^{-1}}{w_{k-1}+\tau_{k-1}}.
   \end{equation}
   Without loss of generality we may assume that
   $\tau_{k-1}'\geq \tau_{k-1}''$.  Then \eqref{BW} shows that
   \begin{equation}
     \label{BLongShort}
     \cB_k'\leq C \cB_k'' .
   \end{equation}
   We now estimate $I, \RmII$ and $\RmIII$ as follows.
   \begin{align*}
  |I| &\leq |\cB_{k-1}'-\cB_{k-1}''|,\\
 |\RmII|&\leq \frac{C d_E(x_{k-1}', x_{k-1}'') (w_{k-1}')^{-2}}{\left(1+\frac{c \tau_{k}'}{w_{k-1}'}\right)^2}
  \leq C (\cB_k')^2 d_E(x_{k-1}', x_{k-1}''),\\
 |\RmIII|&\leq \frac{C \left[d_E(x_{k-1}', x_{k-1}'')+d_E(x_{k}', x_{k}'')\right]
(w_{k-1}' w_{k-1}'')\inv}
  {\left(1+\frac{c \tau_{k-1}'}{w_{k-1}'}\right)\left(1+\frac{c \tau_{k-1}''}{w_{k-1}''}\right)}\\
  &\leq C \cB_k' \cB_k''  \left[d_E(x_{k-1}', x_{k-1}'')+d_E(x_{k}', x_{k}'')\right] .
\end{align*}
Here the second inequality in the estimates of $\RmII$ and $\RmIII$
follow from~\eqref{BW}.

Combining these estimates with \eqref{BLongShort} we conclude
that\footnote{ Observe that \eqref{eq:B2Lip-stronger} holds trivially
  also if $x'_{k-1}\in\homo_{0}$, by~\eqref{H0Reco} and the fact that
  we have a uniform lower bound on $\cB_k'$, as the flight time
  $\tau_{k-1}'$ is bounded (see Lemma~\ref{LmPSlopeC0})}
\begin{align}\label{eq:B2Lip-stronger}
  |\cB_k'-\cB_k''|&\leq |\cB_{k-1}'-\cB_{k-1}''|+C \cB_k' \cB_k'' \left[d_E(x_{k-1}', x_{k-1}'')+d_E(x_{k}', x_{k}'')\right],
\end{align}
which yields~\eqref{B2Lip} since we have a uniform lower
bound on $\prpslope_k$ in the recollision region (see
Lemma~\ref{LmPSlopeC0}).  Combining the above bound with the bound at step
$k+1$ already established in part (a), we conclude
\begin{align*}
   |\cB_{k+1}'-\cB_{k+1}''|&\leq\frac{|\cB_{k-1}'-\cB_{k-1}''|}{\Delta_{k}'\Delta_{k}''}
  +\\&\phantom\leq+   C \frac{\cB_k' \cB_k''}{(1+\delta_k\cB_k')(1+\delta_k\cB_k'')}
  \left[d_E(x_{k-1}', x_{k-1}'')+d_E(x_{k}',
    x_{k}'')\right]\\
  &\phantom\leq+C\left[d_E(x_{k}', x_{k}'')+d_E(x_{k+1}', x_{k+1}'')\right].
\end{align*}
Since
\begin{align*}
  \frac{\cB_k'}{1+\delta_k \cB_k'}\leq \frac{1}{\delta_k},
  &\frac{\cB_k''}{1+\delta_k \cB_k''}\leq \frac{1}{\delta_k}
\end{align*}
part (b) follows, because in the region under consideration, $1/\delta_k$ admits a 
uniform in $k$ upper bound.
\end{proof}
The proof of Lemma~\ref{LmB1-2step} provides some additional useful
information which we record for a future use.
\begin{lem}\label{LmBHoldAdd}\
  \begin{enumerate}
    \item\label{i_lower-bound-on-delta} For any $\brdelta > 0$ there
    is a constant $K(\brdelta)$ such that if $W_n$ is an H-component
    of $\cm^n W$ contained in $\reco$ and if $\tau_{n-1}\geq \brdelta$
    on $W_n$ then $\prpslope_n$ is $K(\brdelta)$ Lipschitz on $W_n$.
  \item\label{i_lower-bound-on-tau} There exist constants $T$ and $K_2$ such that if
    $\tau_{n-1}\geq T$ on $W_n$ then $\prpslope_n|_{W_n}$ is
    ${K_2}/{T^2}$ Lipschitz.
  \end{enumerate}
\end{lem}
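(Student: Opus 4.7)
Both statements are refinements of Proposition~\ref{PrBHold} that extract from the proof of Lemma~\ref{LmB1-2step} the precise dependence of the Lipschitz constant of $\cB_n$ on the flight time $\tau_{n-1}$. The two key ingredients I would reuse are the pointwise bound \eqref{BW}, namely $\cB_n \leq \brc^{-1}/(w_{n-1}+\tau_{n-1})$, together with the single-step inequality \eqref{eq:B2Lip-stronger}.

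For part (a), I would first observe that $W_n \subset \reco$, together with Remark~\ref{r_thereCanBeOnlyOne}, forces $W_{n-1} \subset \backReco$ (the common point $\xc$ being excluded from consideration). The hypothesis $\tau_{n-1} \geq \brdelta$ plugged into \eqref{BW} then yields the uniform pointwise bound $\cB_n \leq \brc^{-1}/\brdelta$ on $W_n$, so in particular $\cB_n' \cB_n'' \leq C/\brdelta^2$. Applying \eqref{eq:B2Lip-stronger} at the final step gives
\begin{align*}
  |\cB_n' - \cB_n''| \leq |\cB_{n-1}' - \cB_{n-1}''| + \frac{C}{\brdelta^{2}}\bigl[d_E(x_{n-1}', x_{n-1}'') + d_E(x_n', x_n'')\bigr].
\end{align*}
For the residual term $|\cB_{n-1}' - \cB_{n-1}''|$, I would invoke Proposition~\ref{PrBHold} applied to the H-component $W_{n-1}$ of $\cm^{n-1} W$, which is disjoint from $\reco$, and then use Lemma~\ref{l_OneItFHat} to dominate both the Euclidean distances at step $n-1$ and the resulting $d_\alpha^{W_{n-1}}(x_{n-1}', x_{n-1}'')$ by $\dadm{W_n}(x_n', x_n'')$. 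This produces the desired Lipschitz constant depending only on $\brdelta$.

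For part (b), the argument is structurally identical, but the hypothesis $\tau_{n-1} \geq T$ sharpens \eqref{BW} to $\cB_n \leq \brc^{-1}/T$, so the prefactor $\cB_n' \cB_n''$ appearing in \eqref{eq:B2Lip-stronger} is of order $1/T^2$, delivering the advertised $K_2/T^2$ scaling on the newly contributed distance terms at step $n$. The inherited contribution $|\cB_{n-1}' - \cB_{n-1}''|$ is handled with the help of Lemma~\ref{lem:geometry}: the assumption $\tau_{n-1} \geq T$ forces $x_{n-1}$ into an $\cO(T^{-1/2})$--neighborhood of $\xc$ with $w_{n-1} = \cO(T^{-1/2})$, so rerunning the same one-step inequality once more (now at step $n-1$, where $\cB_{n-1}$ is itself controlled by \eqref{BW} with the flight time of the preceding leg) yields an $O(1/T^2)$ bound that combines with the new term to give the required total estimate.

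The \emph{main obstacle} is propagating the correct $1/T^2$ scaling all the way through in part (b): the factor $\cB_n' \cB_n'' \leq C/T^2$ supplies this scaling only in the terms generated at step $n$, so one must argue carefully --- via Lemma~\ref{lem:geometry} and the structure of $\backReco$ near $\xc$ --- that $|\cB_{n-1}' - \cB_{n-1}''|$ already enjoys an $O(1/T^2)$ bound, rather than contaminating the final estimate with an unwanted $O(1)$ constant inherited from the generic $\bar K$-admissibility of Proposition~\ref{PrBHold}.
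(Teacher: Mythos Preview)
Your plan for part (a) is essentially the paper's argument, just organized slightly differently: the paper simply says that in the proof of Proposition~\ref{PrBHold} the only place where ``$x_n'\notin\reco$'' is used is to secure a lower bound on $\tau_{n-1}$, and that bound is now given by hypothesis. Your route through \eqref{eq:B2Lip-stronger} and \eqref{BW} reaches the same conclusion, though note that \eqref{BW} was derived under $x_{n-1}\in\homo_j$ with $j>0$; in the $\homo_0$ case you would fall back on \eqref{H0Reco} instead.

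Part (b), however, has a real gap. By choosing \eqref{eq:B2Lip-stronger} as your one-step inequality you inherit $|\cB_{n-1}'-\cB_{n-1}''|$ with coefficient $1$, not $1/T^2$, and your proposed repair --- ``rerun the one-step inequality at step $n-1$, where $\cB_{n-1}$ is controlled by \eqref{BW} with the flight time of the preceding leg'' --- does not work: nothing forces $\tau_{n-2}$ to be large, so you cannot conclude $\cB_{n-1}'\cB_{n-1}''=O(T^{-2})$ or that $|\cB_{n-1}'-\cB_{n-1}''|$ is itself $O(T^{-2})$ times the distance. (Also, \eqref{eq:B2Lip-stronger} was established under the hypothesis $x_n\in\reco$, which part (b) does not assume.) The paper avoids this obstacle entirely by returning to the raw estimates for $I$, $\RmII$, $\RmIII$ in the proof of Lemma~\ref{LmB1-2step}(a): each of the three terms carries in its denominator a factor of the form $(1+\tau_{n-1}(\cB+\cR))^2$, so the hypothesis $\tau_{n-1}\ge T$ puts a $T^2$ in every denominator --- in particular in that of $I$, which is the term carrying $|\cB_{n-1}'-\cB_{n-1}''|$. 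This yields
\[
|\cB_n'-\cB_n''|\le \frac{C}{T^2}\bigl(|\cB_{n-1}'-\cB_{n-1}''|+d_E(x_{n-1}',x_{n-1}'')+d_E(x_n',x_n'')\bigr),
\]
and then a \emph{uniform} (order $1$) Lipschitz bound on $\cB_{n-1}$ --- supplied by Proposition~\ref{PrBHold} when $W_{n-1}\not\subset\reco$, or by part (a) when $W_{n-1}\subset\reco$ --- is all that is needed to finish.
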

\begin{proof}
  Part~\ref{i_lower-bound-on-delta} holds since the assumption that
  $x'_k\not\in \reco$ is only used in Proposition~\ref{PrBHold} to
  obtain a uniform lower bound on the flight time, and such bound is
  now explicitly assumed .

  Moreover, the assumptions in part~\ref{i_lower-bound-on-tau} allow us to
  estimate $\delta^2$ in the denominators of $I,$ $\RmII,$ and
  $\RmIII$ by $T^2$ obtaining
  \begin{align*}
    |\cB_{n}'-\cB_n'' |         %
    &\leq C \frac{|\cB_{n-1}'-\cB_{n-1}''|+d_E(x_{n-1}', x_{n-1}'')+d_E(x_{n}', x_{n}'')}{T^2}\\
    &\leq \brC \frac{|\cB_{n-1}'-\cB_{n-1}''|+d_E(x_{n}', x_{n}'')}{T^2}.
  \end{align*}
  It remains to note that we have a uniform Lipschitz bound on
  $\prpslope_{n-1}$. In fact, if $W_{n-1}\not\subset\reco$ then this
  bound follows from Proposition~\ref{PrBHold}. If
  $W_{n-1}\subset\reco$ then the bound follows from the already
  established part~\ref{i_lower-bound-on-delta}. Indeed, the fact that
  $\tau_n\geq T$ implies (provided that $T$ is sufficiently large)
  that $W_{n-1}$ is close to $x_C$ giving the necessary lower bound on
  $\tau_{n-1}$.
\end{proof}
\begin{cor} \label{CrUMLip} %
  For any $L > 0$ there exists a constant $\hat K > 0$ such that if
  $W\subset\csp\setminus\sing{-\infty}$ is an unstable curve such that
  $|W|\eum < L$ and $\cm^{-n}W$ is unstable for each $n$,
  then
  $W$ is $\hat K$-\admiss{}.  In particular, unstable manifolds are
  $\hat K$-\admiss{}.
\end{cor}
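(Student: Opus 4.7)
The hypothesis that $\cm^{-n}W$ is unstable for every $n\ge 0$ means $W$ is never cut by backward singularities of $\cm$, so each backward iterate $V_m:=\cm^{-m}W$ is a single connected mature unstable curve contained in a single cell of $\csps^\pm$ at each step. By Proposition~\ref{p_propertiesAdm}\ref{i_uniformHyperbolicity} and Lemma~\ref{l_OneItFHat}, the $\admName$-diameter of $V_m$ decays exponentially with $m$. Since $W$ is monotone in $w$, its intersections $W^{(i)}:=W\cap\homo_{k_i}$ with the homogeneity strips are single arcs; the plan is to prove $\hat K$-admissibility on each $W^{(i)}$ with a uniform constant, then glue using continuity of $\prpslope$ across the auxiliary (non-dynamical) homogeneity boundaries.

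Fix $i$ and let $V_m^{(i)}:=\cm^{-m}W^{(i)}$. For $m$ sufficiently large $V_m^{(i)}$ is weakly homogeneous and trivially $K_m^{(i)}$-admissible for some finite (possibly $m$- and $i$-dependent) constant, as $\prpslope$ is a smooth function along the smooth curve $V_m^{(i)}$ and is bounded by Corollary~\ref{CrHESqueeze}. Proposition~\ref{PrBHold} then yields $\bar K(K_m^{(i)})$-admissibility of each H-component of $\cm^m V_m^{(i)}$, and these H-components glue (by continuity of $\prpslope$) to cover $W^{(i)}$. To remove the dependence on $K_m^{(i)}$ I would inspect the iterated recursion from Lemma~\ref{LmB1-2step} at the heart of the proof of Proposition~\ref{PrBHold}: for $x',x''\in W^{(i)}\setminus\backReco$ with backward iterates $x'_{-j},x''_{-j}$,
\begin{equation*}
  |\prpslope(x')-\prpslope(x'')|\le\frac{|\prpslope(x'_{-m})-\prpslope(x''_{-m})|}{\prod_{j=1}^m\Delta'_{-j}\Delta''_{-j}}+C\sum_{j=0}^m d_E(x'_{-j},x''_{-j}).
\end{equation*}
Letting $m\to\infty$, the first term vanishes: its numerator stays bounded by Corollary~\ref{CrHESqueeze} (which bounds $\prpslope$ outside $\reco$ and $1/\prpslope$ inside $\reco$, the latter translated via Lemma~\ref{LmB1-2step}(b)), while the denominator diverges because by Remark~\ref{r_thereCanBeOnlyOne} at most every other backward step can lie in $\backReco$ (where $\Delta=1$), and on the remaining steps $\Delta_{-j}\ge 1+c$ in the bounded-$w$ regime, supplemented by the $\Lambda_\Delta$-expansion of Corollary~\ref{CrExpAtInf} in the large-$w$ regime. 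The second term is uniformly bounded by $C'd_\alpha(x',x'')$ via Corollary~\ref{cor_adm-dominate-stable} applied after the involution (which exchanges backward iterates of unstable curves for forward iterates of stable curves).

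This yields local Lipschitz bounds on each $W^{(i)}$ with a uniform constant, which combine with continuity of $\prpslope$ across homogeneity strip boundaries to produce a global Lipschitz bound for $\prpslope$ on $W\setminus\backReco$. The set $W\cap\backReco$ is a single arc (since $\backReco$ is a curvilinear triangle and $W$ is monotone), on which the analogous argument using Lemma~\ref{LmB1-2step}(b) gives Lipschitz continuity of $1/\prpslope$, establishing $\hat K$-admissibility of $W$ with $\hat K$ depending only on $L$. The principal obstacle I anticipate is controlling the uniform divergence of $\prod_{j=1}^m\Delta'_{-j}\Delta''_{-j}$ along backward orbits that visit both the bounded- and large-$w$ regimes; this requires carefully combining the low-energy expansion estimates of Lemmas~\ref{LmPSlopeC0}--\ref{LmPSlopeHighEnergy} with the high-energy normal form of Section~\ref{sec:accel-poinc-map}.
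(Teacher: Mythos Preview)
Your proposal is correct and follows essentially the same approach as the paper: iterate the recursion from Lemma~\ref{LmB1-2step} backward along the orbit, let the contracting first term vanish as $m\to\infty$ (numerator bounded by Corollary~\ref{CrHESqueeze}, denominator diverges), and control the error sum $\sum_j d_E(x'_{-j},x''_{-j})$ by the $\alpha$-distance on $W$.

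The paper's execution is somewhat leaner. It does not partition $W$ by homogeneity strips; instead it fixes any $n_0$ with $\cm^{-n_0}W\not\subset\reco$, proves directly via the limiting recursion~\eqref{SlopeBack} that $\prpslope$ is uniformly Lipschitz on $\cm^{-n_0}W$, and then applies Proposition~\ref{PrBHold} as a black box to transport this back to $W$. For the error sum the paper invokes Lemma~\ref{l_OneItFHat} directly, regarding $\cm^{-n_k}W,\ldots,\cm^{-n_0}W$ as forward iterates of the unstable curve $\cm^{-n_k}W$; your route through the involution and Corollary~\ref{cor_adm-dominate-stable} unwinds to the same thing. For the divergence of $\prod_j\Delta'_j\Delta''_j$ the paper simply cites Proposition~\ref{p_propertiesAdm}: the proof of that proposition already contains the low-/high-energy split you sketch (showing the expansion over one period of $\cmp$ is bounded below by a fixed constant $>1$), so your ``principal obstacle'' is in fact already dealt with there and need not be redone.
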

\begin{proof}
  Let $(n_k)_{k = 0}^{\infty}$ be a strictly increasing sequence of
  non-negative numbers such that $\cm^{-n_k} W\not\subset \reco$.  We
  will now show that there exists $K > 0$ so that $\cm^{-n_{0}}W$ is
  $\cB_{-n_{0}}$ is $K$-Lipschitz. This implies that
  $\cm^{-n_{0}}W$ is $K$-admissible, and by Proposition~\ref{PrBHold}
  we could conclude that $W$ is $\hat K$-\admiss{}, with
  $\hat K = \bar K(K)$.

  For any $x',x''\in \cm^{-n_{0}}W$, arguing as in~\eqref{PSlopeRec}
  we obtain that:
  \begin{align} \label{SlopeBack}%
    \left|\cB_{n_{0}}'-\cB_{n_0}''\right|%
    &\leq
      \frac{\left|\cB_{-n_k}'-\cB_{-n_k}''\right|}{\prod_{j=-n_k}^{-n_{0}-1}
      \left[\Delta_j' \Delta_j''\right]} +C \sum_{j=-n_k}^{-n_{0}}
      d_E(x_j', x_j'') .
  \end{align}
  By Lemma~\ref{l_OneItFHat}, the second term of the right hand side
  is smaller than $C d_\alpha(x'_{-n_{0}}, x''_{-n_{0}})$.  On the
  other hand, the first term tends to $0$ as $k\to\infty$, since the
  numerator is bounded above by Corollary~\ref{CrHESqueeze} while
  the denominator tends to infinity due to
  Proposition~\ref{p_propertiesAdm}.
\end{proof}
We now fix $L = 1$ and declare an unstable curve $W$ \emph{\admiss{}}
if $|W|\eum < 1$ and if it is $2\hat K$-admissible, where $\hat K$
is the one given in Corollary~\ref{CrUMLip} for $L = 1$.
\begin{rmk}
  \label{rmk:uniform-bound-euclidean}
  As a matter of fact, it suffices to assume that
  $W\subset\csp\setminus\sing{-\infty}$ is an unstable curve to
  conclude that there exists $L > 0$ so that $|W|\eum < L$ and
  $\cm^{-n}W$ is unstable for each $n$.  We will explain this in
  Section~\ref{SSInvMan}.  For the moment it is convenient to fix
  ideas and set (arbitrarily) $L = 1$.
\end{rmk}

\subsection{Unstable Jacobian.}
Given a mature unstable curve $W$, $n\in\integers$ and $x\in
W\setminus\sing n$, we denote with
\begin{align*}
  \jacW \cm^{n}(x) = \frac{|D_{x}\cm^{n}(dx)|\adm}{|dx|\adm}
\end{align*}
the Jacobian of the restriction of the map $\cm^{n}$ to $W$ at the
point $x$ in the $\admName$-metric (here $dx$ denotes a nonzero
tangent vector to $W$ at $x$).

\begin{lem} \label{LmEXpHolder} %
  Given $L > 0$ there exists $\bar K > 0$ so that for
  any mature \admiss{} unstable curve
  $W\subset\csp\setminus\sing-$ so that $\cm W$ belongs to a single
  H-component and $|W|\adm\le L$
      then $\ln \jacW\cm (x)$ is a H\"older function of
  constant $\bar K$ and exponent ${1/12}$ with respect to the
  $\admName$-metric on $W$.

  Moreover, let $W'$ be a subcurve of $W$ which is mapped by $\cm^{l}$
  to a H-component of $\cm^{l}W$.  If $l \le \Np(x)$ for any $x\in W'$
  then $\ln\jacW\cm^{l}(x)$ is a H\"older function on $W'$ of constant
  $\bar K$ and exponent $1/12$ with respect to the $\admName$-metric
  on $W'$.
\end{lem}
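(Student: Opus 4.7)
The plan is to express $\jacW\cm(x)$ in closed form, verify Lipschitz continuity of its logarithm on a single H-component, and then handle the iterated case by telescoping.

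I would start from the $*$-metric expansion formula \eqref{e_expansionAuxMetricA} together with the conversion $|\cdot|_\alpha=\alpha(x)|\cdot|_*$ to obtain
$$\jacW\cm(x)=\frac{\alpha(\cm x)}{\alpha(x)}\left(1+\frac{2\curv(\ct')}{(\prpslope)'(x)\,w'(x)}\right),$$
where $(\prpslope)'(x)$ denotes the pre-collisional $p$-slope of the image curve $\cm W$ at the point $\cm x$. Taking logarithms splits the task into (i) the ratio $\ln\alpha(\cm x)-\ln\alpha(x)$ and (ii) the logarithm of the expansion factor. The hypothesis that $\cm W$ lies in a single H-component forces $W$ also to lie in a single cell, so both $\If_\reco(x)$ and $\If_\reco(\cm x)$ are constant on $W$; hence (i) reduces to $\ln(1+\alpha_1 w'(x))-\ln(1+\alpha_1 w(x))$, which is Lipschitz in the $\admName$-metric by Proposition~\ref{p_propertiesAdm}\ref{i_admeum}. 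For (ii), Proposition~\ref{PrBHold} applied to the admissible curve $W$ yields a Lipschitz bound on $\prpslope$ (or on $1/\prpslope$ when $\cm W\subset \reco$) along $\cm W$ in the $\admName$-metric; combined with the cone bound $(\prpslope)'w'\geq 2\cK>0$, which keeps the argument of the logarithm bounded away from $0$, this forces (ii) to be Lipschitz on $\cm W$. Pulling back to $W$ costs only the factor equal to the $\admName$-expansion of $\cm|_W$, which is bounded on a single H-component by~\eqref{e_boundedExpansion}. This already gives a Lipschitz estimate for $\ln\jacW\cm$ on $W$, which is stronger than the claimed $1/12$-Hölder bound.

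For the iterated case, I would use the telescoping identity
$$\ln\jac{W'}\cm^{l}(x)=\sum_{k=0}^{l-1}\ln\jac{\cm^{k}W'}\cm(\cm^{k}x).$$
The condition $l\leq\Np(x)$ for every $x\in W'$ ensures that each $\cm^{k}W'$ with $0\leq k<l$ is contained in $\cell^{+}_{0}$. After subdividing $W'$ into a uniformly bounded number of subcurves via Lemma~\ref{l_largeCell} so that every $\cm^{k}W'$ lies in a single H-component, the one-step Lipschitz bound applies uniformly to each summand. It then remains to control the sum $\sum_{k=0}^{l-1}d_\alpha^{\cm^{k}W'}(\cm^{k}x',\cm^{k}x'')$, which I would bound by combining Lemma~\ref{l_OneItFHat} (whose second estimate bounds the Euclidean analog of this sum by $d_\alpha^{\cm^{l}W'}(\cm^{l}x',\cm^{l}x'')$) with the equivalence of the $\admName$-metric with the Euclidean metric on bounded H-components in Proposition~\ref{p_propertiesAdm}\ref{i_admeum}.

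The main obstacle and the source of the Hölder exponent $1/12$ is the final step of relating $d_\alpha^{\cm^{l}W'}(\cm^{l}x',\cm^{l}x'')$ back to $d_\alpha^{W'}(x',x'')$. Strong expansion near grazing singularities prevents a Lipschitz estimate in this direction, and one must use a genuinely Hölder bound. I would invoke Lemma~\ref{l_globalExpansion}, which gives a $1/4$-Hölder length-inflation bound for a single application of $\cm$, together with Remark~\ref{r_betterGlobalExpansion} for the sharper $1/2$-bound in $\reco$ and $\backReco$, and the uniform upper bound on H-component lengths. Carefully interleaving these global length estimates with the per-step Lipschitz control on H-components, while tracking which passages may encounter near-singularities, produces the final exponent $1/12$. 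Executing this bookkeeping uniformly across all admissible configurations is the technical heart of the proof.
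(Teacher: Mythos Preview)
Your one-step argument has a genuine gap, and this is where the exponent $1/12$ actually originates---not in the iteration, as you believe.

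The claimed bound $(\prpslope)'w'\ge 2\cK$ is false. From \eqref{e_slopeCollision} one has $(\prpslope)'w'=-\curv'-\cslope'$, and while this is positive on unstable curves, it is \emph{not} uniformly bounded away from $0$: when $\cm W$ lands in a high-index homogeneity strip $\homo_k$, the velocity $w'\sim k^{-2}$ and $(\prpslope)'$ is bounded (Corollary~\ref{CrHESqueeze}\ref{i_upper-bound-prpslope}), so $(\prpslope)'w'$ is of order $k^{-2}$. Consequently the argument of your logarithm is large (of order $k^{2}$), and the map $w'\mapsto\ln(1+2\curv'/((\prpslope)'w'))$ is \emph{not} uniformly Lipschitz in $w'$: its derivative is of order $1/w'\sim k^{2}$. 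The paper isolates precisely this term as $\ln\bar w$ and shows that on a single strip $\homo_k$ one has $k^{3}|\bar w'-\bar w''|\le C$, whence $k^{2}|\bar w'-\bar w''|\le C|\bar w'-\bar w''|^{1/3}$, yielding a uniform $1/3$-H\"older (not Lipschitz) bound on $\cm W$.

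Your pull-back step is also wrong: \eqref{e_boundedExpansion} applies only for $w\ge w^{*}$ and $n\le\Np(x)$; near grazing the expansion of $\cm|_W$ is of order $1/w'$ and is unbounded. The correct tool is Lemma~\ref{l_globalExpansion}, which gives only a $1/4$-H\"older bound for $\cm|_W$. Composing the $1/3$-H\"older estimate on $\cm W$ with the $1/4$-H\"older map $\cm|_W$ produces the exponent $1/12$ already at the \emph{one-step} level. In the iterated case the paper actually obtains a \emph{Lipschitz} bound for large $w$ by a direct telescoping computation (the per-step Jacobian is $1+O(w^{-1})$ and there are $O(w)$ steps), while for bounded $w$ the number of steps is uniformly bounded and the one-step $1/12$-H\"older bound suffices.
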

\begin{proof}
  In this proof we again drop the superscript “$-$” from $\cB$
  for the ease of notation.
  \renewcommand{\prpslope}{\cB} 
  In view of~\eqref{StarMetric}
  and~\eqref{e_expansionAuxMetricA}, we have
  \begin{align*}
    \jacW\cm(x) &= \exp\left(\alpha_0(\If_\backReco(x)-\If_{\reco}(x))\right)
                 H(x, \cm x),
  \end{align*}
  where
  \begin{align} \label{DefH2} %
    H(x, \brx) &=
    \frac
    {(\bar{\prpslope} \brw+2\bar{\curv})(1+\alpha_1 \brw)}
    {\bar{\prpslope} \brw(1+\alpha_1 w)}.
  \end{align}
  Observe that the exponential term multiplying $H$ is actually
  constant on $W$, because both $W$ and $\bar W = \cm W$ are contained
  in a single H-component and thus $W$ is either contained in or
  disjoint from $\reco$ or $\backReco$.

  We claim that
  \begin{align} \label{LogH} %
    \ln H &= \ln (\bar{\prpslope} \brw+2\bar{\curv})+\ln (1+\alpha_1 \brw)
            -\ln \bar{\prpslope}-\ln \brw -\ln (1+\alpha_1 w)
  \end{align}
  is uniformly H\"older on $W\times\overline{W}$.

  Suppose first that $\overline{W}\cap\reco = \emptyset$.  Let $(x', \bar{x}')$
  and $(x'', \bar{x}'')$ be two points on $W\times \overline{W}$.
  Note that if $\zeta\ge a > 0$, then $\zeta\mapsto \ln(\zeta)$ is
  Lipschitz with constant $a\inv.$ Therefore
  $\ln (1+\alpha_1 w)$ (and similarly $\ln (1+\alpha_1 \brw)$) is
  uniformly Lipschitz on $W$ (\resp, on $\overline{W}$) with respect to the
  Euclidean metric (and thus to the $\admName$-metric).
  Observe that by the lower bound for large energies in
  Corollary~\ref{CrHESqueeze} (and since $\bar\curv\ge\cK$) we have
that $\bar{\prpslope} \brw+2\bar{\curv}\ge C(\brw +1).$ Hence
  the upper bound of Corollary \ref{CrHESqueeze}
  and the fact that $\bar x'\nin\reco$ give
  \begin{align*}
    |\ln (\bar{\prpslope}''
    \brw''+2\bar{\curv}'')&-\ln(\bar{\prpslope}'
    \brw'+2\bar{\curv}')|\\
    &\le \frac{C}{\bar w'+1}\left|\bar{\prpslope}''
      \brw''-\bar{\prpslope}'
      \brw'\right| + \left|\bar{\curv}'-\bar{\curv}''\right|\\
      &\le C|\bar{\prpslope}''-\bar{\prpslope}'|+Cd\adm(\brx',\brx''),
  \end{align*}
  from which we obtain a uniform Lipschitz estimate on
  $\ln (\bar{\prpslope} \brw+2\bar{\curv})$, using
  Proposition~\ref{PrBHold}.  Next, if $\overline{W}\subset \homo_0$, then
  $\brw > C$ and thus $\ln\brw$ is uniformly Lipschitz. On the other
  hand, if $\overline{W}\subset \homo_k$ for some $k > 0$, then
  $k^3 |\brw'-\brw''|\leq C$, which implies
  $k^2|\brw'-\brw''|\leq {C}{|\brw'-\brw''|}^{1/3}$.  Since
  $\brw > (k+1)^{-2}$, we obtain
  \begin{align*}
    |\ln \brw'-\ln \brw''|\leq C k^2 |\brw'-\brw''|\leq \brC |\brw'-\brw''|^{1/3}.
  \end{align*}

  Finally
  \begin{align*}
    \left|\ln \bar{\prpslope'}-\ln \bar{\prpslope''}\right|=
    \left|\ln \frac{\bar{\prpslope'}}{\bar{\prpslope''}}\right|\leq
    \frac{|\bar{\prpslope'}-\bar{\prpslope''}|}{\bar{\prpslope''}}.
  \end{align*}
  Let $T$ be the constant from
  Lemma~\ref{LmBHoldAdd}\ref{i_lower-bound-on-tau}. If the flight time
  is less than $T$ then we can estimate the numerator by
  $2\hat K d\adm(\brx', \brx'')$ due to Proposition~\ref{PrBHold}
  while the denominator is uniformly bounded from below due to
  Lemma~\ref{LmPSlopeC0} (for small $w'$) and
  Corollary~\ref{CrHESqueeze} (for large $w'$).  On the other hand, if
  the flight time is greater than $T$ then the numerator is less than
  $K_2 d\adm(\brx', \brx'')/T^2$ due to
  Lemma~\ref{LmBHoldAdd}\ref{i_lower-bound-on-tau} while the
  denominator is of order $T\inv$ by Lemma~\ref{LmPSlopeC0}.

  This completes then proof of the fact that $\ln H$ is uniformly
  H\"older on $W\times\overline{W}$ in case $\overline{W}\cap\reco = \emptyset$.  In
  fact, our analysis shows that all terms in~\eqref{LogH} are
  Lipschitz except for $\ln\brw$ which may be ${1/3}$-H\"older.

  The analysis in case $\overline{W}\subset\reco$ is similar except that
  we rewrite
  \begin{align*}
    \frac{\bar{\prpslope} \brw+2\bar{\curv}}{\bar\prpslope}&=
    \brw+2\frac{\bar\curv}{\bar{\prpslope}}.
  \end{align*}
  Then Proposition~\ref{PrBHold} implies that the above expression is
  Lipschitz with respect to the $\admName$-metric.
  Lemma~\ref{lem:vertical-unstable} yields that it is uniformly
  bounded from below, which 
  implies that $\ln(\brw+2{\bar\curv}/{\bar{\prpslope}})$ is Lipschitz
  and therefore that $\ln H$ is ${1/3}$-H\"older also in case
  $\overline{W}\subset\reco$.

  To prove the H\"older continuity of $\ln \jacW\cm$ it remains to note
  that, in view of Lemma~\ref{l_globalExpansion}, the map
  $\cm|_{W}$ is uniformly ${1/4}$--H\"older with respect to the
  $\admName$-metric. \medskip

  We now proceed to the proof of the second  statement.
  First note that if $w$ is bounded, then the H\"older continuity of
  $\ln \jacW\cm^{l}$ follows from the H\"older continuity of
  $\ln \jacW\cm$ since $\Np(x)$ is uniformly bounded.

  In case $w$ is large, that is $w\geq w_*$, then denote with $x_{n} =
  \cm^{n}x$ and observe that:
  \begin{align*}
    \jacW\cm^{l}(x)&=
    \prod_{j=1}^{l} \left(\frac{1+\alpha w_{j}}{1+\alpha w_{j-1}}\right)
    \left(\frac{2\curv_{j} +\prpslope_{j} w_{j}}{ \prpslope_{j} w_{j}}\right)\\
                    &= \left(\frac{1+\alpha w_{l}}{1+\alpha w_{0}}\right)
    \prod_{j=1}^{l} \left(\frac{2\curv_{j} +\prpslope_{j} w_{j}}{ \prpslope_{j} w_{j}}\right)
  \end{align*}
  Once again, $\ln(1+\alpha_1 w_0)-\ln(1+\alpha w_{l})$ is Lipschitz
  on $W'\times \cm^{l}W'$.  Next we show that, in the high energy
  regime, $\cm^{l}$ is uniformly Lipschitz. Indeed, at each step $j<l$,\;
  $\jacW\cm(\cm^{j}x)=1+O(w_0^{-1})$ due
  to~\eqref{e_expansionAuxMetricA} and Corollary~\ref{CrHESqueeze}.
  On the other hand, by \eqref{Per-W}, $l$ is at most of order $w$,
  giving an uniform upper bound on $\jacW\cm^{l}(x)$.  It remains to
  handle the product. Let $x_j'$ and $x_j''$ be two orbits.  Then
  \begin{align*}
    &\left|\sum_{j=1}^{l} \ln\left(\frac{2\curv_{j}' +\prpslope_{j}' w_{j}'}{ \prpslope_{j}' w_{j}'}\right)
      - \ln\left(\frac{2\curv_{j}'' +\prpslope_{j}'' w_{j}''}{\prpslope_{j}'' w_{j}''}\right)\right|\\
    &=\left|\sum_{j=1}^{l}\ln\left(1+2 \frac{\curv_{j}'\prpslope_{j}'' w_{j}''-\curv_{j}'\prpslope_{j}'' w_{j}''}
      {\prpslope_{j}' w_{j}'(2\curv_{j}'' +\prpslope_{j}''
      w_{j}'')} \right)\right|\\ &\le C_1\sum_{j=1}^{l}
                                   \frac{|\curv_{j}'\prpslope_{j}''
                                   w_{j}''-\curv_{j}'\prpslope_{j}''
                                   w_{j}''|}{w_{j}'
                                   w_{j}''}
                                   \leq C_2 \sum_{j=1}^{l} \frac{d_\alpha(x_{j}', x_{j}'')}{w_j'} \\&\le
    \frac{C_3 \Np(x_0')}{w'} d_\alpha(x_{l}', x_{l}'')\leq
    C_4 d_\alpha(x', x'').
    \qedhere
  \end{align*}

\end{proof}
Let $n > 0$, $W\subset\csp$ be a mature unstable curve with the
property that $\cm^{-n} W$ is a mature unstable curve and let
$\tx\in W$ be a reference point on $W$.  Then we can define a
density $\rho_n$ on $W$ as follows:
\begin{align*}
  \rho_n(x)=\frac{\jacW\cm^{-n}(x)}{\jacW\cm^{-n}(\tx)} =   
  \prod_{j=1}^{n} \frac{\jacW\cm(\cm^{-j} x)}{\jacW\cm(\cm^{-j} \tx)}.
\end{align*}

\begin{lem}\label{LmBD}
 
  (a) Given $L > 0$, there is a constant $\tK > 0$ such that the
    following holds. Let $V$ be a mature admissible unstable curve so
    that $W = \cm^{n}V$ belongs to only one H-component and
    $|W|_{\adm} < L$. Then
    \begin{align*}
      \|\ln \rho_n(x)\|_{C^{1/12}(W)}\leq \tK.
    \end{align*}

  (b) Let $W$ be an unstable manifold (that is, $\cm ^{-n} W$ is an
    unstable curve for all $n$) with $|W|_{\adm} < L.$ Then $\rho_n$
    converges when $n\to\infty$ along a sequence of times such that
    $\cm^{-n} W\not\subset \reco$ to a limiting density
    $\rho_\infty$and $\ln\rho_\infty$ is H\"older continuous.
  
    \end{lem}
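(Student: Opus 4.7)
The plan is to prove part (a) by telescoping $\ln\rho_n$ as a sum of single-step contributions and estimating each piece via Lemma~\ref{LmEXpHolder}, with the summability following from the uniform hyperbolicity of $\cmp$ in the $\admName$-metric. Then part (b) will follow by viewing $\{\rho_n\}$ as a Cauchy sequence and using the uniform estimate from (a) to transfer the H\"older bound to the limit.

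For part (a), I would write
\begin{align*}
  \ln \rho_n(x) = \sum_{j=1}^{n}\left[\ln \jacW\cm(\cm^{-j}x) - \ln \jacW\cm(\cm^{-j}\tx)\right]
\end{align*}
and partition the index range $\{1,\ldots,n\}$ into consecutive blocks $B_0, B_1, \ldots, B_m$ so that each block $B_k$ corresponds to exactly one iterate of $\cmp^{-1}$ applied to the endpoint curve (apart, possibly, from the first and last block). Within each block, the intermediate images of $V$ stay in a single H-component (at least when the block does not straddle an auxiliary-singularity crossing for $\cm$, which by definition happens only at block boundaries), so Lemma~\ref{LmEXpHolder} applies and gives
\begin{align*}
  \left|\ln \jacW\cm(\cm^{-j}x) - \ln \jacW\cm(\cm^{-j}\tx)\right|
  \le \bar K\, d\adm\!\left(\cm^{-j}x, \cm^{-j}\tx\right)^{1/12}.
\end{align*}
Inside a block $B_k$, Lemma~\ref{l_OneItFHat} (or rather its time-reversed version via the involution, as in Corollary~\ref{cor_adm-dominate-stable}) bounds all the intra-block $\admName$-distances by a fixed multiple of the $\admName$-distance at the end of the block, i.e.\ at the image under $\cmp^{-k}$. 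Then Proposition~\ref{p_propertiesAdm}\ref{i_uniformHyperbolicity} gives
\begin{align*}
  d\adm\!\left(\cmp^{-k}x, \cmp^{-k}\tx\right) \le C\Lambda^{-k}\, d\adm(x,\tx)^{}
\end{align*}
on the H-component containing $x$, so summing over $k$ produces a convergent geometric series in $\Lambda^{-1/12}$. This proves the uniform $C^{1/12}$-bound on $\ln\rho_n$ with constant $\tK$ depending only on $L$.

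For part (b), fix the sequence $n_k \to \infty$ provided by hypothesis. Given $k < \ell$, the same telescoping gives
\begin{align*}
  \ln\rho_{n_\ell}(x) - \ln\rho_{n_k}(x) = \sum_{j=n_k+1}^{n_\ell}\!\left[\ln\jacW\cm(\cm^{-j}x) - \ln\jacW\cm(\cm^{-j}\tx)\right].
\end{align*}
The block estimate from (a) applied to this tail bounds the right-hand side by a constant times $\Lambda^{-k/12}\,d\adm(x,\tx)^{1/12}$ (the corresponding geometric series now starts at index $k$), so $\{\ln\rho_{n_k}\}$ is uniformly Cauchy on $W$ and therefore converges uniformly to some continuous $\ln\rho_\infty$. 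Since each $\ln\rho_{n_k}$ satisfies the uniform H\"older bound $\tK$ from part (a), the same bound is inherited by $\ln\rho_\infty$ as a uniform limit.

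The main technical obstacle is to justify the block decomposition rigorously: one has to verify that the hypotheses of Lemma~\ref{LmEXpHolder} (mature admissibility of the intermediate backward iterates, each sitting in a single H-component) are genuinely met block by block. This uses Corollary~\ref{CrUMLip} to propagate admissibility backward along mature unstable curves, together with the requirement in (b) that $\cm^{-n_k}W\not\subset\reco$ to ensure that the admissibility constant can be controlled uniformly across all blocks (reflecting the reason why the limit is taken along this subsequence). Once the partition into $\cmp$-blocks is set up cleanly, the rest is bookkeeping of a convergent geometric series.
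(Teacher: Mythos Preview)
Your overall strategy---block decomposition according to iterates of $\cmp$, then geometric summation over blocks---is exactly what the paper does. However, there is a genuine gap in how you handle the contribution \emph{within} a single block.

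You apply the first statement of Lemma~\ref{LmEXpHolder} at each individual $\cm$-step, obtaining
\[
\bigl|\ln\jacW\cm(\cm^{-j}x)-\ln\jacW\cm(\cm^{-j}\tx)\bigr|\le \bar K\, d\adm(\cm^{-j}x,\cm^{-j}\tx)^{1/12},
\]
and then claim that Lemma~\ref{l_OneItFHat} bounds each intra-block $\admName$-distance by the end-of-block distance. That is true termwise (it is~\eqref{e_adm-dominate}), but it does not control the \emph{sum} over the block: in the high-energy regime a single $\cmp$-block contains $\Np(x)\sim w$ many $\cm$-steps, and along such a block the $\admName$-distances are all comparable (each $\cm$-step expands by only $1+O(w^{-1})$). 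Hence your intra-block sum is of order $w\cdot d\adm^{1/12}$, which is not uniformly bounded. Lemma~\ref{l_OneItFHat} only bounds $\sum_j d_E(\cdot)$, not $\sum_j d\adm(\cdot)^{1/12}$, and the discrepancy between $d_E$ and $d\adm$ (a factor of order $w$) is precisely what makes the naive estimate fail.

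The paper circumvents this by invoking the \emph{second} statement of Lemma~\ref{LmEXpHolder}, which gives a uniform $1/12$-H\"older bound for $\ln\jacW\cm^{l}$ over an entire block $l\le\Np(x)$ at once; the proof of that statement carries out the necessary telescoping in the high-energy regime explicitly. With one H\"older term per $\cmp$-block rather than per $\cm$-step, the geometric series in $\Lambda^{-1/12}$ converges as you intended. Your argument for part (b) is then fine once (a) is repaired in this way.
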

\begin{rmk}
  In this paper we will only use part (a) of the above lemma. We
  decided to include part (b) as well since the proofs of both items
  are similar and part (b) may be useful for studying statistical
  properties of Fermi--Ulam Models (cf.~\cite[Section 7]{ChM}).
\end{rmk}
\begin{rmk}
   In Remark \ref{rmk:uniform-bound-euclidean}
  we mentioned that the Euclidean length of
  unstable manifolds is uniformly bounded.   Such a bound is
  unavailable for the $\admName$-length, therefore we will not be able
  to drop the bounded $\admName$-length assumption in our
  discussion.
\end{rmk}
\begin{proof}
  The statement would easily follow from Lemma~\ref{LmEXpHolder} if
  $\cm$ were uniformly hyperbolic.  Since this is not the case, we
  need to follow a strategy similar to the argument presented in the
  proof of Lemma~\ref{l_OneItFHat}. we partition the interval
  $[1,\cdots, n]$ into blocks with good hyperbolicity properties.

  First of all, by Lemma~\ref{l_OneItFHat} there exists $C > 1$ so
  that for any $0\le m\le n$, $|\cm^{-m}W|\adm < CL$.  Moreover, since
  $\cm^{-m}W\cap \sing m = \emptyset$, we already observed that the
  function $x\mapsto\min\{m,\Np(x)\}$ is constant on $\cm^{-m}W$.  Let
  $n_0$ be the constant value of $\min\{n,\Np(x)\}$ on $V$, $n_1$ be
  the constant value of $\min\{n,\Np_{1}(x)\}-n_{0}$ and so on, until
  we obtain $n_{0},\cdots, n_{p} > 0$ so that $n_{0}+\cdots+n_{p} = n$
  and for any $0 < l < p$, $n_{0}+n_{1}+\cdots+n_{l} = \Np_{l}(x)$ for
  any $x\in V$.  We can thus rewrite:
  \begin{align*}
    \rho_{n}(x) = \prod_{j = 0}^{p-1}\frac
    {\jacW\cm^{n_{j}}(\cm^{-n+n_{0}+\cdots+n_{j-1}}x)}
    {\jacW\cm^{n_{j}}(\cm^{-n+n_{0}+\cdots+n_{j-1}}\tx)}
  \end{align*}

  Then we can write, for any $x',x''\in W$:
$$      \left|\ln \rho_n(x'')-\ln \rho_n(x')\right|$$
$$=\left|\sum_{j=0}^{p-1}
      \ln{\jacW\cm^{n_{j}}(\cm^{-n+n_{0}+\cdots+n_{j-1}}x'')} -
      \ln{\jacW\cm^{n_{j}}(\cm^{-n+n_{0}+\cdots+n_{j-1}}x')}
      \right| $$

  Then Lemma~\ref{LmEXpHolder} implies:
  \begin{align*}
    \left|\ln \rho_n(x'')-\ln \rho_n(x')\right|
    &\le \Const\sum_{j = 0}^{p-1}
      d\adm(\cm^{-n+n_{0}+\cdots+n_{j-1}}x',\cm^{-n+n_{0}+\cdots+n_{j-1}}x'')^{1/12}\\
    &\le \Const d\adm (x',x'')^{1/12}+\\
    &\phantom\le+\Const\sum_{j = 0}^{p-2} d\adm(\cmp^{-j}\cm^{-n_{p-1}}x',\cmp^{-j}\cm^{-n_{p-1}}x'')^{1/12}+\\
    &\phantom\le+\Const d\adm(\cm^{-n}x',\cm^{-n}x'')^{1/12}.
  \end{align*}
  Proposition~\ref{p_propertiesAdm} and Lemma~\ref{l_OneItFHat} 
  conclude the proof of part (a).
  
        To prove part (b) consider two
  time moments $n_1<n_2$ such that $\cm^{-n_2} W\not\subset\reco$ and
  $\cm^{-n_{1}}|_W = \cmp^{-l_{1}}\cm^{-n^{*}}$ with
  $\cm^{-n^{*}}W\subset \cspi$.  Then:
  \begin{align*}
    \left|\ln\rho_{n_2}(x)-\ln\rho_{n_1}(x)\right|
    &=\left|\ln\rho_{n_2-n_1} (\cmp^{-n_1} x)\right|\\
    &=\left|\ln\rho_{n_2-n_1} (\cmp^{-n_1} x)-\ln\rho_{n_2-n_1} (\cmp^{-n_1} \tx)\right|\\
    &\leq \tK d_\alpha(\cmp^{-n_1} x, \cmp^{-n_1} \tx)^{1/12}\leq C
      \theta^{l_1} (d_\alpha(x, \tx))^{1/12},
  \end{align*}
  where the first inequality relies on Corollary~\ref{CrUMLip}, the
  already established part (a) and the second inequality relies on
  Proposition~\ref{p_propertiesAdm}(c).
\end{proof}
The next bound immediately follows from Lemma \ref{LmBD}.
  \begin{cor}[Distortion bounds]\label{cor:distortion}
    Let $L > 0$; there exists $\Cdist > 0$ so that the
    following holds.  Let $V$ be a mature unstable \admiss{}
    curve, $W_{n}$ be an H-component of $\cm^n V$ so that
    $|W_{n}|\adm < L$  and $V_{n} = \cm^{-n} W_{n}$.  Then, for any
    measurable set $E\subset\csp$:
  \begin{align*}
    e^{-\Cdist|W_{n}|\adm^{1/12}}\frac{\Leb_{W_{n}}(E)}{\Leb_{W_{n}}(W_{n})}
    \le
    \frac{\Leb_{V_{n}}(\cm^{-n}E)}{\Leb_{V_{n}}(V_{n})\vphantom{\hat
    W_{n }}}\le
    e^{\Cdist|W_{n}|\adm^{1/12}} \frac{\Leb_{W_{n}}(E)}{\Leb_{W_{n}}(W_{n})},
  \end{align*}
  where $\Leb_{V}$ denotes Lebesgue measure on the curve $V$ with
  respect to the $\admName$-metric.
\end{cor}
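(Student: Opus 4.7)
The plan is to deduce the distortion bounds as a routine consequence of the H\"older control on $\ln\rho_n$ provided by Lemma~\ref{LmBD}(a). First I would fix a reference point $\tx\in W_n$ and apply the change-of-variables formula to rewrite both Lebesgue measures on $V_n$ in terms of the Jacobian $\jacW\cm^{-n}$. That is,
\begin{align*}
  \Leb_{V_n}(\cm^{-n}E) &= \int_E \jacW\cm^{-n}(x)\, d\Leb_{W_n}(x),\\
  \Leb_{V_n}(V_n) &= \int_{W_n} \jacW\cm^{-n}(x)\, d\Leb_{W_n}(x).
\end{align*}
Dividing both identities through by $\jacW\cm^{-n}(\tx)$ the Jacobian ratio collapses to the density $\rho_n$ defined just before Lemma~\ref{LmBD}, and one obtains
\begin{align*}
  \frac{\Leb_{V_n}(\cm^{-n}E)}{\Leb_{V_n}(V_n)}
  &= \frac{\int_E \rho_n(x)\, d\Leb_{W_n}(x)}{\int_{W_n} \rho_n(x)\, d\Leb_{W_n}(x)}.
\end{align*}

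Next I would invoke Lemma~\ref{LmBD}(a), which, given the assumption $|W_n|\adm<L$, yields a constant $\tK=\tK(L)>0$ such that $\|\ln\rho_n\|_{C^{1/12}(W_n)}\le\tK$. Because $\rho_n(\tx)=1$ by construction, for every $x\in W_n$ we have
\begin{align*}
  |\ln\rho_n(x)|
  = |\ln\rho_n(x)-\ln\rho_n(\tx)|
  \le \tK\, d\adm(x,\tx)^{1/12}
  \le \tK\, |W_n|\adm^{1/12},
\end{align*}
so $\rho_n$ is sandwiched between $e^{-\tK|W_n|\adm^{1/12}}$ and $e^{\tK|W_n|\adm^{1/12}}$ on $W_n$.

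Finally, I would insert these pointwise bounds into the ratio of integrals displayed above: the numerator is bounded above by $e^{\tK|W_n|\adm^{1/12}}\Leb_{W_n}(E)$ and the denominator below by $e^{-\tK|W_n|\adm^{1/12}}\Leb_{W_n}(W_n)$, and vice versa for the opposite inequality. This produces both sides of the desired estimate with $\Cdist=2\tK$. There is no real obstacle here; the substantive work is already contained in Lemma~\ref{LmEXpHolder} and Lemma~\ref{LmBD}, and the only point requiring any care is to ensure the change-of-variables is applied to the $\admName$-length measure (which is lawful because $\cm^{-n}|_{W_n}$ is smooth between the mature admissible curves $W_n$ and $V_n$ by construction of the H-component).
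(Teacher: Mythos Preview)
Your proposal is correct and matches the paper's approach exactly: the paper simply states that the corollary ``immediately follows from Lemma~\ref{LmBD},'' and you have spelled out the routine change-of-variables argument that converts the H\"older bound on $\ln\rho_n$ into the two-sided distortion inequality. The only cosmetic difference is the constant---you obtain $\Cdist=2\tK$, which is of course fine.
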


\subsection{Holonomy map}\label{SSHolonomy}
A $C^1$-curve $W$ is called a (homogeneous) stable manifold if
$|\cm^n W|\to 0$ as $n\to\infty$ and for each $n,$ $\cm^n W$ is
contained in one homogeneity strip.  (Homogeneous) unstable manifolds
are defined similarly, with $\cm^{n}$ replaced by $\cm^{-n}$.  At this
point we do not know how often the points have stable and unstable
manifolds, this issue will be addressed in Section~\ref{SSInvMan}.
Below we discuss how the expansion of unstable curves changes when we
move along stable manifolds.  We denote by $\Ws(x)$ the maximal
homogenuous stable manifold passing through the point $x$.  Let
$W_1, W_2$ be two mature unstable curves.  Let
\begin{equation}
\label{DomainRangeHol}
  \Omega_j =\{x\in W_j: \Ws(x)\cap W_{3-j}\neq \emptyset\}.
\end{equation}
Define
\begin{equation}\label{DefHolonomy}
  \hol:\Omega_1\to\Omega_2 \text{ so that } \Ws(x)\cap W_2 = \{\hol(x)\}.
\end{equation}
Observe that $\hol$ commutes with $\cm$ (and
thus with $\cmp$).  We assume that $W_1$ and $W_2$ are close to
each other so that $d_\alpha(x, \hol x)\leq d$ for some small $d > 0$.
Define
\begin{align}\label{Jak}%
  J(x)& =\prod_{j=0}^\infty \frac{\jac{\cmp^{j}W_{2}}\cmp(\cmp^j \hol x)}{\jac{\cmp^{j}W_{1}}\cmp(\cmp^j x)}.
\end{align}
\begin{lem}\label{LmJak}\ %
  \begin{enumerate}
  \item The infinite product~\eqref{Jak} converges. In fact there are
    constants $C>0$, $\theta<1$ such that for any $n > 0$
    \begin{align*}
      \left|J(x)-\prod_{l=0}^{n-1} \frac{\jac{\cmp^{j}W_{2}}\cmp(\cmp^l \hol x)}{\jac{\cmp^{j}W_{1}}\cmp(\cmp^{l} x)}\right| \leq C \theta^n.
    \end{align*}
  \item For any $\breps>0$ there exists $ \brdelta>0$ such that if
    $x'\in W_1,$ $x'' = \hol{}x'\in W_2, $ $d(x', x'')\leq \brdelta$ and
    $|(\prpslope_0)'-(\prpslope_0)''|\leq \brdelta$ then
    \begin{align*}
      \left| \prod_{l=0}^{n-1} \frac{\jac{\cmp^{j}W_{2}}\cmp(\cmp^l x'')}
      {\jac{\cmp^{j}W_{1}}\cmp(\cmp^l x')}-1\right|\leq\breps.
    \end{align*}
  \end{enumerate}
\end{lem}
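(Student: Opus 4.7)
The plan is to exploit the H\"older regularity of $\ln\jacW\cm$ established in Lemma~\ref{LmEXpHolder} together with the exponential contraction of stable manifolds under $\cmp_{*}$ (Proposition~\ref{p_propertiesAdm}\ref{i_uniformHyperbolicity}) and the propagation estimates for p-slopes of Lemma~\ref{LmB1-2step}. Set $y_{j}=\cmp^{j}x$ and $y'_{j}=\cmp^{j}\hol(x)$; since $\hol$ commutes with $\cmp$ and $\hol(x)\in\Ws(x)$, both points lie on a common homogeneous stable manifold, whence
\[
d\adm(y_{j},y'_{j})\le \Lambda^{-j}d\adm(x,\hol x)\le \Lambda^{-j}d.
\]
Let $\cB_{j}$ (\resp $\cB'_{j}$) denote the pre-collisional $p$-slope of $\cmp^{j}W_{1}$ at $y_{j}$ (\resp of $\cmp^{j}W_{2}$ at $y'_{j}$).

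The first key step is to show $|\cB_{j}-\cB'_{j}|\le C\theta^{j}$ for some $\theta\in(0,1)$. Decomposing each application of $\cmp$ into the sequence of $\cm$-iterates given by $\Np$ (as in the proof of Proposition~\ref{PrBHold}) and invoking Lemma~\ref{LmB1-2step}, one obtains, for each collision block, an inequality of the form
\[
|\cB_{k}-\cB'_{k}|\le \frac{|\cB_{k-1}-\cB'_{k-1}|}{\Delta'_{k-1}\Delta''_{k-1}}+C\,d_{E}(y_{k-1},y'_{k-1})+C\,d_{E}(y_{k},y'_{k}).
\]
For mature curves, Corollary~\ref{CrHESqueeze} together with Lemma~\ref{LmPSlopeC0} gives a uniform lower bound $\Delta'_{k-1}\Delta''_{k-1}\ge 1+c_{0}>1$ outside the recollision regime (handled separately as in Proposition~\ref{PrBHold} via the reciprocal $1/\cB$), while the Euclidean distances are dominated by $d\adm(y_{k},y'_{k})$ (by Proposition~\ref{p_propertiesAdm}\ref{i_admeum} and Lemma~\ref{l_OneItFHat}). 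A geometric-series sum against the exponentially decaying $d\adm(y_{j},y'_{j})$ then yields the claimed bound.

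The second step is a direct adaptation of Lemma~\ref{LmEXpHolder}: inspecting the decomposition \eqref{LogH} of $\ln H$, the only terms depending on the unstable direction are $\ln\bar{\prpslope}$ and $\ln(\bar{\prpslope}\bar{w}+2\bar{\curv})$, which are Lipschitz in $\bar{\prpslope}$ (away from $0$, guaranteed by Corollary~\ref{CrHESqueeze} and Lemma~\ref{LmPSlopeC0}). Consequently, substituting our two bounds into the proof of Lemma~\ref{LmEXpHolder} yields
\[
\left|\ln \frac{\jac{\cmp^{j}W_{2}}\cmp(y'_{j})}{\jac{\cmp^{j}W_{1}}\cmp(y_{j})}\right|\le C\bigl(d\adm(y_{j},y'_{j})^{1/12}+|\cB_{j}-\cB'_{j}|\bigr)\le C'\theta^{j/12}.
\]
Summing the tail $\sum_{j\ge n}C'\theta^{j/12}$ proves part (a) with $\theta$ replaced by $\theta^{1/12}$ and an appropriate constant. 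For part (b), the small initial $p$-slope gap $|(\prpslope_{0})'-(\prpslope_{0})''|\le\brdelta$ together with small base-point separation $d(x',x'')\le\brdelta$ feed into Lemma~\ref{LmB1-2step} to force $|\cB_{k}-\cB'_{k}|\lesssim\brdelta$ uniformly in $k$; combining the single-step bound above with the tail estimate of part (a) produces a total deviation $O(\brdelta^{1/12})$, which can be made smaller than $\breps$ by choosing $\brdelta$ accordingly.

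The principal technical obstacle will be the careful book-keeping in step one: one must verify that whenever the orbit of $y_{j}$ passes through the recollision region (so that $\Delta'_{k-1}\Delta''_{k-1}$ no longer gives contraction), the reciprocal-variable bound \eqref{B2Lip} still yields per-$\cmp$-iterate contraction on the p-slope gap; this is where the assumption that $W_{1},W_{2}$ are mature, rather than merely unstable, is essential, as it ensures the upper bound on $\prpslope$ outside $\reco$ needed to apply Lemma~\ref{LmB1-2step}(b) within each block.
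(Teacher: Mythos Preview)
There is a genuine gap in your treatment of part~(a). Your claim that Corollary~\ref{CrHESqueeze} and Lemma~\ref{LmPSlopeC0} yield a uniform per-$\cm$-step contraction $\Delta'_{k-1}\Delta''_{k-1}\ge 1+c_{0}>1$ is false. Recall that $\delta_{k}=\elll/\max\{w^{*},w'_{k}\}$ (or $0$ in $\backReco$), so at high energies $\Delta_{k}-1=O(1/w_{k})$ can be arbitrarily small. Only the \emph{product} of the $\Delta_{k}$'s over a full $\cmp$-period admits a uniform lower bound exceeding~$1$. But even granting this, iterating Lemma~\ref{LmB1-2step} from time~$0$ does not give $|\cB_{j}-\cB'_{j}|=O(\theta^{j})$: for $j$ in the $l$-th $\cmp$-block the recursion produces
\[
|\cB_{j}'-\cB_{j}''|\le \frac{|\cB_{0}'-\cB_{0}''|}{\prod_{k=0}^{j-1}\Delta_{k}'\Delta_{k}''}+C\sum_{k=0}^{j}d_{E}(x_{k}',x_{k}''),
\]
and while the first term decays like $\Lambda^{-l}$, the Euclidean sum is bounded (via Corollary~\ref{cor_adm-dominate-stable}, not Lemma~\ref{l_OneItFHat}, which treats unstable curves) only by the \emph{initial} stable separation $d_{\alpha}(x',x'')\le d$, a fixed constant. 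No exponential decay follows.

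The paper's key device is a halfway trick: for $j$ with $m_{l}\le j<m_{l+1}$, iterate the recursion only from some $\tj$ near $m_{l/2}$ with $x_{\tj}'\notin\reco$. At $\tj$ one invokes the \emph{trivial} bound $|\cB_{\tj}'-\cB_{\tj}''|=O(1)$ from Corollary~\ref{CrHESqueeze}; the denominator product over the remaining $\approx l/2$ periods of $\cmp$ is $\gtrsim\Lambda^{l/2}$, so the first term is $O(\Lambda^{-l/2})$. The Euclidean sum now starts at $\tj$ and is dominated by $d_{\alpha}(x'_{m_{l/2}},x''_{m_{l/2}})=O(\Lambda^{-l/2}d)$ by stable contraction under $\cmp$. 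Both contributions are $O(\Lambda^{-l/2})$, which is what drives the convergence of the infinite product. Your argument for part~(b) is essentially sound and matches the paper: there one does iterate from time~$0$, replacing the trivial $O(1)$ bound by $|\cB_{0}'-\cB_{0}''|\le\brdelta$, and no decay in $j$ is needed---only a uniform $O(\brdelta)$ bound.
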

\begin{rmk}
  In this paper we will not use part (b) of this lemma, but the proof
  follows from similar arguments, and part (b) could be useful in future
  developments.
\end{rmk}
\begin{proof}
  Once again, in this proof we drop the superscript “$-$” from $\cB$
  for the ease of notation.
  \renewcommand{\prpslope}{\cB} 

  For $x'\in W_1$ and $l\ge0$, let us denote
  $x'_l = \cmp^l x'=\cm^{\Np_{l}(x')} x'$ and let $x''=\hol x'$.  With
  this notation we have
  \begin{align*}
    J(x') &=\prod_{l = 0}^{\infty} %
            \frac{\jac{\cmp^{j}W_{2}}\cmp(x_l'')}{\jac{\cmp^{j}W_{1}}\cmp(x'_l)}.
  \end{align*}
  Observe that since $x''\in \Ws(x')$, the points $x'_j$ and $x''_j$
  belong to the same cell $\cell^{-}$ for any $j \ge0 $. In particular
  $x'_j\in\reco$ if and only if $x''_j\in\reco$ (and likewise for
  $\backReco$) and $\Np_{l}(x'') = \Np_{l}(x')$. Let
  $m_{l} = \Np_{l}(x'') = \Np_{l}(x')$.

  Using~\eqref{DefH2} we can then write
  \begin{align*}
    \left|\ln J-\ln \prod_{l=0}^{n-1} \frac{\jac{\cmp^{j}W_{2}}\cmp(x_l'')}{\jac{\cmp^{j}W_{1}}\cmp(x'_l)}\right|
    =\left|\sum_{j=m_n}^{\infty} \left[\ln H(x_j'', x_{j+1}'')-\ln H(x_j', x_{j+1}')\right]\right|.
  \end{align*}
  Inspecting the proof of Lemma~\ref{LmEXpHolder} we obtain the
  following estimate \newcommand{\slopeDifference}{\Xi}
  \begin{align}\notag
    \left|\ln J-\ln \prod_{l=0}^{n-1} \frac{\jac{\cmp^{j}W_{2}}\cmp(x_l'')}{\jac{\cmp^{j}W_{1}}\cmp(x'_l)}\right|
    \label{UBJak}%
    \leq C \sum_{l=n}^{\infty} d_\alpha(x_{m_l}', x_{m_l}'')^{1/12}+
    C \sum_{l=n}^{\infty} \sum_{j=m_l}^{m_{l+1}-1} \slopeDifference_j,
  \end{align}
  where we defined
  \begin{align*}
    \slopeDifference_j %
    &= \begin{cases}
      \displaystyle
      \frac{|\prpslope_j'-\prpslope_j''|}{\min\{1,\prpslope_j''\}}
      &\textrm{if }x'_j\nin\reco, \\[5pt]%
      \displaystyle
      \left|\frac{1}{\prpslope_j'}-\frac{1}{\prpslope_j''}\right|
      &\textrm{otherwise}.
    \end{cases}
  \end{align*}
  Accordingly, we need good bounds on $\slopeDifference_j$.
  Such bounds will be obtained by different arguments depending on
  whether $x'_{j}\nin\reco$ (case A) or $x'_{j}\in\reco$ (case B).

  Let us first consider case A. Observe that since $x'_{j-1}$ and
  $x''_{j-1}$ lie on the same stable manifold, they belong to the same
  cell $\cell^{-}_\indCell$, and $\indCell\sim\tau_{j-1}'$ for large
  $\indCell$. Next, Lemma~\ref{LmPSlopeC0} and Corollary~\ref{CrHESqueeze}  tell us
  that $\prpslope_j''$ can be small only
  if $\indCell$ (and, hence, $\tau_{j-1}''$) is large
  and in this
  case $\prpslope_{j}''$ is of order $\indCell\inv$.
  Applying once more Lemma~\ref{LmPSlopeC0} we get
  $|\prpslope_{j}'-\prpslope_{j}''| < C\indCell^{-2}$ and thus
  \begin{align*}
    \frac{|\prpslope_j'-\prpslope_j''|}{\prpslope_j''}\le
    C\indCell|\prpslope_j'-\prpslope_j''|\le C|\prpslope_j'-\prpslope_j''|^{1/2}.
  \end{align*}
  Hence, regardless if $\prpslope_j$ is small or not,
  it suffices to obtain good bounds for $|\prpslope_j'-\prpslope_j''|$.

  Let $m_l\le j < m_{l+1}$ and let $\tj$ be a number close to
  $m_{(l/2)}$ such that $x_{\tj}'\not\in \reco$.  Set
  $\bar\theta = \Lambda\inv\in(0,1)$.  Since $x_j'\not\in\reco,$
  iterating the estimates of parts (a) and (b) of
  Lemma~\ref{LmB1-2step}, we get
  \begin{align}%
    \notag \left|\cB_j'-\cB_j''\right|%
    & \leq \frac{\left| \cB_{\tj}'-\cB_{\tj}''\right|}
      {\prod_{k = \tj}^{j-1}
      \left[ \Delta_{k}' \Delta_{k}'' \right]
      }
      +C \sum_{k = \tj}^{j-1} d_E(x_k', x_k'')
    \\ &\leq
         \frac{\left|\cB_{\tj}'-\cB_{\tj}''\right|}{\prod_{k=\tj}^{j-1}
         \left[\Delta_k' \Delta_k''\right]} +Cd_\alpha (x_{m_{l/2}}', x_{m_{l/2}}).
    \\ & \le \frac{\left|\cB_{\tj}'-\cB_{\tj}''\right|}{\prod_{k=\tj}^{j-1}
         \left[\Delta_k' \Delta_k''\right]} + C\bar\theta^{l/2} d_\alpha(x', x'').
         \label{StSlope} %
  \end{align}
  where in the second inequality we have invoked
  Corollary~\ref{cor_adm-dominate-stable} and in the last inequality we
  used uniform contraction of stable manifolds by $\cmp$ with respect
  to the $\admName$-metric (which follows from
  Proposition~\ref{p_propertiesAdm}(c). 

  Next, the proof of Proposition \ref{p_propertiesAdm} shows that the
  denominator in the first term of the right hand side
  of~\eqref{StSlope} is $O(\bar\theta^{-l/2})$.  On the other hand by
  Corollary~\ref{CrHESqueeze} (since $x'_{\tj}\nin\reco$), we gather
  \begin{align}
    \label{IntermTrivial}
    |\prpslope_{\tj}'-\prpslope_{\tj}''|=O(1).
  \end{align}
  Accordingly
  $|(\prpslope_{j})'-(\prpslope_{j})''|=O(\bar\theta^{l/2}).$ for
  $m_l \le j < m_{l+1}$.  Plugging this estimate into~\eqref{UBJak}
  and summing over $l\geq n$, we conclude the proof of part (a) in
  case A by
  choosing $\theta = \bar\theta^{1/24}$.

  Next consider case B.
  By~\eqref{B2Lip}, which holds in $\reco$, we have
  \begin{align*}
    \left|\frac{1}{\prpslope_j'}-\frac{1}{\prpslope_j''}\right|
    &\le C|\prpslope_{j-1}'-\prpslope_{j-1}''|+ C d\adm(x'_{j-1},x''_{j-1}).
  \end{align*}
  Since $x'_{j-1}\nin\reco$ we can apply the estimates of case A to
  control $\prpslope_{j-1}'-\prpslope_{j-1}''$ to conclude the proof
  of part (a) in case B.

  The proof of part (b) is similar except that, we replace \eqref{IntermTrivial} by a better
  estimate for $|\prpslope_{\tj}'-\prpslope_{\tj}''|$.  Namely, if $x'_0\nin\reco$, then
  \eqref{SlopeBack} gives
  \begin{equation*}
    |\prpslope_{\tj}'-\prpslope_{\tj}''|
    \leq \frac{\left|\cB_{0}'-\cB_{0}''\right|}{\prod_{k=0}^{\tj-1} \left[\Delta_k' \Delta_k''\right]}
    +C \sum_{l=0}^{\tj} d_\alpha (x_{m_l}', x_{m_l}'')  \leq \brC\brdelta.
  \end{equation*}
  If $x'_0\in\reco$ we obtain a similar bound by
  invoking~\eqref{SlopeBack} up to $j = 1$.

  Accordingly
  $|\prpslope_{j}'-\prpslope_{j}''|=O(\bar\theta^{l/2}\brdelta)$
  for $m_l < j < m_{l+1}$.  Plugging this estimate into~\eqref{UBJak}
  and summing for $l\geq 0$ we obtain part (b).
\end{proof}

\section{Expansion estimate}%
\label{sec:expansion-estimate}
In this section we prove an expansion estimate for unstable curves
which is used in the proof of the so-called Growth Lemma (see
Lemma~\ref{l_growth}).  The section is organized as follows. In
Section~\ref{SS-VirtualComplexity} we define the notion of \emph{regularity
  at infinity}, which appears in the statement of our Main Theorem and
will be used crucially in the proof of the expansion estimate. In
Section~\ref{SSEe} we state the expansion estimate as
Proposition~\ref{p_expansionEstimate}. The proof of this proposition
is divided in two lemmas, which are proved in the final three
subsections of this section.

\subsection{Complexity at infinity.} \label{SS-VirtualComplexity}%
Recall that Theorem~\ref{t_normalForm} states that for large values of
$w$, $\cmp$ is well approximated by the map $\nf_\Delta$ defined
by~\eqref{SawTooth}.  In order to obtain results about the complexity of
the map $\cmp$ near $\infty$, we thus proceed to study the complexity of
the map $\nf_\Delta$.  From now on, we will assume $\Delta$ to be
fixed given by \eqref{DefDelta}.

Recall the definition of fundamental domains $\hDom_n$ given in
Section~\ref{SSCMP}, and define, for any $k > 0$
\begin{equation}\label{DefItin}
  \hDom_{n_0,n_1,\cdots,n_{k-1}} = \bigcap_{j = 0}^{k-1}\cl(\nf^{-j}_{\Delta}\hDom_{n_j}).
\end{equation}
We say that a $k$-tuple $(n_0,n_1,\cdots,n_{k-1})$ is
\emph{$\Delta$-admissible} if
$\hDom_{n_0,n_1,\cdots,n_{k-1}}\ne\emptyset$ and if
$x\in\hDom_{n_0,n_1,\cdots,n_{k-1}}$ we say that
$(n_0,n_1,\cdots,n_{k-1})$ is \emph{a $k$-itinerary} of $x$.  We stress
the fact that the sets $\hDom_{n_0,n_1,\cdots,n_{k-1}}$ are not pairwise
disjoint (their boundaries might overlap), hence some points might have
more than one itinerary.  For $x\in \cl(\hDom_0)$ we define
$ \LC_k(\Delta,x)$ to be the number of possible $k$-itineraries of $x$
that begin with $n_0=0$.
\begin{rmk}
  Observe that $\LC_k(\Delta,x)$ is in general larger than the maximum
  number of singularity lines of order $k$ meeting at the point $x$ (a
  number usually referred to as \emph{complexity}).  In fact, for some
  exceptional values of $\Delta$ (e.g. $\Delta = -1$) we can find $x$ so
  that $\LC_k(\Delta,x) = 2^k$.  On the other hand, for any $\Delta$,
  the number of singularity lines meeting at any point is bounded  above
  by $2k$ (see~\cite[Proof of Theorem 2]{fum} and also~\cite{C3}).
\end{rmk}
We define the \emph{$k$-virtual complexity of $\Delta$ at infinity} as
\begin{align*}
  \LC_k(\Delta)=\max_{x\in \cl(\hDom_0)} \LC_k(\Delta,x).
\end{align*}
\begin{rmk}
  The number $\LC_k(\Delta)$ is crucial in our analysis since it
  controls the number of components in which an arbitrarily small curve
  can be cut not just by $\nf$ but an arbitrarily small perturbation of
  $\nf$.  See Figure~\ref{f_virtualComplexity}: both panes show a
  neighborhood of the point $(1/2,1/2)$.  The left and right pane show
  the singularity portrait (up to $k = 5$ iterates) of
  $\nf_{\Delta = -1}$ and $\nf_{\Delta = -(1+\eps)}$ respectively.  As
  $\eps\to 0$ the nearly parallel lines shown in the right pane slide
  and coalesce at the center.  Observe that the complexity of the center
  in the left pane is $2k$, the complexity of any point in the right
  pane is bounded by $3$, but any short unstable curve passing
  sufficiently near the center is cut by singularities in an exponential
  (in $k$) number of curves provided that $\eps$ is sufficiently small.
  The $k$-\emph{virtual complexity} $\LC_k(\Delta)$ indeed bounds the
  number of such curves. On the other hand, since each point on the orbit of $x$ belongs to at most
  two fundamental domains, it follows that
\begin{equation}
\label{VirtCompTrivial}
\LC_k(\Delta)\leq 2^k.
\end{equation}

\end{rmk}
\begin{figure}[!hb]
  \centering
  \hspace{\stretch{1}}\includegraphics[width = 4cm]{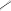}\hspace{\stretch{1}}
  \includegraphics[width = 4cm]{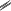}\hspace{\stretch{1}}
  \caption{Comparison of virtual complexity and standard complexity}\label{f_virtualComplexity}
\end{figure}

\begin{mydef}
  A Fermi--Ulam model is  \emph{regular at infinity} if
  \begin{align*}
 \limsup_{k\to\infty} \frac{\LC_k(\Delta)}{\Lambda_\Delta^k}=0
  \end{align*}
  where $\Lambda_\Delta$ is the expansion of the limiting map $\nf_\Delta$ defined by~\eqref{DefLambdaDelta}.

  A model is \emph{superregular at infinity} if there exists a
  constant $C$ so that for any $k\in\bN$ we have
  $\LC_k(\Delta)\le C$.
\end{mydef}

\begin{rmk}
\label{RkDeltaHalf}
We will show in Appendix~\ref{AppRegInf} that for all except possibly
countably many $\Delta$, the map $\nf_\Delta$ is superregular at
infinity.  However, the result of Appendix \ref{AppRegInf} does not
make it easy to check that a given value of $\Delta$ is regular.  On
the other hand~\eqref{VirtCompTrivial} shows that $\nf_\Delta$ is
regular at infinity provided that $\Lambda_\Delta>2$, that is, if
$|\Delta|>\frac{1}{2}$ (see~\eqref{SawTooth}).
\end{rmk}

Recall that the involution defined in Section~\ref{SSInvolution}
conjugates $\cm^{-1}$ to the Poincare map of the time reversed
Fermi--Ulam Model corresponding to $\bar{\ell}(r)=\ell(1-r)$.  Note that the
parameter $\Delta$ defined by~\eqref{DefDelta} is the same for $\ell$
and $\bar{\ell}$.  In particular, the Fermi--Ulam Model is regular at infinity
if and only if the reversed model is regular at infinity.  We
conclude that all results of this section formulated for unstable
curves of $\cm$ are valid also for stable curves of $\cm$ (that are
unstable curves of $\cm\inv$).

\subsection{Expansion estimate}
\label{SSEe}
In order to properly formulate the main result of this section we
need some definitions.  Let $W$ be an unstable curve; then
$\cm W$ is consists of (at most) countable union of connected components.
Any such component may in principle be further cut by
secondary singularities in an (at most) countable number of shorter
curves which we call \emph{\Hcomps{}}.  The same can be said for the
induced map $\cmp$.

We denote by $\{W_{i,n}\}_{i\in\bN}$ (\resp
$\{\hat W_{i,n}\}_{i\in\bN}$) the \Hcomps{} of $\cm^n W$ (\resp
$\cmp^n W$).  Given an \Hcomp{} $\hat W_{i,n}$ of $\cmp^n W$, we can
uniquely define $\Np_{i,n} > 0$ so that
\begin{align*}
  \cmp^n|_{\cmp^{-n} \hat W_{i,n}} = \cm^{\Np_{i,n}}|_{\cmp^{-n} \hat W_{i,n}}.
\end{align*}
Finally, we denote by $\Lambda_{i,n}$ (\resp $\hat\Lambda_{i,n}$) the
minimum expansion, with respect to the $\admName$-metric, of $\cm^n$
(\resp $\cmp^n$) on $\cm^{-n} W_{i,n}$ (\resp
$\cmp^{-n}\hat W_{i,n}$).  Given an unstable curve $W\subset \csp$
(\resp $W\subset\cspi$), and $n > 0$, we define:
\begin{align*}
  \cL_{n}(W) &= \sum_{i}\frac1{\Lambda_{i,n}}&
  \hat \cL_{n}(W) &= \sum_{i}\frac1{\hat\Lambda_{i,n}}.
\end{align*}
Then we let
\begin{align*}
  \cL_{n}(\delta) &= \sup_{W:|W|\adm\le\delta} \cL_{n}(W)&
  \hat \cL_{n}(\delta) &= \sup_{W:|W|\adm\le\delta} \hat \cL_{n}(W)\\
  \cL_{n} &= \liminf_{\delta\to 0}\cL_{n}(\delta)&
  \hat \cL_{n} &= \liminf_{\delta\to 0} \hat \cL_{n}(\delta)
\end{align*}
It follows from the definition that $\cL_{n}$ (\resp
$\hat\cL_{n}$) is a sub-multiplicative sequence, \ie
\begin{align}\label{eq:cL-submultiplicative}
  \cL_{n+m}&\le\cL_{n}\cL_{m} & \hat\cL_{n+m}&\le\hat\cL_{n}\hat\cL_{m}.
\end{align}

\begin{prp}[Expansion estimate]\label{p_expansionEstimate}
There exists $C > 0$ such that
    \begin{align}\label{e_expansionEstimate-trivial}
      \hat \cL_{1} < C.
    \end{align}
  Moreover, if the Fermi--Ulam model is regular at infinity then there
  exists $\bar n > 0$ so that
  \begin{align}\label{e_expansionEstimate}
    \hat \cL_{\bar n} < 1,
  \end{align}
  and there exists $C' > 0$ so that for any $n > 0$ we have
  $\hat \cL_{n} < C'$.
\end{prp}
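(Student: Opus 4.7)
The plan is to establish the three claims in turn, with the uniform bound~\eqref{e_expansionEstimate-trivial} being a key ingredient in the trivial case and the main estimate~\eqref{e_expansionEstimate} being obtained separately under the regularity assumption; the uniform control $\hat\cL_n < C'$ then follows from sub-multiplicativity~\eqref{eq:cL-submultiplicative}.

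For $\hat\cL_1 < C$, I would fix a short mature unstable curve $W\subset\cspi$ and partition the H-components of $\cmp W$ into three classes. First, components landing in cells $\cell^-_\indCell$ with $\indCell\le\bar\indCell$ for a fixed large $\bar\indCell$: by Lemma~\ref{l_largeCell}\ref{i_almostFinite-induced} their number is bounded by some $K$ provided $|W|\adm$ is small enough, and each enjoys expansion at least $\Lambda$ by Proposition~\ref{p_propertiesAdm}\ref{i_uniformHyperbolicity}, contributing at most $K/\Lambda$. Second, components landing in cells $\cell^-_\indCell$ with $\indCell>\bar\indCell$: these lie near $\xc$ with $w'<\Const\indCell^{-1/2}$ by Lemma~\ref{l_singularityStructure}\ref{i_boundedCells}, and their preimages traverse flight times $\tau\sim\indCell$. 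Applying~\eqref{e_expansionAdm-alpha} and~\eqref{e_expansionAdm-II} yields expansion at least $\Const\indCell^{3/2}$; since an unstable curve (tangent to $\conen$) crosses each stable singularity $\singBackward\indCell$ at most once, only $O(1)$ pieces land in each such $\cell^-_\indCell$, giving a convergent sum $\sum_\indCell \indCell^{-3/2}$. Third, within each of the above pieces, further fragmentation by homogeneity strips produces at most one H-component per $\homo_k$, each with additional expansion $\ge \Const k^2$, contributing the convergent factor $\sum_k k^{-2}$.

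For $\hat\cL_{\bar n}<1$, I would fix a large threshold $w^{*}$ and split H-components of $\cmp^{\bar n}W$ according to whether the backward orbit $\cmp^{-j}$, $0\le j\le\bar n$, ever visits the low-energy region $\{w<w^{*}\}$. For components whose orbit stays in $\{w\ge w^{*}\}$, Theorem~\ref{t_normalForm} shows that $\cmp$ in adiabatic coordinates is an $O_{5}(I^{-1})$-perturbation of $\nf_\Delta$; in particular, for large $w^{*}$, the number of H-components is bounded by $\LC_{\bar n}(\Delta)$ plus an additive error from homogeneity strip cuts (handled as in Step~1), while each component expands by at least $c\Lambda_\Delta^{\bar n}$ by Corollary~\ref{CrExpAtInf}. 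Their contribution is thus bounded by $\Const\,\LC_{\bar n}(\Delta)/\Lambda_\Delta^{\bar n}$, which vanishes as $\bar n\to\infty$ by the regularity hypothesis. For components whose orbit enters $\{w<w^{*}\}$, uniform hyperbolicity of $\cmp$ together with finite local complexity (only finitely many cells $\cell^{\pm}_{\indCell}$ meet $\{w<w^{*}\}$, by Lemma~\ref{l_singularityStructure}) gives a contribution at most $C_{*}/\Lambda^{\bar n}$ via a standard adaptation of the billiard argument (iterating Step~1 together with Lemma~\ref{l_largeCell}). Choosing $\bar n$ large enough makes both contributions add to less than~$1$.

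Finally, for $\hat\cL_n<C'$, I would write $n=q\bar n+r$ with $0\le r<\bar n$ and use~\eqref{eq:cL-submultiplicative} to obtain
\[
  \hat\cL_n \le \hat\cL_{\bar n}^{\,q}\,\hat\cL_r \le \hat\cL_r \le \hat\cL_1^{r}\le C^{\bar n}=:C',
\]
since $\hat\cL_{\bar n}<1$. The hardest step will be~\eqref{e_expansionEstimate}: the delicate part is bookkeeping the mixing between the low-energy and high-energy regimes along a single orbit of length~$\bar n$, where the normal form of Theorem~\ref{t_normalForm} is only valid above a threshold and pieces may enter the recollision region $\reco$ (where the pre-collisional slope can blow up). Controlling these transitional pieces requires the refined slope and expansion estimates of Corollary~\ref{CrHESqueeze} and Lemma~\ref{lem:vertical-unstable} together with a combinatorial argument that separates ``virtual'' complexity at infinity from the uniformly bounded complexity at bounded energies, ensuring that the sum $\sum_i 1/\hat\Lambda_{i,\bar n}$ can be estimated blockwise.
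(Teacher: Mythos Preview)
Your handling of $\hat\cL_1$ and the submultiplicativity argument for $\hat\cL_n<C'$ are essentially correct and close to the paper's. The high-energy part of your $\hat\cL_{\bar n}<1$ argument (virtual complexity versus $\Lambda_\Delta^{\bar n}$ via Theorem~\ref{t_normalForm} and Corollary~\ref{CrExpAtInf}) is also the right idea and matches what the paper does.

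The gap is in the low-energy part of~\eqref{e_expansionEstimate}. You claim that for components whose $\cmp$-orbit visits $\{w<w^*\}$, finite local complexity plus uniform hyperbolicity yields a contribution $\le C_*/\Lambda^{\bar n}$. But ``finite local complexity'' from Lemma~\ref{l_largeCell}\ref{i_almostFinite-induced} only bounds the number of pieces created at \emph{each} step by some $K$; iterating naively gives complexity $K^{\bar n}$, hence a contribution $K^{\bar n}/\Lambda^{\bar n}$, and there is no reason to expect $K<\Lambda$ (indeed $K$ can be as large as $3^{N_*}$ in the proof of that lemma). Your sketch acknowledges this is ``the hardest step'' but does not supply the missing mechanism.

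The paper resolves this with a genuinely different decomposition: instead of high/low energy, it splits H-components into \emph{regular} (those whose intermediate images stay in $\homo_0$) and \emph{nearly grazing}. The nearly-grazing contribution is made smaller than any prescribed $\eps$ by choosing $k_0$ large (Lemma~\ref{l_nearlyGrazing}); this is independent of complexity. For the regular components one needs a \emph{linear} bound on the induced pointwise regular complexity, $\ckregp_n<4n+2$ (Lemma~\ref{l_actualInducedComplexityBound}). This linear bound is obtained by a combinatorial argument classifying tangent sectors as \emph{active} or \emph{inactive} and showing that at each step at most one sector can remain active while at most two new regular sectors are created (Lemma~\ref{l_inductiveStep}). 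Only then does one combine the linear complexity in bounded energy with the virtual complexity at infinity (Lemma~\ref{LmCInf-EE}) to get $\hzreg_{\bar n}\le 1/2$. Your proposal is missing this active/inactive sector argument, which is the substantive content of Section~\ref{SSPointwise}.
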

The rest of this section is devoted to the proof of Proposition~\ref{p_expansionEstimate}.
We will follow the strategy
described in~\cite{jmogy}.
Recall the definition of the homogeneity
strips $\homo_k$ given in Section~\ref{s_homo}.

\begin{mydef}
  Let $W$ be an unstable curve. An \Hcomp{} $W_{i,n}$ (\resp
  $\hat W_{i,n}$) of $\cm^n W$ (\resp $\cmp^n W$) is said to be
  \emph{regular} if for any $0 \le q < n$ (\resp $0\le q < \Np_{i,n}$)
  we have that $\cm^{-q}W_{i,n}\subset \homo_0$ (\resp
  $\cm^{-q}\hat W_{i,n}\subset \homo_0$) and \emph{nearly grazing}
  otherwise.
\end{mydef}
Observe that the notion of regularity depends on the choice of the
constant $k_0$ introduced in Section~\ref{s_homo}; in particular, if
$k_0$ increases, the number of regular \Hcomps{} also increases.
\begin{lem}\label{l_atMostOne1}
  Let $W$ be a u-curve and $N > 0$. Then any connected component of
  $\cm^NW$ (\resp $\cmp^NW$) contains at most one regular \Hcomp{}.
\end{lem}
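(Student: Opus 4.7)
The plan is to exploit the fact that along any unstable curve the coordinate $w$ is strictly monotone, which follows directly from the unstable cone condition $\delta w/\delta\ct \le -\cK < 0$ in Definition~\ref{def:invariant-cones}. This monotonicity forces the ``regular region'' inside a connected component of $\cm^N W$ (respectively $\cmp^N W$) to be a single interval lying entirely in one H-component, yielding the uniqueness statement.

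Fix a connected component $V$ of $\cm^N W$ (resp.\ $\cmp^N W$). In the $\cmp^N$ case one first checks that $\Np_N$ is constant on each connected component of $\csp\setminus\singp{N}$, so the common value $M$ it takes on $V$ coincides with $\Np_{i,N}$ for every H-component $\hat W_{i,N}\subset V$. Writing $V = \cm^{M}V_{-N}$ with $V_{-N}\subset W$ (and $M=N$ in the $\cm^N$ case), each preimage $\cm^{-q}V = \cm^{M-q}V_{-N}$, for $0\le q\le M$, is the $(M-q)$-th forward iterate of an unstable subcurve of $W$, and is therefore itself an unstable curve by forward-invariance of $\coneu$. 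Parametrising $V$ monotonically and setting $w_q(x) = w(\cm^{-q}x)$, the strict monotonicity of $w$ along $\cm^{-q}V$ together with the fact that $\cm^{-q}\colon V\to \cm^{-q}V$ is a homeomorphism imply that $w_q$ is a monotone function on $V$. Consequently, for each $0\le q < M$ the set $\{x\in V : w_q(x) > k_0^{-2}\}$ is a subinterval of $V$ with one endpoint at the boundary of $V$.

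The union of regular H-components inside $V$ coincides with
\[
R = \bigcap_{q=0}^{M-1}\bigl\{x\in V : w_q(x) > k_0^{-2}\bigr\}.
\]
Splitting the factors according to whether $w_q$ is increasing or decreasing along $V$, $R$ is the intersection of a family of ``left'' half-intervals with a family of ``right'' half-intervals, hence is itself a single (possibly empty) interval. Inside $R$ every $w_q$ exceeds $k_0^{-2}$, so no preimage $\cm^{-q}\hhs$ of a secondary singularity curve meets $R$; therefore $R$ is contained in a single H-component. Since every regular H-component is by definition a subset of $R$, the component $V$ contains at most one regular H-component.

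The only delicate points, each dispatched quickly, are the smooth connectedness of the intermediate preimages $\cm^{-q}V$ (handled by the factorisation $V=\cm^{M}V_{-N}$ combined with $V_{-N}\subset \csp\setminus \sing{M}$, which gives $\cm^{j}V_{-N}\subset\csp\setminus\sing{M-j}$ inductively), and the constancy of $\Np_{N}$ on connected components of $\cmp^N W$ used to identify $M$ unambiguously in the induced-map case.
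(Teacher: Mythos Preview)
Your proof is correct and rests on the same key observation as the paper's: the coordinate $w$ is strictly monotone along every unstable curve, so the preimage of $\homo_0$ under each $\cm^{-q}$ is a subinterval of $V$. The paper uses this fact \emph{inductively} (the base case $N=1$ is exactly the monotonicity statement, and the inductive step pulls back one iterate at a time), whereas you apply it \emph{simultaneously} to all $w_q$ and intersect the resulting half-intervals in one stroke. The two arguments are logically equivalent; yours is slightly more direct, while the paper's induction makes the role of the single new homogeneity cut at each step more explicit. For the induced map $\cmp^N$, both proofs reduce to the $\cm^M$ case by noting that $\Np_N$ is constant on the relevant connected component (the paper phrases this as a contradiction argument, but since the two hypothetical H-components lie in the same connected component one has $N'=N''$ there as well). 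One minor wording slip: where you write ``no preimage $\cm^{-q}\hhs$ meets $R$'' you mean ``no forward image $\cm^{q}\hhs$ meets $R$'' (equivalently, $\cm^{-q}R\cap\hhs=\emptyset$); the argument itself is unaffected.
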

\begin{proof}
  Let us first prove the statement for connected components of $\cm^NW.$
  We give a proof by induction on $N$.  The statement is true
  if $N = 1$.  Indeed, the intersection of any connected u-curve with $\homo_0$
  is necessarily connected, hence out of the \Hcomps{} in which a
  connected component of $\cm W$ is cut by secondary singularities, at
  most one can be regular.

  Assume now by induction that the statement holds for $N$, and let
  $\tilde W'$ be a connected component of $\cm^{N+1}W$.  Let $\tilde W$
  be the connected component of $\cm^N W$ which contains
  $\cm\inv \tilde W'$.  By inductive hypothesis, either $\tilde W$
  contains no regular \Hcomp{} (and thus so does $\tilde W'$ and the
  statement holds), or it contains only one regular \Hcomp{}, which we
  denote by $W^*\subset\tilde W$.  Then any regular \Hcomp{} of
  $\tilde W'$ has to be contained in the connected u-curve
  $\cm W^*\cap \tilde W'$.  Since at most one of the \Hcomps{} of this
  curve can be contained in $\homo_0$ (and thus can be regular), we
  conclude the proof for $N+1$.

  Finally, the statement for $\cmp^NW$ follows from the statement for
  $\cm^NW$. Namely, suppose that for some $N,$ $\cmp^N W$ contains two
  regular H-components. Denote their preimages by $W'$ and $W''$.  Then
  $\cmp^N W'=\cm^{N'} W'$ and $\cmp^N W''=\cm^{N''} W''$.  Suppose
  without loss of generality that $N'\leq N''$ then $\cm^{N'} W$ has two
  regular H-components giving the contradiction.  \ignore{ Finally,
    because any connected component of $\cmp^NW$ is a connected
    component of $\cm^{\Np}W$ for some $\Np\ge N$ (which possibly
    depends on the component), the statement for $\cmp^NW$ follows from
    the statement for $\cm^NW$.}
\end{proof}
\begin{mydef}\label{d_regularComplexity}
  Given an unstable curve $W$ and $n > 0$, we define the \emph{regular
    $n$-complexity} of $W$ (\resp the \emph{induced regular
    $n$-complexity} of $W$), denoted by $\kreg_n(W)$ (\resp
  $\hkreg_n(W)$) to be the number of regular \Hcomps{} of $\cm^nW$ (\resp
  $\cmp^n W$).  If $n = 0$ we set conventionally
  $\kreg_0(W) = \hkreg_0(W) = 1$.  Finally, define
  \begin{align*}
    \kreg_n(\delta) &= \sup_{W:|W|\adm\le\delta}\kreg_n(W),&
    \hkreg_n(\delta) &= \sup_{W:|W|\adm\le\delta}\hkreg_n(W);\\
    \kreg_n &= \liminf_{\delta\to0}\kreg_n(\delta),&
    \hkreg_n &= \liminf_{\delta\to0}\hkreg_n(\delta).
  \end{align*}
\end{mydef}
\begin{rmk}
  Given an unstable curve $W$, recall the standard definition of
  $n$-\emph{complexity} of $W$ as the number of connected components
  of $\cm^nW$.  Lemma~\ref{l_atMostOne1} implies that regular
  complexity does not exceed standard complexity.  Observe moreover
  that while standard complexity is non-{}decreasing in $n$, regular
  complexity is not necessarily so (\eg the image of a regular
  component of $\cm^nW$ may contain no regular component).  Finally,
  all the above quantities are non-decreasing in $k_0$.
\end{rmk}
For future use, we note that
Lemmata~\ref{l_largeCell}\ref{i_almostFinite} and~\ref{l_atMostOne1}
imply that, provided $k_0$ is sufficiently large and $\delta$ is
sufficiently small, the following trivial estimate holds:
\begin{align}\label{e_trivialEstimateOnRegularComplexity}
  \kreg_n(\delta) &\le 3^n.
\end{align}

Let us now define $\zreg$, $\hzreg$ (\resp $\cL^{*}, \hat \cL^{*}$) as we
did above for $\cL$ and $\hat \cL$, but summing only on regular (\resp nearly grazing components). For instance:
\begin{align*}
  \cL^{*}_{n} = \liminf_{\delta\to0}\sup_{W:|W|\adm\le\delta}{\sum_i}^*\frac1{\Lambda_{i,n}},
\end{align*}
where $\sum^{*}$ denotes that the sum is restricted only to nearly
grazing components.  The following lemmata will allow us to prove
Proposition~\ref{p_expansionEstimate}.
\begin{lem}[Control for nearly grazing components]\label{l_nearlyGrazing}
  For any $N > 0$ and $\eps > 0$, we can choose $k_0$ large enough in
  the definition of homogeneity strips so that
  $\cL_{n}^{*} < \eps\text{ for any $0 < n\le N$}$.
\end{lem}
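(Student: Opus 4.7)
\emph{Plan.} The strategy rests on the expansion estimate~\eqref{e_expansionAdm-II}: if $\cm x$ lands in $\homo_k$ for $k \geq k_0$ (so that $w' \leq k^{-2}$), the $\alpha$-expansion at that step is at least $\const k^2$. Combined with the per-step $\alpha$-expansion lower bound of $1$ from~\eqref{e_expansionAdm}, this immediately yields $\Lambda_{i,n} \geq \const k^2$ for any nearly grazing H-component $W_{i,n}$ of $\cm^n W$ whose backward orbit at some step $j \in \{1,\dots,n\}$ visits $\homo_k$ with $k \geq k_0$.

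I would then overcount by the pair $(j,k)$. Setting $V_j := \cm^{-n+j} W_{i,n}$ and $N_{j,k}(W) := \#\{W_{i,n} : V_j \subset \homo_k\}$, the lower bound above gives
\[
\cL_n^*(W) \leq \sum_{j=1}^{n}\sum_{k \geq k_0}\frac{N_{j,k}(W)}{\const k^2}.
\]
Each $W_{i,n}$ contributing to $N_{j,k}$ is an H-component of $\cm^{n-j}U$ for some H-component $U$ of $\cm^j W$ lying in $\homo_k$. The slope bound $|\delta w/\delta r| \geq \cK$ for unstable curves combined with the $w$-width $\sim k^{-3}$ of $\homo_k$ yields $|U|\adm \leq \const k^{-3}$; for $k_0$ large this sits below the $\delta_*$-threshold of Lemma~\ref{l_globalExpansion}(b), and iterating Lemma~\ref{l_largeCell}\ref{i_almostFinite} then bounds the number of H-components of $\cm^{n-j}U$ by some $\tilde C_{n-j}$ depending only on $n-j \leq N$.

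To bound the count $\tilde M_{j,k}$ of H-components of $\cm^j W$ in $\homo_k$, I would invoke monotonicity of $w$ along primary components of $\cm^j W$: each, being a decreasing unstable curve, meets the horizontal strip $\homo_k$ in at most one subarc. Swapping summation orders, $\sum_{k \geq k_0}\tilde M_{j,k}/k^2$ is at most $\sum_C \sum_{k \geq k_0,\ C \cap \homo_k \neq \emptyset} 1/k^2$, with $C$ ranging over primary components. Primary components meeting $\{w > k_0^{-2}\}$ are at most $3$ in number for $|W|\adm$ small (Lemma~\ref{l_largeCell}\ref{i_almostFinite} with $\bar\indCell$ chosen so that $\cell^-_{\bar\indCell} \subset \{w \leq k_0^{-2}\}$), each contributing at most $2/k_0$. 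The remaining primary components lie in cells $\cell^-_\indCell$ with $\indCell$ large, and by Remark~\ref{r_singularityStructureBackwards} with Lemma~\ref{l_singularityStructure}\ref{i_boundedCells} each such component sits entirely in a single $\homo_{k(\indCell)}$ with $k(\indCell)^2 \sim \indCell^{1/2}$; using that cells are sequentially adjacent and have diameter $\sim \indCell^{-1/2}$, the visited $\indCell$'s form an interval $[I_-, I_+]$ with $\sqrt{I_+} - \sqrt{I_-} = O(|W|\adm)$, so the telescoping sum $\sum_{\indCell \in [I_-, I_+]}\indCell^{-1/2}$ is itself $O(|W|\adm)$.

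Combining the bounds gives $\cL_n^*(W) \leq C_N/k_0 + O(|W|\adm)$ uniformly in $n \leq N$. Passing to $\liminf$ as $|W|\adm \to 0$ yields $\cL_n^* \leq C_N/k_0$, so choosing $k_0 > C_N/\eps$ concludes. The main obstacle is the proliferation of primary components of $\cm^j W$ near the fixed point $\xc$, where the singularity curves $\singBackward{\indCell}$ accumulate; this is absorbed by the telescoping interval estimate above, which confines these accumulating components to specific homogeneity strips whose inverse-square weights sum to $O(|W|\adm)$.
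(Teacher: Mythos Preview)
Your direct (non-inductive) approach has a genuine gap at two places, both stemming from the same underlying issue: near the fixed point $\xc$, short unstable curves can be shattered by $\cm$ into \emph{infinitely} many connected components (one per cell $\cell^-_\indCell$, $\indCell$ large), and this proliferation cannot be bounded by Lemma~\ref{l_largeCell}\ref{i_almostFinite} alone.

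First, your bound ``the number of H-components of $\cm^{n-j}U$ is at most some $\tilde C_{n-j}$ depending only on $n-j$'' is false. Lemma~\ref{l_largeCell}\ref{i_almostFinite} only says that at most three components of $\cm U$ lie \emph{outside} the high cells $\bigcup_{\indCell>\bar\indCell}\cell^-_\indCell$; it gives no bound on the (possibly infinite) number of components that \emph{do} lie in high cells. If $U$ happens to pass near $\xc$, the number of H-components of $\cm U$---and hence of $\cm^{n-j}U$---is not controlled by anything in your scheme. Second, and for the same reason, your claim that $\cm^j W$ has at most three primary components meeting $\homo_0$ is only valid for $j=1$; for $j>1$ one must track what happens to the high-cell pieces produced at earlier steps, and your telescoping interval estimate (which is a $j=1$ statement about how $W$ itself crosses the cells $\cell^+_\indCell$) does not address this.

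The paper avoids both problems by arguing \emph{inductively} on $n$. For $n=1$ it uses the sharper expansion bound $\Lambda_{[\indCell,k],1}\gtrsim \indCell\, k^2$ (the long flight time $\indCell$ contributes, not just the homogeneity index $k$), which makes the double sum $\sum_{\indCell\ge n_*}\sum_{k\gtrsim \indCell^{1/4}}(\indCell k^2)^{-1}$ convergent without any need to count how many cells $W$ actually crosses. For the inductive step $n\to n+1$ it writes
\[
\cL_{n+1}^*(W)\le \sum_{i:\,W_{i,1}\text{ reg.}}\frac{\cL_n^*(W_{i,1})}{\Lambda_{i,1}}
+\sum_i{}^*\frac{\cL_n(W_{i,1})}{\Lambda_{i,1}},
\]
bounds the first sum by $(3/\LambdaMin)\cdot\cL_n^*$ using the inductive hypothesis, and---crucially---bounds the second sum using the \emph{rough} estimate $\cL_n\le 2\cdot 3^n/\LambdaMin^n$, which itself relies on the inductive hypothesis $\cL_n^*<\eps$ combined with the regular-complexity bound $\kreg_n\le 3^n$. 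This circularity is exactly what the induction resolves; your direct approach tries to bound a quantity (total H-complexity of iterates of short curves) that is simply not finite without already knowing something like $\cL_n^*<\eps$.
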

\begin{lem}[Bound on regular complexity]\label{l_regularComplexityBound}
  If the Fermi--Ulam model is regular at infinity, there exists $\brn$
  such that  if $k_0$ in the definition of homogeneity
  strips is large enough and $\delta$ is sufficiently small then
  \begin{equation}  \label{EqGlobCompl}
    \hzreg_{\brn}(\delta)
    \leq \frac{1}{2}.
  \end{equation}
\end{lem}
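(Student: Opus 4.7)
The plan is to split the analysis of regular H-components of $\cmp^{\bar n} W$ according to the energy regime visited by their orbit, exploiting the fact that at high energy Theorem~\ref{t_normalForm} reduces the combinatorics to that of $\nf_\Delta$ and Corollary~\ref{CrExpAtInf} yields expansion of order $\Lambda_\Delta^{\bar n}$, while at low energy Proposition~\ref{p_propertiesAdm}(c) gives uniform expansion $\Lambda^{\bar n}$ and Lemma~\ref{l_largeCell}(c) bounds the per-step branching. First I would choose $k_0$ large enough in the definition of homogeneity strips so that $\cell^\pm_\indCell \cap \homo_0 = \emptyset$ for $\indCell > \redCell$, in particular ensuring that regular H-components stay uniformly away from the fixed point $\xc$ and from the accumulation of singularities at $\infty$; then fix a threshold $w_* > 0$ large enough that both Theorem~\ref{t_normalForm} and Corollary~\ref{CrExpAtInf} apply uniformly on $\{w \ge w_*\}$ for the chosen value of $\bar n$.

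By Lemma~\ref{l_atMostOne1}, each connected component of $\cmp^{\bar n} W$ contains at most one regular H-component, so $\hzreg_{\bar n}(W)$ is dominated by a sum indexed by connected components. For components whose orbit stays in $\{w \ge w_*\}$ throughout the $\bar n$ iterations, Theorem~\ref{t_normalForm} in adiabatic coordinates identifies the itinerary $(n_0,\ldots,n_{\bar n-1})$ through the fundamental domains $\Dom_{n_j}$ with an admissible itinerary of the piecewise affine map $\nf_\Delta$; provided $\delta$ is small enough that $W$ lies essentially in a single $\Dom_n$, the number of such components is at most $C\,\LC_{\bar n}(\Delta)$. By Corollary~\ref{CrExpAtInf} the expansion of each is $\ge \hat C\,\Lambda_\Delta^{\bar n}$, so the contribution to $\hzreg_{\bar n}(\delta)$ from this class is at most $(C/\hat C)\,\LC_{\bar n}(\Delta)/\Lambda_\Delta^{\bar n}$, which by regularity at infinity can be made smaller than, say, $1/4$ by choosing $\bar n$ large.

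For components whose orbit visits $\{w \le w_*\}$ at some time, I would decompose the $\bar n$ iterates into alternating maximal high-energy and low-energy $\cmp$-segments. The bound $\Np(x) \le Cw_* + N_\#$ of Lemma~\ref{LmFRet} limits the $\cm$-duration of each low-energy $\cmp$-step; Lemma~\ref{l_largeCell}(c) gives a uniform bound $K$ on the number of components produced per low-energy $\cmp$-step; and Proposition~\ref{p_propertiesAdm}(c) ensures expansion $\ge \Lambda$ per step. High-energy segments contribute as in the previous paragraph, with complexity bounded by $\LC_m(\Delta)$ and expansion by $\hat C\,\Lambda_\Delta^m$ over $m$ steps. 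Summing over all admissible energy profiles and segment lengths, the dominant term is the all-high-energy contribution $\LC_{\bar n}(\Delta)/\Lambda_\Delta^{\bar n}$, while each excursion to low energy costs a bounded factor and an extra power of $K/\Lambda$; by regularity at infinity this total can be made smaller than $1/2$ for $\bar n$ sufficiently large and $\delta$ correspondingly small.

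The main obstacle is the bookkeeping at transitions between high-energy and low-energy segments: the perturbative estimates of Theorem~\ref{t_normalForm} and Corollary~\ref{CrExpAtInf} are valid only uniformly on $\{w \ge w_*\}$, and must be matched with the non-perturbative low-energy bounds. Since the per-step expansion $\Lambda$ at low energy may be much smaller than the per-step branching $K$, it is essential that the exponential dominance of $\Lambda_\Delta$ over $K$ in the high-energy regime (a consequence of regularity at infinity) absorb the cost of the low-energy excursions; the uniform bound on $\Np$ from Lemma~\ref{LmFRet} and the uniform branching bound of Lemma~\ref{l_largeCell}(c) are the key ingredients making the transition terms summable.
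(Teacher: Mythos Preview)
Your high-energy analysis (components whose $\cmp^{\bar n}$-orbit stays in $\{w\ge w_*\}$) is essentially correct and matches the paper's treatment of that regime. The gap is in the low-energy case. For a short curve $W\subset\{w\le w_*\}$ whose $\cmp$-orbit stays at bounded energy for all $\bar n$ steps (which certainly occurs, since $w$ changes by at most $\hat C$ per $\cmp$-step), your scheme produces at most $K^{\bar n}$ regular components, each with expansion at least $\Lambda^{\bar n}$, hence only $\hzreg_{\bar n}(W)\le (K/\Lambda)^{\bar n}$. But the constant $K$ from Lemma~\ref{l_largeCell}\ref{i_almostFinite-induced} is of order $3^{N_*}$ with $N_*\sim w_*$, while $\Lambda>1$ is fixed once and for all by Proposition~\ref{p_propertiesAdm}\ref{i_uniformHyperbolicity}; there is no reason to have $K<\Lambda$, and in general $(K/\Lambda)^{\bar n}\to\infty$. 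Your assertion that regularity at infinity yields ``exponential dominance of $\Lambda_\Delta$ over $K$'' is incorrect: regularity is the statement $\LC_k(\Delta)=o(\Lambda_\Delta^k)$, which concerns only the limiting map $\nf_\Delta$ and says nothing about the low-energy branching constant $K$; moreover $\Lambda_\Delta\to 1$ as $\Delta\to 0^-$. High-energy expansion cannot absorb low-energy branching, because purely low-energy components never experience it.

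The paper fills this gap with a genuinely different mechanism. It introduces a \emph{pointwise} regular complexity $\ckregp_n(x)$ (the number of components of $\cspi\setminus\singp{n}$ whose closure contains $x$ and which are $n$-regular at $x$) and proves the linear bound $\ckregp_n<4n+2$ via a tangent-sector analysis: at each $\cmp$-step an ``active'' good sector is cut by $\singp{1}$ into at most three regular subsectors, and the $\cmp$-image of at most one of them is again active. A compactness argument then shows that for short curves $W$ at bounded energy, the regular H-components of $\cmp^{\bar n}W$ correspond to distinct regular sectors at an accumulation point $\bar x$, so $\hkreg_{\bar n}(W)\le\ckregp_{\bar n}(\bar x)<4\bar n+2$. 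Since $4\bar n+2=o(\Lambda^{\bar n})$, this makes the low-energy contribution to $\hzreg_{\bar n}$ small for large $\bar n$. This sector-counting bound is the essential idea missing from your proposal.
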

The proofs of the two above lemmata are independent of each other.
Lemma~\ref{l_nearlyGrazing} is proved in Section~\ref{SS-NG}, whereas
the proof of Lemma~\ref{l_regularComplexityBound} occupies
Sections~\ref{ss_regularComplexity} and~\ref{SSPointwise}.

Observe that Lemma~\ref{l_nearlyGrazing} allows to prove that
\begin{align}\label{eq:cl1-bounded}
  \cL_{1} < \infty.
\end{align}
In fact, we have $\cL_{1} = \zreg_{1}+ \cL^{*}_{1}$; by
Lemma~\ref{l_nearlyGrazing}, the second term can be made as small as
needed, and by Lemma~\ref{l_largeCell}\ref{i_almostFinite}, provided
that $|W|\adm$ is small enough, the first term is at most
$3\cdot \underline \Lambda\inv$, where $\underline\Lambda$ is a lower
bound for~\eqref{e_expansionAdm}.

Combining
these two results yields\footnote{ The proof given below is similar to
  the one used in~\cite[Main Theorem]{jmogy}.} the proof of the
Expansion Estimate:
\begin{proof}[Proof of Proposition~\ref{p_expansionEstimate}]
  Let $W$ be an unstable curve so that $|W|\adm < \delta$ with
  $\delta > 0$ sufficiently small. Recall that $\Lambda$ is the
  minimal expansion of $\cmp$ in the $\alpha$-metric
  (see~\eqref{e_uniformHyperbolicity}).  Observe that by definition,
  for any $n > 0$
  \begin{align*}
    \hat \cL_{n}(W) = \hzreg_{n}(W)+\hat \cL^{*}_{n}(W).
  \end{align*}


  In view of Lemma \ref{l_regularComplexityBound} it is enough to show
  that, if $\delta$ is sufficiently small, we have
  $\hat \cL^{*}_{n} < 1/2$ for all $0 < n\le\bar n$ where $\bar n$ is
  from Lemma \ref{l_regularComplexityBound}. By
  Proposition~\ref{p_largeEnergiesFUM} there exists
  $\bar w = \bar w(\bar n)$ so that, if $W\subset\{w > \bar w\}$, then
  $\cmp^{n}W$ has no nearly grazing \Hcomps{} for any
  $0 < n\le\bar n$.  Thus, by~\eqref{e_weakBoundOnNp} and
  Proposition~\ref{p_largeEnergiesFUM}\ref{i_boundOnEndpoint}, we
  conclude that there exists a uniform
  $\bar n'\sim \bar n(\bar w+\bar n)$ so that $\Np_{i,n}\le\bar n'$
  for any nearly grazing \Hcomp{} $\hat W_{i, n}$.  Thus
  \begin{align*}
    {\sum_i}^*\frac1{\hat\Lambda_{i,n}} =%
    \sum_{k = 1}^{\bar n'}{\sum_{i:\Np_{i, n} = k}}^* %
    \frac1{\hat\Lambda_{i, n}} \le%
    \sum_{k = 1}^{\bar n'}{\sum_j}^*\frac1{\Lambda_{j,k}}.
  \end{align*}
  Hence, it is sufficient to apply Lemma~\ref{l_nearlyGrazing} with
  $N = \bar n'$ and $\eps = 1/(4\bar n')$ to obtain
  both~\eqref{e_expansionEstimate-trivial} (with $C = K+1/2$)
  and~\eqref{e_expansionEstimate}.

  The uniform bound on $\hat \cL_{n}$ for all $n$ follows since
  $\hat\cL_{m+n}\le\hat\cL_{m}\hat\cL_{n}$.  Namely, let $n = p\bar n+r$,
  where $0\le r < \bar n$. Then
  \begin{align*}
    \hat\cL_{n}\le\hat\cL_{\bar n}^{p}\cdot
    \hat\cL_{1}^{r}&\le C^{\bar n}.\qedhere
  \end{align*}
\end{proof}
\ignore{We conclude this subsection with an auxiliary result, that
  will be useful in Section~\ref{sec:growth-lemma}.
\begin{lem}\label{lem:bounded-complexity}
  Let $W$ be a sufficiently short u-curve and define the number
  $\Np(W) = \sup_{x\in W\setminus\singt{+}}\Np(x)$.  Then, there
  exists $C > 0$ so that $\cL_{n}(W) < C$ for all $n\le \Np(W)$.
\end{lem}

\todo[inline]{Where this Lemma is used?}

\begin{proof}
  First observe that by~\eqref{e_trivialEstimateOnRegularComplexity},
   $\cL_{1}^{\text{reg}} < 3$; then
  Lemma~\ref{l_nearlyGrazing} implies that $\cL_{1}(W) < 4$ provided
  that $W$ is sufficiently short. Moreover, if $W$ is sufficiently
  short and $W\cap\{w\le w^{*}\}$ for some $w^{*}$ large enough, we
  conclude that $\Np(W) < N^{*} = \Const w^{*}$.  In this case the
  lemma follows by choosing $C = 4^{N^{*}}$.  On the other hand, if
  $W\cap\{w > w^{*}\}$, choosing $W$ sufficiently short,
  Lemma~\ref{l_largeCell}\ref{i_almostFinite-large} guarantees that
  $W$ intersects at most two of the sets $\cEs_{n}$ .  This in turn
  implies that $\cm^{n}W$ has at most two connected components for
  $0 \le n\le \Np(W)$; if $w^{*}$ is sufficiently large, we can
  guarantee that such components are H-components; this implies that
  $\cL_{n} \le 2$ for all $0\le n\le \Np(W)$, and concludes the proof.
\end{proof}
}  \subsection{Control for nearly grazing components}
\label{SS-NG}
\begin{proof}[Proof of Lemma~\ref{l_nearlyGrazing}]
  We prove the lemma by induction on $N$.  Let us first assume $N = 1$
  and let $\tilde W'$ be a connected component (rather than an \Hcomp{})
  of $\cm W$. If we restrict to $H$-components contained in $\tilde W'$,
  we obtain
  \begin{align*}
    {\sum_i}^*\frac{1}{\Lambda_{i,1}}                %
    &\le\sum_{k\ge k_0}\const k_0^{-2} = \const k_0\inv. %
  \end{align*}
  Were the number of connected components $\tilde W'_i$ of $\cm W$
  \emph{uniformly bounded}, our claim would thus be proved.  As we
  already observed, this is not the case.  Fix $n_*$
  sufficiently large. Lemma~\ref{l_largeCell}(a) ensures that, except
  for finitely many (i.e. $3$) connected components of $\cm W$, all
  the others will intersect cells $\cell^-_\indCell$ with
  $\indCell\ge n_*$.  Moreover, by Lemma~\ref{l_largeCell}(b),
  $\cell^-_\indCell$ will intersect only homogeneity strips $\homo_k$
  for $k > \const \indCell^{1/4}$.  Denote by $W_{[\indCell,k],1}$ the
  $H$-component of $\cm W$ such that
  $W_{[\indCell,k],1}\subset\homo_k\cap\cell^-_\indCell$.  Then
  using~\eqref{e_expansionAuxMetricB}, estimating the flight time by
  $\indCell$ and the relative velocity by $k^{-2}$ we conclude that
  the expansion of $W_{[\indCell,k],1}$ satisfies
  \begin{align*}
    \Lambda_{[\indCell,k],1} > \const \indCell k^2.
  \end{align*}
  We thus gather that, if $n_*$ is sufficiently large and $W$ is
  sufficiently short, then
  \begin{align*}
    {\sum_{i}}^*\frac1{\Lambda_{i,1}} %
    &\le \const k_0\inv + \sum_{\indCell\ge n_*}
      \sum_{k\ge\const \indCell^{1/4}}\frac1{\Lambda_{[\indCell,k],1}}\\%
    &\le \const k_0\inv + \sum_{\indCell\ge n_*} \const \indCell^{-5/4}\le\const (k_0\inv + n_*^{-1/4}).
  \end{align*}
  The last expression can then be made as small as needed by
  choosing $k_0$ and $n_*$ sufficiently large.  We thus obtained our base
  step: for any $\eps > 0$, if $k_0$ is sufficiently large we have
  \begin{align*}
    \cL_{1} < \eps.
  \end{align*}
  Using the above notation, we assume by inductive hypothesis that for
  any $\eps > 0$ we can choose $k_0$ large enough in the definition of
  homogeneity strips so that $\cL_{n}^{*} < \eps$ and we want to show
  that $\cL_{n+1}^{*} < \eps$.  In order to prove the inductive step,
  observe that for any u-curve $W$, we have the following inductive
  relation summing over the H-components $W_{i,1}$ of $\cm W$:
  \begin{align}\label{e_inductivecL}
    \cL_{n+1}^*(W)              %
    &\le \sum_{i:\:W_{i,1}\text{ is reg.}}\frac1{\Lambda_{i,1}}\cL^*_{n}(W_{i,1}) +
      {\sum_{i}}^*\frac1{\Lambda_{i,1}}\cL_{n}(W_{i,1}).
  \end{align}

  By Proposition~\ref{p_propertiesAdm}\ref{i_expansionAdm},
  there exists $0 < \LambdaMin < 1$ so that
  $\Lambda_{i,n} > \LambdaMin^n$ for any $n > 0$.  Thus, for any
  $\delta$ sufficiently
  small,~\eqref{e_trivialEstimateOnRegularComplexity} and our inductive
  assumption imply the following rough bound on $\cL_{n}(\delta)$:
  \begin{align} \label{RoughL} \cL_n(\delta) \le
    \frac{3^n}{\LambdaMin^n}+\cL_n^*(\delta)\le
    2\frac{3^n}{\LambdaMin^n}.
  \end{align}
  Using~\eqref{e_globalExpansion} we get that if $|W|\adm < \delta$,
  then $|W_{i,1}|\adm < C_*\delta^{1/4}$. Hence by~\eqref{e_inductivecL}
  and using once again~\eqref{e_trivialEstimateOnRegularComplexity}, if
  $|W|\adm < \delta$:
  \begin{align*}
    \cL_{n+1}^*(\delta)              %
    &\le \sum_{i:\:W_{i,1}\text{ is reg.}}\frac1{\Lambda_{i,1}}\cL^*_{n}(C_*\delta^{1/4}) +
      {\sum_{i}}^*\frac1{\Lambda_{i,1}}\cL_{n}(C_*\delta^{1/4})\\
    &\le \frac{3}{\LambdaMin}\cL_{n}^*(C_*\delta^{1/4})+\cL^*_1(W)\cL_{n}(C_*\delta^{1/4}).
  \end{align*}
  Using the inductive hypothesis and~\eqref{RoughL}, taking
  $\liminf_{\delta\to0}$ we gather that $\cL_{n+1}^* < \Const \eps$,
  which concludes the proof of the inductive step.
\end{proof}
\subsection{Control on regular complexity}\label{ss_regularComplexity}
In this section we prove that we can bound the induced regular
complexity $\hkreg_n$, needed to prove Lemma \ref{l_regularComplexityBound},
by means of two other quantities.  One is the virtual complexity introduced
in Subsection~\ref{SS-VirtualComplexity} and the other is 
the pointwise complexity which we now proceed to define.

Let $x\in\csp$ and let $Q_n$ be a connected component of
$\csp\setminus\sing n$ so that $\cl Q_n\ni x$.  We say that $Q_n$ is
\emph{$n$-regular at $x$} if
\begin{align*}
  \lim_{Q_n\ni x'\to x}\cm^{l}x'\in \cl\homo_0\text{ for all $0 < l\le n$};
\end{align*}
otherwise $Q_n$ is said to be \emph{nearly grazing at $x$}.

\begin{mydef}\label{d_regular pointwise complexity}
  Given a point $x\in\csp$ and $n > 0$, we define the \emph{$n$-regular
    complexity at $x$}, denoted with $\ckreg_n(x)$, to be the number of
  components of $\csp\setminus\sing n$ whose closure contain $x$ and
  that are $n$-regular at $x$.  We then define:
  \begin{align*}
    \ckreg_n = \sup_{x\in\csp}\ckreg_n(x).
  \end{align*}%
\end{mydef}
Recall that $\singp{n}$ denotes the singularity set of $\cmp^{n}$ and
let $\hat Q_n$ be a connected component of $\cspi\setminus\singp{n}$.
By the discussion prior to
Lemma~\ref{LmFRet} we conclude that  there exists $\Np_n(\cc_n)$ so that
for any $x\in\hat Q_n$ we have $\cmp^n(x) = \cm^{\Np_n(\cc_n)}(x)$.
Suppose now that $x\in\cl\hat Q_n$; we say that $\hat Q_n$ is
$n$-regular at $x$ if
\begin{align*}
  \lim_{\hat Q_n\ni x'\to x}\cm^{l}x'\in \cl\homo_0\text{ for all $0 < l\le \Np_n(\cc_n)$}.
\end{align*}
Define $\ckregp_n(x)$ to be the number of connected components of
$\cspi\setminus\singp{n}$ whose closure contains $x$ and which are
$n$-regular at $x$.  Set
\begin{align}\label{e_defInducedRegularComplexity}
  \ckregp_n = \sup_{x\in\cspi}\ckregp_n(x).
\end{align}%

If the phase space $\csp$ were compact (as it is in the case of
dispersing billiards) then $\kregp_n$ (see
Definition~\ref{d_regularComplexity}) and $\ckregp_n$ would coincide
(see case (a) in the proof of Lemma \ref{LmCInf-EE} below).  Since 
our the phase space is not compact, we need a more careful
analysis, which we provide below.

\begin{lem}  \label{LmCInf-EE}
  Suppose that for some $\bar n$ we have
  \begin{equation}\label{e_boundAssumption}
    \ckregp_{\bar n}< \frac{\Lambda^{\bar n}}{2} \text{ and }
    \LC_{\bar n}(\Delta)\leq \frac{\Lambda_\Delta^{\bar n}}{4\hat C}
 \end{equation}
 where $\Lambda$ is the minimal expansion in $\alpha$-metric,
 $\Lambda_\Delta$ is the expansion of the limiting map, defined
 by~\eqref{DefLambdaDelta}, and $ \hat C$ is from
 Corollary~\ref{CrExpAtInf}, then~\eqref{EqGlobCompl} holds.
\end{lem}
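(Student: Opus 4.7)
The plan is to show that, once $k_0$ is fixed large in the definition of the homogeneity strips and $\delta$ is then taken small enough, any mature unstable curve $W\subset\cspi$ with $|W|\adm\le\delta$ satisfies $\hzreg_{\bar n}(W)\le 1/2$. The argument naturally splits according to the height of $W$: either $W$ lies in the compact region $\{w\le w_{*}\}$ for some large threshold $w_{*}$ to be chosen, or it lies in the high-energy region $\{w>w_{*}\}$. Since unstable curves are decreasing and the oscillation of $w$ on $W$ is controlled by $|W|\eum\le|W|\adm$, for $\delta$ small enough the two cases are exhaustive and mutually exclusive.

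In the compact case I would exploit the fact that as $\delta\to 0$ the curve $W$ shrinks to a single point $x\in\cspi\cap\{w\le w_{*}\}$. Because $\singp{\bar n}$ is locally a finite union of smooth curves on any compact set, in the limit the regular H-components of $\cmp^{\bar n}W$ are in one-to-one correspondence with the $\bar n$-regular components of $\cspi\setminus\singp{\bar n}$ whose closure contains $x$; by definition their number is bounded above by $\ckregp_{\bar n}(x)\le\ckregp_{\bar n}$. Proposition~\ref{p_propertiesAdm}\ref{i_uniformHyperbolicity} gives $\hat\Lambda_{i,\bar n}\ge\Lambda^{\bar n}$, so this regime contributes at most $\ckregp_{\bar n}/\Lambda^{\bar n}<1/2$ to $\hzreg_{\bar n}(W)$ by the first hypothesis of~\eqref{e_boundAssumption}.

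In the high-energy case I would pass to the adiabatic coordinates $(\tau,I)$ furnished by Theorem~\ref{t_normalForm}, in which $\cmp$ is a $\mathcal{O}_5(I^{-1})$-perturbation of a translation-conjugated iterate of the model map $\nf_\Delta$. Estimate~\eqref{e_boundAlphaTau} shows that as $\delta\to 0$ the image of $W$ in $(\tau,I)$-coordinates also shrinks to a single point $\hat x$. Since homogeneity strips only live near $w=0$, for $w_{*}$ large every connected component of $\cmp^{\bar n}W$ is automatically a regular H-component, and for $\delta$ small enough each such component corresponds to a distinct $\bar n$-itinerary of $\hat x$ under $\nf_\Delta$; by translation invariance we may assume $\hat x\in\cl(\hDom_0)$, so the number of itineraries is at most $\LC_{\bar n}(\Delta)$. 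Corollary~\ref{CrExpAtInf} bounds the $\admName$-expansion on each component from below by $\hat C\Lambda_\Delta^{\bar n}$, yielding a contribution of at most $\LC_{\bar n}(\Delta)/(\hat C\Lambda_\Delta^{\bar n})\le 1/4$ by the second hypothesis. Taking $\liminf_{\delta\to 0}$ in both cases delivers~\eqref{EqGlobCompl}.

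The main obstacle is the high-energy case: one must verify carefully that, despite the $\mathcal{O}_5(I^{-1})$-perturbation in Theorem~\ref{t_normalForm}, the singularity curves of $\cmp^{\bar n}$ in adiabatic coordinates lie in a small neighborhood of those of $\nf_\Delta^{\bar n}$, so that the $\bar n$-itineraries of $\cmp^{\bar n}$ on a short $W$ are faithfully counted by $\LC_{\bar n}(\Delta,\hat x)$; this demands a uniform choice of $w_{*}$ (depending on $\bar n$) along with the small-$\delta$ regime. The compact case is more routine, but still requires arguing that the supremum defining $\ckregp_{\bar n}$ is realized in the limit $W\to\{x\}$, which follows from the fact that the singularity curves in $\singp{\bar n}$ are locally finite in compact sets and smooth enough to be uniformly transverse to short unstable curves.
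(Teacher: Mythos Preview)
Your approach is the same as the paper's: split into a bounded-energy regime (controlled by the pointwise induced regular complexity $\ckregp_{\bar n}$ together with the expansion bound $\Lambda^{\bar n}$) and a high-energy regime (controlled by the virtual complexity $\LC_{\bar n}(\Delta)$ together with Corollary~\ref{CrExpAtInf}). The paper packages both cases as a proof by contradiction via a sequence $\Wseq{m}$ with $|\Wseq{m}|\adm\to 0$, which dispenses with having to produce a single $\delta$ uniformly over all base points; your direct formulation works too, but the sequential version makes the compactness and limiting arguments cleaner.

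There is one genuine gap in your high-energy count. You assert that after passing to adiabatic coordinates and translating, the number of regular H-components of $\cmp^{\bar n}W$ is bounded by $\LC_{\bar n}(\Delta,\hat x)\le\LC_{\bar n}(\Delta)$. This is not quite right: even when $|W|\adm$ is very small, $W$ may straddle \emph{two} consecutive fundamental domains $\Dom_{N}$ and $\Dom_{N+1}$ (this is exactly what Lemma~\ref{l_largeCell}\ref{i_almostFinite-large} allows). Only the portion $W\cap\Dom_{N}$ translates to a neighborhood of a single $\hat x\in\cl(\hDom_0)$; the other piece sits in the adjacent domain. The paper handles this by noting that one of the two pieces must already carry at least $\LC_{\bar n}(\Delta)+1$ distinct itineraries and deriving the contradiction from that piece alone, yielding the bound $\hkreg_{\bar n}(W)\le 2\LC_{\bar n}(\Delta)$ rather than $\LC_{\bar n}(\Delta)$. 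This factor of $2$ is precisely why the hypothesis reads $\LC_{\bar n}(\Delta)\le\Lambda_\Delta^{\bar n}/(4\hat C)$ with a $4$: one gets $\hzreg_{\bar n}(W)\le 2\LC_{\bar n}(\Delta)/(\hat C\Lambda_\Delta^{\bar n})\le 1/2$, not $1/4$. Your estimate happens to land at $1/4<1/2$ only because you dropped this factor; once you reinstate it the margin disappears and the argument is tight.

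In the compact case your sketch is correct but elides the reason the limit works: regularity forces every intermediate image into $\cl\homo_0$, hence into cells $\cell^+_{\indCell}$ with $\indCell\le\redCell$, and bounded $w$ forces $\Np_{\bar n}$ to be bounded. Thus only \emph{finitely many} connected components $\hat Q$ of $\cspi\setminus\singp{\bar n}$ can host a regular H-component, and one can extract a subsequence so that the curves always sit in the same finite list of cells, all of whose closures then contain the limit point $\bar x$. Without this finiteness, your appeal to ``local finiteness of $\singp{\bar n}$ on compact sets'' is not available as stated (it fails at $\xc$, for instance).
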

\begin{proof}
  Assume by contradiction that~\eqref{EqGlobCompl} were false. 
  Then there
  would exist a sequence of unstable curves $(\Wseq{m})_{m}$ so that
  $|\Wseq{m}|\adm\to 0$ as $m\to\infty$ and
  $\hzreg_{n}(\Wseq{m}) > \frac{1}{2}$ for any $m > 0$.  Observe that
    \begin{align}
  \label{LGeneralBound}
    \hzreg_{n}(W)\le \frac{\hkreg_{n}(W)}{\min_{i} \Lambda_{i, n}}.
  \end{align}
  Pick arbitrary points $x^{(m)}\in \Wseq{m}$.  After possibly passing
  to a subsequence we can assume that one of the two possibilities
  below hold.
  \begin{enumerate}
  \item the sequence $x^{(m)}$ is bounded;
  \item the sequence $x^{(m)}$ tends to infinity.
  \end{enumerate}

  We analyze these two cases separately.

  \textbf{Case (a).}
  In this case we estimate the denominator of \eqref{LGeneralBound}
  by $\Lambda^n$ obtaining
  \begin{align}\label{eq:contradiction-small-w}
    \hkreg_{n}(W) > \frac{\Lambda^n}{2}.
  \end{align}

  Since the sequence $x^{(m)}$ is bounded,
  combining~\eqref{e_trivialEstimateOnRegularComplexity}
  with~\eqref{e_weakBoundOnNp} we gather that
  $(\hkreg_{\bar n}(\Wseq{m}))_m$ is also a bounded sequence.  We can
  therefore assume (possibly passing to a subsequence) that
  $\hkreg_{\bar n}(\Wseq{m}) = \Kn $ for all $m$.

  As noted earlier, the set $\cspi\setminus\singp{\bar n}$ is the
  union of a countable number of connected components.
  By Lemmata~\ref{l_singularityStructure} and~\ref{LmFRet}, to each such
  component\footnote{We drop the subscript $\bar n$ as this is fixed
    once and for all and will not cause any confusion} $\cc$ we can
  uniquely associate a $\Np(\cc)$-tuple
  \begin{align*}
    \indTuple(\cc) = (\indCell_0,\indCell_1,\cdots,\indCell_{\Np(\cc)-1})
    \textrm{ where } \indCell_i\in\{\recoPrivate,0,1,\cdots\}
  \end{align*}
  so that
  \begin{align*}
    \cc = \cspi \cap  \bigcap_{l = 0}^{\Np(\cc)-1}\cm^{-l}\cell^+_{\indCell_l}.
  \end{align*}
  For $0\le i < \hkreg_{\bar n}$, denote with $\Wseq{m}_i$ the
  preimage under $\cmp^{\brn}$ of the $i$-th regular \Hcomp{} of
  $\cmp^{\bar n}\Wseq m$.  Let $\ccseq{m}_i$ be so that
  $\Wseq{m}_i\subset\ccseq{m}_i$.  By
  Lemma~\ref{l_atMostOne1} 
  $\ccseq{m}_i\ne\ccseq{m}_j$ if $i\ne j$.

    Since $\cmp^{\bar n}\Wseq{m}_i$ is regular, we must
    have $\indCell_l(\cc) \in\{\recoPrivate,0,1,\cdots,\redCell\}$ for
    all $0\le l < \Np(\cc)$ and some $\redCell > 0$.  Since the
    sequence $(\Wseq{m})_m$ is bounded, we conclude
    by~\eqref{e_weakBoundOnNp} that $(\Np(\ccseq{m}_i))_{m}$ is also a
    bounded sequence.

  Since there are only finitely many $\cc$'s which satisfy these
  requirements, we can always assume (extracting a subsequence if
  necessary) that $\ccseq{m}_i = \ccseq{m'}_i$ for any $m,m'$; for ease
  of notation we will denote such connected components simply by
  $\cc_i$.

  Let us now choose arbitrarily points
  $\xseq{m}_i\in\Wseq{m}_i\subset\cc_i$.  Since $(\xseq{m}_i)_m$ is a
  bounded sequence, we can assume (extracting a subsequence if
  necessary) that $\xseq{m}_i\to\bar x_i$ for some
  $\bar x_i\in\cl\cc_i$.  On the other hand, since $|\Wseq{m}|\adm\to0$
  and $|\cdot|\adm$ is equivalent to the Euclidean norm if $w$ is
  bounded, it must be that $\bar x_i = \bar x_j$ for every
  $0\le i,j < \Kn $.  We call this common limit point
  $\bar x$.
  Since $\cmp^{\bar n}\Wseq{m}_i$ is regular, we conclude that each of
  the $\cc_i$'s is regular at $\bar x$.  We conclude that
  $\Kn \leq \ckregp_{\bar n}(\bar x)\leq \ckregp_{\bar n}$, which
  contradicts~\eqref{eq:contradiction-small-w} by the first estimate
  in~\eqref{e_boundAssumption}.

    \textbf{Case (b).}  In this case we estimate the denominator of
    \eqref{LGeneralBound} using 
    Corollary
    \ref{CrExpAtInf} obtaining
    \begin{align*}
      \hzreg_{n}(W) \leq \frac{\hkreg_{n}(W)}{\hat{C} \Lambda_\Delta^n}.
    \end{align*}

  Observe that if we show
  $\hkreg_{\bar n}(W^{(m)}) \leq 2\LC_{\bar n}(\Delta)$ for all but finitely many
  $m$'s, then~\eqref{EqGlobCompl} follows from the second estimate
  in~\eqref{e_boundAssumption}.  We proceed by contradiction and assume
  (possibly extracting a subsequence) that $|\Wseq{m}|\adm\to 0$,
  $\DS \min_{\Wseq{m}} w\to\infty$, but
  \begin{align*}
    \hkreg_{\bar n}(\Wseq{m})\geq 2\LC_{\bar n}(\Delta)+1 \text{ for
    all $m > 0$}.
  \end{align*}
  Recall the definition (see \eqref{e_fundamentalDomains}) of the
  fundamental domains $\Dom_n = \{x\in\cspi\st \Np(x) = n\}$.  Similarly
  to~\eqref{DefItin}, we define, for $k > 0$:
  \begin{align*}
    \Dom_{n_0,n_1,\cdots,n_{k-1}} &= \bigcap_{j = 0}^{k-1}\cmp^{-j}\Dom_{n_j}.
  \end{align*}
  A $k$-tuple $(n_0,n_1,\cdots,n_{k-1})$ is said to be
  \emph{$\cmp$-admissible} if
  $\Dom_{n_0,n_1,\cdots,n_{k-1}}\ne\emptyset$. If
  $x\in\Dom_{n_0,n_1,\cdots,n_{k-1}}$, we say that
  $(n_0,n_1,\cdots,n_{k-1})$ is the\footnote{ In
    Section~\ref{SS-VirtualComplexity} we gave similar definitions for
    domains given in terms of the normal form.  It must be noted that
    here we do not take the closure in the definition of the
    $\Dom_{n_0,n_1,\cdots,n_{k-1}}$'s, hence we can define \emph{the}
    itinerary (as opposed as \emph{an} itinerary) of a point $x$.  The
    reason for this mismatch is that the $\Dom_n$'s are defined
    dynamically (as opposed to the geometric definition of $\hDom_n$)
    , and thus their boundary carry some dynamical information which
    we want to preserve.} \emph{$k$-itinerary} of $x$.  Define a
  sequence $(N_m)_m$ so that
  $\Wpseq{m} := \Wseq{m}\cap D_{N_m}\ne\emptyset$ and
  $\hkreg_{\bar n}(\Wpseq{m})\ge \LC_{\bar n}(\Delta)+1$. Such a
  sequence exists since any sufficiently short unstable curve
  intersects at most two domains $D_N$.  Passing to the
  $(\tau, I)$-coordinates and taking a subsequence we may assume that
  $T_{-N_m} \Wpseq{m}$ converges to some point $\brx\in \cl(\hDom_0)$,
  where $T_n$ is the translation map defined
  in~\eqref{e_translationMapDefinition}.  The convergence in the
  $\admName$-metric implies convergence in the $(\tau,I)$-Euclidean
  metric by~\eqref{e_boundAlphaTau}.

  Since $\cmp^{\bar n}$ is continuous on the set of points with a given
  itinerary, it follows that there are points
  $\xseq{m}_1, \xseq{m}_2\dots \xseq{m}_{\LC_{\bar n}(\Delta)+1}\in
  \Wpseq{m}$ having different $k$-itineraries.  Possibly by extracting a
  subsequence, we may thus assume that for $1\le l\le\LC_{\bar n}(\Delta)+1$
  \begin{align*}
    \xseq{m}_{l}\in D_{N_m, N_m+n_{1,l}\dots N_m+n_{\bar n-1,l}},
  \end{align*}
  that is, that the itinerary depends on $N_m$ only via the shift by
  $N_m$.  But then, Theorem~\ref{t_normalForm} implies that
  $\brx\in \hDom_{0, n_{1,l}\dots n_{\bar n-1,l}}$ for every $l$,
  therefore $\LC_{\bar n}(\brx)\geq \LC_{\bar n}(\Delta)+1$, which
  contradicts the definition of $\LC_{\bar n}(\Delta)$.
\end{proof}

\subsection{Linear bound on regular complexity.}
\label{SSPointwise}%

In this section we  prove a linear bound for $\ckregp_n$
defined by ~\eqref{e_defInducedRegularComplexity}.

\begin{lem}\label{l_actualInducedComplexityBound}
  For any $n > 0$ we have
  \begin{align}\label{e_actualInducedComplexityBound}
    \ckregp_n < 4n + 2.
  \end{align}
\end{lem}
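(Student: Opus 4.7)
Plan: The proof goes by induction on $n$. For the base case $n=0$, the set $\cspi\setminus\singp{0}=\intr\cspi$ is connected, so at any point $x\in\cspi$ at most one component of $\cspi\setminus\singp{0}$ contains $x$ in its closure, giving $\ckregp_{0}(x)\le 1 < 2$.

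For the inductive step, I will establish the increment bound $\ckregp_{n}(x)\le \ckregp_{n-1}(x)+4$, which combined with the base case yields $\ckregp_{n}(x)\le 4n+1<4n+2$. The key tool is the recursion $\singp{n}=\singp{n-1}\cup\cmp^{-1}(\singp{n-1}\setminus\singp^{-})$; unwinding this recursion, the genuinely new singularity branches at $x$ introduced when going from $\singp{n-1}$ to $\singp{n}$ correspond, modulo branches already present in $\singp{n-1}$, to smooth branches of $\singp{1}=\singp^{+}$ passing through $\cmp^{n-1}x$. Now, for a regular component at $x$, the iterate $\cmp^{n-1}x$ is well-defined as a limit and lies in $\cl\homo_{0}$; in particular it is bounded away from the singular point $\xc$.

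The central structural claim is that at any point $y\in\cl\homo_{0}\cap\intr\cspi$, at most two smooth branches of $\singp^{+}$ pass through $y$. Indeed, by Lemma~\ref{LmSingF} the family $\{\singForward_{\indCell}\}_{\indCell\ge 0}$ is pairwise disjoint, so at most one branch from that family contains $y$; the recollision curve $\singReco^{+}$ contributes at most one additional branch. In the high-energy regime, Theorem~\ref{t_normalForm} provides an independent confirmation: the singularities of $\cmp$ are approximately vertical lines in adiabatic coordinates, again giving at most two branches through $y$. Since the addition of two smooth branches through $x$ increases the local sector count by at most four (each new transverse curve splits at most two existing sectors), the inductive increment follows.

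The main obstacle is the rigorous verification of the ``at most two smooth branches'' claim for $\singp^{+}$ at an arbitrary regular point $y$. While the disjointness in Lemma~\ref{LmSingF} directly controls the primary family $\{\singForward_{\indCell}\}$, the induced singularity set $\singp^{+}$ also contains curves corresponding to singularities encountered during the (possibly many) iterates of $\cm$ that make up one iterate of $\cmp$ (those points $y$ with $\Np(y)>1$). These induced branches must be carefully tracked by combining the geometric description of cells in Lemma~\ref{l_singularityStructure} with the disjointness and monotonicity of singularity families in Section~\ref{s_singularities}, and a separate case analysis handles the situation when $y$ lies on $\partial\cspi$, where the boundary of the inducing domain may contribute an additional branch not already accounted for.
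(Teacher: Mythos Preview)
Your induction scheme has a genuine gap at the step $\ckregp_{n}(x)\le \ckregp_{n-1}(x)+4$. The issue is that when $x\in\singp{n-1}$ (which is exactly the case of interest, since otherwise $\ckregp_{n-1}(x)=1$), there is no single point $\cmp^{n-1}x$: each regular $(n-1)$-component $\hat Q$ at $x$ determines its \emph{own} limit point $y_{\hat Q}=\lim_{\hat Q\ni x'\to x}\cmp^{n-1}x'$, and these points are in general distinct. The new branches of $\singp{n}\setminus\singp{n-1}$ through $x$ that lie in $\clo\hat Q$ are the $\cmp^{-(n-1)}$-preimages of branches of $\singp{1}$ through $y_{\hat Q}$, and the contributions from different $\hat Q$'s are different curves. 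So even granting your structural claim that at most two branches of $\singp^{+}$ pass through any single point $y$, you obtain at most two new branches \emph{per} regular $(n-1)$-component, hence up to $2\,\ckregp_{n-1}(x)$ new branches at $x$ in total. This gives only the exponential recursion $\ckregp_{n}(x)\le 3\,\ckregp_{n-1}(x)$, not the additive one you need.

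The paper's proof closes this gap with an additional invariant that your proposal does not have: it tracks, at each stage, not merely the count of regular sectors but how many of them are \emph{active} (i.e.\ their image sector under $\cmp^{n-1}_{*}$ contains the stable cone and hence can be cut by the stable singularity curves of $\singp{1}$). Lemma~\ref{l_inductiveStep} shows that each active good sector is cut by $\singp{1}$ into at most three regular subsectors, \emph{of which at most one has an active image}. Inactive sectors can never be cut again, since singularity curves are stable and do not enter them. This forces the number of regular sectors to grow by at most $2$ per step (per initial good half of $\tangu_x\cspi$), yielding the linear bound. Your argument, as written, has no mechanism preventing every existing regular sector from being cut again, which is why the increment blows up.
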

The induced regular complexity $\ckregp_n$ bounds the number of
connected components of $\cspi\setminus\singp{n}$ that are regular at
any point $x$.  Since such connected components are bounded by $C^{1}$
curves, it is possible to formulate an equivalent infinitesimal
definition, which we now  describe.

For $x\in\csp$, denote by $\tangu_x\csp$ the unit tangent sphere at
$x$.  We identify each element of $\upsilon\in\tangu_{x}\csp$ with the
equivalence class of $C^1$-curves in $\csp$ which emanate from $x$ with
a tangent vector that is a positive multiple of $\upsilon$.  Of course
$\tangu_x\csp$ embeds naturally in $\tang_x\csp$; this embedding defines
a topology on $\tangu_x\csp$.  Observe that if $x\in\intr\csp$, then
$\tangu_x\csp = \mathbb S^1$, but if $x\in\sing0$, then $\tangu_x\csp$
is a closed quarter-sphere if $x = (0,0)$ or $x = (1,0)$ and a closed
half-sphere otherwise.  All such sets will be considered with the
counterclockwise orientation.  Similarly, we define, for any $x\in\cspi$,
the set $\tangu_x\cspi$.

A $C^1$-curve in $\csp$ emanating from $x$ thus naturally induces an
element of $\tangu_x\csp$. In particular if $x\in\sing n$, then the
curves in $\sing n$ cut $\tangu_x\csp$ into a number of connected
components which we call \emph{tangent sectors}.  With a slight abuse of
notation we write $\tangu_x\csp\setminus\sing n$ to denote
$\tangu_{x}\csp\setminus\{\upsilon_1,\cdots,\upsilon_{p}\}$ where the
$\upsilon_i$'s are the unit vectors induced by the curves of $\sing{n}$
which meet at $x$.  Similar considerations apply to $\cspi$~and~$\singp n$.

More generally, given two elements
$\upsilon_-\ne\upsilon_+\in\tangu_x\csp$ let
$\cV=\cV(\upsilon_-, \upsilon_+)$ denote the set of directions lying
between $\upsilon_-$ and $\upsilon_+$ with respect to the
counterclockwise orientation. This set will be called the \emph{tangent
  sector centered at $x$ bounded by $\upsilon_-$ and $\upsilon_+$}.
Conventionally, we also introduce the notion of \emph{empty sector}
$\cV = \emptyset$ and \emph{full sector} $\cV = \tangu_{x}\csp$.  A
curve $\Gamma$ which emanates from $x$ with unit tangent vector
$\upsilon\in\cV$ is said to be \emph{compatible} with $\cV$.

Note that all sufficiently short curves compatible with
$\cV\subset\tangu_{x}\csp\setminus\sing n$ necessarily belong to the
same connected component $Q_n$.  Likewise, all sufficiently short curves
compatible with $\cV\subset\tangu_{x}\cspi\setminus\singp n$ necessarily
belong to the same connected component $\hat Q_n = \hat Q_n(\cV)$. We
denote $\Np_n(\cV) = \Np_n(\cV(\hat Q_n))$.

Let $\cV\subset\tangu_{x}\csp\setminus\sing n$ and $\Gamma$ be a curve
compatible with $\cV$.  By construction we have that
$\DS \lim_{\Gamma\ni x'\to x}\cm^l x'$ is well defined and independent of
$\Gamma$ for any $0\le l\le n$. Let us denote this limit point
$x^l_\cV$.  Likewise, if $\cV\subset\tangu_{x}\cspi\setminus\singp n$,
we can uniquely define $x^{l}_{\cV}$ for any $0\le l\le \Np_n(\cV)$.

Let $\cV\subset\tangu_{x}\csp\setminus\sing n$; we can define for any
$0\le l\le n$ the image sector
$\cV^l\subset\tangu_{x^{l}_{\cV}}\csp\setminus\sing{-l,n-l}$ as follows.
Let $\Gamma$ be a curve compatible with $\cV$.  By construction we have
that $\DS \lim_{\Gamma\ni x'\to x}d\cm^l(x')$ is a well defined linear map
and independent of $\Gamma$ for any $0\le l\le n$.  We denote its action
on $\tangu_x\csp$ by
$\cm_{*,\cV}^{l}:\tangu_x\csp\to\tangu_{x^{l}_{\cV}}\csp$.  Then, with a
small abuse of notation we denote with $\cm_*^{l}\cV$ the sector
$\cm_{*,\cV}^{l}\cV$.  A similar construction yields, for any
$\cV\subset\tangu_{x}\cspi\setminus\singp n$ and any $0\le l\le n$ the
definition of
$\cmp^l_*\cV\subset\tangu_{x^{\Np_l(\cV)}_{\cV}}\cspi\setminus\singp{-l,n-l}$.

A tangent sector $\cV\subset\tangu_{x}\csp\setminus\sing n$ is said to
be \emph{$\cm^n$-regular} if it is non-empty and
$x_\cV^{l}\in\cl\homo_0$ for any $0 < l\le n$.  Otherwise, we say that
the sector is \emph{nearly grazing}.  Likewise, a tangent sector
$\cV\subset\tangu_{x}\cspi\setminus\singp n$ is said to be
\emph{$\cmp^n$-regular} if it is non-empty and $x_\cV^{l}\in\cl\homo_0$
for any $0 < l\le \Np_n(\cV)$.

Of course the above definitions are compatible with the ones given
previously for $Q_n$ and $\hat Q_n$ in the sense that a sector
$\cV\in\tangu_x\csp\setminus\sing n$ is $\cm^{n}$-regular if and only if
the corresponding connected component $Q_n$ is $n$-regular at $x$, and a
sector $\cV\in\tangu_x\cspi\setminus\singp n$ is $\cmp^{n}$-regular if
and only if the corresponding connected component $\hat Q_n$ is
$n$-regular at $x$.
This immediately follows by our construction unless the
connected component joins $x$ with a cusp (\ie the corresponding sector
is empty).  But then we claim that the component must necessarily be
nearly grazing at $x$.  In fact, it is easy to see that if the sector
generated by a connected component $\hat Q_n$ is degenerate, then there
exists $0 < l\le \Np_n(\hat Q_n)$ so that $d\cm^l|_{\hat Q_n}$ is
singular as we approach $x$.  Since $d\cm$ is singular only at
$\{w = 0\}$,  $\hat Q_n$ cannot be a regular at $x$.

In
particular the regular complexity $\ckreg_n$ is the maximum number of
$\cm^{n}$-regular sectors in which $\sing n$ cuts $\tangu_x\csp$ for any
$x\in\csp$.  The corresponding statement holds true for $\ckregp_n$.

\begin{mydef}\label{d_goodSector}
  A tangent sector $\cV(\upsilon_-,\upsilon_+)\subset\tangu_x\csp$ (or
  $\cV(\upsilon_-,\upsilon_+)\subset\tangu_x\cspi$) is said to be
  \emph{good} if
 
  \begin{enumerate}[label = (\roman*)]
    \item\label{i_decreasingVectors}
    $\upsilon_{+},\upsilon_-\in\conen_x$ (recall
    definition~\eqref{e_definitionClosedCones}) and
    \item\label{i_sectorPi} the angle between $\upsilon_{-}$ and
    $\upsilon_+$ does not exceed $\pi$.
  \end{enumerate}
  
  A good tangent sector $\cV(\upsilon_-,\upsilon_+)$ is said to be
  \emph{active} if $\upsilon_-$ and $\upsilon_+$ belong to different
  quadrants, and \emph{inactive} if they belong to the same quadrant.
\end{mydef}
Observe that an active good sector contains either the first or the
third quadrant (in particular, the stable cone); inactive sectors cannot
contain any such quadrants.  In particular, since future singularities
are union of stable curves (Lemma~\ref{l_localSing}), if a good sector
$\cV\subset\tangu_{x}\csp$ (\resp $\cV\subset\tangu_{x}\cspi$) is
inactive, then for any $k > 0$ we have
$\cV\subset\tangu_{x}\csp\setminus\sing k$ (\resp
$\cV\subset\tangu_{x}\cspi\setminus\singp k$).

Good sectors satisfy the following invariance property.
\begin{lem}\label{l_invarianceGoodSectors}
  Let $\cV\subset\tangu_{x}\csp$ be a good sector, and 
  $\cV\setminus\sing 1 = \bigcup_{i = 1}^{s}\cV_{i}$.  Then each image
  sector $\cm_*\cV_{i}$ is good.  Similarly, if
  $\cV\subset\tangu_{x}\cspi$, and
  $\cV\setminus\singp1 = \bigcup_{i = 1}^{s}\cV_{i}$, we have that each
  image sector $\cmp_*\cV_i$ is a good sector.
\end{lem}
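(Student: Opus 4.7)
I would prove the lemma first for $\cm$, then deduce the statement for $\cmp$ by iteration. Fix a good sector $\cV=\cV(\upsilon_-,\upsilon_+)$ at $x$ and a connected component $\cV_i$ of $\cV\setminus\sing 1$; it must be shown that $\cm_*\cV_i$ satisfies the two defining properties of Definition~\ref{d_goodSector}.

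\textbf{Step 1 (bounding vectors).} The boundary vectors of $\cV_i$ come in two flavors. The original endpoints $\upsilon_\pm\in\conen_x$ map into $\coneu_{\cm x}\subset\conen_{\cm x}$ by \eqref{e_decreasingCone}. The singularity tangent vectors at $x\in\sing 1\setminus\sing 0$ have p-slope $\pslope=-1/\fft_0(x)$ by \eqref{e_slopeSingularityForward}, which is precisely the pole of the M\"obius action \eqref{e_evolutionB}: their one-sided $\cm_*$-limits are vertical vectors at $\cm x$, lying in $\partial\conen_{\cm x}\cap\partial\conep_{\cm x}$. (If $x\in\sing 0$, the singularity directions are horizontal or vertical coordinate axes of $\csp$, again on $\partial\conen$.) In all cases the bounding vectors of $\cm_*\cV_i$ lie in $\cl\conen_{\cm x}$.

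\textbf{Step 2 (angle bound).} The differential $d\cm$ in Jacobi coordinates equals $\pm L_\cpar U_\fft$ (see \eqref{e_Differential}) and has determinant $+1$, so the induced action on the projective tangent line $\mathbb{RP}^1$ (parametrized by p-slope via \eqref{e_evolutionB}) is an orientation-preserving M\"obius bijection, lifting to an orientation-preserving diffeomorphism of $\tangu$. Since $\cV_i$ has angular length at most $\pi$, its image under the $2{:}1$ covering $\tangu_x\csp\to\mathbb{RP}^1$ is an arc of $\mathbb{RP}^1$ of length at most $\pi$; the M\"obius bijection maps this to an arc of $\mathbb{RP}^1$ of length at most $\pi$, whose lift back to $\tangu_{\cm x}\csp$ (that is, $\cm_*\cV_i$ itself) has the same angular length, hence at most $\pi$. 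Together with Step 1, this makes $\cm_*\cV_i$ a good sector. As a sanity check, in the inactive case $\cV_i\subset\conen_x$ and \eqref{e_decreasingCone} gives $\cm_*\cV_i\subset\coneu_{\cm x}$, whose angular aperture is strictly less than $\pi$.

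\textbf{Reduction of $\cmp$ to $\cm$.} Since $\cmp=\cm^{\Np(x)}$ and $\singp 1$ contains every intermediate $\cm$-singularity, on any connected component $\cV_i$ of $\cV\setminus\singp 1$ the intermediate iterates $\cm_*^k\cV_i$ for $0\le k<\Np(x)$ never meet $\sing 1$; applying the $\cm$-case of the lemma successively (no further cutting occurs after the first application) yields that $\cmp_*\cV_i=\cm_*^{\Np(x)}\cV_i$ is good. The main obstacle is the angle bound of Step 2 in the active case: while the projection-and-lift argument on $\mathbb{RP}^1$ is conceptually clean, verifying that the orientation-preserving lift really does select the arc of length at most $\pi$ requires care near the singular direction $\pslope=-1/\fft_0$, where the M\"obius map has a pole and one-sided limits must be tracked consistently.
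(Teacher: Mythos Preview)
Your overall structure is right, but Step~1 contains a false intermediate claim and Step~2 is, as you yourself note, incomplete; the paper's proof handles both points with one-line observations.

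In Step~1 you assert that the image of a tangent vector to $\sing 1$ is always \emph{vertical}, because the p-slope $-1/\fft_0(x)$ of the singularity curve is the pole of the M\"obius action. This identification presumes that the flight-time parameter in the relevant one-sided differential $\cm_{*,\cV_i}$ equals $\fft_0(x)$. That is true in the corner case (item~(c) of the classification in \S\ref{ss_singularitiesLocalStructure}, where $\fft$ is continuous), but fails in the grazing case (item~(b)) on the side that \emph{skips} the grazing point: there the limiting flight time is strictly larger than $\fft_0(x)$, so $-1/\fft_0(x)$ is not the pole, and the image direction lies in $\coneu$ but is not vertical. The paper bypasses this case analysis entirely: a boundary vector of $\cV_i$ induced by $\sing 1$ maps, under the $\cV_i$-side limit of $\cm$, to a vector tangent to some curve in $\sing{-1}$, and Lemma~\ref{l_localSing} then gives membership in $\conen$ directly.

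For Step~2, the $\mathbb{RP}^1$ detour is unnecessary and the ``lift back'' step is where your argument actually has to do work (as you acknowledge). The paper's argument is immediate: a sector of angle at most $\pi$ is a convex cone in $\mathbb{R}^2$, and an invertible linear map preserves convexity. No lifting, no tracking of poles.

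Your reduction of the $\cmp$ statement to the $\cm$ statement by iteration is correct but also not needed: the paper simply remarks that the same argument applies verbatim, since boundary vectors induced by $\singp 1$ map to tangents of curves in $\singp{-1}\subset\sing{-\infty}$, which are again in $\conen$.
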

\begin{proof}
  First of all observe that the image by a linear map of a sector of
  angle at most $\pi$ is a sector of angle at most $\pi$.  We conclude
  that item~\ref{i_sectorPi} in Definition~\ref{d_goodSector} holds for
  each of the image sectors.

  Let $\upsilon$ be one of the boundary vectors of $\cV_{i}$.  There are
  two possibilities: either $\upsilon$ is one of the boundary vectors of
  $\cV$, or it is induced by $\sing1$.  In the first case,
  $\upsilon\in\conen_{x}$ and thus~\eqref{e_decreasingCone} implies that
  its image
  $\cm_{\cV_{i},*}\upsilon\in\coneu_{\cm_{\cV_{i}}x}\subset\conen_{\cm_{\cV_{i}}x}$.
  In the second case, we have by construction that
  $\cm_{\cV_{i}}\upsilon$ is tangent to some curve in $\sing{-1}$.
  Lemma~\ref{l_localSing} then implies that also in this case
  $\cm_{\cV_{i},*}\upsilon\in\conen_{\cm_{\cV_{i}}x}$, which concludes
  the proof of the first part.  The second part follows from identical
  considerations.
\end{proof}
\begin{rmk}
  The above lemma implies in particular that if
  $\cV\subset\tangu_x\csp\setminus\sing k$ is a good sector, then
  $\cV^{l}$ are also good sectors for any $0\le l\le n$.
\end{rmk}
The linear bound~\eqref{e_actualInducedComplexityBound} will be obtained
by means of the following lemma, whose proof we briefly postpone.
\begin{lem}\label{l_inductiveStep}\
  \begin{enumerate}
  \item \label{i_inductiveStep} %
    Let $x\in\csp\setminus\{\xc\}$. Any active good tangent sector
    $\cV\subset\tangu_x\csp$ is cut by $\sing1$ in at most two
    $\cm$-regular sectors.  The $\cm$-image of at most one of them is active.
  \item \label{i_inducedInductiveStep} %
    Let $x\in\cspi\setminus\{\xc\}$. Any active good tangent sector
    $\cV\subset\tangu_x\cspi$ is cut by $\singp1$ in at most three
    $\cmp$-regular sectors.  The $\cmp$-image of at most one of them is
    active.
  \end{enumerate}
\end{lem}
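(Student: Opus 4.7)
My proof plan is to analyze the local geometry of $\sing 1$ (respectively $\singp 1$) near $x$ and count sub-sectors directly, using the structural information already developed.

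For part (a), I first invoke Lemma~\ref{l_finitelyManySectors} to reduce to the case of only finitely many stable singularity curves of $\sing 1$ through $x\neq\xc$. Combining the structure theorem Lemma~\ref{LmSingF} (applied to $\singForward{}$ via the involution) with Lemma~\ref{l_singularityStructure}(c) and Remark~\ref{r_singularityStructureBackwards}, I claim that at most three connected components of $\csp\setminus\sing 1$ have $x$ in their closure; hence $\sing 1$ cuts $\cV$ into at most three sub-sectors. Since tangent vectors to $\sing 1$-curves lie in $\conep_x$ by Lemma~\ref{l_localSing}, while the active good sector $\cV$ contains exactly one of the quadrants $\{\delta r>0,\delta w>0\}$ or $\{\delta r<0,\delta w<0\}$, the cuts inside $\cV$ come only from tangent directions in that one quadrant.

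The central case analysis then shows at most two of the sub-sectors are $\cm$-regular. If only two components of $\csp\setminus\sing 1$ meet $x$ the bound is immediate. If three components meet at $x$ then the middle sub-sector corresponds either to the recollision region $\backReco$ (whose image under $\cm$ lies in $\reco$, hence close to $\{w=0\}$) or to a high-index cell $\cell^+_\indCell$, which by Lemma~\ref{l_singularityStructure}(e) is contained in a small neighborhood of $\xc$ and thus has $w$ close to zero. In either case this middle sub-sector is nearly grazing, so not $\cm$-regular. For the active-image claim in (a), both boundary vectors of $\cm_*\cV_i$ lie in $\conen$: boundary vectors of $\cV$ (in $\conen_x$) map into $\coneu\subset\conen$, while tangent vectors to $\sing 1$ at $x$ map to tangent directions of $\sing{-1}$-curves at $\cm x$, which are unstable by Lemma~\ref{l_localSing} and thus also in $\conen$. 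Activity requires the two images to straddle $\{\delta r=0\}$. Using~\eqref{e_slopeSingularityForward} for the slope of $\sing 1$ at $x$ and~\eqref{e_slopeSingularity1} for the slope of $\sing{-1}$ at $\cm x$, one verifies that the image of a stable singularity tangent lies on a determined side of the vertical axis, constraining at most one regular sub-sector to have active image.

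For part (b) the proof proceeds by iterating the analysis above. Since $\cmp=\cm^{\Np}$ is a composition of up to $\Np(x)$ iterates of $\cm$, the relevant singularity set decomposes as $\singp 1=\bigcup_{k=0}^{\Np(x)-1}\cm^{-k}(\sing 1)$ together with $\partial\cspi$, and at each intermediate step we may apply part (a). The boundary $\partial\cspi$ (which contains $\singBackward 0$ by Remark~\ref{r_boundary cspi}) contributes one additional potential cut, accounting for the bound of three (instead of two) in (b). The active-image statement in (b) follows from the same orientation argument applied to the final iterate of the composition. I anticipate the main obstacle to be the careful bookkeeping for the ``at most one active image'' claim, especially in part (b), where the iteration through multiple $\cm$-iterates requires simultaneously tracking the cone structure and the orientation of stable singularity tangents at each intermediate step. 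This ultimately rests on the sign analysis afforded by the explicit formulas~\eqref{e_slopeSingularity1} and~\eqref{e_slopeSingularityForward}, which I expect to be the technically most demanding step of the argument.
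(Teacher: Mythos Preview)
There are genuine gaps in both parts.

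\textbf{Part (b).} Your plan of applying part (a) at each of the $\Np(x)$ intermediate iterates would yield at most $\Np(x)+1$ regular sectors (each application of (a) turns the unique active sector into at most two, adding one), and $\Np(x)$ is unbounded. The crucial ingredient you are missing is~\eqref{e_domainSingularities}: $\Dom_n\cap\sing{n-1}=\emptyset$. Hence if $x\in\cl\Dom_n$, the map $\cm^{n-1}$ is smooth near $x$ and \emph{no cuts whatsoever} occur during the first $n-1$ iterates; all cutting happens at the final step (or the final two steps, if $x\in\cl\Dom_n\cap\cl\Dom_{n+1}$). This is what produces the bound~$3$ rather than $\Np(x)+1$. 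The extra ``$+1$'' does not come from $\partial\cspi$ as you suggest, but from the possibility that $x$ lies on the boundary between two consecutive domains $\Dom_n,\Dom_{n+1}$.

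\textbf{Part (a), the ``at most one active image'' claim.} Your sketch does not identify the mechanism. When the two regular sub-sectors $\cV_0,\cV_1$ share the common boundary direction $\upsilon_{\sing{}}$ tangent to $\singForward\indCell$, the limits of $\cm$ from the two sides land on $\{r=1\}$ and $\{r=0\}$ respectively (Lemma~\ref{l_singularityStructure}\ref{i_adjacentCells}). Therefore both images $\cm_{*,\cV_0}\upsilon_{\sing{}}$ and $\cm_{*,\cV_1}\upsilon_{\sing{}}$ are \emph{vertical}, and moreover both point \emph{the same way} (up or down, determined by which quadrant $\upsilon_{\sing{}}$ lies in). Together with orientation-preservation of each $\cm_{*,\cV_i}$ and the fact that the other boundary of each $\cV_i$ maps into $\coneu$, this forces at most one image sector to be active. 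Simply citing~\eqref{e_slopeSingularity1} to say the image ``lies on a determined side of the vertical axis'' is not enough: the two maps $\cm_{*,\cV_0}$ and $\cm_{*,\cV_1}$ are different, and you need both images of $\upsilon_{\sing{}}$ to be vertical in the same direction.

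\textbf{Part (a), the ``at most two regular'' claim.} Your ``high-index cell'' alternative is incorrect. When three components of $\csp\setminus\sing1$ meet at $x$, they are $\backReco$, $\cell^+_\indCell$, $\cell^+_{\indCell+1}$ with $\indCell$ arbitrary (not necessarily large). The one that is never $\cm$-regular is $\backReco$: since $x\in\singReco^+=\cm^{-1}([0,\ctc]\times\{0\})$, the limit of $\cm$ from the $\backReco$ side has $w=0$, hence lies outside $\cl\homo_0$.
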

We can now prove the main result of this subsection.
\begin{proof}[Proof of Lemma~\ref{l_actualInducedComplexityBound}]
  First observe that Lemma~\ref{l_continuityAtxc} implies that if $x$ is
  sufficiently close to $\xc$, then $\cm x$ is also close to $\xc$,
  which implies that $\Np(x) = 1$ and that $\cmp x\nin\homo_0$.  Hence,
  no sector $\cV\subset\tangu_x\cspi$ can be $\cmp$-regular.  We can
  thus assume $x\in\cspi\setminus\{\xc\}$.

  Cutting $\tangu_x\cspi$ along the vertical direction we obtain (up to)
  $2$ good sectors (recall Remark~\ref{r_boundary cspi}); of course both
  such sectors might be active.%

  Let $\cV$ denote one such active sector. We now show inductively that
  for any $k > 0$, the singularity set $\singp{k}$ cuts $\cV$ in at most
  $(2k+1)$ $\cmp^{k}$-regular sectors, and the $\cmp^{k}$-image of at
  most one of them is active.
  Lemma~\ref{l_inductiveStep}\ref{i_inducedInductiveStep} proves our
  claim for $k = 1$.  In order to proceed with our proof, we need to set
  up some notation: for any $k\ge 1$, the singularity set $\singp {k}$
  cuts $\cV$ in a number $s_{k}$ of sectors
  $(\cV^{(k)}_0,\cV^{(k)}_1,\cdots,\cV^{(k)}_{s_{k}-1})$; let $r_k$
  denote the number of such sectors that are $\cmp^{k}$-regular.
  Without loss of generality we can take them to be
  $(\cV^{(k)}_0,\cV^{(k)}_1,\cdots,\cV^{(k)}_{r_k-1})$.

  Assume now, by induction, that our claim holds for $k$; we gather that
  $r_k\le {2}k+1$ and that the image of at most one of the regular
  sectors is active.  If no sector is active, no further cutting is
  allowed, so we are done. Hence we assume that one sector is active and
  without loss of generality we let it be indexed as $\cV^{(k)}_0$.

  Consider now the $\cmp^{k+1}$-regular sectors
  $(\cV^{(k+1)}_0,\cV^{(k+1)}_1,\cdots,\cV^{(k+1)}_{r_{k+1}-1})$
  obtained by cutting $\cV$ by $\singp{k+1}$.  By definition of
  $\cmp^{k+1}$-regularity, for any $0 \le i < r_{k+1}$ there exists
  $0\le j < r_k$ so that $\cV^{(k+1)}_{i}\subset\cV^{(k)}_j$.  However,
  if $\singp{k+1}$ cuts $\cV^{(k)}_j$, then it must be that its
  $\cmp^{k}$-image is cut by $\singp1$, but this is only possible if
  said image is active, \ie if $j = 0$.  Applying
  Lemma~\ref{l_inductiveStep}\ref{i_inducedInductiveStep} to this
  sector, we thus conclude that it can be cut it at most three regular
  sectors and that the image of at most one of them is active.  This in
  turn proves that $r_{k+1} \le r_{k}+2$. This proves our claim for
  $k+1$.

  Since $\tangu_x\cspi$ consists of at most two active sectors we
  conclude that $x$ has at most $2(2n+1)$ regular sectors when cut by
  $\singp{n}$.  Since $x$ was  arbitrarily, this
  proves~\eqref{e_actualInducedComplexityBound}.
\end{proof}

\begin{proof}[Proof of Lemma~\ref{l_inductiveStep}]\ %
  We first show how item~\ref{i_inducedInductiveStep} follows from
  item~\ref{i_inductiveStep}.  Recall that $\cmp$ is the first return
  map of $\cm$ to the set $\cspi$, which is defined
  in~\eqref{e_definitionInducedSpace}. %
  Recall also (see~\eqref{e_domainSingularities}) that
  $\Dom_n\cap\sing{n-1} = \emptyset$ for any $n > 0$, and that
  $\Dom_n\cap\singp1 = \Dom_n\cap (\cm^{-(n-1)}\sing1)$.  
  Since by definition $\bigcup_{n\ge0}\cl\Dom_n = \cspi$ and
  $\cl\Dom_n\cap\cl\Dom_{n'} = \emptyset$ unless $|n-n'|\le 1$,  there
  are two possibilities:
  \begin{enumerate}[label = (\roman*)]
  \item there exists a \emph{unique} $n$ so that $x\in\cl\Dom_n$;
  \item $x\in\cl\Dom_n\cap\cl\Dom_{n+1}$ for some $n$.
  \end{enumerate}
  Assume first that possibility (i) holds. Since
  $\Dom_n\cap\sing{n-1} = \emptyset$, we conclude that $\singp1$ cuts
  $\cV$ in as many (regular) sectors as $\sing1$ cuts $\cm^{n-1}_*\cV$.
  This shows that, in this case, item~\ref{i_inductiveStep} implies
  item~\ref{i_inducedInductiveStep}. %

  Next, suppose that possibility (ii) holds. Also in this case
  $x\nin\sing{n-1}$, so we can define the sector
  $\cV^{*} = \cm^{n-1}_*\cV$.  By item~\ref{i_inductiveStep}, the
  singularity set $\sing1$ cuts $\cV^*$ in at most two $\cm$-regular
  sectors $(\cV^*_0,\cV^{*}_1)$. By Lemma~\ref{l_invarianceGoodSectors}
  the image of both of them is a good
  sector
  and of the image of
  at most one of them (say $\cV^{*}_0$) may be active.  Since
  $x\in\cl\Dom_{n+1}$, some of these sectors may belong to $\Dom_{n+1}$;
  for such sectors we need to consider the cutting by $\sing2$.  If
  $\cV^*_0$ is disjoint from $\Dom_{n+1}$ or its image is not active, we
  are done, since no further cutting can take place.  On the other hand,
  if $\cV^*_0$ belongs to $\Dom_{n+1}$ and its image is active, it might
  be cut by $\sing2$ into further sectors.
  Applying~\ref{i_inductiveStep} to $\cm\cV^*_0$ we gather that $\sing2$
  can cut $\cV^*_0$ into at most two $\cm^2$-regular sectors, the
  $\cm^2$-image of both of them is a good sector and of at most one of
  them is active.  This proves that~\ref{i_inductiveStep}
  implies~\ref{i_inducedInductiveStep} also in case (ii).  Note that we
  have at most two sectors in case (i) and at most three in case (ii).

  It remains to prove item~\ref{i_inductiveStep}.  If $x\not\in\sing1$,
  or $x\in\sing0\setminus\cl(\sing1\setminus\sing0)$, the map $\cm$ is
  smooth in a neighborhood of $x$ and the statement immediately follows.

  We thus assume that $x\in\cl(\sing1\setminus\sing0)$.  Recall (see
  Lemma~\ref{LmSingF}(a-b)) that $x$ can belong to at most one of
  the $\singForward{\indCell}$ and, possibly, to $\singReco^{+}$.

  If $x\in\singReco^+$, then, by Lemma \ref{l_forwardReco}, $\backReco$
  induces a sector which is not $\cm$-regular.  Hence, only cells
  $\cell^+_\indCell$ can induce $\cm$-regular sectors and by
  Lemma~\ref{l_singularityStructure} there are only two possibilities:
  \begin{enumerate}
  \item there exists a \emph{unique} $\indCell$ so that
    $\cell^+_\indCell$ so that $x\in\cl\cell^+_\indCell$.
  \item there exist two consecutive cells $\cell^+_\indCell$ and
    $\cell^{+}_{\indCell+1}$ so that
    $x\in\cl\cell^+_\indCell\cap\cl\cell^+_{\indCell+1}$ (and $x$ does
    not intersect the closure of any other cell.)
  \end{enumerate}
  This already establishes that $\cV$ is cut
  by $\sing1$ in at most two $\cm$-regular sectors.  We now need to
  prove that at most one of their images is an active sector.  Observe
  that if $\cV$ is cut in fewer than two regular sectors, there is
  nothing left to prove.  This is the situation, in particular, in
  case (a).

  \begin{figure}[!h]
    \centering
    \hspace*{-2.5cm} \includegraphics{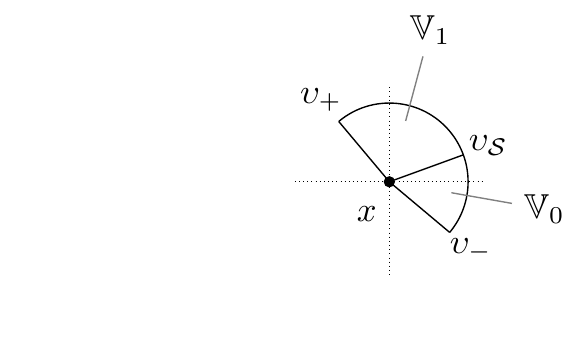}
    \hspace*{-1.5cm} \includegraphics{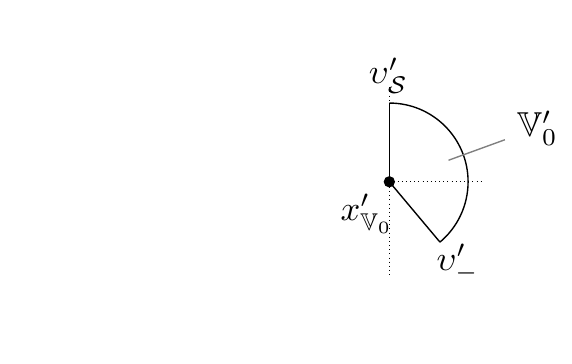}
    \hspace*{-2.5cm} \includegraphics{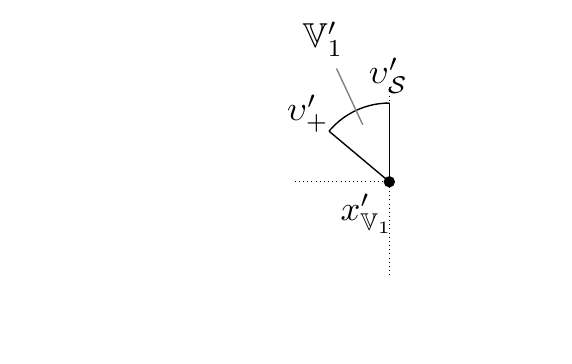}\\
    \hspace*{-2.5cm} \includegraphics{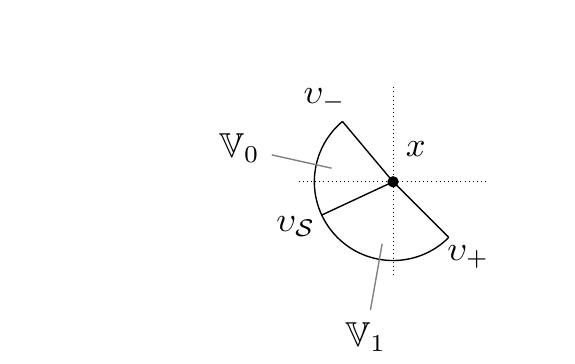}
    \hspace*{-1.5cm} \includegraphics{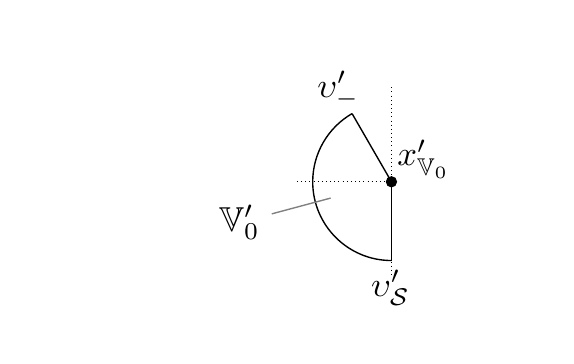}
    \hspace*{-4.5cm} \includegraphics{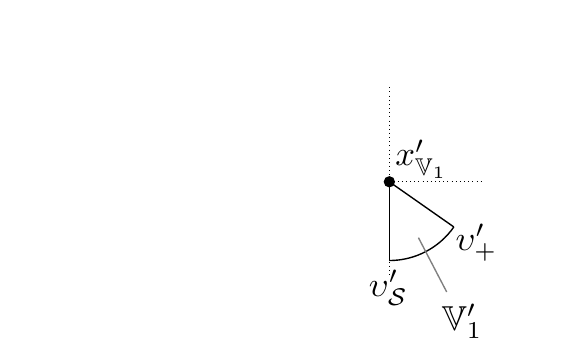}\\
    \hspace*{-2.5cm} \includegraphics{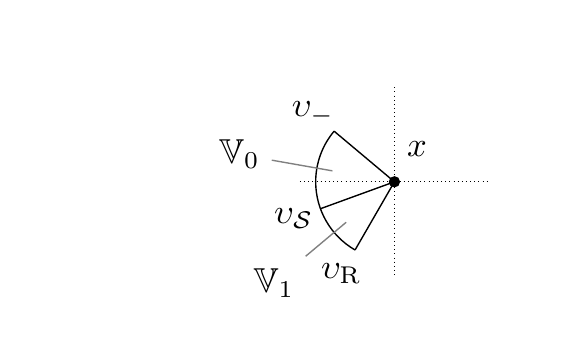}
    \hspace*{-1.5cm} \includegraphics{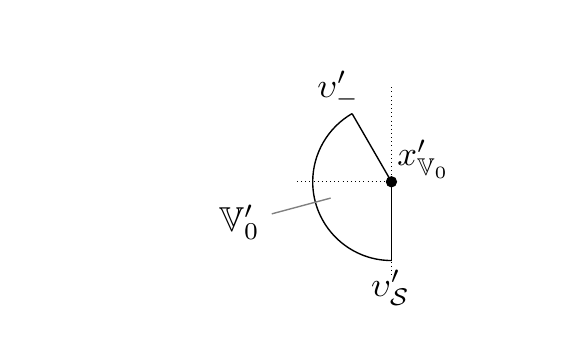}
    \hspace*{-4.5cm} \includegraphics{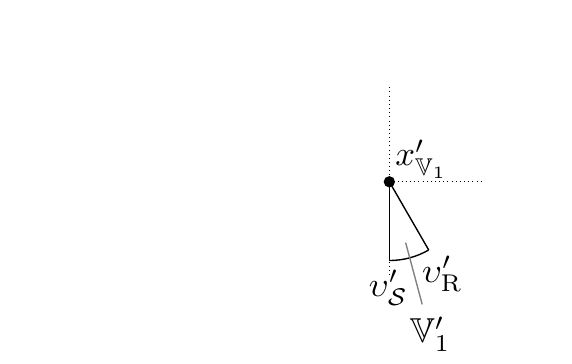}\\
    \caption{The three possible cutting cases for $\cV$ by $\singp {1}$
      in regular sectors (on the left), and their images (on the right)
      by the two differentials $\cm_{*,\cV_0}$ and $\cm_{*,\cV_1}$
      respectively.}
    \label{f_cases}
  \end{figure}
  In case (b), we necessarily have that
  $x\in\singForward\indCell$.  We subdivide the argument into two
  further subcases:
  (i) $x\nin\singReco^{+}$;
  (ii) $x\in\singReco^{+}.$

  In case (i), $\singForward{\indCell}$ cuts $\cV$ in exactly two sectors,
  induced by $\cell^{+}_{\indCell}$ and $\cell^{+}_{\indCell+1}$.
  Notice that these two sectors have a common boundary vector, which is
  induced by $\singForward{\indCell}$: we can then write the two sectors
  as (see Figure~\ref{f_cases}, first and second row)
  $\cV_0 = \cV(\upsilon_-,\upsilon_\sing{})$ and
  $\cV_1 = \cV(\upsilon_{\sing{}},\upsilon_+)$.  We say we are in case
  $i'$ if $\cV$ contains the first quadrant (see first row of
  Figure~\ref{f_cases}) and in case $i''$
  if $\cV$ contains the third
  quadrant (see second row of Figure~\ref{f_cases}).

  Consider first case $i'$. By inspection we gather that $\cV_0$ is
  induced by $\cell^{+}_{\indCell+1}$
  and $\cV_1$ is induced by $\cell^{+}_{\indCell}.$
  Since we assume both sectors to be regular,
  Lemma~\ref{l_singularityStructure}\ref{i_adjacentCells} implies that
  \begin{align*}
    \lim_{y\to x}\cm_{\cV_{0}} x&\in\{0\}\times\reals^+, &
    \lim_{y\to x}\cm_{\cV_{1}} x&\in\{1\}\times\reals^+.
  \end{align*}
  Thus the image $\cm_{*,\cV_0}\upsilon_{\sing{}}$ (\resp
  $\cm_{*,\cV_1}\upsilon_{\sing{}}$) is a vertical vector.  Moreover,
  since $\upsilon_{\sing{}}$ lies in the first quadrant), then both its
  images are vertical vectors pointing upwards.  The other boundary
  vector of each $\cV_{i}$ is one of the original vectors
  $\upsilon_{\pm}$, and thus its image is unstable.  Since
  $\cm_{*,\cV_i}$ is orientation preserving, we conclude that only one
  of the images of $\cV_i$'s can be an active sector (see again
  Figure~\ref{f_cases}, row 1).

  Case $i''$ is completely analogous. In
  this case $\cV_0$ is induced by $\cell^{+}_{\indCell}$ and $\cV_1$ is
  induced by $\cell^{+}_{\indCell+1}$.  Once again, since both sectors are regular,
  we gather by Lemma~\ref{l_singularityStructure}\ref{i_adjacentCells}
  that
  \begin{align*}
    \lim_{y\to x}\cm_{\cV_{0}} x&\in\{1\}\times\reals^+&
    \lim_{y\to x}\cm_{\cV_{1}} x&\in\{0\}\times\reals^+.
  \end{align*}
  Hence the image $\cm_{*,\cV_0}\upsilon_{\sing{}}$ (\resp
  $\cm_{*,\cV_1}\upsilon_{\sing{}}$) is a vertical vector.  Moreover,
  since $\upsilon_{\sing{}}$ lies in the third quadrant, then both its
  images are vertical vectors pointing downwards.  The other boundary
  vector of each $\cV_{i}$ is one of the original vectors
  $\upsilon_{\pm}$, and thus its image is unstable .  Since
  $\cm_{*,\cV_i}$ is orientation preserving, we conclude that only one
  of the images of $\cV_i$'s can be an active sector (see
  Figure~\ref{f_cases}, second row).  This completes the proof in case (i).

  In case (ii), combining Lemma~\ref{LmSingF} (we need the part concerning
  $\singForward{}$!)  with Lemma \ref{l_forwardReco} we gather that $x$
  is the right endpoint of $\singForward\indCell$.  Therefore
  $\singForward\indCell$ will cut $\cV$ only if $\cV$ contains the third
  quadrant.  Thus, if $\cV$ contains the first quadrant, then
  $\singForward\indCell$ does not cut $\cV$.  Thus $\cV$ could only be
  cut by $\singReco^+$ and by an earlier discussion $\cV$ contains at
  most one regular sector, so we are done.

  \ignore{
    only one of the two cells
    (namely $\cell^{+}_{\indCell}$) can induce a sector in $\cV$, and
    therefore we conclude $\cV$ is cut in at most one regular sector and,
    as noted before, nothing else is left to prove.}

  It remains to consider the more difficult case in which $\cV$ contains the third
  quadrant (Figure~\ref{f_cases}, bottom row).
  Since $x$ is the right endpoint of
  $\singForward{\indCell}$, we conclude that the vector induced by
  $\singForward{\indCell}$ must meet with $\singReco^{+}$ on the left.
  Therefore  the two regular sectors are
  $\cV_0 = \cV(\upsilon_-,\upsilon_\sing{})$ and
  $\cV_1 = \cV(\upsilon_{\sing{}},\upsilon_{\recoPrivate})$.  As in case
  $i'',$ we have that $\cV_0$ is induced by $\cell^{+}_{\indCell}$ and
  $\cV_1$ is induced by $\cell^{+}_{\indCell+1}$; the vector
  $\upsilon_{\recoPrivate}$ is induced by $\singReco^{+}$.  Following
  the same reasoning as in case $i''$ above, we conclude that the image
  $\cm_{*,\cV_0}\upsilon_{\sing{}}$ (\resp
  $\cm_{*,\cV_1}\upsilon_{\sing{}}$) is a vertical vector pointing
  downwards.  The image $\cm_{*,\cV_0}\upsilon_{-}$ is of course
  unstable and belongs to the second quadrant.  The image of
  $\upsilon_{\recoPrivate}$ is also in $\conen$ (as it will be induced
  by some curve in $\sing{-1}$) and points downwards.
  Hence, only $\cV_0'$ is active.

  This concludes the argument in case (ii)
  and finishes the proof.
\end{proof}

\section{Invariant manifolds.}\label{sec:invariant-manifolds}
The expansion estimate proved in the previous section
is the
main ingredient for the so-called Growth Lemma (Lemma~\ref{l_growth}).
In turn the Growth Lemma constitutes the backbone for proving ergodicity using
the Hopf argument, as will be done in the next section.  The Hopf
argument relies on existence of a large set of points which have
sufficiently long stable and unstable manifolds.  The present section
contains necessary results about the existence of stable and unstable
manifolds as well as regularity of partition of the phase space into
stable and unstable manifolds.  In this section we always assume that
the Fermi--Ulam model is regular at infinity.
As a notational convention, in an attempt to simplify our notation, in
this section we drop the superscripts from $\dadm{W}(\cdot,\cdot)$, as
they can be unambiguously recovered from the context.
\renewcommand{\dadm}{d\adm}

\subsection{The Growth Lemma}\label{sec:growth-lemma}
In this section we state and prove a version of the Growth Lemma for
our system.  This lemma will allow to obtain, in the next subsection, a
good lower bound on the length of stable and unstable manifolds
passing through most of the points.

Let $W$ be an unstable curve and $x\in W$. $x$ subdivides $W$
into two subcurves. We define $r_W(x)$ as the $\admName$-length of the
shortest of the two subcurves.  The function $r_{W}(x)$ measures, in
an appropriate way, the distance of $x$ to the boundary of $W$.
Observe that if $W$ is weakly homogeneous, we have,
by~\eqref{eq:dW-over-d-bound},\; $r_{W}(x) < \const \dadm(x,\hhs)$.

Observe moreover that
\begin{align}\label{eq:trivial-bound-for-r}
  \Leb_{W}(r_{W}(x) < \eps) = \min\{2\eps,\Leb_{W}(W)\}
\end{align}
(recall that $\Leb_{W}$ denotes Lebesgue measure on the curve $W$
with respect to the $\admName$-metric).

Given an unstable curve $W,$ a point $x\in W$ and $n\ge 0$, we define
$W_{n}(x)$ as follows.  If $x\in\hsing n$ we let
$W_{n}(x) = \emptyset$; otherwise we let $W_{n}(x)$ to be the
H-component of $\cm^{n} W$ that contains $\cm^{n} x$ (recall the
discussion before Proposition~\ref{p_expansionEstimate}).
Then we define $r_{W,n}(x) = r_{W_{n}(x)}(\cm^{n}x)$ (or $0$ if $W_{n}(x) = \emptyset$).

Likewise, given an unstable curve $W$, $x\in W$ and $n\ge 0$, we
define $\widehat{W}_{n}(x)$ and $\hat r_{W,n}$ as follows. Recall the
definition of $\Np_{n}$ given before Remark~\ref{rmk:definition-Npk};
if $\Np_{n}(x)$ is not defined, we let $\widehat{W}_{n}(x) = \emptyset$ and
$\hat r_{W,n}(x) = 0$. Otherwise we let
$\widehat{W}_{n}(x) = W_{\Np_{n}(x)}(x)$ and
$\hat r_{W,n}(x) = r_{W,\Np_{n}(x)}(x)$.
\begin{lem}
  We have $r_{W,0} = \hat r_{W,0} = r_{W}$ and
\begin{align}\label{eq:silly-bound-on-r}
  r_{W,n}(x) < \const \dadm(\cm^{n}x,\hhs).
\end{align}
Moreover, there exists $C > 1$, so that if $\cm^{n}W$ is a single
H-component, then for any $x\in W$:
  \begin{align}\label{eq:r-trivial-bound-connected}
    r_{W,n}(x) > C\inv\Lambda^{\hat n(\cm^{n}W)} r_{W}(x),
  \end{align}
  where $\Lambda$ is the constant appearing
  in~\eqref{e_uniformHyperbolicity}  and $\hat n$ was defined
  in~\eqref{eq:definition-hatn}.
\end{lem}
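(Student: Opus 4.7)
For part (a), observe that $W_0(x)$ is by definition the H-component of $\cm^0 W = W$ containing $x$; since we tacitly assume $W$ is weakly homogeneous and uncut, this equals $W$, so $r_{W,0}(x) = r_W(x)$. Similarly $\Np_0(x) = 0$ forces $\widehat W_0(x) = W_0(x) = W$ and thus $\hat r_{W,0}(x) = r_W(x)$. For part (b), every $W_n(x)$ is by construction an H-component, hence weakly homogeneous; the inequality recorded just above the lemma gives $r_{W_n(x)}(y) < \const\, \dadm(y,\hhs)$, and applying this at $y = \cm^n x$ yields \eqref{eq:silly-bound-on-r}.

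The substance of the lemma is part (c), where I would argue as follows. The assumption that $\cm^n W$ is a single H-component implies, by injectivity of $\cm$ on each cell of smoothness, that $\cm^k W$ is also a single H-component for every $0 \le k \le n$ (a putative cut of $\cm^k W$ by some auxiliary or primary singularity would persist under further iteration into a cut of $\cm^n W$). Setting $\hat k = \hat n(\cm^n W)$, this means in particular that $\cmp^{\hat k}$ is smooth on all of $W$ and $\cmp^{\hat k} W$ is a single H-component.

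Next I would split $\cm^n = \cm^{n - \Np_{\hat k}(x)} \circ \cmp^{\hat k}$ (pointwise in $x$) and bound the two factors separately on any unstable tangent vector $dx$ to $W$:
\begin{equation*}
  \frac{|\cm^n_* dx|\adm}{|dx|\adm} = \frac{|\cm^{n-\Np_{\hat k}(x)}_* (\cmp^{\hat k}_* dx)|\adm}{|\cmp^{\hat k}_* dx|\adm} \cdot \frac{|\cmp^{\hat k}_* dx|\adm}{|dx|\adm}.
\end{equation*}
By Proposition~\ref{p_propertiesAdm}\ref{i_uniformHyperbolicity}, the second factor is at least $\Lambda^{\hat k}$. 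For the first factor, by definition of $\hat k$, we have $n - \Np_{\hat k}(x) < \Np(\cmp^{\hat k} x)$, i.e.\ the remaining iterations of $\cm$ fall strictly within a single $\cmp$-step. Proposition~\ref{p_largeEnergiesFUM}\ref{i_boundOnExcursion} then pins the velocities $w_j$ along this partial excursion in a bounded ratio to $w(\cmp^{\hat k} x)$, so iterating the lower bound~\eqref{e_expansionAdm-alpha} along these steps produces a telescoping product controlled by the ratio $\alpha(\cdot)/\alpha(\cdot)$, which is bounded below by a uniform constant (the same argument underlying the upper bound~\eqref{e_boundedExpansion} yields this matching lower bound); at low energies Lemma~\ref{LmFRet} caps the number of leftover iterations uniformly, and~\eqref{e_expansionAdm-II} gives a constant lower bound per step. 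Combining, the first factor is bounded below by some $C^{-1} > 0$.

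Applying this pointwise expansion bound to the shorter of the two subcurves of $W$ adjacent to $x$, whose image under $\cm^n$ sits inside $W_n(x) = \cm^n W$ as a subcurve adjacent to $\cm^n x$, yields $r_{W,n}(x) \ge C^{-1} \Lambda^{\hat k} r_W(x)$, which is~\eqref{eq:r-trivial-bound-connected}. The main subtlety is the lower bound on the ``partial'' $\cm$ iterations after $\cmp^{\hat k}$, where the naive application of~\eqref{e_expansionAdm-alpha} yields an $e^{-\alpha_0}$-factor at recollisions and an unfavorable $(1+\alpha_1 w')/(1+\alpha_1 w)$-factor; ruling these out uniformly requires the high-energy velocity control of Proposition~\ref{p_largeEnergiesFUM} together with the observation that recollisions within a single $\cmp$-step are uniformly bounded in number.
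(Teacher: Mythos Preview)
Your approach is essentially the paper's: decompose $\cm^n$ into $\hat n$ full $\cmp$-steps (giving $\Lambda^{\hat n}$ via \eqref{e_uniformHyperbolicity}) plus partial steps with uniformly bounded contraction. The paper packages the partial-step lower bound by simply citing \eqref{e_adm-dominate} and the machinery of Lemma~\ref{l_OneItFHat} (whose Sublemma~\ref{sublem:intermediate-step-expansion} is precisely the estimate you re-derive), which makes the argument very short. Two small fixes to your version: (i) pushing forward the shorter subcurve of $W$ bounds only \emph{one} of the two subcurves of $\cm^n W$, not necessarily the shorter, so either apply the expansion bound to both halves of $W$ or, as the paper does, pull back the shorter subcurve $W'_n(x)$ of $\cm^n W$ and note that $|\cm^{-n}W'_n(x)|_\alpha \ge r_W(x)$; (ii) writing $\cm^{\Np_{\hat k}(x)} = \cmp^{\hat k}$ presumes $W\subset\cspi$; for general $W$ there is an additional initial partial step $\cm^{\Np(x)}$ before $\cmp$ applies, handled by the same sublemma (the resulting $\Lambda^{\hat k-1}$ in place of $\Lambda^{\hat k}$ is absorbed into $C$).
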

\begin{proof}
  The first two items follow immediately from the definition and from
  our previous observation.  We thus need to
  prove~\eqref{eq:r-trivial-bound-connected}.  By definition
  $r_{W,n}(x) = |W'_{n}(x)|\adm$, where $W'_{n}(x)$ is shortest
  subcurve of $W_{n}(x)$ joining $x_{n}$ with $\partial W_{n}(x)$.
  Since $\cm^{n}W$ is a single H-component, we conclude that
  $W_{n}(x) = \cm^{n}W$.  Thus $W'_{n}(x)$ connects $x_{n}$ with
  $\partial \cm^{n}W$, and $\cm^{-n}W'_{n}(x)$ connects $x$ with
  $\partial W$. In particular $|\cm^{-n}W'_{n}(x)|\adm\ge r_{W}(x)$.
  Then the proof follows
  from~\eqref{e_adm-dominate},~\eqref{e_uniformHyperbolicity} and the
  definition of $\hat n$.
  \end{proof}

The following is the classical Growth Lemma.
\begin{lem}[Growth Lemma for $\hat r$]\label{l_growth}
  Suppose that the Fermi--Ulam model is regular at infinity.  Then
  there exists $0 < \theta<1$ and $C > 0$ so that for any sufficiently
  short mature \admiss{} unstable curve~$W\subset\csp$, any
  $\eps > 0$ and any $n > 0$
  \begin{align}\label{eq:growth-lemma-induced}
    \Leb_W(\hat r_{W,n}(x) < \eps)\le C\eps\,\Leb_W(W)+ C
    \Leb_W\left(r_W(x)\leq \theta^{n}\eps\right).
  \end{align}
\end{lem}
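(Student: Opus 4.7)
The plan is to iterate a one-step Growth inequality over blocks of length $\bar n$, where $\bar n$ is given by Proposition~\ref{p_expansionEstimate} with $\vartheta := \hat\cL_{\bar n} < 1$, and choose $\theta \in (\vartheta^{1/\bar n}, 1)$.

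First, I will establish a one-step estimate: there exist $C_0 > 0$ and $\Lambda > 1$ such that for any short mature admissible u-curve $V$ and any $\delta > 0$:
\[
\Leb_V\{\hat r_{V,1}(x) < \delta\} \leq C_0\, \delta\, \max(1,|V|\adm) + \hat\cL_1(V) \cdot \Leb_V\{r_V(x) < \delta/\Lambda\}.
\]
To derive this, decompose $\cmp V = \bigsqcup_i \hat W_{i,1}$ into H-components. The event $\hat r_{V,1}(x) < \delta$ means $\cmp x$ lies within $\delta$ of some endpoint of its H-component. Each endpoint is either \emph{old} (the $\cmp$-image of an endpoint of $V$, of which there are at most two) or \emph{new} (a cut by $\singp+ \cup \hhs$). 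By the distortion bound of Corollary~\ref{cor:distortion}, the $\cmp^{-1}$-preimage of a $\delta$-neighborhood of an endpoint in $\hat W_{i,1}$ is a subcurve of $V$ of $\admName$-length at most $C\delta/\hat\Lambda_{i,1}$. Preimages of old endpoints lie within $C\delta/\hat\Lambda_{\min}$ of $\partial V$ and contribute to the $r_V$-term; preimages of new endpoints sum, over all $i$, to at most $C\delta \cdot \hat\cL_1(V)$, which is bounded by $C\delta$ by~\eqref{e_expansionEstimate-trivial} and absorbed into $C_0\delta\,\max(1,|V|\adm)$.

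Next, I iterate the one-step estimate. Writing $\cmp^{n} = \cmp \circ \cmp^{n-1}$ and applying the one-step bound to each H-component of $\cmp^{n-1}W$ (noting that every H-component is itself a short mature admissible u-curve by Proposition~\ref{PrBHold}, Corollary~\ref{CrUMLip}, and the definition of $\mconeu$), induction on $n$ yields
\[
\Leb_W\{\hat r_{W,n}(x) < \eps\} \leq C_0\,\eps\,|W|\adm \sum_{j=0}^{n-1} \hat\cL_j(W) \Lambda^{-j} + \hat\cL_n(W) \Leb_W\{r_W(x) < \Lambda^{-n}\eps\},
\]
where Lemma~\ref{l_OneItFHat} is used to relate the $|\hat W_{i,k}|\adm$ factors accumulated at intermediate steps to $|W|\adm$. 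By~\eqref{e_expansionEstimate} together with submultiplicativity~\eqref{eq:cL-submultiplicative}, $\hat\cL_n \leq C'\theta^n$ uniformly in $n$ (handling $n \bmod \bar n$ via the uniform bound $\hat\cL_j < C$ for $0 < j < \bar n$). Consequently the series $\sum_j \hat\cL_j \Lambda^{-j}$ converges to a finite constant, yielding the $C\eps\,|W|\adm$ term, while the $r_W$-coefficient becomes $\leq C' \theta^n$; renaming constants and replacing $\Lambda^{-n}$ by $\theta^n$ (after possibly enlarging $\theta$) gives the stated estimate.

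The main obstacle is the careful bookkeeping in the inductive step, specifically ensuring that the distortion bound of Corollary~\ref{cor:distortion} applies uniformly over all H-components of $\cmp^k W$ despite the non-compact phase space and the presence of grazing singularities. This is handled by the homogeneity strip decomposition: every H-component lies in a single strip $\homo_k$, so the $\admName$-lengths of H-components are uniformly bounded (by Lemma~\ref{l_globalExpansion} and the definition of weak homogeneity), allowing the distortion control to be applied with a uniform constant. A secondary subtlety is that the $\eps|W|\adm$ term, which looks weaker than the sharper $\eps$ bound sometimes stated, is forced upon us because H-components may have widely varying lengths; the factor $\max(1,|W|\adm)$ in the one-step estimate is needed exactly to absorb the uniform bound $\hat\cL_1 \leq C$ when $|W|\adm$ is small.
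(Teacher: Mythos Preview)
Your overall strategy is the same as the paper's, but there is a genuine gap in the bookkeeping that the paper addresses with a device you have omitted.

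The claim that ``every H-component lies in a single strip $\homo_k$, so the $\admName$-lengths of H-components are uniformly bounded'' is false. For $k=0$ the strip $\homo_0=\{w>k_0^{-2}\}$ is unbounded, and an H-component of $\cmp^m W$ lying in $\homo_0$ can have arbitrarily large $\admName$-length once $m$ is large. Lemma~\ref{l_globalExpansion}(b) only controls the length of H-components for a \emph{fixed} number $k$ of $\cmp$-iterates, with the required initial shortness $\delta$ depending on $k$; it does not give a uniform bound valid for all $m$. Since Corollary~\ref{cor:distortion} needs $|W_n|\adm<L$ with the distortion constant depending on $L$, your inductive step loses uniform distortion control, and the one-step estimate cannot be iterated as written.

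The paper fixes this by introducing \emph{shortened H-components}: after every block of $\bar n$ iterates of $\cmp$, each H-component $W_i$ is subdivided into $k_i=\lfloor |W_i|\adm/\delta\rfloor+1$ equal pieces of length $<\delta$, and one works with the function $\hat r'_{W,n}\le \hat r_{W,n}$ measuring distance to the boundary of the shortened pieces. The artificial cuts create extra boundary, but their total contribution is controlled by $\sum_i k_i/|W_i|\adm\le \delta^{-1}+\text{(expansion sum)}$, which is exactly where the term $\bar C\eps\,\Leb_W(W)$ with $\bar C\sim\delta^{-1}$ in~\eqref{eq:growth-lemma-inductive-step} originates. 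Your iterated formula with the factor $|W|\adm$ and your remark that this factor ``is forced upon us'' are gesturing at this phenomenon, but without the explicit shortening the derivation does not go through. A minor additional point: in your one-step bound the factor $\hat\cL_1(V)$ should multiply the new-endpoint term $C_0\delta$, not the old-endpoint term $\Leb_V\{r_V<\delta/\Lambda\}$; there are only two old endpoints regardless of $\hat\cL_1(V)$.
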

\begin{proof}
  The proof of the Growth Lemma follows via relatively standard
  arguments (see \cite[Sections 5.9 and 5.10]{ChM}) from the expansion
  estimate (Proposition~\ref{p_expansionEstimate}) and the distortion
  bounds proved in Corollary~\ref{cor:distortion}.

  Recall the definition of $\hat\cL_{n}$ given right before
  Proposition~\ref{p_expansionEstimate}, and let $\bar n$ be the
  number appearing in Proposition~\ref{p_expansionEstimate}. We fix
  $\delta > 0$ to be sufficiently small so that
  $\bar\theta = e^{2\Cdist\delta^{1/12}}\hat\cL_{\bar n} < 1$ (where
  $\Cdist$ is the constant appearing in
  Corollary~\ref{cor:distortion}) and that
  Lemma~\ref{l_globalExpansion}(b) holds with $k = \bar n$ and
  $\delta_{*} = 1$.

  Let us first assume that $W\subset \cspi$ and that
  $|W|\adm < \delta$.  Then we claim that there exists $\bar C > 0$ so
  that for any $\eps > 0$:
  \begin{align}\label{eq:growth-lemma-inductive-step}
    \Leb_{W}(\hat r_{W,\bar n}(x) < \eps) <  \bar C\eps\Leb W + \Leb_{W}(r_{W}(x) <                                          e^{-\Cdist\delta^{1/12}}\bar\theta\eps).
  \end{align}
  As we observed in Corollary~\ref{cor:distortion}, our distortion
  bounds on unstable curves depend on their length. In this proof we
  will need very fine distortion bounds, and it will then be necessary
  to work only with sufficiently short unstable curves.  This entails
  a partitioning scheme for H-components that we now proceed to
  describe. Let $\{W_{i}\}$ denote the set of H-components of $\cmp^{\bar n}W$.
  We partition each $W_{i}$ into a number
  \begin{align*}
    k_i = \left\lfloor \frac{|W_{i}|\adm}{\delta}\right\rfloor+1
  \end{align*}
  of subcurves of equal $\admName$-length (smaller than $\delta$) that
  we denote with $W_{ij}$.  Observe that if $|W_{i}|\adm < \delta$,
  $k_{i} = 1$, and no shortening takes place.  We call such subcurves
  \emph{shortened H-components of $\cmp^{\bar n}W$}.  We will shorten
  the H-components inductively every $\bar n$ steps of the induced map
  $\cmp$. By our choice of $\delta$, this guarantees that at each
  intermediate step, no H-component will have $\admName$-length
  exceeding $1$.  Given $x\in W$, we will then denote with
  $\hat W'_{n}(x)$ the shortened H-component of $\cmp^{n}W$ whose
  interior contains $\cmp^{n}x$ (or $\emptyset$ if some image of $x$
  lies on an endpoint of a shortened subcurve).  We then define
  $\hat r'_{W,n}(x) = r_{\hat W'_{n}(x)}(\cmp^{n}x)$.  Observe
  that $\hat r'_{W,\bar n} < \hat r_{W,\bar n}$, so that
  proving~\eqref{eq:growth-lemma-inductive-step} for $\hat r'_{W,n}$
  will imply~\eqref{eq:growth-lemma-inductive-step} for
  $\hat r_{W,n}$.  Let $B_{ij}\subset W_{ij}$ be the
  $\eps$-neighborhood (in the $\admName$-metric) of the boundary of
  each $W_{ij}$; in particular $\Leb_{W_{ij}}(B_{ij}) = 2\eps$.  Then
   \begin{align*}
    \Leb_{W}(\hat r'_{W,\bar n}(x) < \eps) = \sum_{ij}\Leb_{W}(\cmp^{-\bar n}B_{ij}).
  \end{align*}
 By the distortion estimates of
  Corollary~\ref{cor:distortion}
  \begin{align*}
    \sum_{ij}\Leb_{W}(\cmp^{-\bar n}B_{ij}) &\le 
    e^{\Cdist\delta^{1/12}}\sum_{ij}\Leb_{W}(\cmp^{-\bar n}W_{ij})
    \frac {\Leb_{W_{ij}}(B_{ij})}{\Leb_{W_{ij}}(W_{ij})}. \\
    &\le 2e^{\Cdist\delta^{1/12}}\eps \sum_{ij}
      k_{i}\frac{\Leb_{W}(\cmp^{-\bar n}W_{ij})}{\Leb_{W_{i}}(W_{i})} \\
    &\le 2\delta\inv e^{\Cdist\delta^{1/12}}\eps
      \sum_{ij}\Leb_{W}(\cmp^{-\bar n}W_{ij})+ \\
    &\phantom\le+2
      e^{\Cdist\delta^{1/12}}\eps\sum_{ij}\frac{\Leb_{W}(\cmp^{-\bar
      n}W_{ij})}{\Leb_{W_{i}}(W_{i})} \\
    &\le
      \bar C\eps\Leb_{W}(W)+2e^{\Cdist\delta^{1/12}}\eps\sum_{i}\frac{\Leb_{W}(\cmp^{-\bar
      n} W_{i})}{\Leb_{W_{i}}(W_{i})} \\
    &\le \bar C\eps\Leb_{W}(W)+2e^{\Cdist\delta^{1/12}}\eps\hat\cL_{\bar n},
  \end{align*}
  where we defined $\bar C = 2\delta\inv e^{\Cdist\delta^{1/12}}$.
  Using~\eqref{eq:trivial-bound-for-r}, the fact that the left hand
  side is always bounded above by $\Leb_{W}$, and our definition of
  $\bar\theta$, we conclude that
  \begin{align}\label{eq:growth-lemma-inductive-step-r'}
    \Leb_{W}(\hat r'_{W,\bar n}(x) < \eps) <  \bar C\eps\Leb W + \Leb_{W}(r_{W}(x) <                                          e^{-\Cdist\delta^{1/12}}\bar\theta\eps).
  \end{align}
  which, as noted earlier, 
  implies~\eqref{eq:growth-lemma-inductive-step}.

  We now proceed to show that for any $k > 0$:
  \begin{align}\label{eq:growth-lemma-induced-estimate}
    \Leb_{W}(\hat r'_{W,k\bar n}(x) < \eps) &\le
    e^{\Cdist\delta^{1/12}}\frac{1-\bar\theta^{k}}{1-\bar\theta}\cdot
    \bar C\eps\Leb_{W}(W)+ \\
    &\phantom =
    +\Leb_{W}(r_{W}(x) \le\bar \theta^{k}\eps).\notag
  \end{align}
  For $k = 1$  \eqref{eq:growth-lemma-induced-estimate} follows from
  \eqref{eq:growth-lemma-inductive-step-r'}.
    Let
  us assume by induction that~\eqref{eq:growth-lemma-induced-estimate}
  holds for $k$ and prove it for $k+1$.  Let $W'$ be a shortened
  H-component of $\cmp^{k\bar n}$. Notice that by construction
  $W'\subset\cspi$ and $|W'|\adm < \delta$.  Then,
  applying~\eqref{eq:growth-lemma-inductive-step} to $W'$ we gather:
  \begin{align*}
    \Leb_{W'}(\hat r'_{W',\bar n}(y) < \eps) &\le \bar C\eps\Leb_{W'}(W') +
                                               \Leb_{W'}(\hat
                                               r_{W'}(y)\le
                                               e^{-\Cdist\delta^{1/12}}\bar\theta\eps).
  \end{align*}
  Let $W'' = \cmp^{-k\bar n}$, then by Corollary~\ref{cor:distortion},
  we conclude that:
  \begin{align*}
    \Leb_{W''}(\hat r'_{W'',(k+1)\bar n}(x) < \eps) &
    \le e^{\Cdist\delta^{1/12}}\bar C\eps\Leb_{W''}(W'')\\
    &\phantom = +
    e^{\Cdist\delta^{1/12}}\Leb_{W''}%
    (\hat r'_{W'',k\bar n}(x) < e^{-\Cdist\delta^{1/12}}\bar \theta\eps).
  \end{align*}
  Summing over all $W''$'s and applying the inductive hypothesis yields:
 $$    \Leb_{W}(\hat r'_{W,(k+1)\bar n}(x) < \eps) \le $$
    $$e^{\Cdist\delta^{1/12}}\bar C\eps\Leb_{W}(W)+
   e^{\Cdist\delta^{1/12}}\Leb_{W}
      (\hat r'_{W,k\bar n}(x) < e^{-\Cdist\delta^{1/12}}\bar\theta\eps) \le $$
 $$   e^{\Cdist\delta^{1/12}}\bar C\frac{1-\bar\theta^{k+1}}{1-\bar\theta}\eps\Leb_{W}(W)+
  e^{\Cdist\delta^{1/12}}\Leb_{W}
      (r_{W}(x) < e^{-\Cdist\delta^{1/12}}\bar\theta^{k+1}\eps),
$$
  which yields~\eqref{eq:growth-lemma-induced-estimate} for $k+1$.
  Hence we can write:
  \begin{align}\label{eq:growth-lemma-final-estimate}
    \Leb_{W}(\hat r'_{W,k\bar n}(x) < \eps)\le
    C\eps\Leb_{W}(W)+\Leb_{W}(r_{W}(x) \le \bar\theta^{k}\eps).
  \end{align}
  where $C = \ifrac{\bar C e^{\Cdist\delta^{1/12}}}{(1-\theta)}$.

  We now extend this estimate to iterates that are not
  multiples of $\bar n$.  We begin by obtaining a bound on
  $\Leb_{W}(\hat r'_{W,s}(x) < \eps)$ for $s < \bar n$.  Notice that
  no partitioning into short curves occurs before step $\bar n$,
  therefore if $\{W_{i}\}$ denotes the set of $H$-component of
  $\cmp^{s}W$, we have
  \begin{align*}
    \Leb_{W}(\hat r'_{W,s}(x) < \eps)  = \Leb_{W}(\hat r_{W,s}(x) < \eps) = \sum_{i}\Leb_{W}(\cmp^{-s}B_{i}),
  \end{align*}
  where $B_{i}$ is a $\eps$-neighborhood of the boundary of $W_{i}$.
  Then we proceed as before. Since $|W|\adm < \delta$, we are guaranteed
  that $|W'|\adm < 1$. Thus, applying the distortion bounds
  in Corollary~\ref{cor:distortion}, we gather:
  \begin{align*}
    \sum_{i}\Leb_{W}(\cmp^{-s}B_{i})&\le
    2e^{\Cdist}\eps \sum_{i}\frac{\Leb_{W}(\cmp^{-s}W_{i})}{\Leb_{W_{i}}(W_{i})}\le
    2 e^{\Cdist}\eps\hat\cL_{s}.
  \end{align*}
  Applying once again~\eqref{eq:trivial-bound-for-r}, and observing
  that by Proposition~\ref{p_expansionEstimate} we have that
  $\hat\cL_{s}$ is bounded uniformly in $s$, yields:
  \begin{align}\label{eq:growth-lemma-transient-bound}
    \Leb_{W}(\hat r_{W,s}(x) < \eps)\le \Leb_{W}(r_{W}(x) < \Const\eps).
  \end{align}
  Now, for any $m > 0$, we write $m = k\bar n+s$, with
  $0 \le s < \bar n$. Applying~\eqref{eq:growth-lemma-transient-bound}
  to each shortened component $W'$ of $\cmp^{k\bar n}W$ yields:
  \begin{align*}
    \Leb_{W'}(\hat r_{W',s}(y) < \eps)\le \Leb_{W'}(r_{W '}(x) < \Const\eps).
  \end{align*}
  Taking $W'' = \cmp^{-k\bar n}W'\subset W$, and applying the
  distortion bounds:
  \begin{align*}
    \Leb_{W''}(\hat r_{W'',k\bar n+s}(x) < \eps)&
    \le e^{\Cdist\delta^{1/12}}\Leb_{W''}(r'_{W'',k\bar n}(x) < \Const\eps).
  \end{align*}
  Now summing over all $W''$ and
  applying~\eqref{eq:growth-lemma-final-estimate}, we finally conclude
  that
  \begin{align*}
    \Leb_{W}(\hat r_{W,k\bar n+s}(x) < \eps)&
    \le  e^{\Cdist\delta^{1/12}}C\eps\Leb_{W}(W)+\Leb_{W}(r_{W}(x)\le \Const\bar \theta^{k}\eps).
  \end{align*}
  Choosing $\theta = \bar\theta^{1/\bar n}$ and $C = \Const\theta\inv$
  yields~\eqref{eq:growth-lemma-induced} under the assumption
  $W\subset \cspi$ and $|W|\adm < \delta$.

  Now, observe that, given an unstable curve $W$, for any $x\in W$,
  $\hat W_{1}(x)$ is either $\emptyset$ or it is a curve
  $W'\subset\cspi$.  By Lemma~\ref{l_globalExpansion}, it is possible
  to assume $W$ so short that each $W'$ is such that
  $|W'|\adm < \delta$.  By applying once again the distortion
  argument, we deduce that~\eqref{eq:growth-lemma-induced} holds in
  the general case, by suitably increasing the constants.
\end{proof}

We are now going to complement the Growth Lemma above (which involves
iterates of $W$ by $\cmp$) with some estimates on the length of the
iterates of unstable curves by $\cm$.  More precisely, let $W$ be an
unstable curve and $x\in W$: we define:
\begin{align*}
  \bar r_{W}(x) = \min_{0\le n < \Np(x)}r_{W,n}(x),
\end{align*}
with the convention that if $\Np(x)$ is undefined, then $\bar r_{W}(x)
= 0$.
\begin{lem}[Transient growth control]\label{lem:transient-growth-control}
  There exists $C > 0$ so that for any sufficiently short mature
  \admiss{} unstable curve $W$:
  \begin{align*}
    \Leb_{W}(\bar r_{W}(x) < \eps) < \Leb_{W}(r_{W}(x) < C\eps).
  \end{align*}
\end{lem}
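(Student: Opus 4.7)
The plan is to reduce the lemma to a statement about bounded combinatorics and bounded expansion during a single transient block, both of which follow from tools already developed earlier in the paper.

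The core observation I would exploit is: if $\bar r_W(x) < \eps$, then some iterate $\cm^n x$ (with $0 \le n < \Np(x)$) is $\eps$-close (in the $\admName$-metric) to the boundary $\partial W_n(x)$ of its H-component, and since the expansion of $\cm^n$ in the $\alpha$-metric is bounded below by $1$, the preimage $z := \cm^{-n}(\text{nearest boundary point}) \in W$ is within $\eps$ of $x$ along $W$. Thus $z$ is either on $\partial W$ (in which case $r_W(x) < \eps$) or $z$ lies in one of the \emph{internal cut points} of $W$, i.e. points $z \in \textup{int}\, W$ on $\hsing k$ for some $k < \sup_W \Np$. Hence
\begin{align*}
\{\bar r_W < \eps\} \subseteq \{r_W < \eps\} \cup \bigcup_{z \text{ internal cut}} \{y \in W : d\adm^W(y,z) < \eps\}.
\end{align*}
It follows that it is enough to show the number of internal cuts is bounded by some $k_0$ independent of $W$, since then $\Leb_W(\bar r_W < \eps) \le 2(k_0 + 1)\eps$, which is bounded by $\Leb_W(r_W < (k_0 + 1)\eps)$ (when this is not simply all of $\Leb_W(W)$, in which case the inequality is automatic).

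To bound the number of internal cuts I would split into the two standard regimes. In the low-energy regime $W \cap \{w \le w^*\} \ne \emptyset$ (with $W$ short enough to be contained in $\{w \le 2w^*\}$), Lemma \ref{LmFRet} bounds $\Np$ uniformly by some $N^*$; combining Lemmas \ref{LmSingF}, \ref{l_singularityStructure} and \ref{l_finitelyManySectors}, the set $\hsing{N^*}$ intersected with a bounded region consists of finitely many smooth curves, and shrinking $W$ guarantees a uniformly bounded number of intersections. In the high-energy regime $W \subset \{w > w^*\}$, Proposition \ref{p_largeEnergiesFUM}\ref{i_boundOnExcursion} shows the transient orbit stays at comparable energy, so in particular remains in $\homo_0$ (no secondary cuts); Lemma \ref{l_largeCell}\ref{i_almostFinite-large} then gives at most two intersected $\cEs_n$, hence at most one internal cut, and \eqref{e_boundedExpansion} provides the needed uniform expansion bound $\Lambda^*$ during the whole transient, ensuring the above preimage-of-boundary argument applies uniformly in $n$.

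The main obstacle is Step 1 in the low-energy regime, where one must control cuts coming both from forward singularities $\sing k$ and from pullbacks of the homogeneity strip boundaries $\hhs$. The pullbacks under finitely many iterates $\cm^k$ with $k \le N^*$ yield only finitely many smooth curves of type $\cm^{-k}\hhs$ in any compact region (each $\cm^{-k}\hhs_j$ is a sequence of stable curves converging to $\sing{-k}$, and for $k \le N^*$ only finitely many intersect any fixed compact ball), so a sufficiently short $W$ meets only a uniformly bounded number; this is precisely the content one extracts from the structure theorems of Section \ref{s_singularities} combined with Lemma \ref{l_finitelyManySectors}. Once the internal cuts are controlled, Step 3 above closes the proof, with the constant $C$ depending only on $k_0$ (and hence only on $\ell$).
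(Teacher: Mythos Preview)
Your high-energy argument is essentially the paper's: split $W$ by $\cEs_n$'s into at most two pieces, use connectedness of the transient iterates on each piece, and invoke the uniform expansion bound. That part is fine (modulo the minor point that the $\alpha$-metric expansion of $\cm^n$ along a transient block is bounded below by some uniform $C^{-1}>0$ via Sub-lemma~\ref{sublem:intermediate-step-expansion}, not by $1$; this only changes the constant).

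The gap is in the low-energy regime. Your strategy hinges on the claim that a sufficiently short $W\subset\{w\le 2w^*\}$ meets only a uniformly bounded number of curves of $\hsing{N^*}$, so that there are only $k_0$ internal cut points. This is false. The pullbacks $\cm^{-k}\hhs$ are countable families of curves accumulating on the primary singularity set $\sing{k}$ (see Remark~\ref{RkSCClosed}), and $\sing{k}$ is nonempty in any compact region of phase space. Hence any short unstable curve $W$ that crosses (or is arbitrarily close to) a curve of $\sing{k}$ for some $k\le N^*$ will be cut by infinitely many curves of $\cm^{-k}\hhs$: its image $\cm^k W$ reaches $\{w=0\}$, and the homogeneity strips $\homo_j$ accumulate there. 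So the number of internal cuts is not uniformly bounded, and your union-of-intervals estimate $\Leb_W(\bar r_W<\eps)\le 2(k_0+1)\eps$ breaks down. Lemma~\ref{l_finitelyManySectors} does not help here: it controls the number of \emph{primary} singularity branches through a point, not the secondary ones coming from $\hhs$.

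The paper circumvents this by not counting cuts at all. Instead it writes $\Leb_W(r_{W,n}<\eps)=\sum_i\Leb_W(\cm^{-n}B_i)$ with $B_i$ the $\eps$-boundary of each H-component, applies the distortion bound (Corollary~\ref{cor:distortion}) to get $\sum_i\Leb_W(\cm^{-n}B_i)\le 2e^{\Cdist}\eps\,\cL_n(W)$, and then uses $\cL_n\le\cL_1^n\le\cL_1^{N^*}<\infty$ (from~\eqref{eq:cl1-bounded} and submultiplicativity). The point is that although there may be infinitely many H-components, the ones near $\{w=0\}$ have enormous expansion $\Lambda_{i,n}$, so the weighted sum $\sum_i\Lambda_{i,n}^{-1}=\cL_n$ converges. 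Your counting argument discards exactly this expansion information.
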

\begin{proof}
  The proof follows from distortion arguments similar to the ones
  given in the proof of the Growth Lemma.  Assume that
  $|W|\adm < \delta$.  Fix $\largew > 0$ sufficiently large. Assume
  first that $W\subset\{w\le \largew\}$.  Then there exists $N_{*} =
  \Const \largew$ so that $\Np(x) < N_{*}$ for any $x\in W$.  Thus:
  \begin{align*}
    \Leb_{W}(\bar r_{W}(x) < \eps) 
    &\le \sum_{n = 0}^{N_{*}-1}\Leb_{W}(r_{W,n}(x) < \eps).
  \end{align*}
  We proceed to obtain a bound on $\Leb_{W}(r_{W,n}(x) < \eps)$.  Let
  us fix $n > 0$ and let $\{W_{i}\}$ denote the set of H-components of
  $\cm^{n}W$; let $B_{i}$ be an $\eps$-neighborhood of the boundary of
  $W_{i}$. Then
  \begin{align*}
    \Leb_{W}(r_{W,n}(x) < \eps) = \sum_{i}\Leb_{W}(\cm^{-n}B_{i}).
  \end{align*}
  Assuming $|W|\adm < \delta$, we are guaranteed that each component
  $W_{i}$ satisfies $|W_{i}|\adm < 1$. Hence by our distortion bounds
  (Corollary~\ref{cor:distortion})
  \begin{align*}
    \sum_{i}\Leb_{W}(\cm^{-n}B_{i})&\le 2e^{\Cdist}\eps
                                     \sum_{i}\frac{\Leb_{W}(\cm^{-n}W_{i})}{\Leb_{W}(W_{i})} \\
    &\le 2e^{\Cdist}\eps \cL_{n}(W) \\
     &\le \Leb_{W}(r_{W}(x) < e^{\Cdist}\cL_{n}\eps).
  \end{align*}

   By \eqref{eq:cL-submultiplicative},  $\cL_{n}\le\cL_{1}^{n}.$
  Thus $\cL_{n}\le\max\{1,\cL_{1}^{N_{*}}\}$, which is bounded
  by~\eqref{eq:cl1-bounded}.  This concludes the proof of the lemma in
  the case of low energies.

  Let us assume, on the other hand, that
  $W\cap \{w > \largew\}\ne\emptyset$.  Then if $\largew$ is
  sufficiently large and $\delta$ sufficiently small, by
  Lemma~\ref{l_largeCell}\ref{i_almostFinite-large}, $W$ intersects at
  most two cells $\cEs_{n}$. Such cells partition $W$ in (at most) two
  subcurves $W_{1}$ and $W_{2}$ so that $\Np(x) = N_{*}$ for all
  $x\in W_{1}$ and $\Np(x) = N_{*}+1$ for all $x\in W_{2}$, for some
  $N_{*} > 0$.  Note that
  \begin{align*}
  \Leb_{W}(\bar r_{W}(x) < \eps)\le\Leb_{W}(\bar r_{W_{1}}(x) < \eps)
    + \Leb_{W}(\bar r_{W_{2}}(x) < \eps).
  \end{align*}

  Let us consider $\bar r_{W_{1}}(x)$; by construction
  $W_{1}\subset\cEs_{N_{*}}$.  Since
  $\cEs_{N_{*}}\cap\sing{N_{*}-1} = \emptyset$, we gather that
  $\cm^{n}W_{1}$ is connected for any $0\le n <
  N_{*}$. Thus,~\eqref{eq:r-trivial-bound-connected} ensures that
  $r_{W_{1},n}(x)\le C\inv r_{W_{1}}(x)$ for any $n < N_{*}$, and
  therefore $\bar r_{W_{1}}(x) < C\inv r_{W_{1}}(x)$.  By the same
  token we conclude $\bar r_{W_{2}}(x) < C\inv r_{W_{2}}(x)$. Hence
  \begin{align*}
    \Leb_{W}(\bar r_{W}(x) < \eps) \le \Leb_{W}(r_{W} < 2C\eps)
  \end{align*}
  which concludes the proof of the lemma.
\end{proof}
In order to obtain bounds on the length of stable and unstable
manifolds, we will need some results similar to the ones presented
above, but for slightly different functions $r$. We now proceed to define
them and link their properties to the ones of the functions $r$ that
have been investigated above.

Recall the properties of the singularity sets $\singBoth$
outlined in Lemma~\ref{LmSingF} and define, for $N \ge 0$:
\begin{align*}
  \singForward{(N)} &= \sing{0}\cup\singForward\recoPrivate\cup\bigcup_{\indCell = 0}^{N}\singForward{\indCell}.
\end{align*}
For $x\in W$ let us define $r_W(x,\singForward{(N)})$ as follows. If
$x\in\singForward{(N)}$ we set $r_W(x,\singForward{(N)}) = 0$.
Otherwise $\singForward{(N)}$ cuts $W$ into finitely many subcurves.
Let $W'$ be the subcurve that contains $x$ and
$r_W(x,\singForward{(N)}) = r_{W'}(x)$. Observe that necessarily
$r_{W}(x,\singForward{(N)})\le r_{W}(x)$.  Finally define\footnote{
  The motivation for this definition will become clear to the reader
  in the proof of Lemma~\ref{lem:kolya-patch}}
\begin{align*}
  r_{W}^{*}(x) = \inf_{N > 0}\{N^{3/2}r_{W}(x,\singForward{(N)})\}.
\end{align*}

Notice that $r^{*}_{W}(x)\le r_{W}(x)$, and it could, in principle, be
much smaller than $r_{W}$. However, the measure of points where this
possibility occurs is under control thanks to the following bound.
\begin{lem}\label{lem:r-star-vs-r}
  There exists $C > 0$ so that for any unstable curve $W$
  \begin{align*}
    \Leb_{W}(r^{*}_{W}(x) < \eps)\le \Leb_W(r_{W}(x) < C\eps).
  \end{align*}
\end{lem}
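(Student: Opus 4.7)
The plan is to bound $\Leb_W(r_W^*(x)<\eps)$ directly by $O(\eps)$, which then implies the lemma since $\Leb_W(r_W(x)<C\eps)=\min(2C\eps,|W|\adm)$ and the $|W|\adm$ case is automatic. The key insight will be that the weight $N^{3/2}$ in the definition of $r_W^*$ is exactly strong enough to make the sum over singularity indices converge.

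First I would establish a transversality statement: for each $\indCell\ge 0$, the curve $\singForward{\indCell}$ is a smooth stable curve by Lemma~\ref{l_localSing}, so its tangent lies in $\conep$, while $W$ is unstable with tangent in $\conen$. Writing both as graphs over $\ct$, the first is increasing and the second is strictly decreasing, so they meet in at most one point $y_\indCell$. The same argument shows $\singForward{\recoPrivate}\cap W$ is at most one point, and $\sing{0}=\partial\csp\cap W$ consists of a uniformly bounded set $F_0$.

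Next I would decompose the event $\{r_W^*(x)<\eps\}$. If $x$ lies in this set, there exists $N^*\ge 1$ with $r_W(x,\singForward{(N^*)})<\eps/(N^*)^{3/2}$, which means $x$ is within $\admName$-distance $\eps/(N^*)^{3/2}$ along $W$ of either a point of $\partial W$ or a point of $W\cap\singForward{(N^*)}$. This splits into three cases: (i) if the nearest boundary point is in $\partial W$, then $r_W(x)<\eps$; (ii) if it is in $F_0\cup(W\cap\singForward{\recoPrivate})$, then $x$ lies in the $\eps$-neighborhood of a uniformly bounded set of points; (iii) if it is the unique point $y_\indCell$ for some $\indCell\in\{0,\dots,N^*\}$, then using $N^*\ge\indCell$ I get $\dadm{W}(x,y_\indCell)<\eps/(N^*)^{3/2}\le\eps/\max(1,\indCell)^{3/2}$. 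Taking the union over the resulting cases gives the inclusion
\[
\{r_W^*(x)<\eps\}\ \subseteq\ \{r_W(x)<\eps\}\ \cup\ B\ \cup\ \bigcup_{\indCell\ge 1}\bigl\{\dadm{W}(x,y_\indCell)<\eps/\indCell^{3/2}\bigr\},
\]
where $B$ is an $\eps$-neighborhood of a set whose cardinality is bounded uniformly in $W$.

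Finally I would bound each piece using the trivial estimate $\Leb_W\{\dadm{W}(x,y)<\delta\}\le 2\delta$ for any fixed point $y\in W$. The first set contributes at most $2\eps$, the set $B$ at most $\Const\eps$, and the $\indCell$-th set at most $2\eps/\indCell^{3/2}$. Since $\sum_{\indCell\ge 1}\indCell^{-3/2}<\infty$, summing gives $\Leb_W(r_W^*(x)<\eps)\le C_1\eps$ for a universal $C_1$, and taking $C=C_1$ completes the proof. There is no substantial obstacle: the only subtlety is verifying that each $\singForward{\indCell}$ meets $W$ in a single point (ensuring one term per $\indCell$ rather than a multiplicative blow-up), together with the observation that the exponent $3/2$ in the definition of $r_W^*$ exceeds $1$, which is what makes the series converge.
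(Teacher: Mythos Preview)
Your proposal is correct and follows essentially the same approach as the paper's proof: both bound $\Leb_W(r_W^*(x)<\eps)$ by covering the bad set with two boundary intervals of length $\eps$, an $\eps$-neighborhood of the (at most one) intersection point of $W$ with each of $\singForward{\recoPrivate}$ and $\singForward{0}$, and an $\eps\indCell^{-3/2}$-neighborhood of the (at most one) intersection point of $W$ with $\singForward{\indCell}$ for $\indCell\ge 1$, then sum the resulting convergent series. You are a bit more explicit than the paper in verifying that each $\singForward{\indCell}$ meets $W$ at most once (via the stable/unstable transversality), which the paper invokes only implicitly through Lemma~\ref{LmSingF}.
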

\begin{proof}
  By Lemma{~\ref{LmSingF}}, we conclude that the set
  $\{r_{W}^{*}(x) < \eps\}$ is contained in the union of
  \begin{itemize}
  \item 2 intervals of $\admName$-length $\eps$ at the boundary of $W$
  \item an interval of $\admName$-length $2\eps$ centered at each point
    of $W\cap(\singForward{\recoPrivate}\cup\singForward{0})$;
  \item an interval of $\admName$-length $2\indCell^{-3/2}\eps$
    centered at each point of $W\cap\singForward{\indCell}$ for
    $\indCell > 0$.
  \end{itemize}
  Hence
  \begin{align*}
    \Leb_{W}(r_{W}^{*}(x) < \eps) < 2\eps (1+2+\sum_{\indCell > 0}\indCell^{-3/2}) < 2\Const\eps.
  \end{align*}
  Since by definition $\Leb_{W}(r_{W}^{*}(x) < \eps) \le \Leb_{W}(W)$, we
  conclude that
  \begin{align*}
    \Leb_{W}(r_{W}^{*}(x) < \eps) & \le \Leb_{W}(r_{W}(x) < C\eps) .\qedhere
  \end{align*}
\end{proof}
Using the above lemma, it is possible to obtain a Growth Lemma and
transient growth control for $r^{*}$.
Let $W$ be an unstable curve and $x\in W$. For $n\ge0$ we define
$r^{*}_{W,n}(x)$ as follows; if $x\in\hsing n$ we let $r^{*}_{W,n}(x)
= 0$; otherwise $W_{n}(x)\ne\emptyset$ and we set
\begin{align*}
  r^{*}_{W,n}(x) = r^{*}_{W_{n}(x)}(\cm^{n}x).
\end{align*}
Likewise, given $n\ge 0$, if $\Np_{n}(x)$ is not defined, we let
$\hat r^{*}_{W,n}(x) = 0$. Otherwise we define
\begin{equation}
\label{DefHatR}
\hat r^{*}_{W,n}(x) = r^{*}_{W,\Np_{n}(x)}(x).
\end{equation}
  Finally,
let $x\in W$. If $\Np(x)$ is undefined, we let $\bar r^{*}_{W}(x)
= 0$. Otherwise let
\begin{align*}
  \bar r^{*}_{W}(x) &= \min_{0\le n < \Np(x)}r^{*}_{W,n}(x).
\end{align*}
We now prove for $\bar r^{*}$ the same bound that was proved in
Lemma~\ref{lem:transient-growth-control}.
\begin{lem}\label{lem:transient-growth-control-star}
  There exists $C > 0$ so that for any sufficiently short mature
  \admiss{} unstable curve $W\subset \csp$
  \begin{align}\label{eq:transient-growth-r-star}
    \Leb_{W}(\bar r^{*}_{W}(x) < \eps) &< \Leb_{W}(r_{W}(x) < C\eps).
  \end{align}
\end{lem}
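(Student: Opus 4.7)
The plan is to follow the same two-case (low- vs.\ high-energy) strategy used in the proof of Lemma~\ref{lem:transient-growth-control}, replacing the direct use of the trivial bound $\Leb_W(r_W<\eps)\le 2\eps$ on each H-component by an application of Lemma~\ref{lem:r-star-vs-r}. The key observation that makes the star version tractable is that the singularity set $\singForward{(N)}$ depends on $N$ only through the curves $\singForward{\indCell}$ with $\indCell\ge 0$, so on a connected piece of $\cm^n W$ that does not encounter any of these, the infimum in the definition of $r^*$ is simply attained at $N=1$.

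First I would assume $|W|\adm<\delta$ for a small $\delta$ and fix $\largew$ large. Write
\[
  \Leb_W(\bar r^*_W<\eps)\le \sum_{n=0}^{N_{\max}-1}\Leb_W(r^*_{W,n}<\eps),
\]
where $N_{\max}=\sup_{x\in W}\Np(x)$. For each $n$, let $\{W_i\}$ be the H-components of $\cm^n W$. Using Corollary~\ref{cor:distortion} to pull back, then Lemma~\ref{lem:r-star-vs-r} applied on $W_i$, then Corollary~\ref{cor:distortion} again to push forward,
\[
  \Leb_W(r^*_{W,n}<\eps)\le e^{2\Cdist}\sum_i\Leb_W\!\bigl(\cm^{-n}\{r_{W_i}<C_0\eps\}\bigr)
  \le \Const\,\eps\,\cL_n(W).
\]

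In the low-energy case $W\subset\{w\le\largew\}$, Lemma~\ref{LmFRet} yields $N_{\max}\le N_*=\Const\largew$, and by~\eqref{eq:cl1-bounded} and sub-multiplicativity, $\cL_n\le\cL_1^{N_*}<\infty$. Summing the above over $n<N_*$ gives $\Leb_W(\bar r^*_W<\eps)\le\Const\,\eps$, which by~\eqref{eq:trivial-bound-for-r} is $\le\Leb_W(r_W<C\eps)$.

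In the high-energy case $W\cap\{w>\largew\}\ne\emptyset$, Lemma~\ref{l_largeCell}\ref{i_almostFinite-large} says $W$ meets at most two cells $\cEs_{N_*},\cEs_{N_*+1}$, so $W=W_1\sqcup W_2$ with $\Np$ constant on each $W_j$ and $\cm^n W_j$ a single connected H-component for $0\le n<\Np(W_j)$. The crucial point is that connectedness of $\cm^{n+1}W_j$ forces $\cm^n W_j\cap\singForward{\indCell}=\emptyset$ for every $\indCell\ge 0$, because any such intersection would make $\cm$ discontinuous on $\cm^n W_j$. Consequently, for every $N\ge 1$,
\[
  r_{\cm^n W_j}\!\bigl(\cm^n x,\singForward{(N)}\bigr)=r_{\cm^n W_j}\!\bigl(\cm^n x,\sing0\cup\singReco^+\bigr),
\]
so the infimum in the definition of $r^*_{\cm^n W_j}(\cm^n x)$ is attained at $N=1$. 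Now decompose $\{\bar r^*_{W_j}<\eps\}$ into points for which $\cm^n x$ is within $\eps$ of $\partial(\cm^n W_j)\subset \sing 0$ (contributing to $\bar r_{W_j}<\eps$) and points for which $\cm^n x$ is within $\eps$ of some intersection point $y_n\in\cm^n W_j\cap\singReco^+$. The first contribution is controlled by Lemma~\ref{lem:transient-growth-control} (together with~\eqref{eq:r-trivial-bound-connected}) as $\le\Leb_{W_j}(r_{W_j}<C\eps)$. For the second, since $\singReco^+$ is a single stable curve (Lemma~\ref{l_forwardReco}), $\cm^n W_j$ meets it in a uniformly bounded number of points, each of which pulls back by $\cm^{-n}$ (via distortion and the uniform expansion of $\cmp$, Proposition~\ref{p_propertiesAdm}\ref{i_uniformHyperbolicity}) to a set of $\admName$-measure $\le\Const\,\eps\,\Lambda^{-\hat n}$; summing a geometric series in $\hat n$ gives a contribution $\le\Const\,\eps$, which in turn is $\le\Leb_{W_j}(r_{W_j}<C\eps)$ by~\eqref{eq:trivial-bound-for-r}.

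The main technical obstacle is precisely the second part of the high-energy case: one must argue carefully that although $\singReco^+$ produces extra cutting points in every $\cm^n W_j$, their preimages in $W_j$ are arranged along a geometrically decaying sequence (due to hyperbolic expansion of $\cmp$), so that summing the $\admName$-contributions over $0\le n<\Np(W_j)$ remains $O(\eps)$ even though $\Np(W_j)$ may be as large as $\Const\,w$. All other ingredients — distortion bounds, the pointwise lower bound~\eqref{eq:r-trivial-bound-connected}, and Lemma~\ref{lem:r-star-vs-r} — are already in place.
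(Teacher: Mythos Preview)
Your low-energy case is fine and matches the paper. The high-energy case, however, has a genuine gap that stems from a missed observation.

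The ``main technical obstacle'' you identify --- controlling the contribution of $\singReco^+$ along the transient orbit --- is illusory. By Proposition~\ref{p_largeEnergiesFUM}\ref{i_boundOnExcursion}, once $W\cap\{w>\largew\}\neq\emptyset$ (and $W$ is short), every iterate $\cm^n x$ with $0\le n<\Np(x)$ stays in $\{w\ge C_*^{-1}\largew\}$. But $\singReco^+$ is a bounded curve joining $\xc$ to $(1,\slopeb)$ (Lemma~\ref{l_forwardReco}(b1)), so for $\largew$ large enough the curves $\cm^n W_j$ simply never meet $\singReco^+$. The same holds for every $\singForward\indCell$ with $\indCell\ge1$ (they are compact), and $\singForward0$ lies on $\partial\cell_0^+$, so a curve contained in $\cEs_1=\intr\csp\setminus\cell_0^+$ cannot cross it transversally. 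Hence $\cm^n W_j\cap\singForward{}=\emptyset$ for all $0\le n<\Np(W_j)$, which gives $r^*_{W_j,n}=r_{W_j,n}$ exactly, and therefore $\bar r^*_{W_j}=\bar r_{W_j}$. The conclusion then follows immediately from Lemma~\ref{lem:transient-growth-control}; this is what the paper does.

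Your proposed workaround --- a geometric series in $\hat n$ --- does not work in any case. In the transient regime $0\le n<\Np(W_j)$ the quantity $\hat n(\cm^n W_j)$ from~\eqref{eq:definition-hatn} equals $0$ throughout, so there is no $\Lambda^{-\hat n}$ decay to sum; each $\eps$-neighbourhood on $\cm^n W_j$ pulls back to a set of $\admName$-measure $\sim\eps$ (see Sub-lemma~\ref{sublem:intermediate-step-expansion}), and summing over $n$ up to $\Np\sim w$ would give $\Const w\eps$, not $\Const\eps$. So the argument you sketch would fail precisely in the regime where you invoke it. Fortunately, as explained above, the regime is empty.
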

\begin{proof}
  Assume $|W|\adm < \delta$ and fix $\largew > 0$ sufficiently large.
  Assume first that $W\subset\{w\le\largew\}$.  Then there exists $N_{*}
  = \Const\largew$ so that $\Np(x) < N_{*}$ for any $x\in W$.  Thus:
  \begin{align*}
    \Leb_{W}( \bar r^{*}_{W}(x) < \eps)\le \sum_{n = 0}^{N_{*}-1}
    \Leb_{W}(r^{*}_{W,n}(x) < \eps).
  \end{align*}
  Lemma~\ref{lem:r-star-vs-r} then implies that
  \begin{align*}
    \Leb_{W}(\bar r_{W}^{*}(x) < \eps)\le \sum_{n = 0}^{N_{*}-1}\Leb_{W}(r_{W,n}(x) < C\eps).
  \end{align*}
 Now arguing as in the proof of
  Lemma~\ref{lem:transient-growth-control}, we conclude
  that~\eqref{eq:transient-growth-r-star} holds in this first case.

  Assume now that $W\cap\{w > \largew\}\ne\emptyset$, then if $\delta$
  is sufficiently small and $\largew$ sufficiently large, we conclude
  by Lemma~\ref{p_largeEnergiesFUM}\ref{i_boundOnExcursion} that for
  any $x\in W$ and any $0\le n < \Np(x)$,
  $\cm^{n}x\in\{w\ge \largew/2\}$.  First of all notice that
  Lemma~\ref{LmSingF} and the construction of $\cEs_{n}$ guarantees
  that $\cEs_{n}\cap\sing+ = \emptyset$ unless $n = 1$.  By Lemma~\ref{LmSingF}(d)
  $\sing+_{\indCell}$ is compact for $\indCell > 0$.
    Therefore for large enough $\largew$,
  the only possible curve of $\sing+$ that intersects with
  $\cEs_{0}\cap\{w\ge\largew/2\}$ is $\sing+_{0}$, but
  $\sing+_{0}\subset\partial\cEs_{0}$; we conclude that
  $\cEs_{0}\cap\{w\ge\largew/2\} \cap \sing+ = \emptyset$.  We thus
  proceed as in the proof of Lemma~\ref{lem:transient-growth-control}:
  If $\largew$ is sufficiently large and $\delta$ sufficiently small,
  by Lemma~\ref{l_largeCell}\ref{i_almostFinite-large}, $W$ intersects
  at most two cells $\cEs_{n}$; such cells partition $W$ in (at most)
  two subcurves $W_{1}$ and $W_{2}$.  Then
  \begin{align*}
    \Leb_{W}(\bar r^{*}_{W}(x) < \eps)&\le
    \Leb_{W}(\bar r^{*}_{W_{1}}(x) < \eps)+
    \Leb_{W}(\bar r^{*}_{W_{2}}(x) < \eps).
  \end{align*}
  Notice that $\cm^{n}W_{i}$ will belong to only one cell
  $\cEs_{\indCell}$ for any $n$ involved in the definition of
  $\bar r^{*}_{W_{i}}$.  By the argument above, we gather that $\bar
  r^{*}_{W_{i}} = \bar r_{W_{i}}$. Now we conclude arguing as
  in the proof of Lemma~\ref{lem:transient-growth-control}.
\end{proof}

\subsection{Size    of invariant manifolds.}\label{SSInvMan}
Recall that a stable curve $W$ is  a homogeneous stable
manifold if $|\cm^{n}W|\adm\to 0$ as $n\to\infty$ and $\cm^{n}W$
belongs to a single homogeneity strip for any $n\ge 0$. Recall also
the corresponding definition for unstable manifolds.  Given
$x\in \csp$, we denote with $\Ws(x)$ (\resp $\Wu(x)$) the maximal
homogeneous stable (\resp unstable) manifold containing $x$ (or
$\emptyset$ if such manifold does not exists).  Conventionally we
consider such curves without the endpoints.  We now give a convenient
characterization of $\Ws(x)$ and $\Wu(x)$. The construction closely
follows~\cite[Section 4.11]{ChM}, and we refer the reader to that
section for additional details.  For
$x\in\csp\setminus\hsing{-\infty}$, we denote with $Q_{-n}(x)$ the
connected component of the open set $\csp\setminus\hsing{-n}$ that
contains $x$.  Naturally, $Q_{-n}(x)\supset Q_{-(n+1)}(x)$ for any
$n$. Moreover $\overline{Q_{-n}}(x)$ is compact for any $n$
sufficiently large, possibly depending on $x$.\footnote{ This holds
  since, for $n$ sufficiently large (\eg
  $n > \Np(x)+\Np(\cm^{\Np(x)}(x))$), the set $\cm^{\Np(x)}Q_n(x)$ is
  contained in some fundamental domain $\Dom_{m}$, and such sets are
  bounded (see \eg~\eqref{Per-W}).}  Let
$\widetilde\Wu(x) = \bigcap_{n\ge1} \overline{Q_{-n}(x)}$.  Using
compactness of $\overline{Q_{-n}(x)}$ and Lemma~\ref{l_S-dense} one
can show that $\widetilde\Wu(x)$ is a compact unstable curve. It then
follows that $\Wu(x)$ is equal to $\widetilde\Wu(x)$ minus the
endpoints.  A completely similar construction can be carried over for
$\Ws(x)$. 

If $\Wu(x) = \emptyset$ we define $\ru(x) = 0$. Otherwise, $x$
subdivides $\Wu(x)$ in two subcurves; we denote with $\ru(x)$ the
$\admName$-length of the shortest of such subcurves.  Define $\rs(x)$
similarly.

We now obtain lower bounds for $\rs$ and $\ru$. In order to
do so we  introduce some notation.  Given $x\in\csp$, define
the functions $E^{\pm}:\csp\to\reals$ so that if
$x\in\homo_{k}\cap\cell^{\pm}_{\indCell}$, then
$E^{\pm}(x) = (\indCell+1)(k^{2}+1)$. More precisely
\begin{align*}
  E^{\pm}(x) &= \sum_{k}(k^{2}+1)\chi_{\homo_{k}\cap\cell^{\pm}_{\recoPrivate}}+\sum_{k,\indCell}(k^{2}+1)(\indCell+1)\chi_{\homo_{k}\cap\cell^{\pm}_{\indCell}}(x),
\end{align*}
where $\chi$ denotes the indicator function of the set written as its
subscript.
\begin{lem}
  $E^{\pm}$ controls the contraction and expansion of stable and
  unstable vectors by $d\cm$ as follows:
  \begin{subequations}
    \label{eq:function-E}
    \begin{align}
    \label{FunE1}
      C\inv E^{-}(\cm x) &< \frac {\|d\cm
                           v^{\unstable}\|}{\|v^{\unstable}\|} < C
                           E^{-}(\cm x)& \fa
                                         x\in\csp\setminus\singForward{}, v^{\unstable}\in\coneu_{x}\\
                                         \label{FunE2}
      C\inv E^{+}(\cm\inv x) &< \frac {\|d\cm\inv
                               v^{\stable}\|}{\|v^{\stable}\|} < C
                               E^{+}(\cm\inv x)& \fa
                                                 x\in\csp\setminus\singBackward{}, v^{\stable}\in\cones_{x}.
    \end{align}
  \end{subequations}
\end{lem}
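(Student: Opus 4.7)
My plan is as follows. By the involution $\invo$ of Section~\ref{SSInvolution}, $(d\invo)_*$ sends $\cones$ to $\coneu$ and conjugates $\cm$ to the inverse Poincar\'e map of the time-reversed Fermi--Ulam model, which in turn interchanges the sets $\cell^+$ and $\cell^-$. Hence \eqref{FunE2} reduces to \eqref{FunE1} applied to the reversed system, and I need only prove \eqref{FunE1}.

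For \eqref{FunE1}, fix $x \in \csp \setminus \singForward{}$ and $v^{\unstable} \in \coneu_x$; set $x' = \cm x$ and suppose $x' \in \homo_k \cap \cell^-_\indCell$, so that $x \in \cell^+_\indCell$. The main ingredient will be the explicit expansion formula in the auxiliary metric $|\cdot|_*$ of~\eqref{StarMetric}. By~\eqref{e_expansionAuxMetricA} and the identity $(\prpslope)' = \popslope/(1+\tau\popslope)$ from~\eqref{e_slopeEvolution},
\begin{align*}
  \frac{|d\cm v^{\unstable}|_*}{|v^{\unstable}|_*}
  = 1 + \frac{2\curv'}{(\prpslope)'\, w'}
  = 1 + \frac{2\curv'\tau}{w'} + \frac{2\curv'}{\popslope\, w'}.
\end{align*}
Since $v^{\unstable} \in \coneu_x$ forces $\prpslope \ge 0$, one has $\popslope = \prpslope+2\curv/w \ge 2\cK/w$; thus the right-hand side is comparable, up to uniform constants, to $1 + \tau/w' + w/w'$. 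Passing from $|\cdot|_*$ to either the $\admName$-metric or the Euclidean norm introduces only the factor $\alpha(x')/\alpha(x)$, which lies in $[c,C]$ for uniform $c,C>0$ in view of the one-step estimate $|w-w'| \le 2\slopeb$ given by~\eqref{e_boundFreeFlight}.

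When $\indCell \ne \recoPrivate$, the desired bounds follow directly from three observations: \eqref{e_defCell} gives $\tau \sim \indCell+1$; the definition of $\homo_k$ in Section~\ref{s_homo} gives $1/w' \sim k^2+1$; and $w/w' \le 1 + 2\slopeb/w' \lesssim k^2+1$. Combining these yields expansion comparable to $(\indCell+1)(k^2+1) = E^-(x')$ with uniform constants.

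The main obstacle is the recollision case $\indCell = \recoPrivate$, where $\tau$ can be arbitrarily small while $E^-(x') = k^2+1$. Here $x,x'$ lie in the bounded regions $\backReco, \reco$ with $w,w' \in (0,\slopeb)$ by Lemma~\ref{l_forwardReco}. The upper bound is immediate, since $w + \tau \le \Const$ forces $w/w' + \tau/w' \le \Const/w' \lesssim k^2+1$. For the lower bound I plan to decompose $\backReco$ into (i) the region where $w$ is bounded below by a fixed positive constant, on which $w/w' \gtrsim 1/w' \sim k^2+1$ and the lower bound holds by the same argument as above; and (ii) a small neighborhood of $\xc$, on which Lemma~\ref{l_continuityAtxc} forces $w'$ to be bounded below as well, so that $k$ (and hence $E^-(x')$) stays uniformly bounded and the trivial inequality $|d\cm v^\unstable|_* \ge |v^\unstable|_*$ supplies the required lower bound. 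This completes the proof of~\eqref{FunE1}.
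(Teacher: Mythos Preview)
Your recollision case contains a genuine error. In step~(ii) you invoke Lemma~\ref{l_continuityAtxc} to conclude that $w'$ is bounded \emph{below} when $x$ is near $\xc$, but that lemma says the opposite: if $x\in B(\xc,\delta)$ then $\cm x\in B(\xc,\eps)$, so $w'$ is close to $0$. Hence $k$, and with it $E^-(x')=k^2+1$, can be arbitrarily large in this region, and the trivial inequality $|d\cm v^\unstable|_*\ge |v^\unstable|_*$ does not deliver the required lower bound.

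There is a second gap upstream. From $\popslope\ge 2\cK/w$ you correctly get $2\curv'/(\popslope w')\le C\,w/w'$, which yields the \emph{upper} estimate expansion $\le C(1+\tau/w'+w/w')$. But this does not give a lower bound of the same form: since $v^\unstable\in\coneu_x$ allows $\prpslope$ (hence $\popslope$) to be arbitrarily large, the term $2\curv'/(\popslope w')$ can be arbitrarily small, and the only lower bound you have is $1+2\cK\tau/w'$ (from $(\prpslope)'\le 1/\tau$). Your step~(i) therefore fails as written, because it relies on the $w/w'$ contribution for the lower bound. Note also that the split ``$w\ge c$'' versus ``near $\xc$'' does not cover all of $\backReco$ (points near $(1,0)$ have $w$ small but are far from $\xc$).

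The paper handles the recollision lower bound by quoting~\eqref{e_expansionAdm-II}, whose proof (via Lemma~\ref{lem:vertical-unstable}) shows that when $w'$ is small the pre-collisional $p$-slope $(\prpslope)'$ at $x'$ is bounded above, so~\eqref{e_expansionAuxMetricA} gives expansion $\ge c/w'\sim c(k^2+1)$ directly. In your framework the correct dichotomy would be on $\tau$ rather than on $w$: if $\tau$ is bounded below (in particular near $\xc$, where $\tau\to 1$), then $1+2\cK\tau/w'$ already dominates $k^2+1$; the region where $\tau$ is small is a neighbourhood of $(1,0)$ in $\backReco$, and there $\cm x$ lies near $(0,\slopeb)$, so $w'$ is bounded below and $E^-(x')$ is bounded.
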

\begin{proof}
  Of course it suffices to show\eqref{FunE1}, then \eqref{FunE2}
 follows from the properties of the involution.  If $\cm x\in\reco$,
  then the lower bound follows from~\eqref{e_expansionAdm-II} and the
  upper bound follows from~\eqref{e_expansionAuxMetricA} and
  Corollary~\ref{CrHESqueeze}\ref{i_lower-bound-prpslope}.  On the
  other hand, suppose $\cm x\nin\reco$. If $w$ is large, then our estimates
  follow from Corollary~\ref{CrHESqueeze}
  and~\eqref{e_expansionAuxMetricA}. If $w$ is small,
  Lemma~\ref{LmPSlopeC0}(b) and~\eqref{e_expansionAuxMetricA} yield
  the desired estimate.
\end{proof}

Given $x\in\csp$ and $n\in\bZ$, we denote with $\ds(x,\hsing n)$
(\resp $\du(x,\hsing n)$) the length (in the $\admName$-metric) of the
shortest\footnote{ The existence of such a curve follows from the fact
  that the stable (\resp unstable) cone is closed and that the
  singularity set is closed. } stable (\resp unstable) curve which
connects $x$ with $\hsing n$.

For $x\in\csp$, let $\Lambda^{\unstable}_{n}(x)$ be the minimal
expansion of unstable vectors by $d\cm^{n}|_{x}$. Similarly, let
$\Lambda^{\stable}_{n}(x)$ be the minimal expansion of stable vectors
by $d\cm^{-n}|_{x}$.  Notice that there exists $\underline\Lambda >
0$: so that for any $n > 0$ and $x\in\csp$
\begin{align}\label{eq:lambda-lowerbar}
  \Lambda_{n}^{\stable}(x) > \underline\Lambda, \quad
  \Lambda_{n}^{\unstable}(x) > \underline\Lambda.
\end{align}

Moreover, by definition, for any $0 < m < n$:
\begin{align*}
  \Lambda^{\unstable}_{n}(x)&\ge\Lambda^{\unstable}_{m}(x)\Lambda^{\unstable}_{n-m}(\cm^{m}x)&
  \Lambda^{\stable}_{n}(x)&\ge\Lambda^{\stable}_{m}(x)\Lambda^{\stable}_{n-m}(\cm^{-m}x).
\end{align*}
Hence  by~\eqref{eq:function-E}, there exists $c > 0$ so that for any
$n\ge 1$
\begin{subequations}
\begin{align}\label{eq:lambda-E}
  \Lambda^{\unstable}_{n}(x)&\ge c E^{-}(\cm
                              x)\Lambda^{\unstable}_{n-1}(\cm x),\\
  \Lambda^{\stable}_{n}(x)&\ge c E^{+}(\cm\inv
                            x)\Lambda^{\stable}_{n-1}(\cm\inv x).
\end{align}
\end{subequations}

\begin{lem} \label{PrSize}%
  For any $L > 0$ there exists a constant $c > 0$ such that
  \begin{align*}
    \rs(x)&\geq \min\{L,c \inf_{n > 0} \Lambda_{n}^{\stable}(\cm^{n}x) \ds(\cm^{n}x, \hsing{-1})\},\\
    \ru(x)&\geq \min\{L,c \inf_{n > 0} \Lambda_{n}^{\unstable}(\cm^{-n}x) \du(\cm^{-n}x, \hsing{1})\}.
  \end{align*}
\end{lem}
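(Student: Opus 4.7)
The stable inequality follows from the unstable one by applying the involution $\invo$ of Section~\ref{SSInvolution}, which conjugates $\cm$ to the time-reversed collision map, interchanges $\Wu\leftrightarrow\Ws$ and $\coneu\leftrightarrow\cones$, and correspondingly swaps $\hsing n\leftrightarrow\hsing{-n}$; I therefore focus on $\ru(x)$ and assume $\ru(x)<L$, since otherwise the conclusion is trivial.  Let $V\subset \Wu(x)$ be the shorter of the two sub-arcs of $\Wu(x)$ emanating from $x$, so $|V|\adm=\ru(x)$, and let $p$ be its other endpoint.  The characterization $\Wu(x)=\bigcap_{n\ge 1}\overline{Q_{-n}(x)}$ (with endpoints removed) recalled immediately before the lemma implies that, for every sufficiently large $n$, $\partial Q_{-n}(x)\subset\hsing{-n}$ meets $V$ at some point of $\admName$-distance at most $\ru(x)$ from $x$.

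I choose $n_0\ge 1$ to be minimal such that some component $S\subset\hsing{-n_0}\setminus\hsing{-n_0+1}$ intersects $V$ at a point $q$ of $\admName$-distance at most $2\ru(x)$ from $x$.  The inductive definition~\eqref{e_definitionHSing} of $\hsing{-n_0}$ shows that every fresh component is of the form $\cm^{k}S_0$ for some $0\le k\le n_0$ and some primitive singularity curve $S_0\subset\sing{-1}\cup\hhs$, which under the involution identification is the set denoted $\hsing{1}$ in the statement.  Minimality of $n_0$ guarantees that the sub-arc $V'$ of $V$ from $x$ to $q$ does not meet $\hsing{-n_0+1}$, so $\cm^{-j}$ is smooth on $V'$ for each $0\le j\le n_0$.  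By forward invariance of $\coneu$, the image $\cm^{-n_0}V'$ is an unstable curve joining $\cm^{-n_0}x$ to $\cm^{-n_0}q\in S_0$; combining the definition of $\Lambda^{\unstable}_{n_0}(\cm^{-n_0}x)$ with the uniform distortion control of Lemma~\ref{l_OneItFHat} and Proposition~\ref{p_propertiesAdm}, its $\admName$-length is at most $c_1\inv\ru(x)/\Lambda^{\unstable}_{n_0}(\cm^{-n_0}x)$ for a uniform $c_1>0$.  Consequently
\[
\du\bigl(\cm^{-n_0}x,\hsing{1}\bigr)\le c_1\inv\frac{\ru(x)}{\Lambda^{\unstable}_{n_0}(\cm^{-n_0}x)},
\]
and since $n_0$ is one of the indices over which we take the infimum, rearranging yields $\ru(x)\ge c\inf_{n>0}\Lambda^{\unstable}_n(\cm^{-n}x)\du(\cm^{-n}x,\hsing{1})$ with $c=c_1$.

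The main obstacle lies in justifying the factorization $S=\cm^{k}S_0$ with a single primitive $S_0$; a priori infinitely many components of $\hsing{-n_0}$ may accumulate near $p$, in particular near the singular fixed point $\xc$ where the cells $\cell^{-}_\indCell$ accumulate (Lemma~\ref{l_singularityStructure}(e)).  I resolve this by choosing $L$ small enough that Lemma~\ref{l_continuityAtxc} forces $V$ to avoid a fixed neighborhood of $\xc$, and then invoking Lemma~\ref{l_finitelyManySectors} to reduce the question to finitely many components of $\hsing{-n_0}$ near $p$, after which the inductive form of~\eqref{e_definitionHSing} produces the factorization unambiguously.
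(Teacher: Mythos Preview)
Your approach has a genuine gap in the identification of the target set.  You correctly observe that the endpoint $q$ of $V$ lies in $\hsing{-n_0}$ for some minimal $n_0$, and that the ``fresh'' curves in $\hsing{-n_0}$ are of the form $\cm^{k}S_0$ with $S_0\subset\sing{-1}\cup\hhs$.  However, the set $\sing{-1}\cup\hhs$ is \emph{not} $\hsing{1}$.  By~\eqref{e_definitionHSing}, $\hsing{1}=\sing{1}\cup\hhs\cup\cm^{-1}(\hhs\setminus\sing{-1})$, which consists of \emph{forward} singularities (stable curves) and homogeneity lines; by Lemma~\ref{l_localSing}, $\sing{-1}\setminus\sing0$ consists of \emph{unstable} curves and is not contained in $\hsing{1}$.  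So when your pullback $\cm^{-k}V'$ terminates on $\sing{-1}$, you do not get a bound on $\du(\cm^{-k}x,\hsing{1})$, and the displayed inequality you derive is unsupported.  The phrase ``under the involution identification'' does not help: the involution conjugates $\cm$ to the collision map of the \emph{reversed} wall motion, so it sends $\sing{-1}$ for $\cm$ to $\sing{1}$ for a \emph{different} map, not for $\cm$ itself.

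The paper circumvents exactly this issue.  Rather than working with the actual arc $V\subset\Wu(x)$ (whose endpoint lies on the backward set $\hsing{-\infty}$), it first passes to time $-n$, extends $\cm^{-n}\Wu(x)$ as an admissible unstable curve out to $\partial Q_{n}(\cm^{-n}x)\subset\hsing{n}$ (the \emph{forward} set), and then pushes the extension back to time $0$, where it stays within $\eps$ of $\Wu(x)$.  Because the endpoint of $W_{-n}$ now sits on $\hsing{n}$, the inductive structure~\eqref{e_definitionHSing} of the forward singularity set produces an index $m\in[1,n]$ with $W_{-m}$ terminating on $\hsing{1}$, and the desired inequality follows via the distortion bounds.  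In other words, the extension at time $-n$ is precisely what converts a backward obstruction into a forward one; your argument never makes this switch.

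A secondary issue: in your last paragraph you propose to ``choose $L$ small enough'' to keep $V$ away from $\xc$, but the statement is for \emph{every} $L>0$ (with $c=c(L)$), so you are not free to restrict $L$.  Moreover, the accumulation near $\xc$ is not the real obstacle; even away from $\xc$ the set-identification error above persists.
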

\begin{proof}
  The proof of the lemma is a combination of the arguments given
  in~\cite[Lemma 4.67, (4.61), Exercise 5.19 and (5.58)]{ChM}.

  Let us
  prove the statement for $\ru$ (the statement for $\rs$ follows as
  usual by the properties of the involution).  We may further assume
  that $x\in\csp\setminus\hsing{-\infty}$ (otherwise the right hand
  side of the inequality is $0$ and the statement holds trivially). As
  before, for any $n$, we let $Q_{-n}(x)$ be the connected component
  of $\csp\setminus\hsing{-n}$ containing the point $x$; clearly
  $Q_n(\cm^{-n}x) = \cm^{-n}(Q_{-n}(x))$ is the connected component of
  $\csp\setminus\hsing{n}$ containing the point $\cm^{-n}x$.

  Let $n^{*}$ be so that $\overline{Q_{-n^{*}}(x)}$ is compact. 
  Choose $w^{*}$ so that
  $\overline{Q_{-n^{*}}(x)}\subset\{w\le w^{*}\}$.  Let us now fix
  $\eps > 0$ and choose $n > n^{*}$ so that $\overline{Q_{-n}(x)}$ is
  contained in an Euclidean $\eps/w^{*}$-neighborhood of $\Wu(x)$.

By construction
  $\cm^{-n}\Wu(x)\subset Q_{n}(\cm^{-n}x)$.  Let $W'_{-n}$ be an
  arbitrary continuation as a mature unstable curve of
  $\cm^{-n}\Wu(x)$ to $\partial Q_{n}(\cm^{-n}x)$.  We further assume
  that $W'_{-n}$ is $\hat K$-\admiss{}\footnote{By
    Corollary~\ref{CrUMLip}, $\cm^{-n}\Wu(x)$ is $\hat K$-\admiss{}
    and we can choose our
    continuation to satisfy this requirement}.  Then
  $W' = \cm^{n}(W'_{-n})$ is an unstable continuation of $\Wu(x)$ that
  terminates on $\partial Q_{-n}(x)$. It is divided by the point $x$
  into two subcurves; denote with $W$ the shortest one (in the
  $\admName$-metric).  By our construction
  and~\eqref{e_defAdmE} we gather that $\ru(x)\ge |W|\adm-\Const
  \eps$.  Since $\eps$ is arbitrary, it suffices to show that
  \begin{align*}
    |W|\adm
    &\geq \min\{L,c \inf_{n > 0} \Lambda_{n}^{\unstable}(\cm^{-n}x) \du(\cm^{-n}x, \hsing{1})\}.
  \end{align*}

  The above bound trivially holds if $|W|\adm \ge L$. Let us thus
  assume that $|W|\adm < L$ and for $0\le m\le n$ let
  $W_{-m} = \cm^{-m}W$.  Since $W_{-n}$ terminates on $\hsing{n}$,
  there exists $m\in[1,n]$ so that $W_{-m}$ joins $\cm^{-m}x$ with
  $\hsing{1}$.  We thus gather
$$    |W|\adm= \frac{|W|\adm}{|W_{-m}|\adm}|W_{-m}|\adm
           \ge\Const \Lambda_{m}^{\unstable}(\cm^{-m}x) |W_{-m}|\adm$$
$$           \ge\Const \Lambda_{m}^{\unstable}(\cm^{-m}x)\du(\cm^{-m}x,\hsing1)
$$
  where we used distortion estimates obtained in
  Corollary~\ref{cor:distortion}.
\end{proof}
The statement we are about to prove below
(Lemma~\ref{lem:kolya-patch}) is the analog of~\cite[Exercise
5.69]{ChM}, but there are some differences which are due to two
separate issues.  First of all the statement of that exercise is
incorrect: the strategy presented in~\cite[Section 5.5]{ChM} has a gap
and needs to be corrected (see~\cite{kolya-patch} for a proposed
solution).  Secondly, the argument would need a non-trivial adaptation
to our specific case because of the nature of our singularities
(presence of corner points, non-compactness).  We thus proceed to give
in detail the statement and the proof of what is needed for our
analysis.  In order to simplify our notation we denote, as usual,
$x_{n} = \cm^{n}x$.
\begin{lem}\label{lem:kolya-patch}
  There exists a constant $C > 0$ so that
  \begin{subequations}
    \begin{enumerate}
    \item for any mature unstable curve $W\subset\csp$, any $n\ge 2$ and
      any $x\in W\setminus\sing n$:
      \begin{align}
        \Lambda^{\stable}_{n}(x_{n})\ds(x_{n},\hsing{-1}) \ge C\min\{
        &\Lambda^{\stable}_{n}(x_{n})r_{W,n}(x),\\\notag
        &\Lambda^{\stable}_{n-1}(x_{n-1})r^{*}_{W,n-1}(x),\\\notag
        &\Lambda^{\stable}_{n-2}(x_{n-2})r_{W,n-2}(x)\}.
      \end{align}
    \item for any unstable curve $W\subset\csp$ that is the image of a
      mature unstable curve and any
      $x\in W\setminus\sing 1$:
      \begin{align}
        \Lambda^{\stable}_{1}(x_{1})\ds(x_{1},\hsing{-1}) \ge C\min\{
        &\Lambda^{\stable}_{1}(x_{1})r_{W,1}(x), r^{*}_{W}(x), r_{W}(x)^{4}\}.
      \end{align}
    \end{enumerate}
  \end{subequations}
\end{lem}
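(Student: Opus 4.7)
The plan is to analyze a shortest stable curve $\gamma$ from $x_{n}$ to a point $y_{n} \in \hsing{-1}$ realizing $\ell := \ds(x_{n}, \hsing{-1})$, and to pull $\gamma$ back by one or two iterates of $\cm^{-1}$. The three terms in the minimum correspond to the number of pullbacks ($0$, $1$, or $2$) needed to identify $y_{n}$'s origin as an endpoint of the relevant $H$-component at some earlier time. Throughout, the semigroup identity $\Lambda^{\stable}_{n}(x_{n}) = \Lambda^{\stable}_{j}(x_{n})\,\Lambda^{\stable}_{n-j}(x_{n-j})$ combined with the lower bound $|\cm^{-j}\gamma|\adm \ge c\,\Lambda^{\stable}_{j}(x_{n})\,\ell$ coming from uniform stable expansion (Proposition~\ref{p_propertiesAdm}\ref{i_expansionAdm}) will turn each bound of the form $|\cm^{-j}\gamma|\adm \ge c\,r_{?,n-j}(x)$ into the corresponding lower bound on $\Lambda^{\stable}_{n}(x_{n})\,\ell$.

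Decomposing $\hsing{-1} = \sing{-1} \cup \hhs \cup \cm(\hhs \setminus \sing{1})$, I would argue by cases. \emph{Case (i):} if the closest endpoint of the $H$-component $W_{n}(x)$ to $x_{n}$ already lies on an $\hsing{-1}$ curve, then transversality of the unstable cone containing $\cm^{n}W$ and the stable cone containing $\gamma$, together with the uniform cone bounds from~\eqref{e_slopeCollision}, gives $\ell \ge c\,r_{W,n}(x)$ directly, producing the first term. \emph{Case (ii):} if $y_{n} \in \sing{-1}$ — an unstable curve by Lemma~\ref{l_localSing} — then $y_{n-1} = \cm^{-1}y_{n} \in \sing{0} = \partial\csp$, and $\gamma_{n-1} = \cm^{-1}\gamma$ is a stable curve from $x_{n-1}$ terminating on the phase-space boundary. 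Relating $|\gamma_{n-1}|\adm$ to the distance from $x_{n-1}$ to $\partial\csp$ through the $H$-component $W_{n-1}(x)$ requires the refined quantity $r^{*}$, because the forward singularity curves $\singForward{\indCell}$ accumulate at $\xc$ with rate $\indCell^{-1/2}$ by Lemma~\ref{LmSingF}(e); the weight $N^{3/2}$ in the definition of $r^{*}$ is precisely calibrated to this accumulation, yielding $|\gamma_{n-1}|\adm \ge c\,r^{*}_{W,n-1}(x)$ and hence the second term.

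\emph{Case (iii):} if $y_{n} \in \hhs \cup \cm(\hhs \setminus \sing{1})$, then $y_{n}$ is a homogeneity cutoff rather than a genuine singularity, and $\cm^{-1}$ (or $\cm^{-2}$) carries it to an actual singularity curve bounding $W_{n-2}(x)$. After two pullbacks the geometric transversality argument from case (i) applied at time $n-2$ gives $|\gamma_{2}|\adm \ge c\,r_{W,n-2}(x)$, producing the third term. Part (b), with $n=1$, handles cases (i) and (ii) identically; the obstruction in case (iii) is that there is no time $n-2$, but since $W$ is the image of a mature unstable curve we can apply Lemma~\ref{l_globalExpansion}(a) in reverse: the H\"older-type bound $|\cm V|\adm \le C\,|V|\adm^{1/4}$ inverts to a fourth-power lower bound on the preimage length, producing the $r_{W}(x)^{4}$ term and reflecting the non-uniform hyperbolicity at low energies.

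The main obstacle I expect will be the geometric analysis in case (ii): tracking how the endpoint of $\gamma_{n-1}$ on $\partial\csp$ relates to the $H$-component $W_{n-1}(x)$, and verifying that the $N^{3/2}$ weighting in $r^{*}$ is both necessary and sufficient to absorb the density of forward singularity curves near $\xc$. This is precisely the point where our argument must depart from the compact billiard case of~\cite{ChM, kolya-patch}, and requires the detailed structural information on $\singForward{}$ gathered in Lemma~\ref{LmSingF} together with the continuity result at $\xc$ from Lemma~\ref{l_continuityAtxc} to handle the accumulation of cells near the fixed point.
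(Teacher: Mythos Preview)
Your overall strategy---pull back a shortest stable curve $\gamma$ realizing $\ds(x_n,\hsing{-1})$ by $0$, $1$, or $2$ steps and relate each pullback to the corresponding $r$-quantity---is exactly the paper's approach. However, your case analysis assigns the three terms to the wrong pieces of $\hsing{-1}$, and your explanation of the $N^{3/2}$ weight is not the operative mechanism.

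In the paper's argument the decomposition is $\hsing{-1}=\hhs\cup\cm(\hhs\setminus\singForward{})\cup\singBackward{}$. If the endpoint $z\in\hhs$, no pullback is needed: $|\gamma|\adm\ge\dadm(x_n,\hhs)$ and~\eqref{eq:silly-bound-on-r} gives $r_{W,n}$ immediately (first term). If $z\in\cm(\hhs\setminus\singForward{})$, \emph{one} pullback lands on $\hhs$ and yields $r_{W,n-1}\ge r^{*}_{W,n-1}$ (second term). So your case~(iii) is wrong: the homogeneity-strip boundaries never require two pullbacks. The genuine singularity case $z\in\singBackward{}$ is the delicate one and splits further. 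After one pullback, $\gamma'=\cm^{-1}\gamma$ joins $x_{n-1}$ to a point $z'\in\singForward{}$. If $x_{n-1}\in\reco$ and $z'$ lies on the vertical boundary $\{0\}\times[0,\slopeb]$, the mature-cone transversality you invoke fails (unstable curves in $\reco$ can be nearly vertical), and one must pull back once more to $\backReco$, where transversality with $\{\ct=1\}$ is recovered; this is where $r_{W,n-2}$ (third term) arises. In the remaining sub-case, $r^{*}$ enters---but not because the curves $\singForward{\indCell}$ accumulate at rate $\indCell^{-1/2}$. The point is that $\gamma'$ is stable and $\singForward{\indCell}$ is also stable with slope $\curv+O(\indCell^{-3/2})$ by Lemma~\ref{LmSingF}(f), so the \emph{angle} between $\gamma'$ and $\singForward{\indCell}$ is only $O(\indCell^{-3/2})$; hence $\dadm(x_{n-1},\singForward{\indCell})\le C\indCell^{-3/2}|\gamma'|\adm$, which is precisely what the weight in $r^{*}$ absorbs. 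Your part~(b) sketch is essentially right: the $r_W(x)^4$ comes from applying the square-root bound of Remark~\ref{r_betterGlobalExpansion} twice, once to the stable curve $\gamma'\subset\reco$ and once to the unstable curve $\cm^{-1}W\subset\backReco$.
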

\begin{proof}
  Recall that $\hsing{-1}$ is a closed set (see Remark \ref{RkSCClosed}).
  In particular
  $\ds(x_{n},\hsing{-1})$ is attained as $|V|\adm$, where $V$ is a
  stable curve which joins $x_{n}$ to some point $z\in\hsing{-1}$.  By
  definition (see~\eqref{e_definitionHSing}) we have:
  \begin{align*}
    \hsing{-1} = \hhs \cup \cm(\hhs\setminus\singForward{}) \cup \singBackward{}.
  \end{align*}
  Hence there are three possibilities:
  \begin{enumerate}
  \item $z\in\hhs$;
  \item $z\in\cm(\hhs\setminus\singForward{})$;
  \item $z\in\singBackward{}$.
  \end{enumerate}
We begin with case (a). By definition it holds that
  $|V|\adm\ge \dadm(x_{n},\hhs)$. Using~\eqref{eq:silly-bound-on-r} we
  thus conclude that $\ds(x_n,\hsing{-1})\ge c r_{W,n}(x)$.

In cases (b) and (c) we consider $V' = \cm\inv V$.
  Then $V'$ is a weakly homogeneous stable curve and,
  by~\eqref{eq:function-E}:
  \begin{align*}
    |V|\adm\ge \frac{c |V'|\adm}{E^{+}(x_{n-1})}.
  \end{align*}
  In case (b), $V'$ links $x_{n-1}$ to some point $z'\in\hhs$,
  therefore $|V'|\adm\ge\dadm(x_{n-1},\hhs)$ and
  using~\eqref{eq:lambda-E} we gather that
\begin{align*}
    \Lambda^{\stable}_{n}(x_{n})\ds(x_{n},\hsing{-1})&\ge c\Lambda^{\stable}_{n-1}(x_{n-1})\dadm(x_{n-1},\hhs).
  \end{align*}
  Using again~\eqref{eq:silly-bound-on-r} we thus conclude that
  \begin{align*}
    \Lambda^{\stable}_{n}(x_{n})\ds(x_{n},\hsing{-1})&\ge c\Lambda^{\stable}_{n-1}(x_{n-1})r_{W,n-1}(x).
  \end{align*}

  Finally, we consider case (c): then $V'$ is a stable curve linking
  $x_{n-1}$ to some\footnote{ Note that $\cm\inv$ is undefined on
    $\singBackward{}$ so we cannot quite say that $z' = \cm\inv z$}
  point $z'\in\singForward{}$.  We consider two possibilities:
  \begin{enumerate}
  \item[$(c')$] $x_{n-1}\in\reco$ and $z'\in\{0\}\times[0,\slopeb]$;
  \item[$(c'')$] otherwise.
  \end{enumerate}
  In case $(c')$, observe that since $V'$ is a stable curve, it is
  increasing, and the assumptions in $(c')$ imply that
  $V'\subset\reco$ (see Lemma~\ref{l_forwardReco}).  We have now to
  deal separately with the case $n = 1$ and $n > 1$. If $n > 1$, 
  consider $V'' = \cm\inv V'$. Observe that $V''\subset\backReco$
  is a stable (once again, increasing) curve, which joins
  $x_{n-2}\in\backReco$ to $z''\in\{\ct = 1\}$.  The expansion of
  $d\cm\inv$ along $V'$ is bounded above\footnote{ Remarkably, the
    geometry still allows us to obtain an upper bound on expansion
    despite the fact that $V''$ is not, a priori, weakly homogeneous}
  by $cE^{+}(x_{n-2})$ (since $x_{n-2}\in\backReco$ and it is the
  lowest point on $V''$).  We conclude that
  \begin{align*}
    |V'|\adm\ge c\frac{|V''|\adm}{E^{+}(x_{n-2})}.
  \end{align*}
  Hence, $|V''|\adm\ge \dadm(x_{n-2},\{\ct = 1\})$.  Now $x_{n-2}$
  cuts $W_{n-2}(x)$ into two subcurves; let $W'_{n-2}(x)$ denote the
  subcurve to the right of $x_{n-2}$; then by definition
  $|W'_{n-2}(x)|\ge r_{W,n-2}(x)$.  Notice that
  $W'_{n-2}(x)\subset\backReco$, thus
  $W'_{n-2}(x)\cap\reco = \emptyset$; Corollary~\ref{CrHESqueeze} then
  implies that we have uniform transversality of $W'_{n-2}(x)$ with any
  vertical line, which allows to conclude that
  \begin{align*}
    \dadm{}(x_{n-2},\{\ct = 1\}) &\ge c|W'_{n-2}(x)|\adm\ge c r_{W,n-2}(x).
  \end{align*}
 Hence in case $(c')$ and if $n > 1$:
  \begin{align*}
    \Lambda^{\stable}_{n}(x_{n})\ds(x_{n},\hsing{-1}) &\ge C
        \Lambda^{\stable}_{n-2}(x_{n-2})r_{W,n-2}(x)\}.
  \end{align*}
  Otherwise if $n = 1$, we need to modify the above argument as
  follows.  Applying Lemma~\ref{l_globalExpansion} (and
  Remark~\ref{r_betterGlobalExpansion}) to the stable curve
  $V'\subset\reco$ we conclude that
  \begin{align*}
    |V'|\adm \ge c |V''|\adm^{2};
  \end{align*}
  Then arguing as before (with $W_{n-2}$ replaced by $\cm\inv W$, that
  is guaranteed to be a mature unstable curve by our assumption), we
  conclude that $|V''|\adm\ge c r_{\cm\inv W}(x_{-1})$.  Applying once
  again Lemma~\ref{l_globalExpansion} (and
  Remark~\ref{r_betterGlobalExpansion} to $\cm\inv W$), we conclude
  that $r_{W}(x)\le \Const r_{\cm\inv W}(x_{-1})^{1/2}$, from which we
  finally conclude that
  \begin{align*}
    \rs(x) = |V'|\adm\ge Cr_{W}(x)^{4}.
  \end{align*}

  We now estimate $|V'|\adm$ in case $(c'').$ We claim
  that
  \begin{align}\label{eq:vprime-lower-bound}
    |V'|\adm \ge C\inf_{N > 0}N^{3/2}d(x_{n-1},\singForward{(N)}).
  \end{align}
  The above holds trivially if $z'\in\singForward{(1)}$. Otherwise,
  there exists $\indCell > 1$ so that $z'\in\singForward{\indCell}$.
  This implies that $V'\subset\cell^{+}_{\indCell'}$ where either
  $\indCell' = \indCell$ or $\indCell' = \indCell+ 1$.  Since
  $\cell^{+}_{\indCell'}$ is bounded if $\indCell' > 1$ (see
  Lemma~\ref{l_singularityStructure}\ref{i_boundedCells}), $V '$
  lies in a region where $w$ is bounded and so the $\admName$-metric
  and the Euclidean metric are equivalent.

  Moreover, the angle between $V'$ and $\singForward\indCell$ is
  bounded above by $C\indCell^{-3/2}$ (see the proof of Lemma
  \ref{lem:geometry}).
  Thus
  $\dadm(x_{n-1},\singForward{\indCell})\le C
  \indCell^{-3/2}|V'|\adm$.  Since
  $\dadm(x_{n-1},\singForward{(\indCell)}) \le
  \dadm(x_{n-1},\singForward{\indCell})$, we obtain
  \eqref{eq:vprime-lower-bound}.

  By Lemma~\ref{l_localSing} $\singForward{(N)}$
  is a union of curves compatible with the cone $\conep$. Moreover,
  since we are in case $(c'')$, $x_{n-1}\nin\reco$ (and thus
  $W_{n-1}(x)\cup\reco = \emptyset$).  Hence by
  Corollary~\ref{CrHESqueeze}, $W_{n-1}$ is uniformly transversal to
  any curve in $\conep$ and we conclude that
  $$\dadm(x_{n-1},\singForward{(N)})\ge \Const
  r_{W_{n-1}(x)}(x_{n-1},\singForward{(N)}).$$ This yields
  \begin{align*}
    |V'|\adm \ge C\inf_{N > 0}N^{3/2}r_{W_{n-1}(x)}(x_{n-1},\singForward{(N)})
    = C r^{*}_{W,n-1}(x).
  \end{align*} Therefore
  \begin{align*}
    \Lambda^{\stable}_{n}(x_{n})\ds(x_{n},\hsing{-1})&\ge C\Lambda^{\stable}_{n-1}(x_{n-1}) r^{*}_{W,n-1}(x)
  \end{align*}
concluding the proof.
\end{proof}
Using the two results bounds above it is possible to obtain lower
bounds on the length of stable (\resp unstable) manifolds passing
through most points on any given unstable (\resp stable) mature
admissible curve.  This is done in the following
corollary, which is the analog to~\cite[Theorems 5.66--5.67, Section
5.12]{ChM}.
\begin{cor}\label{CorSize}
 
  (a) There exists $C > 0$ so that for any \admiss{} mature unstable curve $W\subset\csp$ and
    $\eps > 0$ with the property that for every $x\in W$ we have
    $d\adm(\cm x,\hsing{-1}) > C\eps$, then
    \begin{align*}
      \Leb_{W}(\rs(x)\le\eps) < \Const \eps.
    \end{align*}
  
  (a')
    for any \admiss{} mature unstable curve $W\subset\csp$ that is the
    image of a mature unstable curve and any $\eps > 0$:
    \begin{align*}
      \Leb_{W}(\rs(x)\le\eps) < \Const \eps^{1/4}.
    \end{align*}
  
  (b) for any $\eta > 0$ there exists $k > 0$ so that for any
    \admiss{} mature unstable curve $W\subset\csp$ and $\eps > 0$ with
    the property that for every $x\in W$ we have
    $d_{\alpha}(\cm^{n}x,\hsing{-1}) > \eps$ for any
    $0\le n \le \Np_{k}(x)$; then
    \begin{equation}
      \label{ShortStable} \Leb_W(\rs(x)\leq \eps)\leq\eta\eps.
    \end{equation}
 
 (c) There exists $C > 0$ so that
  for any \admiss{} mature stable curve $W\subset\csp$ and
    $\eps > 0$ with the property that for every $x\in W$ we have
    $d\adm(\cm\inv x,\hsing{1}) > C\eps$, then
    \begin{align*}
      \Leb_{W}(\ru(x)\le\eps) < \Const \eps.
    \end{align*}
  
  (c')
    for any \admiss{} mature stable curve $W\subset\csp$ that is the
    pre-image of a mature stable curve and any $\eps > 0$:
    \begin{align*}
      \Leb_{W}(\ru(x)\le\eps) < \Const \eps^{1/4}.
    \end{align*}
  
  (d) for any $\eta > 0$ there exists $k > 0$ so that for any
    \admiss{} mature stable curve $W\subset\csp$ and $\eps > 0$ with
    the property that for every $x\in W$ we have
    $d_{\alpha}(\cm^{-n}x,\hsing{1}) > \eps$, for any
    $\Np_{-k}(x) < n \le 0$; then
    \begin{align*}
      \Leb_W(\ru(x)\leq \eps)&\leq\eta \eps.
    \end{align*}
  \end{cor}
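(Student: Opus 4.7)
The plan is to combine Lemma~\ref{PrSize} (which gives $\rs(x)\ge \min\{L,c\inf_{n>0}\Lambda^{\stable}_n(x_n)\ds(x_n,\hsing{-1})\}$) with Lemma~\ref{lem:kolya-patch} (which converts each term $\Lambda^{\stable}_n\ds$ into lower bounds involving $r_{W,n}$, $r^{*}_{W,n-1}$, or $r_{W}$ itself) and then invoke the Growth Lemma~\ref{l_growth} together with its transient analogues (Lemmas~\ref{lem:transient-growth-control} and~\ref{lem:transient-growth-control-star}) to bound the resulting sublevel sets. By the involution $\invo$, which interchanges stable and unstable directions and reverses the dynamics, parts (c), (c'), (d) will follow from (a), (a'), (b), so I only need to address the latter three.

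For part (a), fix $\eps < cL$ so that $\rs(x)\le \eps$ forces some $n > 0$ with $\Lambda^{\stable}_n(x_n)\ds(x_n,\hsing{-1}) \le \eps/c$. The hypothesis $\dadm(\cm x,\hsing{-1}) > C\eps$, together with $\Lambda^{\stable}_1\ge \underline\Lambda$, excludes $n=1$ once $C$ is chosen large enough. For $n\ge 2$, Lemma~\ref{lem:kolya-patch}(a) shows that at least one of $\Lambda^{\stable}_n r_{W,n}$, $\Lambda^{\stable}_{n-1} r^{*}_{W,n-1}$, or $\Lambda^{\stable}_{n-2} r_{W,n-2}$ is $\le \Const\eps$. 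The key bookkeeping is to group such $n$ by the induced iteration index $j$ with $\Np_j \le n < \Np_{j+1}$: then $\Lambda^{\stable}_n(x_n)\ge \Const \Lambda^j$ by uniform hyperbolicity of $\cmp$ (equation~\eqref{e_uniformHyperbolicity}), so $r_{W,n} \le \Const \eps/\Lambda^j$, which implies $\bar r_{\cmp^j W_{*}}(\cmp^j x) \le \Const\eps/\Lambda^j$ on the relevant $H$-component $W_{*}$. Applying Lemma~\ref{lem:transient-growth-control} on each $W_{*}$, pulling back via the distortion bound (Corollary~\ref{cor:distortion}), and summing over $H$-components yields the bound $\Const \Leb_W(\hat r_{W,j}(x) < \Const\eps/\Lambda^j)$. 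The Growth Lemma then gives
\begin{align*}
  \Leb_{W}\!\left(\hat r_{W,j} < \Const\eps/\Lambda^j\right)
  \le \Const\,\eps\,\Lambda^{-j}\,\Leb_W(W) + \Const\,\Leb_{W}\!\left(r_W < \Const\,(\theta/\Lambda)^{j}\eps\right),
\end{align*}
and the geometric summability $\sum_j \Lambda^{-j} < \infty$ together with~\eqref{eq:trivial-bound-for-r} yields the $\Const\eps$ bound. The $r^{*}$-terms are treated identically, replacing Lemma~\ref{lem:transient-growth-control} by Lemma~\ref{lem:transient-growth-control-star}.

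Part (a') is the same argument, except that the $n=1$ term is no longer ruled out by hypothesis; instead Lemma~\ref{lem:kolya-patch}(b) contributes three alternatives. The $\Lambda^{\stable}_1 r_{W,1}$ alternative is absorbed into the sum as in (a); the $r^{*}_W \le \Const\eps$ alternative has measure $\le \Const\eps$ by Lemma~\ref{lem:r-star-vs-r}; and $r_W^4 \le \Const\eps$, i.e.\ $r_W \le \Const\eps^{1/4}$, has measure $\le \Const\eps^{1/4}$ by~\eqref{eq:trivial-bound-for-r}. The latter dominates, giving the stated $\eps^{1/4}$ exponent. For part (b), the strengthened hypothesis $\dadm(\cm^n x,\hsing{-1}) > \eps$ for $0\le n\le \Np_k(x)$ forces $\Lambda^{\stable}_n\ds > \underline\Lambda\,\eps$ for those $n$ (after possibly replacing $\eps$ by a fixed multiple to absorb the constant $c$ from Lemma~\ref{PrSize}), so the infimum must be attained at some $n > \Np_k$; the calculation of part (a), restricted to $j\ge k$, then produces the factor $\sum_{j\ge k}\Lambda^{-j} = \Const\,\Lambda^{-k}$, and choosing $k$ large enough that $\Const\,\Lambda^{-k} < \eta$ gives~\eqref{ShortStable}. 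The main obstacle I anticipate is the careful interplay of constants and scalings between Lemmas~\ref{PrSize} and~\ref{lem:kolya-patch}, particularly in ruling out small-$n$ contributions under the sharper hypothesis of (b); this should be resolvable by absorbing the multiplicative constant $c\underline\Lambda$ into a rescaling of $\eps$, at the cost of enlarging $k$ appropriately.
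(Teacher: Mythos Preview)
Your proposal is correct and follows essentially the same route as the paper: combine Lemma~\ref{PrSize} with Lemma~\ref{lem:kolya-patch}, group the resulting terms by the induced-iterate index, apply the transient growth control to pass to $\hat r_{W,m}$, and then sum the Growth Lemma estimates geometrically. The only notable streamlining in the paper is that it uses $r_{W,n}\ge r^{*}_{W,n}$ at the outset to collapse the three alternatives of Lemma~\ref{lem:kolya-patch}(a) into the single quantity $\inf_{n\ge 0}\Lambda^{\stable}_{n}(\cm^{n}x)\,r^{*}_{W,n}(x)$, so that only Lemma~\ref{lem:transient-growth-control-star} is needed rather than both transient lemmas.
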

\begin{proof}
  We prove parts (a), (a') and (b). Parts (c), (c') and
  (d) follow by identical arguments by considering $\cm^{-1}.$
  Combining Lemmata ~\ref{lem:kolya-patch} and ~\ref{PrSize} (with
  $L = 1$) with the estimate $r_{W,n}(x)\ge r_{W,n}^{*}(x)$ we obtain
  \begin{align}\label{eq:lower-bound-rs-temp}
    \rs(x)\ge \min\{1,c\Lambda_{1}^{\stable}(\cm x)\ds(\cm x,\hsing{-1}),C
    \inf_{n\ge0}\Lambda_{n}^{\stable}(\cm^{n}x)r_{W,n}^{*}(x)\}.
  \end{align}
Define
  $C = c\inv\underline\Lambda\inv$
  (recall~\eqref{eq:lambda-lowerbar})  to ensure 
  that if
  $\dadm{}(\cm x,\hsing{-1}) > C\eps$, then
  $c\Lambda_{1}^{\stable}(\cm x)\ds(\cm x,\hsing{-1}) > \eps$.  Then,
  under the assumptions of (a), assuming $\eps < 1$, the only
  possibility for $\rs(x) \le \eps$ is that the third term in the
  right hand side of the above expression is small.  In case of (a'),
  we can apply Lemma~\ref{lem:kolya-patch}(b) to bound the second term
  above and conclude that:
  \begin{align*}
    \rs(x)\ge \min\{1,c r_{W}(x)^4,C \inf_{n\ge0}\Lambda_{n}^{\stable}(\cm^{n}x)r_{W,n}^{*}(x)\}.
  \end{align*}
  Using~\eqref{eq:trivial-bound-for-r}, we then conclude that
  \begin{align*}
    \Leb_{W}(r_{W}(x) < C\eps^{1/4})\le \const\eps^{1/4}.
  \end{align*}
  We are hence left to estimate the measure of points where the third
  term of~\eqref{eq:lower-bound-rs-temp} is small.  Observe that
  if $\Np_{m}$ is not defined on some $x\in W$ for some $m$, then
  $x\in\sing{\infty}$. Since $W\cap\sing\infty$ is countable, the set
  of such $x$'s forms a zero Lebesgue measure set on $W$ and can be
  neglected.  We can thus assume that $\Np_{m}(x)$ is defined for any
  $m$ and we can write, recalling the definition of $\Lambda$
  in~\eqref{e_uniformHyperbolicity}:
  $$
    \inf_{n\ge0}\Lambda_{n}^{\stable}(\cm^{n}x)r_{W,n}^{*}(x)
    =    \inf_{m\ge0}\inf_{\Np_{m}(x)\le n < \Np_{m+1}(x)}\Lambda_{n}^{\stable}(\cm^{n}x)r_{W,n}^{*}(x) $$
    $$\ge \inf_{m\ge0}\Lambda_{\Np_{m}(x)}^{\stable}(\cm^{\Np_{m}(x)}x)
    \min_{\Np_{m}(x)\le n < \Np_{m+1}(x)}\Lambda_{n-\Np_{m}(x)}^{\stable}(\cm^{n}x)r_{W,n}^{*}(x) $$
    $$\ge \Const\inf_{m\ge0}\Lambda^{m}
    \min_{\Np_{m}(x)\le n < \Np_{m+1}(x)}Cr_{W,n}^{*}(x)
    \ge \Const\inf_{m\ge0}\Lambda^{m}
    \bar r^{*}_{W,\Np_{m}(x)}(x).
$$
Hence:
\begin{align*}
  \Leb_{W}(\inf_{n\ge0}\Lambda_{n}^{\stable}(\cm^{n}x)r_{W,n}^{*}(x) <\eps) 
  &\le \sum_{m\ge0}\Leb_{W}(\bar r^{*}_{W,\Np_{m}(x)}(x) < \Lambda^{-m}\eps). 
\end{align*}
Using Lemma~\ref{lem:transient-growth-control-star} and recalling the
definition of $\hat r_{W,m}$ (see \eqref{DefHatR}) we obtain
\begin{align*}
   \sum_{m\ge0}\Leb_{W}(\bar r^{*}_{W,\Np_{m}(x)}(x) < \Lambda^{-m}\eps)&\le
  \sum_{m\ge0}\Leb_{W}(\hat r_{W,m}(x) < C\Lambda^{-m}\eps). 
\end{align*}
Then by the Growth Lemma~\ref{l_growth} we can estimate
\begin{align*}
  \Leb_{W}(\hat r_{W,m}(x) < C\hat\Lambda^{-m}\eps)&\le 
  C\Lambda^{-m}\eps\Leb_{W}W + C\theta^{m}\Lambda^{-m}\eps.
\end{align*}
Summing over $m$ and collecting all the above estimates we get
\begin{align*}
  \Leb_{W}(\inf_{n\ge0}\Lambda_{n}^{\stable}(\cm^{n}x)r_{W,n}^{*}(x) <\eps) 
  &\le C\eps.
\end{align*}
This proves items (a) and (a').

The proof of item (b) is similar to the proof of item (a).  Once again
we can neglect the points $x\in W$ where $\Np_{m}$ is not defined for
some $m$. Next,
\begin{align} \label{RsThree}
  \rs(x) \ge \min\{1&,%
         \min_{1\le n < \Np_{k}(x)}c\Lambda_{n}^{\stable}(\cm^{n} x)\ds(\cm^{n} x,\hsing{-1}),\\
         &C \inf_{n \ge\Np_{k}(x)}\Lambda_{n}^{\stable}(\cm^{n}x)r_{W,n}^{*}(x)\}.
         \notag
\end{align}
Choose $k$ so that $\Const\Lambda^{k} < \eta$. The
 assumption of part (b) implies that
 \begin{align*}
   \min_{1\le n < \Np_{k}(x)}c\Lambda_{n}^{\stable}(\cm^{n}
   x)\ds(\cm^{n} x,\hsing{-1}) \ge \eps
 \end{align*}
so only the last term in \eqref{RsThree} could be small.
On the other hand arguing as in part (a) we gather
 \begin{align*}
  \Leb_W\left(
  \inf_{n \ge\Np_{k}(x)}\Lambda_{n}^{\stable}(\cm^{n}x)r_{W,n}^{*}(x)<\eps\right)
   &\le \sum_{m\ge k}\Leb_{W}(\hat r_{W,m}(x) < C\Lambda^{-m}\eps)\\
   &\le \Const \Lambda^{-k}\eps.
 \end{align*}
completing the proof.
\end{proof}
\subsection{Absolute continuity of the holonomy map.}
In this subsection we discuss regularity properties of the holonomy
map.  Let $W_1, W_2\subset \cspi$ be two mature \admiss{} unstable
curves which are close to each other.  More precisely, fix a small
number $\bd > 0$.  Let $\hol$ be the holonomy map defined
by~\eqref{DefHolonomy} and recall the sets $\Omega_1, \Omega_2$
defined by~\eqref{DomainRangeHol}.  We assume that
\begin{align}\label{eq:W12-closeness}
   \sup_{x_{1}\in\Omega_1} d_\alpha(x_{1}, \hol x_{1})\leq \bd.
\end{align}
Recall moreover the definition of unstable Jacobian~\eqref{Jak} and
that $\Leb_{W}$ denotes the Lebesgue measure induced by the
$\admName$-metric.
\begin{prp}
\label{PrAC1}
{\sc (Absolute Continuity-1)} For $\phi\in L^{1}(W_1)$
\begin{align*}
  \int_{\Omega_1} \phi(x_1) d\Leb_{W_{1}}(x_1) &=\int_{\Omega_2} \phi(\hol^{-1} x_{2}) J(\hol^{-1} x_{2}) d\Leb_{W_{2}}(x_{2}).\end{align*}
\end{prp}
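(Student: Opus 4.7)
The plan is to exploit the commutation $\cmp\circ\hol = \hol\circ\cmp$ which allows one to recognize $\jac{\hol}(x)$ as $1/J(x)$, after which the proposition follows from the ordinary change of variables for the bijection $\hol:\Omega_1\to\Omega_2$. To make this precise, I would first define the iterated holonomies $\hol_n:\cmp^n\Omega_1\to\cmp^n\Omega_2$ by $\hol_n:=\cmp^n\circ\hol\circ\cmp^{-n}$. Since $\hol x$ lies on $\Ws(x)$ and stable manifolds are $\cmp$-forward invariant, $\hol_n y$ lies on $\Ws(y)$ for every $y\in\cmp^n\Omega_1$; the uniform contraction of stable curves under $\cmp$ in the $\admName$-metric (Proposition~\ref{p_propertiesAdm}\ref{i_uniformHyperbolicity}) yields $d\adm(y,\hol_n y)\le \Lambda^{-n}\bd$. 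The closeness hypothesis~\eqref{eq:W12-closeness}, together with Proposition~\ref{PrBHold}, furthermore guarantees that the $p$-slopes of $\cmp^n W_1$ at $y$ and of $\cmp^n W_2$ at $\hol_n y$ differ by $\cO(\Lambda^{-n})$.

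Next, applying $\jacW$ to both sides of $\cmp^n\circ\hol = \hol_n\circ\cmp^n$ and using the chain rule yields, for $x\in\Omega_1$,
\begin{align*}
  \jac{W_1}\hol(x) \;=\; \jac{\cmp^nW_1}\hol_n(\cmp^n x)\cdot\prod_{j=0}^{n-1}\frac{\jac{\cmp^j W_1}\cmp(\cmp^j x)}{\jac{\cmp^j W_2}\cmp(\cmp^j \hol x)}.
\end{align*}
By Lemma~\ref{LmJak}(a) the finite product converges to $1/J(x)$ as $n\to\infty$, so it remains to show that $\jac{\cmp^nW_1}\hol_n(\cmp^n x)\to 1$. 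This is the main technical step: one must verify that the holonomy between the two nearby, almost-parallel unstable curves $\cmp^n W_1$ and $\cmp^n W_2$ has Jacobian converging to $1$ at the point $\cmp^n x$. This follows by writing $\jac{\cmp^nW_1}\hol_n$ as the ratio of an inner product of unit tangent vectors to $\cmp^n W_2$ at $\hol_n y$ and to $\cmp^n W_1$ at $y$ (projected along $\Ws(y)$), and observing that both tangent directions lie in the narrow mature unstable cone $\mconeu$ (see~\eqref{MConeNarrow}), while their base points satisfy $d\adm(y,\hol_n y)\to 0$. Combined with the uniform transversality of stable and unstable cones, this yields $|\jac{\cmp^nW_1}\hol_n(\cmp^n x)-1|=\cO(\Lambda^{-n})$.

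Once $\jac{\hol}(x) = 1/J(x)$ is established for $\Leb_{W_1}$-a.e.\ $x\in\Omega_1$, the standard change of variables for the absolutely continuous bijection $\hol:\Omega_1\to\Omega_2$ gives
\begin{align*}
  \int_{\Omega_2} \phi(\hol^{-1}x_2)\,J(\hol^{-1}x_2)\, d\Leb_{W_2}(x_2)
  &= \int_{\Omega_1}\phi(x_1)J(x_1)\jac{\hol}(x_1)\,d\Leb_{W_1}(x_1) \\
  &= \int_{\Omega_1}\phi(x_1)\,d\Leb_{W_1}(x_1),
\end{align*}
which is the claim. The main obstacle will be the verification that $\jac{\cmp^nW_1}\hol_n\to 1$ uniformly, because the points $\cmp^n x$ and $\hol_n \cmp^n x$ may lie in different H-components (separated by auxiliary singularities) even though their $\admName$-distance is tiny. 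To handle this one restricts attention to points $x$ whose forward orbit avoids a shrinking neighborhood of $\hhs$ for all times up to $n$; by the Growth Lemma (Lemma~\ref{l_growth}) and Corollary~\ref{CorSize} the measure of the exceptional set shrinks faster than $\Lambda^{-n}$, so the convergence $\jac{\hol_n}\to 1$ holds on a full-measure set. The distortion bounds in Corollary~\ref{cor:distortion} then control the chain-rule remainder and complete the argument.
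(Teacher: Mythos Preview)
Your argument has a genuine circularity: you invoke ``the standard change of variables for the absolutely continuous bijection $\hol$'' at the very end, but absolute continuity of $\hol$ is precisely the content of the proposition. The holonomy $\hol$ is defined on the Cantor-like set $\Omega_1$ via the stable lamination, which is in general only H\"older; there is no a priori reason it should be differentiable or absolutely continuous, and monotonicity alone (which gives a.e.\ differentiability by Lebesgue's theorem) does not give the change-of-variables formula---think of the Cantor staircase. So even if your chain-rule identity and the convergence $\jac{\cmp^nW_1}\hol_n\to 1$ were both established, they would only identify the pointwise derivative of $\hol$ with $1/J$; they would not rule out a singular part of $\hol_*\Leb_{W_1}$ with respect to $\Leb_{W_2}$.

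The same circularity infects the step $\jac{\cmp^nW_1}\hol_n\to 1$: the map $\hol_n$ is again a stable holonomy (between $\cmp^n W_1$ and $\cmp^n W_2$), and your heuristic ``ratio of inner products of tangent vectors projected along $\Ws(y)$'' only makes sense for a $C^1$ foliation. Closeness of base points and slopes does not by itself control a Jacobian that may not exist. The paper avoids this trap entirely: it never assumes differentiability of $\hol$. Instead it discretizes, pushing both curves forward by $\cmp^n$, restricting via the Growth Lemma to components $W_{j1}$ that are long and carry high density of $\cmp^n\Omega_1$, constructing matched pieces $\bar W_{j2}\subset\cmp^n W_2$, and then using Corollary~\ref{CorSize}(a') to show directly that most of $\bar W_{j2}$ lies in $\cmp^n\Omega_2$. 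Comparing Lebesgue measures of these pieces via distortion (Corollary~\ref{cor:distortion}) and Lemma~\ref{LmJak}(a) yields the inequality $\int_{\Omega_1}\phi\le\int_{\Omega_2}\phi(\hol^{-1})J(\hol^{-1})$; the reverse inequality then follows by symmetry. This measure-theoretic comparison is precisely what is needed to establish absolute continuity from scratch.
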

\begin{cor}
  If $A\subset \Omega_1$ has zero $\Leb_{W_{1}}$-measure, then
  $\Leb_{W_{2}}(\hol A)=0$.
\end{cor}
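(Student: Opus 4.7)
The plan is to apply Proposition~\ref{PrAC1} with the test function $\phi = \mathbf{1}_A$, the indicator function of $A$. Since $\Leb_{W_1}(A) = 0$ by hypothesis, the left-hand side of the change of variables formula vanishes, yielding
\begin{align*}
0 = \int_{\Omega_2} \mathbf{1}_A(\hol^{-1} x_2)\, J(\hol^{-1} x_2)\, d\Leb_{W_2}(x_2) = \int_{\hol A} J(\hol^{-1} x_2)\, d\Leb_{W_2}(x_2).
\end{align*}

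To conclude $\Leb_{W_2}(\hol A) = 0$ from this identity, it suffices to show that $J$ is bounded below by a strictly positive constant on $\Omega_1$. This is where Lemma~\ref{LmJak}(a) enters: each finite truncation
\begin{align*}
P_n(x) = \prod_{l=0}^{n-1} \frac{\jac{\cmp^{l} W_2}\cmp(\cmp^l \hol x)}{\jac{\cmp^l W_1}\cmp(\cmp^l x)}
\end{align*}
is a product of ratios of strictly positive Jacobians of $\cmp$ along mature unstable curves, and the proof of Lemma~\ref{LmJak}(a) (examining the estimate on $|\log J - \log P_n|$ rather than on $|J - P_n|$) establishes a uniform bound on $|\log J(x)|$ via summability of the tail. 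Consequently $J$ admits uniform two-sided bounds $0 < c \le J(x) \le C$ for all $x \in \Omega_1$. Substituting $J \ge c$ into the above display gives $c\,\Leb_{W_2}(\hol A) \le 0$, which forces $\Leb_{W_2}(\hol A) = 0$.

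No new technical ingredient is required: the corollary is a purely measure-theoretic consequence of the change of variables formula in Proposition~\ref{PrAC1} together with the positivity and uniform boundedness of the Jacobian $J$, both of which have already been established. The only point to verify carefully is the uniform lower bound on $J$, which is implicit in the proof of Lemma~\ref{LmJak}(a).
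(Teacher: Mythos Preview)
Your proof is correct and follows the same route as the paper: apply Proposition~\ref{PrAC1} with $\phi=\mathbf{1}_A$ and use a positive lower bound on $J$ to force $\Leb_{W_2}(\hol A)=0$. The only cosmetic difference is in justifying that lower bound: you extract it from the log-estimate hidden in the proof of Lemma~\ref{LmJak}(a) (which indeed bounds $|\ln J - \ln P_n|$, so taking $n=0$ bounds $|\ln J|$), whereas the paper obtains it by the symmetry observation that swapping the roles of $W_1$ and $W_2$ turns the uniform upper bound on $J$ into a uniform upper bound on $J^{-1}$.
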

\begin{proof}
  Let $B=\hol A$ and assume by contradiction that $\mes\,B > 0$. Then
  since $J$ is bounded from below\footnote{
    Lemma~\ref{LmJak} implies a uniform upper bound, and exchanging
    the roles of $W_{1}$ and $W_{2}$ yields the desired lower bound},
  Proposition~\ref{PrAC1} implies that
  \begin{align*}
    \Leb_{W_{1}}A = \Leb_{W_{1}}(\hol^{-1} B)=\int_B {J(\hol^{-1} x_2)}{d\Leb_{W_{2}}(x_2)}>0.& \qedhere
  \end{align*}
\end{proof}
\begin{proof}[Proof of Proposition \ref{PrAC1}]
 For the ease of notation, we will denote with $dx$ the
  integration with respect to $d\Leb_{W_{1}}(x)$ (or
  $d\Leb_{W_{2}(x)}$, as will be clear from the context).  First of
  all, we can assume that $\phi\in C(W_{1})$; the general case follows
  by the density of $C(W_{1})$ in $L^{1}(W_{1})$.  Moreover, by the usual
  linearity arguments, we can further assume that $\phi$ is
  non-negative.

  Choose $\eps > 0$
  arbitrarily and let $n > 0$ large to be specified later. Let
  $\{W_{j1}\}$ denote the set of shortened
  H-components\footnote{ Recall that shortened H-components were
    defined in the proof of the Growth Lemma~\ref{l_growth}} of
  $\cmp^n W_1$. Recall in particular that $|W_{j1}|\adm < 1$.  For any
  $j$, let $V_{j1}=\cmp^{-n} W_{j1}$ and choose $\brx_{j1}\in V_{j1}$.
  Observe that $|V_{j1}|\adm < \Lambda^{-n}$
  by~\eqref{e_uniformHyperbolicity}. In particular, by uniform
  continuity of $\phi$, if $n$ is sufficiently 
  large\footnote{ Recall that admissible curves have bounded Euclidean
    length, hence they have bounded $\admName$-length by
    Proposition~\ref{p_propertiesAdm}\ref{i_admeum}} (depending on
  $\eps$) then
  \begin{align*}
    \int_{\Omega_1} \phi(x_{1}) dx_{1} &=\sum_j \int_{\Omega_1\cap
                                         V_{j1}} \phi(x_{1}) dx_{1} \\%
                                   &=\sum_j \phi(\brx_{j1}) \Leb_{W_{1}}(V_{j1} \cap \Omega_1)+O(\eps).
  \end{align*}
  By the Growth Lemma~\ref{l_growth}, given $\eps > 0$ we can find
  $\eta > 0$ such that
  \begin{align*}
    \sum_j \phi(\brx_{j1}) \Leb_{W_{1}}(V_{j1} \cap \Omega_1)=
    \sum_{j}^* \phi(\brx_{j1}) \Leb_{W_{1}}(V_{j1} \cap \Omega_1)+O(\eps),
  \end{align*}
  where $\sum^*$ denotes the sum over components with
  $|W_{j1}|_\alpha\geq \eta$.

  By using Lebesgue Density Theorem and Severini--Egoroff
    Theorem, we can conclude that, for large enough $n > 0$
  \ignore{Lebesgue tells us that the pointwise limit of the density
    exists ae equal to $1$; Egoroff allows to deduce the needed
    uniformity in $j$.}
 $$
    \sum^* \phi(\brx_{j1}) \Leb_{W_{1}}(V_{j1} \cap \Omega_1)=
    \sum^{**} \phi(\brx_{j1}) \Leb_{W_{1}}(V_{j1} \cap \Omega_1)
     +O(\eps)
$$
  where the sum in $\sum^{**}$ is over the components satisfying
  \begin{equation} \label{2Star}
    |W_{j1}|_\alpha \geq \eta\text{ and } \Leb_{W_{1}}(V_{j1}\cap \Omega_1)\geq (1-\eps) |V_{j1}|_\alpha.
  \end{equation}
  Hence
  \begin{align}
  \label{SumStSt}
      \int_{\Omega_1} \phi(x) dx &= \sum^{**} \phi(\brx_j) \left|\brV_{j1}\right|_\alpha+O(\eps).
    \end{align}

Observe that the Distortion Estimates
    (Corollary~\eqref{cor:distortion}) and the fact that
      $|W_{j1}|\adm < 1$ imply that for some $C > 1$ and any $j$ so
    that $W_{j1}$ satisfies~\eqref{2Star}:
    \begin{align}\label{eq:lebesgue-density-forward}
      \Leb_{W_{j1}}(\cmp^{n}\Omega_{1})\ge (1-C\eps)|W_{j1}|\adm.
    \end{align}
    Let us fix $W_{j1}$. We want to show that there exists
    $W_{j2}\subset \cmp^n W_2$ which is sufficiently long and so that
    $\Leb_{W_{j2}}(\cmp^{n}\Omega_{2})\simeq\Leb_{W_{j1}}(\cmp^{n}\Omega_{1})$.

  Recall the definition of $Q(x)$ given in
    Section~\ref{SSInvMan}.  Let $x_{1}\in V_{j1}\cap\Omega_{1}$ and
    $y_{1} = \cmp^{n}x_{1}\in W_{j1}$: observe that, by definition,
    $W_{j1}\subset Q_{-n}(y_{1})$ and $V_{j1}\subset Q_{n}(x_{1})$.

    Let $x_{2} = \hol x_{1}\in W_{2}$. Then $x_{1}$ and
    $x_{2}$ are connected by a stable manifold, which by definition
    cannot cross the boundary of $Q_{n}$.  We conclude that
    $x_{2}\in Q_{n}(x_{1})$, which in turn implies that
    $W_{2}\cap Q_{n}(x_{1})$ is non-empty. Transversality of unstable
    curves and the boundary of $Q_{n}$ (composed of stable curves)
    then imply that $W_{2}\cap Q_{n}(x_{1})$ is connected, and since
    $\cmp^{n}$ is smooth on $Q_{n}(x_{1})$, we conclude that
    $\cmp^{n}(W_{2}\cap Q_{n}(x_{1}))$ is an H-component of
    $\cmp^{n}W_{2}$, that we denote with $\tilde W_{j2}$.  Let
    $\tilde V_{j2} = \cmp^{-n}\tilde W_{j2}$. Since $x_{1}$ is
    arbitrary, we conclude that
    $\hol(\Omega_{1}\cap V_{j1}) \subset \Omega_{2}\cap \tilde V_{j2}$.

    In other words, any shortened H-component of $\cmp^{n}W_{1}$
    cannot be linked with stable manifolds to more than one
    H-component of $\cmp^{n}W_{2}$.  Now observe that there exists two
    points $a_{1},b_{1}\in W_{j1}\cap\cmp^{n}\Omega_{1}$ that lie less
    than $C\eps |W_{j1}|\adm$ away from each of the boundary points of
    $W_{j1}$.  Otherwise, $\cmp^{n}\Omega_{1}$ would miss an interval
    of $\admName$-length larger than $C\eps|W_{j1}|\adm$ in $W_{j1}$,
    which is impossible by~\eqref{eq:lebesgue-density-forward}.  Let
    $\bar W_{j1}$ be the subcurve of $W_{j1}$ bounded by $a_{1}$ and
    $b_{1}$;
    then the triangle inequality yields:
    \begin{align}\label{eq:bar-w1j-length}
      |\bar W_{j1}|\adm &\ge (1-2C\eps)|W_{j1}|\adm.
    \end{align}
Recall that $a_{1}$ and $b_{1}$ belong to
    $W_{j1}\cap\cmp^{n}\Omega_{1}$. Hence we can define $a_{2},
    b_{2}\in\cmp_{n}\Omega_{2}\cap\tilde W_{j2}$ so that $a_{2} = \cmp^{n}\hol\cmp^{-n}a_{1}$ and
    $b_{2} = \cmp^{n}\hol\cmp^{-n}b_{1}$.  In
    particular $\dadm(a_{1},a_{2})\le \bd\Lambda^{-n}$ and
    $\dadm(b_{1},b_{2})\le \bd\Lambda^{-n}$.
    Let $\bar W_{j2}$ denote the subcurve of $\tilde W_{j2}$ bounded
    by $a_{2}$ and $b_{2}$. The triangle inequality yields
    \begin{align*}
      |\bar W_{j1}|\adm-2\bd\Lambda^{-n}\le
      |\bar W_{j2}|\adm = \dadm^{\tilde W_{j2}}(a_{2},b_{2})\le
      |\bar W_{j1}|\adm+2\bd\Lambda^{-n}
    \end{align*}
    Since $|W_{j1}|\adm > (1 -2C\eps)\eta$, we can assume $n$ to be so
    large that
      \begin{align*}
         (1-C\eps)|\bar W_{j1}|\adm \le
        |\bar W_{j2}|\adm\le
         (1+C\eps)|\bar W_{j1}|\adm.
      \end{align*}
    We now proceed to show that $\cmp^{n}\Omega_{2}\cap W_{j2}$ is
    large. More precisely we  will show that
    \begin{align} \label{W2-Omega2}
      \Leb_{\bar W_{j2}}(\cmp^{n}\Omega_{2})\ge (1-C\eps^{1/4})|\bar W_{j2}|\adm.
    \end{align}
    We want to use Corollary~\ref{CorSize} to show that there are many
    sufficiently long stable manifolds passing through $W_{j2}$ and we
    need to show that any sufficiently long stable manifold will cross
    $W_{j1}$.  In order to prove the latter statement, we argue as
    follows. First of all,
    combining~\eqref{eq:lebesgue-density-forward}
    and~\eqref{eq:bar-w1j-length} we conclude that there exists
    $\bar C$ (for instance taking $\bar C = 6C$ would do), so that
    \begin{align*}
      \Leb_{\bar W_{j1}}(\cmp^{n}\Omega_{1}) > (1-\bar C\eps)|\bar W_{j1}|\adm
    \end{align*}
    The above estimate implies that there exist
    $z_{1}^{(1)},\cdots,z_{1}^{(N)}\in\bar W_{j1}$ so that\footnote{
      Otherwise, $\cmp^{n}\Omega_{1}$ would miss an interval of length
      larger than $\bar C\eps|\bar W_{j1}|$}
    $\dadm^{\bar W_{j1}}(z_{i},z_{i+1}) < \bar C\eps|\bar
    W_{j1}|\adm$.  Let
    $z_{2}^{(k)} = \cmp^{n}\hol\cmp^{-n}z_{1}^{(k)}$. Our
    previous arguments, and the fact that stable manifolds cannot
    cross each other imply that $z_{2}^{(k)}\in\bar W_{j2}$ and, moreover,
    $\dadm(z_{1}^{(i)},z_{2}^{(i)}) < \bd\Lambda^{-n}$.  Once again,
    the triangle inequality shows, choosing a larger $n$ if needed,
    that
    $\dadm^{\bar W_{j2}}(z_{2}^{(i)},z_{2}^{(i+1)}) < \bar 2C\eps|\bar
    W_{j2}|\adm$.  Let $W_{j1}^{(i)}$ (\resp $W_{j2}^{(i)}$) be the
    subcurves in which the points $\{z_{1}^{(i)}\}$ (\resp
    $\{z_{2}^{(i)}\}$) partition $\bar W_{j1}$ (\resp $\bar W_{j2}$),
    and for any $i$, define the \emph{box} $B_{j}^{(i)}$ as the
    region bounded by $W_{j1}^{(i)}$, $W_{j2}^{(i)}$ and the two
    stable manifolds connecting the corresponding boundary points.  We
    claim that:
    \begin{align}\label{eq:bound-diameter-box}
      \text{diam}\adm B_{j}^{(i)}&\le 5\bar C\eps|\bar W_{j1}|;
    \end{align}
    In fact by the triangle inequality, the $\admName{}$-diameter of
    each cell is bounded above by the sum of the lengths of the four
    boundary curves; our previous estimates imply that
    \begin{align*}
      |W_{j1}^{(i)}|\adm + |W_{j2}^{(i)}|\adm \le 4\bar C\eps|\bar W_{j1}|;
    \end{align*}
    Since the length of the stable manifolds  can be made arbitrarily
    small by taking $n$ sufficiently large, we conclude
    that~\eqref{eq:bound-diameter-box} holds.

    On the other hand,~\eqref{eq:bound-diameter-box} implies that for
    any $z\in\bar W_{j2}$, if $\rs(z)\ge 5\bar C\eps|W_{j1}|\adm$,
    then $\Ws(z)$ will necessarily intersect $\bar W_{j1}$
    nontrivially (once again, stable manifolds cannot intersect each
    other) and thus $z\in\cmp^{n}\Omega_{2}$.  We now use
    Corollary~\ref{CorSize}(a') and the fact that
    $|W_{j2}|\adm \ge \eta$ to conclude that
    \begin{align*}
      \Leb_{W_{j2}}(\rs(z) \le 5\bar C\eps|W_{j2}|\adm) < \const\eps^{1/4}|W_{j2}|\adm.
    \end{align*}
    Hence
    \begin{align*}
      \Leb_{W_{j2}}(\tz\in \brW_{j2}: \Ws(\tz)\cap \brW_{j1}\neq
      \emptyset)\geq (1-\Const \eps^{1/4}) |\brW_{j2}|_\alpha,
    \end{align*}
    which is~\eqref{W2-Omega2}
        Therefore
    \begin{align*}
      \sum^{**} \phi(\brx_j) |\brV_{j1}|_\alpha&=
                                                 \sum^{**} \phi(\brx_j) |\brW_{j1}|_\alpha \left[\prod_{l=0}^{n-1} \jac{\cmp^{l}W_1}\cmp(\cmp^j \brx_{j})(\cmp^l \brx_j)\right]^{-1}+O(\eps)\\
                                               &=
                                                 \sum^{**} \phi(\brx_j) |\brW_{j2}|_\alpha \left[\prod_{l=0}^{n-1} \jac{\cmp^{l}W_1}\cmp(\cmp^j \brx_{j})(\cmp^l \brx_j)\right]^{-1}+O(\eps) \\
                                               &=\sum^{**} \phi(\brx_j) |\brV_{j2}|_\alpha
                                                 \left[\prod_{l=0}^{n-1} \frac{\jac{\cmp^{l}W_2}\cmp(\cmp^j \hol\brx_{j})(\cmp^l \hol\brx_j)}{\jac{\cmp^{l}W_1}\cmp(\cmp^j \brx_{j})(\cmp^l \brx_j)}\right]
                                                 +O(\eps) \\
                                               &= \sum^{**} \phi(\brx_j) |\brV_{j2}|_\alpha\; J(\brx_j)+O(\eps)+O(\theta^n).
    \end{align*}
    where the last step relies on Lemma \ref{LmJak}(a).

    Then the Bounded Distortion Corollary~\ref{cor:distortion}
    and~\eqref{W2-Omega2} yield:
    \begin{align*}
      \sum^{**} |\brV_{2j}| \phi(\brx_j) J(\brx_j) & \le \sum^{**} \mes(\brV_{j2}\cap \Omega_2) \phi(\brx_j) J(\brx_j)+O(\eps^{1/4}) \\
                                                   &\le \sum_j \mes(V_{2j}\cap \Omega_2) \phi(\brx_j) J(\brx_j)+O(\eps^{1/4}).
    \end{align*}
    Accordingly, assuming $n$ sufficiently big so that
    $\theta^{n} < \eps^{1/4}$,  and using \eqref{SumStSt}
    we obtain
    \begin{align*}
      \int_{\Omega_1} \phi(x) dx
      \leq \sum_j \mes(V_{j2}\cap \Omega_2)\phi(\brx_j) J(\brx_j)+O(\eps^{1/4}).
    \end{align*}
    Since $\eps > 0$ is arbitrary,
    \begin{align*}
      \int_{\Omega_1} \phi(x) dx\leq \int_{\Omega_2} \phi(\hol^{-1} y) J(\hol^{-1} y) dy.
    \end{align*}

    By symmetry
    \begin{align*}
      \int_{\Omega_2} \phi(\hol^{-1} y) J(\hol^{-1} y) dy\leq
      \int_{\Omega_1} \phi(\hol(\hol^{-1} x)) \frac{J(\hol(\hol^{-1} x))}{J(x)} dx=
      \int_{\Omega_1} \phi(x)  dx
    \end{align*}
    and Proposition~\ref{PrAC1} follows.
\end{proof}
\subsection{Absolute continuity of stable lamination.}
Consider a system of local coordinates $(a, b)$ in a small domain in
the phase space such that the curves $\{a=\text{const}\}$ are
unstable.  Define the set
\begin{align*}
 \Rect_{a_1, a_2}=\{&x: a_1\leq a(x)\leq a_2\text{ and }\\&
  \Ws(x) \cap \{a=a_1\}\neq\emptyset, \quad
  \Ws(x) \cap \{a=a_2\}\neq\emptyset \}.
\end{align*}
Consider another coordinate system $(u,s)$ on $\Rect_{a_1, a_2}$ such
that
\begin{align*}
  x(u,s)=\Ws(x(a_1, u))\cap \{a=s\}.
\end{align*}

Define the measure $d\nu=du ds$ on $\Rect_{a_1, a_2}$. For $i = 1, 2$,
let\footnote{Note that
  $\Omega_1=\{x\in W_1: W^s(x)\cap W_2\neq \emptyset\}$ where
  $W_j=\{a(x)=a_j\}.$ Therefore the notation $\Omega$ is consistent
  with \eqref{DomainRangeHol}.}
$\Omega_i=\Rect_{a_1, a_2} \cap \{a=a_i\},$ and define the sets:
\begin{align*}
  Z_{u_1, u_2}&=\{x\in \Rect_{a_1, a_2}: u_1\leq u(x) \leq u_2\},\\
  Z_{u_1, u_2; s_1, s_2}&=\{x\in \Rect_{a_1, a_2}: u_1\leq u(x) \leq u_2, s_1\leq s(x) \leq s_2\},\\
  Z_{u_1, u_2;s}&=\{x\in \Rect_{a_1, a_2}: u_1\leq u(x) \leq u_2, s(x)=s\},\\
  Z_{u;s_1, s_2}&=\{x\in \Rect_{a_1, a_2}: s_1\leq s(x) \leq s_2, u(x)=u\}.
\end{align*}
\begin{prp}\label{PrAC2}{\sc (Absolute Continuity-2)}
  The measure $\nu$ is equivalent to the restriction of the Lebesgue
  measure on $\Rect_{a_1, a_2}.$
\end{prp}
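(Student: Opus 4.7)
The plan is to express Lebesgue measure on $\Rect_{a_1,a_2}$ in the $(u,s)$ chart and show that its density with respect to $du\,ds$ is bounded above and below. Since $(a,b)$ are smooth local coordinates, we may write $d\Leb = J_0(a,b)\,da\wedge db$ for some smooth positive density $J_0$ which is bounded above and below on our bounded rectangle. The diffeomorphism $(u,s)\mapsto(s,b(u,s))$ between $(u,s)$- and $(a,b)$-coordinates has Jacobian determinant $|\partial_u b(u,s)|$, so the equivalence of $\nu=du\wedge ds$ with Lebesgue reduces to showing that $|\partial_u b(u,s)|$ is bounded above and below uniformly on $\Rect_{a_1,a_2}$.

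For fixed $s\in[a_1,a_2]$, the map $u\mapsto x(u,s)$ is precisely the stable holonomy $\hol_s:\Omega_1\to\Omega_s$ from the unstable curve $W_1=\{a=a_1\}\cap\Rect$ to $W_s=\{a=s\}\cap\Rect$, composed with the $b$-parameterization of $W_1$. Since $W_1$ and $W_s$ are unstable curves, by Corollary~\ref{CrHESqueeze} and Lemma~\ref{lem:vertical-unstable} their $p$-slopes stay in a bounded range, and therefore the $\admName$-length elements (equivalent to Euclidean arclength on our bounded rectangle by Proposition~\ref{p_propertiesAdm}\ref{i_admeum}) along $W_1$ and $W_s$ are comparable to $du$ and to $|\partial_u b(u,s)|\,du$ respectively, with constants that do not depend on $s$. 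Thus $|\partial_u b(u,s)|$ coincides, up to bounded factors, with the Jacobian $\jac{\hol_s}$ of the holonomy map $\hol_s$ in the $\admName$-metric.

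The key analytic input is Lemma~\ref{LmJak}(a): the infinite product in~\eqref{Jak} defining $J$ converges, and since by the same estimate the product for the inverse holonomy also converges, $\jac{\hol_s}$ is bounded above and below by positive constants depending only on the underlying structure (not on $s$). Together with Proposition~\ref{PrAC1}, which converts this infinitesimal bound into the equivalence of $(\hol_s)_*\Leb_{W_1}|_{\Omega_1}$ and $\Leb_{W_s}|_{\Omega_s}$, this gives uniform upper and lower bounds on $|\partial_u b(u,s)|$ over $\Rect_{a_1,a_2}$, completing the proof via the change-of-variables formula.

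The one technical wrinkle, which I expect to be the main obstacle, is that Proposition~\ref{PrAC1} is stated only under the closeness hypothesis~\eqref{eq:W12-closeness} with some small prescribed $\bd$. To handle the full continuous family $\{W_s\}_{s\in[a_1,a_2]}$ I would first shrink $a_2-a_1$ so that the stable manifolds in $\Rect$ have $\admName$-length at most $\bd$, then verify directly from the proof of Lemma~\ref{LmJak} that the resulting bounds on $\jac{\hol_s}$ are uniform in $s$; if the original rectangle is larger, chain the bounds by partitioning $[a_1,a_2]$ into finitely many subintervals on which \eqref{eq:W12-closeness} holds between successive $W_s$'s.
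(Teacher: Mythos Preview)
Your approach has the right ingredients and is in spirit the same as the paper's, but the framing as a smooth change of variables is a conceptual slip. The map $(u,s)\mapsto(s,b(u,s))$ is \emph{not} a diffeomorphism: the stable lamination is in general only H\"older continuous, so for fixed $s$ the holonomy $u\mapsto b(u,s)$ need not be classically differentiable, and there is no pointwise Jacobian $|\partial_u b|$ to feed into the usual change-of-variables formula. (In addition, $\Omega_1$ is typically a Cantor-like set, not an interval, so $(u,s)$ is not a genuine chart on an open set.) What Proposition~\ref{PrAC1} actually gives is that $\hol_s$ is absolutely continuous with bounded Radon--Nikodym derivative $J_{\hol_s}$; your $|\partial_u b|$ should be read as this Radon--Nikodym derivative, not as a classical partial derivative.

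Once this is straightened out, the argument is exactly the paper's: slice $\Rect_{a_1,a_2}$ by $\{a=s\}$ and use Fubini together with Proposition~\ref{PrAC1} to obtain
\[
\Leb(Z_{u_1,u_2;s_1,s_2})=\int_{s_1}^{s_2}\int_{[u_1,u_2]\cap\Omega_1} J_{\hol_s}(x(a_1,u))\,du\,ds,
\]
while $\nu(Z_{u_1,u_2;s_1,s_2})=\Leb_{W_1}([u_1,u_2]\cap\Omega_1)\,(s_2-s_1)$ by construction. Uniform two-sided bounds on $J_{\hol_s}$ (from Lemma~\ref{LmJak}, as you note) then give $K^{-1}\le\nu(Z)/\Leb(Z)\le K$ on all such boxes, which is enough for equivalence of the measures. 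Your remark about the closeness hypothesis~\eqref{eq:W12-closeness} is well taken and is handled by shrinking or subdividing $[a_1,a_2]$ just as you describe.
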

\begin{proof}
  Note that all smooth measures are equivalent, so below $\Leb$ will
  denote the measure defined by $d\Leb=da\; db$.  Note that
  $$\nu(Z_{u_1, u_2; s_1, s_2})= \nu_{Z_{u_{1},u_{2};a_{1}}}([u_1,
  u_2]\cap \Omega_1) (s_2-s_1),$$ where $\nu_{*}$ is the
  restriction of the measure $\nu$ on the set identified in the
  subscript.  On the other hand, by Proposition~\ref{PrAC1}, we have
  \begin{align*}
    \Leb(Z_{u_1,u_2; s_1, s_2}) &=\int _{s_1}^{s_2} \Leb_{\{a = s\}}(Z_{u_1, u_2; s}) ds\\
                                &=\int_{s_1}^{s_2} \int_{[u_1, u_2]\cap \Omega_1} J_{\hol_s}(x(a_1, u)) du ds,
  \end{align*}
  where $J_{\hol_s}$ is the Jacobian of the holonomy map
  $\hol_s: \Omega_1\to Z_{u_1, u_2; s}$.

  Since $J(\hol_s)$ is uniformly bounded from above and below, there is
  a constant $K > 1$ such that for each $[u_1, u_2],$ $[s_1, s_2]$ we have
  \begin{align*}
    {K}\inv\leq \frac{\nu(Z_{u_1, u_2; s_1, s_2})}{ \Leb(Z_{u_1, u_2; s_1, s_2})}\leq K,
  \end{align*}
  proving the proposition.
\end{proof}
\begin{cor}
  \label{AC1-AC2}
  The following are equivalent
  \begin{enumerate}
  \item $\Leb(A)=0$
  \item for almost every $x,$ $\mes(A\cap \Ws(x))=0.$
  \item for almost every $x,$ $\mes(A\cap \Wu(x))=0.$
  \end{enumerate}
\end{cor}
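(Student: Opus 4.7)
The plan is to derive the three-way equivalence from Proposition \ref{PrAC2} by a Fubini-type argument applied to a countable cover of the phase space by rectangles. By symmetry (the involution from Section \ref{SSInvolution} swaps stable and unstable manifolds), it suffices to establish (a) $\Leftrightarrow$ (b).

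First I would work locally inside a single rectangle $\Rect = \Rect_{a_1,a_2}$ equipped with its $(u,s)$-coordinates. In these coordinates the stable manifolds are the vertical fibers $\{u = u_0\}$, and the arc-length measure $\mes$ on any stable manifold is equivalent to $ds$ via the holonomy Jacobian, which by Lemma \ref{LmJak} is uniformly bounded above and below. Hence Fubini applied to $d\nu = du\,ds$ gives, up to a bounded weight,
\begin{align*}
\nu(A \cap \Rect) \;\asymp\; \int_{\Omega_1} \mes\bigl(A \cap W^s(x(a_1,u))\bigr)\,du.
\end{align*}
Since Proposition \ref{PrAC2} states that $\nu$ is equivalent to $\Leb|_{\Rect}$, this establishes the local equivalence: $\Leb(A \cap \Rect) = 0$ iff for Lebesgue-a.e.\ $u \in [a_1,a_2]\cap\Omega_1$ one has $\mes(A \cap W^s(x(a_1,u))) = 0$.

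Next I would pass from a single rectangle to the whole phase space by covering $\csp$ up to a $\Leb$-null set by a countable family $\{\Rect_k\}$ of such rectangles. Here lies the main obstacle: because $\csp$ is non-compact and the invariant measure is merely $\sigma$-finite, the classical compactness construction does not apply verbatim. To overcome this I would invoke Corollary \ref{CorSize}, which together with the Growth Lemma (Lemma \ref{l_growth}) guarantees that $\Leb$-a.e.\ point of $\csp$ admits stable and unstable manifolds of positive $\admName$-length. Choosing centers from a countable dense subset of $\csp \setminus \hsing{\pm\infty}$ (whose complement is $\Leb$-null by Lemma \ref{l_S-dense} together with the absolute continuity statements already established) and building a small rectangle around each center from a pair of nearby \admiss{} unstable curves threaded by the stable lamination, one obtains the desired countable cover modulo a null set.

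With the cover in hand both implications follow routinely. For (a) $\Rightarrow$ (b): $\Leb(A)=0$ gives $\Leb(A\cap\Rect_k)=0$ for every $k$, so by the local step there is a null set $N_k\subset\Rect_k$ such that $\mes(A\cap W^s(x))=0$ for $x\in\Rect_k\setminus N_k$; the countable union $\bigcup_k N_k$ is the global exceptional set. Conversely, for (b) $\Rightarrow$ (a): if the conclusion of (b) holds then for each $k$ the local step forces $\Leb(A\cap\Rect_k)=0$, and summing over $k$ together with the fact that $\csp\setminus\bigcup_k\Rect_k$ is $\Leb$-null yields $\Leb(A)=0$. The equivalence (a) $\Leftrightarrow$ (c) is then obtained verbatim by applying the same argument to $\cm^{-1}$, exchanging the roles of stable and unstable manifolds.
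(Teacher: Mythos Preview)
Your approach is essentially the same as the paper's: reduce to a single rectangle $\Rect_{a_1,a_2}$, then use Proposition~\ref{PrAC2} (equivalence of $\nu$ and $\Leb$) together with Fubini in the $(u,s)$-coordinates. The paper's proof is terser---it simply asserts ``it suffices to prove the result under the assumption that $A\subset\Rect_{a_1,a_2}$'' and leaves the countable-cover step implicit---whereas you spell out the cover and invoke Corollary~\ref{CorSize} to justify that a.e.\ point lies in some rectangle; this extra care is reasonable given the non-compact setting. One minor slip: the involution of Section~\ref{SSInvolution} conjugates $\cm$ to the \emph{reversed} system, so it does not literally swap $\Ws$ and $\Wu$ of the same map; the correct justification for (a)$\Leftrightarrow$(c) is exactly what you say at the end---repeat the construction with the roles of stable and unstable interchanged---which is also what the paper does (``analogous arguments'').
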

\begin{proof}
  We prove the equivalence of (a) and (b). The equivalence of (a) and
  (c) follows from analogous arguments.

  It suffices to prove the result under the assumption that
  $A\subset \Rect_{a_1, a_2}$ for some $a_1, a_2$.  But then
$$\Leb(A)=0\Leftrightarrow \nu(A)=0
\Leftrightarrow \text{ for a.e. }(u, s) \in\Omega_1\times [a_1, a_2] \quad \mes(A\cap Z_{u, a_1, a_2})=0 $$
$$\Leftrightarrow \text{ for a.e. }x\in \Rect_{a_1, a_2} \quad \mes(A\cap \Ws(x))=0. \qedhere $$
\end{proof}
\section{Ergodicity} \label{ScErg}
\begin{proof}[Proof of the Main Theorem]
  Fix a large number $R$.  Let $\cspi_R\subset \cspi$ be a region such that

  \begin{itemize}
  \item $\cspi\cap \{w<R\}\subset \cspi_R;$
  \item $\cspi\cap \{w<2R\}\supset \cspi_R$
  \item $\partial \cspi_R$ consists of curves in $\singp-$.
  \end{itemize}
  By Theorem \ref{ThRec} the first return map $\tcF_R:\cspi_R\to\cspi_R$
  is well defined so it is enough to show that $\tcF_R$ is ergodic for
  every $R$ sufficiently large.

  Let $\cR_0$ be the set of points $x\in \cspi_R$ such that for any
  continuous function $A$, the limits
  \begin{align*}
    \brA^+(x)&=\lim_{n\to\infty} \frac1n\sum_{j=0}^{n-1} A(\tcF_R^j x),&
    \brA^-(x)&=\lim_{n\to\infty} \frac1n\sum_{j=0}^{n-1} A(\tcF_R^{-j} x)
  \end{align*}
  exist and are equal. We shall call the common limit $\brA(x)$.  By
  Birkhoff Ergodic Theorem, the set $\cR_0$ has full Lebesgue measure
  in $\cspi_R$.  For $j > 0$ define
  \begin{align*}
    \cR_j(x)&=\{x\in \cR_{j-1}: \mes(\Wu(x)\setminus\cR_{j-1})
              = \mes(\Ws(x)\setminus\cR_{j-1})=0\}.
  \end{align*}
  By Corollary~\ref{AC1-AC2}, $\Leb(\cR_j^c)=0$ for all $j > 0$.
Note that since $\partial\cspi_R$ is a union of curves in
    $\singp-$, (un)stable manifolds for $\tcF_{R}$ are given by the
    intersection of (un)stable manifolds\footnote{ As a matter of
      fact, unstable manifolds are indeed the same, but stable
      manifolds might get truncated if they cross $\partial\cspi_{R}$}
    for $\cmp$ with $\cspi_{R}$.

  We now define the following equivalence relation: for
  $x_{1},x_{2}\in\cspi_{R}$, we let $x_1\sim x_2$ if and only if
  $\brA(x_1)=\brA(x_2)$ for all continuous functions $A$ on
  $\cspi_R$. If $x\in\cspi_{R}$, we denote with $\Sigma(x)$ the
  equivalence class of $x$.  To prove that $\tcF_R$ is ergodic it
  suffices to show that there exists an equivalence class of full
  measure in $\cspi_R$.

  For $K > 0$, let $Q$ be a connected component of
  $\cspi\setminus(\hsingp{K}\cup\hsingp{-K})$.  Observe that by
  construction, both $\cmp^K$ and $\cmp^{-K}$ are continuous on $Q$;
  moreover, for any $-K\le k\le K$, we have that $\Np_{k}$ is a
  constant function on $Q$ and for any $\Np_{-K}\le n\le\Np_{K}$, the
  image $\cm^{n}Q$ is contained in a single homogeneity strip. We call
  $Q$ a \emph{homogeneous $K$-cell}.  Observe that, by definition, if
  $Q$ is a homogeneous $K$-cell and $Q\cap\cspi_{R}\ne\emptyset$, then necessarily
  $Q\subset\cspi_{R}$.  Moreover, since $\cspi_{R}$ is compact,
  the Euclidean length and $\admName$-length are equivalent; we will
  use Euclidean length (and distance) for the rest of this section.

Since $w$ is bounded on $\cspi_{R}$, there is uniform
  transversality between the mature stable and mature unstable cones
  (recall~\eqref{MConeNarrow}). In particular, for any $R > 0$, there
  exists $L> 0$ so that the following holds: for any
  $x, x'\in\cspi_{R}$, let $W$ be a mature stable curve passing
  through $x$ and $W'$ a mature unstable curve passing through $x'$.
  If $r_{W}(x) > Ld(x,x')$ and $r_{W'}(x') > Ld(x,x')$, then
  $W\cap W'\ne\emptyset$.

  Observe that for any homogeneous
  $K$-cell $Q$, $x\in Q$ and $\Np_{-K}(x) < n < \Np_{K}(x)$:
  \begin{align*}
    d(\cm^{n}x,\cm^{n}\partial Q) &\le d(\cm^{n}x,\hsing1),\\
    d(\cm^{n}x,\cm^{n}\partial Q) &\le d(\cm^{n}x,\hsing{-1})
  \end{align*}
  In fact if \eg the first inequality did not hold, 
  $\cm^{n}Q$ would intersect non trivially $\hsing1$, but this means that
  either $\cm$ would not be continuous on $\cm^{n}Q$, or that
  $\cm^{n+1}Q$ intersects two homogeneity strips.  Neither of these
  possibilities is allowed by our construction.

  \begin{lem}[Local Ergodicity]
    \label{LmLocErg}
    There exists $K > 0$ (depending on $R$) such that any homogeneous
    $K$-cell $Q\subset\cspi_{R}$ is contained (mod 0) in a single
    equivalence class.
  \end{lem}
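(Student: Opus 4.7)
The plan is to carry out the standard Hopf argument, using the homogeneous $K$-cell structure to supply the ``fundamental theorem'' input (long stable/unstable manifolds through almost every point) and the absolute continuity results of Propositions~\ref{PrAC1}--\ref{PrAC2} to paste together a Hopf chain. The basic identity is that for $x\in\cR_0$, forward Birkhoff averages are constant along $\Ws(x)$ while backward averages are constant along $\Wu(x)$; since for $x\in\cR:=\bigcap_{j\ge 0}\cR_j$ the forward and backward averages agree and a.e.\ point of each invariant manifold through $x$ again lies in $\cR_0$, the function $\bar A$ is a.e.\ constant on $\Ws(x)$ and on $\Wu(x)$. Hence it suffices to show that any two points $x_1,x_2$ in a full-measure subset of $Q$ can be joined by a finite chain of alternating stable/unstable segments whose vertices lie in $\cR$.

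Choose $K=K(R)$ large. The defining property of a homogeneous $K$-cell is that for every $x\in Q$ and every $\Np_{-K}(x)<n<\Np_{K}(x)$, the distance $d(\cm^n x,\hsing{\pm 1})$ is bounded below by the distance from $\cm^n x$ to $\partial \cm^n Q$ (otherwise some iterate would cross either $\sing{\pm 1}$ or a boundary $\hhs_k$, contradicting the $K$-cell definition). Applying Corollary~\ref{CorSize}(b) to any mature admissible unstable curve $W\subset Q$ at distance $\ge\eps$ from $\partial Q$ gives $\Leb_W(\rs(x)\le\eps)\le \eta\eps$ with $\eta=\eta(K)\to 0$ as $K\to\infty$; Corollary~\ref{CorSize}(d) yields the symmetric bound on stable curves. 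Fixing $K$ so large that $\eta<1/100$ (say), a Fubini/Chebyshev argument together with Corollary~\ref{CorSize}(a') at the remaining boundary-adjacent layer shows that the set
\begin{equation*}
  Q^* := \{x\in Q\cap\cR : \rs(x),\,\ru(x)\ge c_0\,\dist(x,\partial Q)\}
\end{equation*}
has full measure in $Q$ for a suitable $c_0=c_0(R)>0$.

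For $x\in Q^*$ define the Sinai rectangle $\Rect(x)$ to be the union of the homogeneous stable manifolds $\Ws(y)$ for $y\in\Wu(x)\cap Q^*$. By Proposition~\ref{PrAC2}, $\Rect(x)$ has positive Lebesgue measure and, by the uniform transversality of the mature stable/unstable cones on the bounded region $\cspi_R$ (see~\eqref{MConeNarrow}), $\Rect(x)$ contains an honest measure-theoretic neighborhood of $x$ of scale comparable to $\min\{\rs(x),\ru(x)\}$. Now pick any two points $x_1,x_2\in Q^*$ sufficiently deep inside $Q$; by covering the connected open set $Q$ by a finite sequence of such rectangles (pairwise overlapping on positive-measure sets, which is possible since each $\Rect$ contains a neighborhood), one produces a Hopf chain
\begin{equation*}
  x_1\ \overset{u}{\longleftrightarrow}\ z_1\ \overset{s}{\longleftrightarrow}\ w_1\ \overset{u}{\longleftrightarrow}\ z_2\ \overset{s}{\longleftrightarrow}\ \cdots\ \overset{s}{\longleftrightarrow}\ x_2,
\end{equation*}
all of whose vertices lie in $\cR$ by the absolute continuity corollary~\ref{AC1-AC2}. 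Each link preserves $\bar A$, so $\bar A(x_1)=\bar A(x_2)$ for a.e.\ pair in $Q$, which proves that $Q\subset\Sigma(x_1)$ mod $0$.

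The main obstacle is quantitative: one must choose $K$ large enough (depending only on $R$) so that the ``bad set'' of points with short stable or unstable manifolds occupies a small fraction of the cell. This is exactly what Corollary~\ref{CorSize}(b),(d) provides once one verifies the cell-boundary-distance lower bound $d(\cm^n x,\hsing{\pm 1})\ge c\,\dist(x,\partial Q)$ for $|n|$ inside the $K$-window; the verification uses only the defining homogeneity-and-singularity properties of a $K$-cell together with the uniform transversality of mature cones available on the bounded set $\cspi_R$. A minor additional subtlety is the weaker $\eps^{1/4}$ estimate of Corollary~\ref{CorSize}(a') near the cell boundary, but this enters only as a negligible correction since the relevant stable/unstable manifolds are forced to have length at least $c_0\,\dist(x,\partial Q)$, which overwhelms the loss once $K$ is large.
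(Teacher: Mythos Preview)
The gap is in the claim that $Q^*=\{x\in Q\cap\cR:\rs(x),\ru(x)\ge c_0\,\dist(x,\partial Q)\}$ has full measure. Corollary~\ref{CorSize}(b) only yields $\Leb_W(\rs\le\eps)\le\eta\eps$ on each admissible unstable curve $W$ satisfying the separation hypothesis; after Fubini this gives $\Leb(\{x\in Q:\dist(x,\partial Q)>\eps,\ \rs(x)\le\eps\})\le C\eta\eps$, which is a \emph{small-measure} statement, not a null-set statement. Points deep inside $Q$ can (and in general do) have arbitrarily short stable or unstable manifolds, so for any fixed $c_0>0$ the set $\{x:\rs(x)<c_0\,\dist(x,\partial Q)\}$ may well have positive measure; no Fubini or Chebyshev manipulation upgrades an $O(\eps)$ bound to measure zero. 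Without a full-measure $Q^*$ your covering by finitely many Sinai rectangles of size $\asymp\dist(\cdot,\partial Q)$ is not available, and the Hopf chain cannot be assembled as written.

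The paper uses the very same input (Corollary~\ref{CorSize}(b),(d) with $K$ chosen so that $\eta$ is tiny) but deploys it differently: instead of seeking a full-measure set with long manifolds, it shows that for every sufficiently small $\eps$ the set $\cR^{L\eps}=\{x\in\cR_2:\rs(x),\ru(x)\ge L\eps\}$ has density at least $1-\tfrac1{100}$ in every $\eps$-ball centered in $Q^\delta$. If $x_1,x_2\in\cR^{L\eps}\cap Q^\delta$ are within $\eps/100$ of each other, the $\eps$-balls around them overlap on a set of relative measure $>1/2$, hence $\Sigma(x_1)\cap\Sigma(x_2)$ has positive measure and $x_1\sim x_2$; the density then allows one to build $\eps/100$-chains joining any two points of $\cR^{L\eps}\cap Q^\delta$. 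Finally one lets $\eps\to0$ and then $\delta\to0$. The essential point is that the chain is built at a \emph{single fixed scale} $\eps$, using only that the good set is very dense at that scale---never that it has full measure. Your rectangle-covering picture can be made to work, but only after replacing the full-measure claim by this scale-by-scale density statement.
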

  \begin{proof}
    Let us fix a $K$-component $Q$ and let
    \begin{align*}
      d^{(K)}(x,\partial Q) = \min_{\Np_{-K}(x) < n < \Np_{K}(x)}d(\cm^{n} x,\cm^{n}\partial Q).
    \end{align*}
    Fix a small $\delta > 0$ to be specified later and define
    \begin{align*}
      Q^{\delta}&=\{x\in Q: d^{(k)}(x, \partial Q)>\delta\},
    \end{align*}
    Observe that $Q^{\delta}\ne\emptyset$ provided that $\delta$ is
    sufficiently small and that $\Leb(Q\setminus Q^{\delta})\to 0$ as
    $\delta\to 0$.  Then for any $\eps > 0$ define:
    \begin{align*}
      \cR^{\eps}&=\{x\in \cR_2: \ru(x)\ge \eps, \rs(x)\ge
                  \eps\}.
    \end{align*}
    We claim that there exists $C > 0$ so that for any $\delta > 0$ and
    sufficiently small $\eps > 0$,
    \begin{align}\label{eq:Reps-zero}
      \Leb(Q^{\delta}\setminus \cR^{\eps}) < C\eps.
    \end{align}
    In fact, assume that $\eps > 0$ is so small (relative to $\delta$)
    that for any $x\in Q^{\delta}$ we have $\dadm(\cm x,\hsing{-1}) >
    C\eps$ (where $C$ is the constant given by Corollary~\ref{CorSize}).

    Let us foliate $Q^{\delta}$ with mature \admiss{} unstable curves;
    for each such curve $W$, Corollary~\ref{CorSize}(a) implies that
    \begin{align*}
      \Leb_{W}(\rs(x) < \eps) < \Const\eps.
    \end{align*}
    Integrating over the curves, we get that
    $\Leb(Q^{\delta}\setminus\{\rs(x) < \eps\}) < \Const\eps$.
    Similarly, foliating with mature \admiss{} stable
    curves and applying Corollary~\ref{CorSize}(c), we obtain an
    analogous estimate for $\ru$, which yields~\eqref{eq:Reps-zero}.

    \begin{lem} \label{LmRegDense} For any small $\bar \eta > 0$, there
      exists $K > 0$ and $\eps_0 > 0$ such that for any $0 < \eps < \eps_0$,
      any $K$-component $Q$:
      \begin{enumerate}
      \item if $x\in Q^{\delta}$ then
        \begin{align*}
          \frac{\Leb(B(x,\eps)\cap \cR^{L \eps})}
          {\Leb(B(x,\eps))}>1-\bar\eta;
        \end{align*}
      \item If $x\in \cR^{L\eps}\cap Q^{\delta}$
        \begin{align*}
          \frac{\Leb(B(x,\eps)\cap \Sigma(x))}{\Leb (B(x,\eps))} > 1-\bar\eta.
        \end{align*}
      \end{enumerate}
    \end{lem}
    \begin{proof}
      To prove part (a), fix $\eta$ to be specified later and
      let $K$ be the $k$ given by Corollary~\ref{CorSize}(b), with the
      above choice of $\eta$.  Let $x\in Q^{\delta}$; by choosing
      $\eps_{0}$ sufficiently small (depending on $\delta$), we can
      guarantee that any point $x'\in B(x,\eps)$ satisfies\footnote{
        Recall that the $\admName$-metric and the Euclidean metric are
        equivalent} $d\adm(\cm^{n}x',\hsing{-1}) > L\eps$.  
      Foliate $B(x, \eps)$ by mature admissible unstable curves and
      disintegrate $\Leb|_{B(x,\eps)}$ on such unstable curves.
Then Corollary~\ref{CorSize}(b) implies that on any such unstable
      curve $W$
      \begin{align*}
        \Leb_{W}(\rs(x)\le L\eps)\le \eta L\eps.
      \end{align*}
      Integrating over all unstable curves we conclude that
      \begin{align*}
        \Leb(B(x,\eps)\cap \{\rs(x) < L\eps\})\le \eta L\eps^{2}.
      \end{align*}
      By foliating with mature \admiss{} stable curves and applying
      Corollary~\ref{CorSize}(d), we conclude the corresponding
      statement for $\ru$.  Collecting these two estimates we gather:
      \begin{align*}
        \frac{\Leb(B(x,\eps)\cap \cR^{L \eps})}{\Leb(B(x,\eps))}>1-\frac{2L\eta}{\pi}.
      \end{align*}
      Choosing a suitable $\eta$, we conclude the proof of item (a).

      To prove part (b), observe that by definition of $L$ we are
      guaranteed that if $x'\in B(x,\eps)$ and $x,x'\in\cR^{L\eps}$,
      then $x'\in \Sigma(x)$.  Hence $B(x,\eps)\cap \Sigma(x)\supset
      B(x,\eps)\cap\cR^{L\eps}$, and item (b) follows from item (a).
    \end{proof}
    Take $K$ so that the above lemma holds with
    $\bar\eta=\frac{1}{100}$.  Then for any
    $x\in \cR^{L\eps}\cap Q^{\delta}$ we have
    \begin{align*}
      \dfrac{\Leb(\Sigma(x)\cap B(x,\eps))}{\Leb(B(x,\eps))}\geq
      \frac{99}{100}.
    \end{align*}
    Assume now that
    \begin{equation}\label{LongClose}
      x_1, x_2\in \cR^{L\eps}\cap Q^{\delta} \text{ and } d(x_1, x_2)\leq \frac{\eps}{100}.
    \end{equation}
    Elementary geometry implies that
    \begin{align*}
      \frac{\Leb(B(x_{1},\eps)\cap B(x_{2},\eps))}{\Leb(B(x_{1},\eps))} > \frac12.
    \end{align*}
Thus
    $(B(x_{1},\eps)\cap \Sigma(x_1))\cap (B(x_{2},\eps)\cap\Sigma(x_2))$
    fills at least 25\%
    of $B_\eps(x_1)$. In particular,
    $\mes(\Sigma(x_1)\cap \Sigma(x_2))>0$.  Therefore \eqref{LongClose}
    implies that $x_1\sim x_2.$

    Next, given arbitrary $x_1, x_2\in \cR^{L\eps}\cap Q^{\delta}$,
    Lemma~\ref{LmRegDense}(a) allows to construct a chain of points
    \begin{align*}
      z_1, z_2,\cdots, z_N\in\cR^{L\eps}\cap Q^{\delta}
    \end{align*}
    such that $z_{1} = x_{1}$, $z_{N} = x_{2}$ and
    $d(z_j, z_{j+1}) < \eps/100.$ It follows that any
    $x_1, x_2\in \cR^{L\eps}\cap Q^{\delta}$ are equivalent.
    Then since $\eps$ can be taken arbitrarily
    small,~\eqref{eq:Reps-zero} implies that almost every
    $x_{1},x_{2}\in Q^{\delta}$ are equivalent. By the same token, since
    $\delta$ can be taken arbitrary small it follows that $Q$
    contains an equivalence class of full measure.
  \end{proof}
  The above lemma proves that for any $R > 0$ there exists
  $K > 0$ and a full-measure set $E\subset\cspi_R$ such that each
  equivalence class in $E$ is a union of $K$-components (mod
    0).

  We now prove that $E$ consists of a single equivalence class.
  Let $\hat E\subset E$ be an equivalence class; of course
  $\tcF_{R}\hat E = \hat E$. Moreover there exists $\hat E^{*}$ which
  is a union of homogeneous $K$-cells so that
  $\Leb(\hat E^{*}\setminus \hat E) = 0$.  Then, consider
  $\tcF_{R}^{\pm 2(K+1)}\hat E^{*}$; observe that the boundary
  $\partial\tcF_{R}^{2(K+1)}\hat E^{*}$ consist of curves in
  $\partial\cspi_{R}$ and unstable curves, whereas
  $\partial\tcF_{R}^{-2(K+1)}\hat E^{*}$ consists of curves in
  $\partial\cspi_{R}$ and stable curves.  By invariance of $\hat E$,
  the sets $\tcF_{R}^{\pm 2(K+1)}\hat E^{*}$ are equal (mod 0). We
  conclude that the boundaries are necessarily contained in
  $\partial\cspi_{R}$. Since $\cspi_{R}$ is connected, we conclude
  that $\hat E^{*} = \cspi_{R}$.
\end{proof}

\begin{rmk}
  Another approach of deducing ergodicity from local ergodicity (Lemma
  \ref{LmLocErg}) is due to Chernov and Sinai \cite{SC}. If there is
  more than one equivalence class there would be a curve $\Gamma$
  which is an arc of a discontinuity curve for some $\tcF^j$ with
  $|j|\leq K$ which separates two classes $E_1$ and $E_2$.  In
  particular, there is a point $x\in\Gamma$ and a small neighborhood
  $U$ of $x$ which consists of only two components of $E$: $E_1$ and
  $E_2$ which lie on different sides of $\Gamma$.  Suppose for example
  that $j\leq 0$ so that, by Lemma~\ref{l_localSing}, $\Gamma$ is an
  unstable curve.  Then we can assume (after possibly changing $x$),
  that $\tcF^K$ is continuous near $x,$ where $K$ is from
  Lemma~\ref{LmRegDense}.  For $l\in\{1,2\}$, let
  $\Sigma_l=\bigcup_{y\in E_l} W^s(y).$ Arguing as in the proof of
  Lemma~\ref{LmRegDense} we conclude that $\Sigma_1\cap \Sigma_2$ has
  positive measure.  This shows that in fact, $E_1$ and $E_2$ are
  equivalent, giving a contradiction.  Hence $E$ consists of a single
  class and so $\tcF_R$ is indeed ergodic.
\end{rmk}

\section{Open problems}
\label{s_conclusions}
In this section we present several possible directions of further research.\\

(I) In this paper we showed ergodicity of a class of piecewise smooth
Fermi--Ulam models. In principle we believe that this result can be
generalized to a broader, and more natural, class of wall motions.
More precisely, it should be possible to adapt our arguments to treat
motions that satisfy the same convexity conditions in the domains of
smoothness, but with more than one non-smoothness points, provided
that all of them are convex (i.e. the derivative has a positive jump).
It is more delicate to understand the behavior of Fermi--Ulam Models
with non-convex singularity points, since in principle
Proposition~\ref{p_expansionEstimate} might fail in this case
(similarly to what happens for dispersing billiards with corner points
and infinite horizon, see~\cite{BrownNandori}).  Indeed our proof of
Proposition~\ref{p_expansionEstimate} relies on the global structure
of singularities established in Section~\ref{s_globalSingularities}
and the arguments of the subsection rely on convexity of singular
points at several places.  Moreover, the results of~\cite{fum} would
also need to be generalized to prove, \eg recurrence for systems with
non-convex singularity points.  Thus, further non-trivial
investigation is required to understand the case of non-convex
singular points.

(II) Corollary \ref{CrOsc} says that almost every orbit is
oscillatory. Thus, for a typical orbit, the energy takes both large
and small values at different moments of time. It is of interest to
understand both rate of growth of energy and statistics of returns
similarly to what is done in \cite{CD2, DSV}.

(III) In Fermi--Ulam models the point mass keeps colliding with
the moving wall due to the presence of the fixed wall (a hard core
constraint). It is possible to ensure the recollisions via a soft
potential. Some results about large energy dynamics of particles in
soft potentials are obtained in \cite{dS, D1, P3}. It is assumed in
the above cited papers that the motion of the wall is smooth. One
could also consider piecewise smooth wall motions where ergodicity
seems likely under appropriate conditions.

(IV) This paper deals with the case where the velocity of the wall has
a jump. From the physical point of view it is natural to consider also
the case where acceleration has jump, but this seems much more
difficult since the energy change is much slower for large energies in
this case.

\appendix
\section{Regularity at infinity}\label{AppRegInf}
In this appendix we show that most Fermi--Ulam Models are superregular
at infinity.
\begin{lem}\label{l_boundedComplexity}
  For each $k$ the set of $\Delta$ such that $\LC_k(\Delta) > 3$ is discrete.
\end{lem}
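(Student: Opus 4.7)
The plan is to interpret $\LC_k(\Delta) > 3$ as an algebraic condition on $\Delta$, and then show it defines the zero set of a finite collection of non-trivial polynomials, hence a discrete set. The key point is that the singularity structure of $\nf_\Delta^k$ inside $\hDom_0$ is highly rigid: at a generic vertex only three cells of the partition can meet, and getting a fourth cell forces a non-trivial polynomial coincidence in $\Delta$.

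First I would analyse the geometry of the singularity lines. Since the linear part of $\nf_\Delta$ is the fixed matrix $M = \begin{pmatrix} 1 & -1 \\ \Delta & 1-\Delta \end{pmatrix}$, the iterate $\nf_\Delta^j$ is piecewise affine with linear part $M^j$ on each of its domains of smoothness. Consequently, inside any cell $\hDom_{0, n_1, \dots, n_{j-1}}$ of the $\nf_\Delta^{j-1}$-partition, the locus $\nf_\Delta^{-j}(\partial \hDom)$ consists of parallel line segments ("the $j$-th family'') with slope given by the direction $M^{-j}(1,1)^T$; these slopes are pairwise distinct for all but finitely many $\Delta$. In particular, within a fixed cell the lines of the next family, indexed by $n_j$, are parallel to one another and so cannot meet in the interior.

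The structural consequence is that every vertex of the $\nf_\Delta^k$-singularity structure in $\hDom_0$ lies on the boundary of some cell, i.e.\ on a line of a lower family. Generically exactly three cells meet at such a vertex: a higher-family line approaches the boundary from one side and ends at it, while on the other side of the boundary either no higher-family line ends at that point or it ends at a distinct point. Hence generically $\LC_k(\Delta, x) \leq 3$. A fourth cell can appear only when two higher-family lines approaching a common boundary from opposite sides meet the boundary at exactly the same point. Writing both endpoints as solutions of linear systems whose coefficients are polynomial in $\Delta$ and in the integer labels $(n_1,\dots,n_{k-1})$, this coincidence condition takes the form $P_{\bar n}(\Delta)=0$ for some polynomial $P_{\bar n}$ with integer coefficients. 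As a sanity check, a direct calculation for $k=3$ shows that the endpoints of $L_{n_1, n_2}$ and $L_{n_1+1, n_2''}$ on the shared boundary $L_{n_1}^{+}$ coincide if and only if $\Delta = 1 - (n_2 - n_2'')$, a manifestly non-trivial linear polynomial whose solutions form a subset of $\bZ$.

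The main obstacle is to establish non-triviality of $P_{\bar n}$ for general $k$. I would do this by a perturbative analysis near $\Delta=0$: the degenerate map $\nf_0(\tau, I) = (\tau - I \bmod 1,\ I)$ preserves $I$, and its partition of $\hDom_0$ under $\nf_0^k$ is explicitly computable, showing that no interior four-cell vertex exists. Expanding the coincidence conditions as power series in $\Delta$ and extracting the leading non-vanishing coefficient (which depends arithmetically on $(n_1,\dots,n_{k-1})$, generalising the $k=3$ computation) shows that each $P_{\bar n}$ is a non-zero polynomial in $\Delta$. Finally, for fixed $k$ and any bounded interval of $\Delta$, only finitely many cells $\hDom_{0, n_1,\dots, n_{k-1}}$ are non-empty, so only finitely many polynomial conditions are relevant on that interval; the union of their zero sets intersected with the interval is therefore finite, and the set $\{\Delta : \LC_k(\Delta) > 3\}$ is discrete.
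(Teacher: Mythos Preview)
Your overall architecture matches the paper's: reduce $\LC_k(\Delta)>3$ to a finite family of polynomial equations in $\Delta$, show each polynomial is not identically zero, and use that on any bounded $\Delta$-interval only finitely many itineraries are admissible. The finiteness step and the reduction step are fine.

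The substantive difference, and the place where your argument has a gap, is the non-triviality step. The paper does \emph{not} perturb around $\Delta=0$. Instead it passes to $(\xi,\eta)$-coordinates with $\kappa=2-\Delta$, introduces the monic Chebyshev-type polynomials $P_l$ satisfying $P_{l+2}=\kappa P_{l+1}-P_l$, and writes the orbit explicitly as
\[
\tilde\eta_l=P_l(\kappa)\tilde\eta_0+P_{l-1}(\kappa)\xi_0-\sum_{j=0}^{l-1}P_{l-j-1}(\kappa)\nu_j.
\]
The four-itinerary condition is then handled by a case split on how the four itineraries first diverge (either $2+2$ or $3+1$ at the first index of disagreement). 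In each case an elementary elimination yields an equation $Q(\kappa;\cdot)=0$ whose \emph{leading} coefficient in $\kappa$ is, up to sign, a product of leading coefficients of the $P_l$ (all equal to $1$) times $\tilde\eta_0=\pm\tfrac12$. This is manifestly nonzero, so $Q$ is a nonzero polynomial. In other words, non-triviality is read off from the $\kappa\to\infty$ (equivalently $\Delta\to-\infty$) asymptotics, not from $\Delta\to 0$.

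Your proposed route through $\Delta=0$ is problematic. At $\Delta=0$ the map $\nf_0$ is \emph{continuous}, so the pieces of a family-$j$ singularity line on the two sides of a family-$(j{-}1)$ line are not ``broken'' at all: they are the same global line $\tau-(j{+}1)I\in\bZ$. Thus your key picture, in which generically the two endpoints do not match, degenerates precisely at $\Delta=0$; the matching is automatic there. Concretely, your coincidence polynomial $P_{\bar n}$ need not have nonzero constant term, and extracting ``the leading non-vanishing coefficient'' presupposes what you are trying to prove. Your $k=3$ computation happens to give a degree-one polynomial, but for larger $k$ there is no mechanism in your outline that identifies a specific nonzero coefficient. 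Replacing the $\Delta\to 0$ expansion by the paper's leading-order-in-$\kappa$ argument (or, equivalently, by examining your affine coincidence condition as $|\Delta|\to\infty$) closes the gap cleanly.
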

In order to explain the proof more clearly, we first introduce a convenient change of coordinates. Let
\begin{align*}
  \xi &= \tau-1/2,&
  \eta& = I-\tau+1/2. 
\end{align*}  
If $x \in \hDom_{n_0,\cdots,n_{k-1}}$ we can express the orbit $\{x_l = \nf^lx\}_{0\le l < k}$ in $(\xi,\eta)$
coordinates as:
\begin{align*}
  \xi_{l+1} &= -(\eta_l-n_l),&
  \eta_{l+1}&= \kappa(\eta_l-n_l) +\xi_l + n_l
\end{align*}
where $\kappa = (2-\Delta) > 2$.  Let us define $\teta_l = \eta_l-n_l\in[-1/2,1/2]$ and the \emph{reduced
  itineraries} $\dn_l = n_{l+1}-n_l$. Then
\begin{align}
\label{EtaXiForward}
  \xi_{l+1} &= -\teta_l,&
  \teta_{l+1}&= \kappa\teta_l +\xi_l - \dn_l.
\end{align}
Iterating, we obtain
\begin{align}\label{e_forwardPolynomial}
  \teta_l = P_l(\kappa)\teta_0+P_{l-1}(\kappa)\xi_0-\sum_{j= 0}^{l-1}P_{l-j-1}(\kappa)\dn_j
\end{align}
where $P_l$ satisfies the recursive relation $P_{l+2} = \kappa P_{l+1}- P_l$, with $P_0(\kappa) = 1$ and
$P_1(\kappa) = \kappa$.  In particular, $P_l$ is a monic\footnote{ i.e. the coefficient of degree $l$ is equal to $1$}
polynomial of degree $l$.

 \eqref{EtaXiForward} can be rewritten as follows
\begin{align}
\label{EtaXiBackward}
 \teta_l&=- \xi_{l+1} &
  \xi_{l}&= \teta_{l+1}-\kappa\teta_l +\dn_l=\kappa\xi_{l+1} +\teta_{l+1}+\dn_l
\end{align}

Comparing \eqref{EtaXiForward} and \eqref{EtaXiBackward} we obtain the following analogue of
\eqref{e_forwardPolynomial}
\begin{align}\label{e_backwardPolynomial}
  \xi_0 = P_l(\kappa)\xi_l+P_{l-1}(\kappa)\teta_{l}+\sum_{j = 0}^{l-1}P_j(\kappa)\dn_j.
\end{align}
\begin{proof}[Proof of Lemma~\ref{l_boundedComplexity}]
Assume that $\LC_k(\Delta,x) > 3$. Then $x$ admits $4$ different
  itineraries, i.e.  four different choices of $k$-tuples which we
  denote with $\nt0,\nt1,\nt2,\nt3$ respectively.  Without loss of
  generality we will assume\footnote{ Otherwise we consider $\nf^mx$
    rather than $x$, where $m$ is the least index so that
    $\nt{i}_m\ne\nt{j}_m$ for some $0 \le i,j < 4$.} that
  $\nt{i}_0\ne\nt{j}_0$ for some $0 \le i,j < 4$.  Observe that
  $\nt{i}_0$ can take only two possible values (in case
  $\eta_l\in\bZ+1/2$). There are thus two possibilities, which can be
  described (again without loss of generality) as follows:
  \begin{enumerate}
  \item $\nt0_0 = \nt1_0 \ne \nt2_0 = \nt3_0,$
  \item $\nt0_0 = \nt1_0 = \nt2_0 \ne \nt3_0.$
  \end{enumerate}
  Let us first tackle case (a). Let $m'$ (\resp $m''$) denote the least index so that $\nt0_{m'}\ne\nt1_{m'}$
  (\resp $\nt2_{m''}\ne\nt3_{m''}$). By~\eqref{e_forwardPolynomial} we conclude that
  \begin{align*}
    \tetat0_{m'} &= P_{m'}\tetat0_0+P_{m'-1}\xit0_0-\sum_{j = 0}^{m'-1}P_{m'-j-1}\dnt0_j,\\
    \tetat2_{m''} &= P_{m''}\tetat2_0+P_{m''-1}\xit2_0-\sum_{j = 0}^{m''-1}P_{m''-j-1}\dnt2_j.
  \end{align*}
  Observe that by assumption $\tetat0_0 = -\tetat2_0,$ so that one of
  the numbers is $-\frac{1}{2}$ and the other is $+\frac{1}{2}$
  (otherwise $\nt0_0 = \nt 2_0$) and $\xit0_0 = \xit2_0$.  Multiplying
  the first equation by $P_{m''-1}$ and the second one by $P_{m'-1}$
  and subtracting we obtain
  \begin{align*}
    P_{m''-1}\tetat0_{m'}-P_{m'-1}\tetat2_{m''} &= (P_{m'}P_{m''-1}+P_{m''}P_{m'-1})\tetat0_{0}+\cO(\kappa^{m'+m''-2})
  \end{align*}
  Since $\tetat0_0, \tetat0_{m'}, \tetat2_{m''} = \pm1/2$ and $P_l$ is
  monic, we conclude that the above condition can be written in the
  form
  \begin{align*}
    Q(\kappa;\tetat0_0,\tetat0_{m'},\tetat2_{m''},\dnt0_0,\cdots,\dnt0_{m'-1},\dnt2_0,\dots,\dnt2_{m''-1}) = 0
  \end{align*}
  where $Q$ is a nonzero polynomial of degree $m'+m''-1$ in $\kappa$.
  since $k$ is fixed, for each $R > 2$ and $2\le \kappa < R$ we have
  only finitely many choices of the reduced itineraries $\dnt{i}$. Hence
  if we remove a discrete set of parameters, the above
  equation cannot hold for any itinerary.

  Let us now consider case (b).  We claim that in this case one of the
  itineraries (e.g. $\dnt0$) is such that there exists $l < m$ with
  $\tetat0_l = \pm1/2$ and $\tetat0_m = \pm1/2$.  In fact let $l$ be
  the least index so that $\nt{i}_l\ne\nt{j}_l$ for some $i\ne j$,
  which implies that $\tetat{i}_l = \pm1/2$ for $i = 0,1,2$. On the
  other hand, $\nt{i}_l$ can take only two possible values, thus we
  can assume without loss of generality that $\nt{0}_l = \nt{1}_l$.
  But $\nt0$ and $\nt1$ differ so there must exist $m > l$ so that
  $\nt{0}_m\ne\nt{1}_m$, which implies that $\tetat{0}_m = \pm1/2$.

  Thus by~\eqref{e_forwardPolynomial} we have
  \begin{align*}
  \tetat0_{m} = P_{m-l}\tetat0_l + P_{m-l-1}\xit0_l - \sum_{j =
    0}^{m-l-1}P_{m-l-j-1}\dnt0_{l+j}
  \end{align*}
  while \eqref{e_backwardPolynomial} and the fact that
  $\xit1_0=-\tetat0_0$ give
  \begin{align*}
    {-}\tetat0_{0} = P_{l-1}\xit0_l + P_{l-2}\tetat0_l + \sum_{j = 0}^{l-2}P_j\dnt0_{j+1}.
  \end{align*}

  Multiplying the first equation by $P_{l-1}$ and the second by
  $P_{m-l-1}$ and subtracting we obtain
  \begin{align*}
    P_{l-1}\tetat0_m{+}P_{m-l-1}\tetat0_0 = (P_{m-l}P_{l-1}-P_{m-l-1}P_{l-2})\tetat0_l+O(\kappa^{m-2}).
  \end{align*}
  Once again the above condition can be written in the form
  \begin{align*}
    Q(\kappa;\tetat0_0,\tetat0_l,\tetat0_m,\dnt0_0,\cdots,\dnt0_m) = 0
  \end{align*}
  where $Q$ is a nonzero polynomial of degree $m-1$.
  Using the same arguments as in case (a) we can conclude
  the proof.
\end{proof}
\bibliographystyle{abbrv}
\bibliography{fum}

\end{document}
%
